\theoremstyle{plain}
\newtheorem{thm}{Theorem}[section]
\newtheorem{cor}[thm]{Corollary}
\newtheorem{pro}[thm]{Proposition}
\newtheorem{lem}[thm]{Lemma}
\newtheorem{proposition-principale}[thm]{Proposition principale}
\newtheorem{thm-principal}{Main Theorem}
\newtheorem{defi}[thm]{Definition}
\newtheorem{nota}[thm]{Notation}
\theoremstyle{definition}
\newtheorem{eg}[thm]{Example}
\newtheorem{rem}[thm]{Remark}
\newenvironment{defi-G}
{\noindent{\bf Definition.}\it}{\\}
\newenvironment{thm-M}
{\noindent{\bf Main Theorem.}\it }{}
\newenvironment{thm-C}
{\noindent{\bf Classification Theorem.}\it }{}
\newenvironment{thm-AA}
{\noindent{\bf Theorem A'.}\it}{\\ }
\newenvironment{thm-B}
{\noindent{\bf Theorem B.}\it}{\\ }
\newenvironment{thm-BB}
{\noindent{\bf Theorem B'.}\it}
\def\C{\mathbf{C}}
\def\R{\mathbf{R}}
\def\Q{\mathbf{Q}}
\def\H{\mathbf{H}}
\def\Z{\mathbf{Z}}
\def\N{\mathbf{N}}
\def\ii{{\sf{i}}}
\def\P{\mathbb{P}}
\def\disk{\mathbb{D}}
\def\U{{\mathcal{U}}}
\def\V{{\mathcal{V}}}
\def\F{{\mathcal{F}}}
\def\zbar{{\overline{z}}}
\def\wbar{{\overline{w}}}
\def\L{\mathcal{L}}
\def\K{\mathcal{K}}
\def\A{\mathcal{A}}
\def\B{\mathcal{B}}
\def\E{\mathcal{E}}
\def\Aut{{\sf{Aut}}}
\def\Leb{{\sf{Leb}}}
\def\Pes{{\mathcal{P}}}
\def\Dens{{\mathrm{Dens}}}
\def\Isom{{\sf{Iso}}}
\def\NS{{\mathrm{NS}}}
\def\PC{{\mathrm{PC}}}
\def\NN{{\mathrm{N}}}
\def\Fat{{\mathrm{Fat}}}
\def\vol{{\rm{vol}}}
\def\PGL{{\sf{PGL}}\,}
\def\GL{{\sf{GL}}\,}
\newcommand{\Lip}{{\rm Lip \, }}
\newcommand{\Id}{{\rm Id}}
\def\Pic{{\mathrm{Pic}}}
\def\dist{{\sf{dist}}}
\numberwithin{equation}{section}       
\begin{document}

\setlength{\baselineskip}{0.56cm}        
%
%
\title[Dynamics on complex surfaces]
{Automorphisms of surfaces:\\
 Kummer rigidity and measure of maximal entropy}
\date{2013/2014}
\author{Serge Cantat and Christophe Dupont}
\address{IRMAR (UMR 6625 du CNRS)\\ 
Universit{\'e} de Rennes 1 
\\ France}
\email{serge.cantat@univ-rennes1.fr}
\email{christophe.dupont@univ-rennes1.fr}

%
%

%
%

%
%

\begin{abstract} 

We classify complex projective surfaces with an automorphism of positive entropy for which the
unique invariant measure of maximal entropy is absolutely continuous with respect to Lebesgue measure.

\vspace{0.15cm}

\noindent{\sc{R\'esum\'e.}} Nous classons les surfaces complexes  projectives munies d'un automorphisme
dont l'unique mesure d'entropie maximale est absolument continue par rapport \`a la mesure de Lebesgue. 

\end{abstract}

\maketitle

\setcounter{tocdepth}{1}
\tableofcontents

\section{Introduction}

\subsection{Automorphisms and absolutely continuous measures} \label{autoabs}

Let $X$ be a complex projective surface and $\Aut(X)$ be the group of holomorphic diffeomorphisms of 
$X$. By Gromov-Yomdin theorem, the topological entropy ${\sf{h}}_{top}(f)$ of every $f \in \Aut(X)$
is equal to the logarithm of the spectral radius $\lambda_f $ of the linear endomorphism
\[
f^*\colon H^2(X;\Z)\to H^2(X;\Z)
\]
(here $H^2(X;\Z)$ is the second cohomology group of $X$). Thus, $f$ has positive
entropy if, and only if there is an eigenvalue $\lambda\in \C$ of $f^*$ with $\vert \lambda\vert >1$. 
In fact such an eigenvalue is unique if it exists, and is equal to the spectral radius $\lambda_f$. The number $\lambda_f$ is called the {\bf{dynamical degree}} of $f$. As a root 
of the characteristic polynomial of $f^* \colon H^2(X;\Z)\to H^2(X;\Z)$, it is an algebraic integer; more precisely, $\lambda_f$ is equal to $1$, to a reciprocal quadratic integer, or to a Salem number (see \cite{Cantat:Milnor}).

\subsubsection*{}
When the entropy is positive, there is a natural $f$-invariant probability measure $\mu_f$ on $X$ which satisfies the following properties (see \cite{BLS1, Cantat:Acta}):
\begin{itemize}
\item $\mu_f$ is the unique $f$-invariant probability measure with maximal entropy;
\item if $\mu_n$ denotes the average on the set of isolated fixed points
of $f^n$,   then $\mu_n$ converges towards $\mu_f$ 
as $n$ goes to $+\infty$. 
\end{itemize} 

Thus $\mu_f$ encodes the most interesting features of the dynamics of $f$. In the sequel we shall call $\mu_f$ the {\bf{measure of maximal entropy}} of $f$. 

The main goal of this paper is to study the regularity properties of this measure. By ergodicity, $\mu_f$ is either {\bf{singular}} or {\bf{absolutely continuous}} with respect to Lebesgue measure. By definition, it is singular if there exists a Borel subset $A$ of $X$ satisfying $\mu_f(A)=1$ and $\vol(A)=0$ (the volume is taken with respect to any smooth volume form on $X$); it is absolutely continuous if $\mu_f(B)=0$ for  every Borel subset $B\subset X$ such that ${\vol}(B)=0$. 

\subsubsection*{}
Classical examples of pairs $(X,f)$ for which $\mu_f$ is absolutely continuous are described in Section~\ref{exampleswith} below. The first examples are linear Anosov automorphisms of complex tori; one derives new examples from these linear automorphisms by performing equivariant quotients under a finite group action and by blowing up periodic orbits. Our main theorem, stated in Section~\ref{MT}, establishes that these are the only possibilities. This result answers a question raised by Curtis T. McMullen and by the first author.

\subsubsection*{}
This theorem allows to exhibit automorphisms of complex projective surfaces for which $\mu_f$ is singular (see \S \ref{appli}). All previously known examples were constructed on rational surfaces whereas here, we focus on K3 surfaces. A complex projective surface $X$ is a {\bf K3 surface} if it is simply connected and if it supports a holomorphic $2$-form $\Omega_X$ that does not vanish. Such a form is unique
up to multiplication by a non-zero complex number; thus, if one imposes the constraint 
\[
\int_X\Omega_X\wedge {\overline{\Omega_X}} = 1,
\]
the volume form $\vol_X := \Omega_X\wedge {\overline{\Omega_X}}$ is uniquely determined by the complex
structure of $X$; in particular, this volume form is $\Aut(X)$-invariant. A byproduct of our main theorem is a characterization of the pairs $(X,f)$ for which $\mu_f=\vol_X$; this occurs if and only if $\mu_f$ is absolutely continuous.

\begin{eg}A good example to keep in mind is the family of (smooth) surfaces of degree $(2,2,2)$ in 
$\P^1_\C\times \P^1_\C\times \P^1_\C$. Such a surface $X$ comes with three
double covers $X\to \P^1_\C\times \P^1_\C$, hence with three holomorphic involutions $\sigma_1$, 
$\sigma_2$,  and $\sigma_3$. The composition $f=\sigma_1\circ\sigma_2\circ \sigma_3$ 
is an automorphism of $X$ of positive entropy. The measure $\mu_f$ is singular for a generic choice of $X$ but coincides with $\vol_X$ for specific choices.  \end{eg}

\subsection{Examples with absolutely continuous maximal entropy measure}\label{exampleswith}

\subsubsection{Abelian surfaces} \label{absurface}

 Let $A$ be a complex abelian surface and let $\vol_A$ denote the Lebesgue (i.e. Haar) measure on $A$, normalized by $\vol_A(A)=1$. Every $f \in \Aut(A)$ preserves $\vol_A$, and the measure of maximal entropy $\mu_f$ is equal to $\vol_A$ when $\lambda_f>1$. 
 
 Complex abelian surfaces with automorphisms of positive
entropy have been classified in \cite{Ghys-Verjovsky}. The simplest example is obtained as follows. Start with an 
elliptic curve $E=\C/\Lambda_0$ and consider the product $A=E\times E$. The group $\GL_2(\Z)$  acts
on $\C^2$ linearly, preserving the lattice $\Lambda= \Lambda_0\times \Lambda_0$ ; thus, it acts also on the
quotient $A=\C^2/\Lambda$. This gives rise to a morphism $M\mapsto f_M$ from $\GL_2(\Z)$ 
to $\Aut(A)$. The spectral radius of $(f_M)^*$ on $H^2(A;\Z)$ is equal to the
square of the spectral radius of $M$. In particular, $\lambda_f>1$ as soon as the trace of $M$
satisfies $\vert {\text{tr}}(M)\vert > 2$.

\subsubsection{Classical Kummer surfaces} 

Consider the complex abelian surface $A=E\times E$ as in Section~\ref{absurface}. 
The center of $\GL_2(\Z)$ is generated by the involution $\eta= -{\text{Id}}$; it acts on $A$ by 
\[
\eta(x,y)=(-x,-y).
\]
The quotient $A/\eta$ is a singular surface. Its singularities are sixteen ordinary double points; they can be resolved
by a simple blow-up, each singular point giving rise to a smooth rational curve with self-intersection $-2$. Denote
by $X$ this minimal regular model of $A/\eta$. Since $\GL_2(\Z)$ commutes to $\eta$, one gets
an injective morphism $M\mapsto g_M$ from $\PGL_2(\Z)$ to $\Aut(X)$. The topological entropy of
$g_M$ (on X) is equal to the topological entropy of $f_M$ (on $A$). 
The holomorphic $2$-form $\Omega_A=dx\wedge dy$
is $\eta$-invariant and determines a non-vanishing holomorphic $2$-form $\Omega_X$ on $X$. The volume
form $\Omega_X\wedge {\overline{\Omega_X}}$ is invariant under each automorphism $g_M$, and the associated
probability measure
coincides with the measure of maximal entropy $\mu_{g_M}$ when ${\sf{h}}_{top}(g_M)>0$. 
Hence, again, one gets examples of automorphisms for which the measure of maximal entropy is absolutely continuous. The surface $X$ is a Kummer surface and provides a famous example of K3 surface (see \cite{BPVDVH}). 

\begin{rem} Kummer surfaces $A/\eta$ with $A$ a complex torus and $\eta(x,y)=(-x,-y)$ form a dense subset
of the moduli space of K3 surfaces (see \cite{BPVDVH}). Moreover, there are explicit families $(X_t, f_t)_{t \in \disk}$ of automorphisms of K3 surfaces such that  $(X_0,f_0)$ is such a Kummer example, but $X_t$ stops to be a Kummer surface for $t\neq 0$ (see \cite{Cantat:Panorama}, Section 8.2). 
\end{rem}

\subsubsection{Rational quotients} 

Consider the complex abelian surface $A=E\times E$ of Section~\ref{absurface} given by the lattice $\Lambda_0=\Z[\tau]$
with $\tau^2=-1$ or $\tau^3=1$ (and $\tau\neq 1$). The group $\GL_2(\Z[\tau])$
acts on $A$ and its center contains  
\[
\eta_\tau(x,y)=(\tau x, \tau y).
\] 
The quotient space $X_0=A/\eta_\tau$ is singular and rational. Resolving the singularities, 
one gets examples of smooth rational surfaces $X$ with automorphisms of positive 
entropy. The image of the Lebesgue measure on $A$ provides a probability measure
on $X$ which is smooth on a Zariski open subset of $X$ but has ``poles'' along 
the exceptional divisor of the projection $\pi\colon X\to X_0$. Nevertheless, this measure 
is absolutely continuous with respect to Lebesgue measure (see \cite{Cantat:Panorama}).

\subsection{Main theorem}\label{MT}

The examples of Section~\ref{exampleswith} lead to the following definition (see also \cite{Cantat:Compositio, Cantat-Zeghib, Zhang:2013}). 

\begin{defi} Let $X$ be a complex projective surface and let $f \in \Aut(X)$. The pair $(X,f)$ is a {\bf{Kummer example}} if there exist
\begin{itemize}
\item a birational morphism $\pi : X \to X_0$ onto an orbifold $X_0$, 
\item a finite orbifold cover $\epsilon : Y \to X_0$ by a complex torus $Y$, 
\item an automorphism $f_0$ of $X_0$ and an automorphism $\hat f $ of $Y$ such that 
$$ f_0 \circ \pi = \pi \circ f \quad \textrm{ and } \quad  f_0 \circ \epsilon = \epsilon \circ \hat f   . $$
\end{itemize}  
\end{defi}

If $(X,f)$ is a Kummer example with $\lambda_f>1$, then $\mu_f$ is absolutely continuous with respect
to Lebesgue measure (it is real analytic with integrable poles along a finite set of algebraic curves).

\vspace{0.2cm}

\begin{thm-M}\label{thm:main1}
Let $X$ be a complex projective surface and $f$ be an automorphism of $X$ with positive entropy. Let $\mu_f$ be the measure of maximal entropy of $f$. If $\mu_f$ is absolutely continuous with respect to Lebesgue measure, then $(X,f)$ is a Kummer example. 
\end{thm-M}

\vspace{0.2cm}

This answers a question raised by the first author in his thesis \cite{Cantat:Thesis}
and solves Conjecture 3.31 of Curtis T. McMullen in \cite{McMullen:Algebra-and-Dynamics}. 
Moreover, the surfaces $X$ which can occur are specified by the following easy theorem (see \cite{Cantat-Favre, Cantat-Favre:II}).

\vspace{0.2cm}

\begin{thm-C}\label{lem:CFKummer}
Let $X$ be a complex projective surface and $f$ be an automorphism of $X$ of positive entropy.  
Assume that $(X,f)$ is a Kummer example. 
\begin{enumerate}
\item $X$ is an abelian surface, a K3 surface, or a rational surface. 
\item If $X$ is a K3 surface, it is a classical Kummer surface, i.e. the quotient of an abelian 
surface $A$ by the involution $\sigma(x,y)=(-x,-y)$; in particular, the Picard number of 
$X$ is not less than $17$.
\item If $X$ is rational, then $\lambda_f$ is contained in  $\Q(\zeta_l)$ where $\zeta_l$ is a primitive
root of unity of order $l=3$, $4$, or $5$.
\end{enumerate}  
\end{thm-C}

\vspace{0.2cm}

In particular, $X$ is not an {\bf{Enriques surface}}, i.e. a quotient of K3 surface by a fixed point free holomorphic involution.

\begin{rem}[see \cite{Berteloot-Dupont,BL,Mayer,Zdunik}] An analogous result holds for holomorphic endomorphisms $g$ of the projective space $\P^k_\C$ of topological degree $>1$. In this case, 
there is also an invariant probability measure $\mu_g$ that describes the repartition of periodic points and is the unique measure of maximal entropy, and if $\mu_g$ is absolutely continuous with respect to Lebesgue measure, then $g$ is a {Latt\`es example}: it lifts to an endomorphism of an abelian variety via an equivariant ramified cover. This is due to Zdunik for $k=1$ and to Berteloot, Loeb and the second author for $k \geq 2$. \end{rem}

\begin{rem}[see \cite{Cantat:Compositio}]\label{latt}
There are examples of non injective rational transformations $h\colon X\dasharrow X$ of K3 surfaces such that
\begin{itemize}
\item the topological entropy of $h$ is positive, 
\item $h$ preserves a unique measure of maximal entropy $\mu_h$,
\item $\mu_h$  coincides with the canonical volume form $\vol_X=\Omega_X\wedge {\overline{\Omega_X}}$ on the K3 surface $X$, 
\end{itemize}
but $h$ is not topologically conjugate to a Kummer example. In particular the Kummer and Latt\`es rigidities do not extend to non injective rational mappings.
\end{rem}

\subsection{Applications} \label{appli}


\subsubsection{Lyapunov exponents and Hausdorff dimension} The first consequence relies on theorems of Ledrappier, Ruelle and Young.

\begin{cor}\label{cor1} Let $X$ be a complex projective surface and $f$ be an automorphism of $X$ with positive entropy $\log \lambda_f$. 
Let $\lambda_s < \lambda_u$ denote the negative and positive Lyapunov exponents of the measure $\mu_f$.
The following properties are equivalent 
 \begin{enumerate}
\item $\mu_f$ is absolutely continuous with respect to Lebesgue measure;
\item $\lambda_s  = -{1 \over 2} \log \lambda_f$ and $\lambda_u  =  {1 \over 2} \log \lambda_f$;
\item for $\mu_f$-almost every $x \in X$,
\[
\lim_{r\to 0}\frac{\log \mu_f (B_x(r))}{\log r}=4;
\]
\item $(X,f)$ is a Kummer example.
\end{enumerate} 
\end{cor}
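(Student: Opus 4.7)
The plan is to combine the Main Theorem with standard Pesin theory on the complex surface $X$. The equivalence $(1) \Leftrightarrow (4)$ is exactly the content of the Main Theorem together with the easy converse (stated just after the definition of a Kummer example) that $\mu_f$ is absolutely continuous for any Kummer example; so it remains to prove $(1) \Leftrightarrow (2) \Leftrightarrow (3)$.

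First I would invoke the Margulis--Ruelle inequality. Since $f$ is holomorphic, the Oseledets decomposition of $TX$ at a $\mu_f$-generic point splits into two complex line bundles, so each distinct Lyapunov exponent of $\mu_f$ has real multiplicity $2$. Applying Margulis--Ruelle to $f$ and to $f^{-1}$ yields
$$ \log \lambda_f \,=\, h_{\mu_f}(f) \,\leq\, 2\lambda_u, \qquad \log \lambda_f \,=\, h_{\mu_f}(f^{-1}) \,\leq\, 2|\lambda_s|, $$
so $\lambda_u \geq \frac{1}{2}\log \lambda_f$ and $|\lambda_s| \geq \frac{1}{2}\log\lambda_f$; in particular $\mu_f$ is hyperbolic. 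Condition $(2)$ then amounts to simultaneous equality in both of these inequalities.

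For $(1) \Leftrightarrow (2)$, the key input is Ledrappier's theorem: the equality $h_{\mu_f}(f) = 2\lambda_u$ is equivalent to $\mu_f$ being an SRB measure for $f$, i.e.\ to its conditional measures along unstable Pesin manifolds being absolutely continuous with respect to the induced Lebesgue measure; applying this to $f^{-1}$ gives the analogous characterisation along stable manifolds. Absolute continuity of $\mu_f$ with respect to Lebesgue on $X$ trivially implies both SRB properties by disintegration; conversely, if both families of conditional measures are absolutely continuous, the local product structure of Pesin charts (Pesin's absolute continuity theorem) forces $\mu_f$ itself to be absolutely continuous with respect to Lebesgue.

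For $(2) \Leftrightarrow (3)$, I would appeal to the Ledrappier--Young pointwise dimension and entropy formulas: for $\mu_f$-almost every $x$,
$$ \lim_{r \to 0} \frac{\log\mu_f(B_x(r))}{\log r} \,=\, \delta_u + \delta_s, \qquad \log\lambda_f \,=\, \lambda_u\,\delta_u \,=\, |\lambda_s|\,\delta_s, $$
where $\delta_u,\delta_s \in [0,2]$ are the partial dimensions of $\mu_f$ along the unstable and stable Pesin manifolds (each of real dimension $2$). Combined with the Ruelle bounds $\lambda_u,|\lambda_s| \geq \frac{1}{2}\log\lambda_f$, this gives $\delta_u + \delta_s \leq 4$ with equality if and only if $\delta_u = \delta_s = 2$, which is precisely condition $(2)$. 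The only truly substantial input in the whole argument is the Main Theorem; the remaining implications are a routine application of Pesin theory, and the one subtlety worth highlighting is that each Lyapunov exponent has real multiplicity $2$ by holomorphicity, which is why the extremal pointwise dimension is $4$ rather than $2$.
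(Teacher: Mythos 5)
Your proposal is correct and follows essentially the same route as the paper: (1)\,$\Leftrightarrow$\,(4) from the Main Theorem plus the easy converse, Margulis--Ruelle applied to $f$ and $f^{-1}$ for the one-sided inequalities, Ledrappier's theorem to pass between absolute continuity and equality in Ruelle, and a Ledrappier--Young pointwise-dimension formula for (2)\,$\Leftrightarrow$\,(3). The one place where your route genuinely diverges is (1)\,$\Rightarrow$\,(2): you deduce it from Ledrappier's SRB characterisation, whereas the paper derives it from its own renormalization argument (Proposition~\ref{densitecste}), although the authors explicitly note that Ledrappier [Corollaire 5.6 of \cite{L}] also gives the implication — so your shortcut is the one they flag as an alternative. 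Two small points worth tightening: in the step ``abs.\ continuity of $\mu_f$ trivially implies both SRB properties by disintegration'' and in the converse, what actually makes the disintegration behave is the absolute continuity of the stable/unstable holonomies together with the local product structure $\mu_f = h_*(\nu^+\otimes\nu^-)$ in Pesin boxes; the paper obtains the holonomy regularity not from general Pesin absolute continuity but from the $\lambda$-lemma (quasi-conformality), which is the simplification specific to the holomorphic setting, and the product structure of $\mu_f$ is a nontrivial input (from BLS-type laminarity), not an automatic consequence of Pesin charts. Similarly, Young's pointwise-dimension theorem is stated for real surface diffeomorphisms; the paper explicitly flags that her proof ``applies also to our context'' (real dimension $4$, but with two holomorphic $2$-real-dimensional Oseledets directions), and your Ledrappier--Young rephrasing $\delta_u+\delta_s$ with $\delta_u,\delta_s\in[0,2]$ is a cleaner way to articulate exactly why the extremal value is $4$.
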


If $X$ is a K3 surface or, more generally, if there is an  $f$-invariant volume form on $X$,  the Lyapunov exponents of $\mu_f$  are opposite ($\lambda_u = - \lambda_s$), and one can replace the second item by either one of the two equalities. 

\subsubsection{K3 and Enriques surfaces} 
Recall that the {\bf{N\'eron-Severi group}} $\NS(X)$ of a complex projective surface $X$ is 
the subgroup of the second homology group $H_2(X;\Z)$  generated by the homology classes of algebraic curves on $X$.
The rank of this abelian group is the {\bf{Picard number}} $\rho(X)$. When $\rho(X)$ is equal to $1$,  the
entropy of every automorphism $f$ of $X$ vanishes (see~\cite{Cantat:Panorama, Cantat:Milnor}); thus, the first interesting case is $\rho(X)=2$.

\begin{cor}\label{cor:K3Pic2} Let $X$ be a complex projective K3 surface with Picard number~$2$. Assume that the 
 intersection form does not represent $0$ and $-2$ on $\NS(X)$. Then 
 \begin{enumerate}
 \item $\Aut(X)$ contains an infinite cyclic subgroup of index at most $2$;
 \item if $f\in \Aut(X)$ has infinite order its entropy is positive and its measure of maximal entropy $\mu_f$ is
singular with respect to Lebesgue measure.
\end{enumerate}\end{cor}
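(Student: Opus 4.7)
The plan is to combine the Global Torelli Theorem for K3 surfaces with the Main Theorem and the Classification Theorem stated above.

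For assertion (1), I would invoke the following consequence of Global Torelli (Piatetsky-Shapiro--Shafarevich, Sterk): the natural morphism $\rho\colon \Aut(X)\to O(\NS(X))$ has finite kernel and image of finite index in the stabilizer $O(\NS(X),\Ka(X))$ of the K\"ahler cone. The hypothesis that $\NS(X)$ represents no $-2$ makes the Weyl group of $(-2)$-reflections trivial, so $\Ka(X)$ equals one full component of the positive cone $\{v\in \NS(X)\otimes \R : v\cdot v>0\}$; whence $\Aut(X)^{*}$ has finite index in $O^{+}(\NS(X))$. Because $\NS(X)$ is a rank-$2$ lattice of signature $(1,1)$ that does not represent $0$, its form is anisotropic over $\Q$, and Dirichlet's unit theorem applied to the associated real quadratic order shows that $O^{+}(\NS(X))$ is virtually infinite cyclic, more precisely an extension of $\Z$ by a group of order $\leq 2$. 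Combining these facts---and ruling out a non-trivial $\ker\rho$ by analysing its action on the rank-$20$ transcendental lattice together with its character on $H^{0}(X,K_X)$---one deduces that $\Aut(X)$ contains an infinite cyclic subgroup of index at most $2$.

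For assertion (2), pick any $f\in \Aut(X)$ of infinite order. By (1), its image in $O^{+}(\NS(X))$ has infinite order, hence acts as a hyperbolic isometry of the positive cone of $\NS(X)\otimes \R$. Consequently the spectral radius $\lambda_f$ of $f^{*}$ on $H^{2}(X;\Z)$ satisfies $\lambda_f>1$, and the entropy of $f$ is positive. Now suppose $\mu_f$ is absolutely continuous with respect to Lebesgue measure; then the Main Theorem implies that $(X,f)$ is a Kummer example, and the Classification Theorem forces the K3 surface $X$ to be a classical Kummer surface with Picard number $\geq 17$, contradicting $\rho(X)=2$. Hence $\mu_f$ is singular.

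The main obstacle is the sharp bound "index at most $2$" in (1): it requires careful bookkeeping of the possible $\Z/2$ extension of the cyclic part of $O^{+}(\NS(X))$ (coming from an orientation-reversing lattice isometry preserving $\Ka(X)$) together with the triviality of $\ker\rho$ in our situation. All the remaining ingredients---Torelli, the identification $\Ka(X)=$ positive cone once $(-2)$-classes are forbidden, the anisotropy forcing $O^{+}$ virtually cyclic, and the Main Theorem together with its Classification---are either classical or have been established earlier in the paper.
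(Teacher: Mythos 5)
Your proposal takes essentially the same route as the paper: Global Torelli plus the absence of $(-2)$-classes to identify the K\"ahler cone with the full positive cone, the anisotropy of the rank-2 lattice (Pell--Fermat / Dirichlet units) to show $O^{+}(\NS(X))$ is virtually infinite cyclic, and then the Main Theorem together with the Classification Theorem to rule out absolute continuity via the Picard number bound. Your explicit flagging of the gap around the sharp ``index at most $2$'' bound and the need to control $\ker\rho$ is actually somewhat more cautious than the paper's own sketch, which in Lemma~\ref{lem:Torelli-Pic2} only proves that $\Aut(X)$ is virtually cyclic.
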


Examples of such K3 surfaces with an infinite group of automorphisms are described in \cite{Wehler, Cantat-Oguiso}.

An Enriques surface $Z$ is the quotient of a K3 surface by a fixed point free involution. If
$\pi\colon X\to Z$ is such a quotient, the  canonical volume form $\vol_X$ of $X$ determines a smooth $\Aut(Z)$-invariant
volume form $\vol_Z$ on $Z$. Thus, Enriques surfaces have a natural invariant volume form that is uniquely determined by
the complex structure. 

\begin{cor}\label{cocor}
If $f$ is an automorphism of an Enriques surface with positive entropy,
then $\mu_f$ is singular with respect to Lebesgue measure. 
\end{cor}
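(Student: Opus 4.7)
The plan is to derive this corollary directly from the Main Theorem combined with the Classification Theorem, without passing through the K3 cover. The whole content of the argument is that an Enriques surface never appears as the ambient surface of a Kummer pair.

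First I would argue by contradiction: assume that $\mu_f$ is absolutely continuous with respect to Lebesgue measure on the Enriques surface $Z$. Since $Z$ is a smooth complex projective surface and $f \in \Aut(Z)$ has positive entropy, the Main Theorem applies verbatim to the pair $(Z,f)$, so $(Z,f)$ must be a Kummer example in the sense defined in Section~\ref{MT}.

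Next I would invoke the Classification Theorem, whose first item constrains the ambient surface of any Kummer pair to be either an abelian surface, a K3 surface, or a rational surface. None of these classes contains an Enriques surface: an Enriques surface has fundamental group $\Z/2\Z$ and thus cannot be a K3 surface nor a rational surface (both of which are simply connected), nor an abelian surface (whose fundamental group is $\Z^4$). Equivalently, the canonical class $K_Z$ is a nontrivial $2$-torsion element of $\Pic(Z)$, so $\kappa(Z)=0$ with $K_Z\ne 0$, whereas abelian and K3 surfaces have $K=0$ and rational surfaces have $\kappa=-\infty$. This is precisely the content of the remark following the statement of the Classification Theorem.

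The incompatibility between ``$Z$ is abelian, K3, or rational'' and ``$Z$ is an Enriques surface'' produces the desired contradiction, hence $\mu_f$ must be singular with respect to Lebesgue measure. There is essentially no obstacle to overcome: Corollary~\ref{cocor} is really a formal consequence of the Main Theorem, the entire mathematical work being hidden in the proof of that theorem and in the Classification Theorem borrowed from \cite{Cantat-Favre, Cantat-Favre:II}.
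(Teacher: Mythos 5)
Your proof is correct and follows exactly the same route as the paper: the Main Theorem forces $(Z,f)$ to be a Kummer example, and Assertion~(1) of the Classification Theorem (Lemma~\ref{lem:CFKummer}) rules out Enriques surfaces, as the paper itself notes in the remark immediately after that theorem and again in Section~\ref{par:Consequences}. The only difference is cosmetic: you spell out why an Enriques surface cannot be abelian, K3, or rational (via $\pi_1$ or $K_Z$), whereas the paper's proof of Corollary~\ref{cocor} mainly elaborates on why Enriques surfaces actually carry positive-entropy automorphisms before citing the Classification Theorem.
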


Let now $Z$ be a general Enriques surface. Up to finite index, the group $\Aut(Z)$ is
isomorphic to the group of isometries of the lattice ${\mathbb{U}}\oplus (-{\mathbb{E}}_8)$ (see \cite{ Barth-Peters:1983,Dolgachev-Cossec:Book}), so that it contains automorphisms with positive topological entropy. Thus, Corollary~\ref{cocor} implies that general Enriques surfaces have automorphisms for which the measure of maximal entropy is singular; on the other hand, the volume form $\vol_Z$ provides the only probability measure that is invariant under the action of $\Aut(Z)$ (\cite{Cantat:Trans}).

\subsubsection{Dynamical degrees and rational surfaces}
For the next statement, recall that the dynamical degree $\lambda_f$ is an algebraic integer (see Section \ref{autoabs}).

\begin{cor}\label{thm:dyna-deg}
Let $X$ be a complex projective surface and $f$ be an automorphism of $X$ with positive entropy.  If the degree of  $\lambda_f$ (as an algebraic integer)
is larger than or equal to $5$ then the measure of maximal entropy $\mu_f$ is singular with respect
to Lebesgue measure. 
\end{cor}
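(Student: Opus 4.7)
The plan is to establish the contrapositive: assuming $\mu_f$ is absolutely continuous, I will show $[\Q(\lambda_f):\Q]\le 4$. By the Main Theorem, $(X,f)$ is then a Kummer example, so by the Classification Theorem $X$ is an abelian surface, a classical Kummer K3 surface, or a rational surface. The rational case is immediate: part~(3) of the Classification Theorem yields $\lambda_f\in\Q(\zeta_l)$ for some $l\in\{3,4,5\}$, and therefore $[\Q(\lambda_f):\Q]\le\varphi(5)=4$.

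In the remaining two cases, dynamical degree is invariant under both the birational morphism $\pi$ and the finite orbifold cover $\epsilon$ of the Kummer diagram, so $\lambda_f=\lambda_{\hat f}$, where $\hat f$ is the induced automorphism of the covering complex torus $Y=\C^2/\Lambda$. Let $N\in\GL_2(\C)$ be the linear part of $\hat f$ on $H^{1,0}(Y)$, with eigenvalues $\alpha_1,\alpha_2$; then $\det N=\alpha_1\alpha_2$ has modulus $1$ because $\hat f$ preserves the lattice $\Lambda$. The eigenvalues of $\hat f^{*}$ on $H^1(Y;\C)$ are $\alpha_1,\alpha_2,\bar\alpha_1,\bar\alpha_2$, so the six eigenvalues of $\hat f^{*}$ on $H^2(Y;\Z)\cong\Z^6$ are
\[
|\alpha_1|^2,\quad |\alpha_2|^2,\quad \det N,\quad \overline{\det N},\quad \alpha_1\bar\alpha_2,\quad \bar\alpha_1\alpha_2,
\]
and $\lambda_f=\max(|\alpha_1|^2,|\alpha_2|^2)$ is the unique one whose modulus strictly exceeds $1$.

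The crucial point is that $\det N$ is a root of unity. Indeed, by the Ghys-Verjovsky classification of complex tori admitting a positive-entropy automorphism, $Y$ is isogenous to $E\times E$ for some elliptic curve $E$ and $N$ lies in $\GL_2(\mathcal{O})$ for $\mathcal{O}=\text{End}(E)$ an order either in $\Q$ or in an imaginary quadratic field; in either case $\mathcal{O}^{\times}$ is a finite group consisting only of roots of unity, whence so is $\det N$. Setting $a:=\det N+\overline{\det N}\in\Z$, the palindromic quadratic $X^2-aX+1\in\Z[X]$ then divides the integer characteristic polynomial $Q\in\Z[X]$ of $\hat f^{*}$ on $H^2(Y;\Z)$, so we may write $Q(X)=(X^2-aX+1)\,R(X)$ with $R\in\Z[X]$ of degree $4$. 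Since $\lambda_f>1$ is not a root of the unimodular factor $X^2-aX+1$, the minimal polynomial of $\lambda_f$ divides $R$, and consequently $[\Q(\lambda_f):\Q]\le 4$.

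The principal hurdle is this verification that $\det N$ is a root of unity: without the Ghys-Verjovsky structural input on $\text{End}^{0}(Y)$, the polynomial $Q$ could a priori be irreducible of degree $6$, leaving room for $\lambda_f$ to have degree $5$ or $6$ over $\Q$.
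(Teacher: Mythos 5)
Your proposal is correct and reaches the right bound, but it takes a genuinely different and appreciably more elaborate route than the paper. The paper works directly with the N\'eron--Severi group of the covering abelian surface $A$: since $\rho(A)\le h^{1,1}(A)=4$, and since (by Hodge index theorem) $\hat f^{*}$ acts on the orthogonal complement of $\NS(A;\R)$ inside $H^{1,1}(A;\R)$ as an isometry of a negative-definite form, the eigenvalue $\lambda_f>1$ must occur in $\NS(A;\R)$; hence $\lambda_f$ is a root of the characteristic polynomial of $\hat f^{*}\colon\NS(A)\to\NS(A)$, an integer polynomial of degree at most $4$, and we are done. That argument is self-contained (Hodge index plus the rank bound), treats abelian, classical Kummer and rational Kummer examples uniformly, and never needs to touch the $(2,0)+(0,2)$ part of $H^{2}$.

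You instead work on the full lattice $H^2(Y;\Z)\cong\Z^6$, where the characteristic polynomial $Q$ has degree $6$, and you have to excise the two extra eigenvalues $\det N$, $\overline{\det N}$ coming from $H^{2,0}\oplus H^{0,2}$. To know that these form a $\Z[X]$-factor $X^2-aX+1$, you appeal to the Ghys--Verjovsky classification of complex tori carrying a positive-entropy automorphism, so that $\det N$ is a unit in an order of $\Q$ or of an imaginary quadratic field, hence a root of unity of order in $\{1,2,3,4,6\}$. This is correct, but it imports an external classification that the paper's proof does not need. There is also one small point you should make explicit: when $\det N=\pm 1$, you need $(X\mp1)^2$ to divide $Q$, which holds because $\pm1$ then occurs once as the eigenvalue on $H^{2,0}$ and once on $H^{0,2}$; without this remark the divisibility of $Q$ by $X^2-aX+1$ in $\Z[X]$ is not immediate. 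Finally, you handle the rational Kummer case by a separate appeal to part (3) of the Classification Theorem, whereas the paper's argument already covers it, since a rational Kummer example arises from a finite equivariant quotient of an abelian surface and the dynamical degree is unchanged by that quotient.
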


This can be applied to the examples constructed by Bedford and Kim (see \cite{Bedford-Kim:2010,Bedford-Kim:2012}) and McMullen (see \cite{McMullen:2007}). They exhibit families of automorphisms of rational surfaces $f_m\colon X_m\to X_m$ for which the degree of the algebraic number $\lambda_{f_m}$ increases with $m$. Thus Corollary~\ref{thm:dyna-deg} shows that $\mu_{f_m}$ is singular with respect to Lebesgue measure when $m$ is large enough. This may also be applied to a family of examples constructed by Blanc. (See Sections~\ref{par:egbdk} and \ref{par:Blanc-auto}  
for more details)

\subsection{Related problems}

\subsubsection{Geodesic flows} A similar question of entropy rigidity concerns the geodesic flow $(\theta_t)_{t\in \R}$ on a negatively curved
riemannian manifold $(M, g)$. Negative curvature implies that this flow is Anosov with a unique invariant probability 
measure $\nu$ of maximal entropy (i.e. with metric entropy ${\sf{h}}(\theta_1, \nu)$ equal to the topological entropy ${\sf{h}}_{top}(\theta_1)$). 
The flow preserves also the Liouville measure $\lambda_g$, and the {\sl{Entropy conjecture}} predicts that $\nu_g$ is
absolutely continuous with respect to $\lambda_g$ if  and only if the riemannian manifold $(M,g)$ is locally symmetric. This is proved by Katok for surfaces (see \cite{Katok}). We refer to \cite{Ledrappier:ICM} for a nice survey on this type of problem 
and to \cite{Ghys:ENS, BFL} for its relationship to the
rigidity properties of Anosov flows  with smooth stable and unstable foliations. 

\subsubsection{Random walks} Another related question concerns the regularity of harmonic measures. Consider the fundamental group $\Gamma_g$ of an orientable, closed surface of genus $g\geq 2$, and
identify the boundary  $\partial \Gamma_g$ to the unit circle ${\mathbb{S}}^1$.
Let  $\nu_S$ be a probability measure on $\Gamma_g$  with finite 
support $S$, such that $S$ generates $\Gamma_g$. The measure $\nu_S$ determines a random walk 
on $\Gamma_g$. Given a starting point $x$ in $\Gamma_g$ and a subset $A$ of the boundary $\partial \Gamma_g$, the harmonic measure $\omega_x(A)$ is the probability that a random path which starts at $x$ converges to a point of
$A$ when time goes to $+\infty$. It is conjectured that $\omega_x$ is singular with respect to  Lebesgue measure
on $\partial \Gamma_g= {\mathbb{S}}^1$. We refer to   \cite{Kaimanovich-LePrince, BHM} for an introduction to this topic
and references; see also \cite{Bourgain} for a recent example.

\subsection{Organization of the paper}

Fix a complex projective surface $X$ and an automorphism $f$ of $X$ with positive entropy. 
Section \ref{par:prelim} is devoted to classical facts concerning the dynamics of $f$. In particular, we explain that  $\mu_f$ is the product of two closed positive currents $T^+_f$ and $T^-_f$, that 
the generic stable and unstable manifolds of $f$ are parametrized by
holomorphic entire curves $\xi:\C\to X$, and that $T^+_f$ and $T^-_f$  are respectively  
obtained by integration on these stable and unstable manifolds. 
Assume, now that $\mu_f$ is absolutely continuous with respect to Lebesgue measure. 

\subsubsection{Renormalization along the invariant manifolds} We first show that 
\begin{itemize}
\item[(1)] the absolute continuity of 
$\mu_f$ can be transferred to the currents $T^+_f$ and~$T^-_f$; 
\item[(2)] if $\xi\colon \C\to X$ parametrizes (bijectively) the stable manifold of a $\mu_f$-generic point, then
$\xi^* T^-_f=a \frac{\ii}{2}dz\wedge d\bar z$ for some constant $a>0$.
\end{itemize}
These two steps occupy sections \ref{par:PESIN} to \ref{par:Renorma-Stable}.
The proof of Property (1) builds on the local product structure of $\mu_f$ and on the weak laminarity properties of $T^\pm_f$. 
The proof of (2) relies on a renormalization argument (along $\mu_f$-generic orbits).

\begin{rem}\label{latteskummer}
The renormalization techniques already appear in the proof of Latt\`es rigidity for endomorphisms of projective spaces $\P^k_\C$ (see \cite{Mayer} and \cite{Berteloot-Dupont}). Our context is actually closer to the conformal case $k=1$ since the renormalization is done along the stable and unstable manifolds. In the case of endomorphisms of $\P^k_\C$, the unstable manifolds cover open subsets of $\P^k_\C$; here, the stable and unstable manifolds have co-dimension $1$, and we have to overcome important new difficulties.
\end{rem}

\subsubsection{Normal families of entire curves} \label{norfam} We combine Zalcman's reparametri\-zation lemma, Hodge
index theorem, and a result of Dinh and Sibony to derive a compactness property, in the sense of Montel, for the entire parametrizations of  stable and unstable manifolds. This crucial step is detailed in Section~\ref{par:NormalStable}. 

\subsubsection{Laminations, foliations, and conclusion} \label{conccom} Thanks to the previous step, we prove that the stable (resp. unstable) manifolds of $f$ are organized in a (singular) lamination by holomorphic curves. Then, an argument of Ghys can be coupled to Hartogs phenomenon to show that this lamination extends to an 
$f$-invariant, singular, holomorphic foliation of $X$. 

\begin{rem}\label{latteskummer2} Thus, at this stage of the proof, the starting hypothesis on $\mu_f$ has been upgraded in a regularity property for $T^+_f$ and $T^-_f$ (these currents are smooth, and correspond to transverse invariant measures for two holomorphic foliations). For endomorphisms of  $\P^k_\C$, this fact follows directly from the renormalization argument and pluripotential theory (see Lemma 3 in \cite{Berteloot-Dupont}). 
\end{rem}

To conclude, we refer to a previous theorem of the first author and Favre concerning symmetries of foliated surfaces. 
These steps are done in Section~\ref{par:NormalStable} and Section~\ref{par:CONCLUSION}

\subsubsection{Consequences and appendices}  
Section~\ref{par:Consequences} contains the proofs of the main corollaries and consequences.
In the appendices, we state several theorems of   Dinh, Sibony,  and Moncet, and 
prove those which are not easily accessible. 

\subsection{Acknowledgements}

We thank to Eric Bedford, Romain Dujardin, S\'e\-bastien Gou\"ezel, Anatole Katok, Misha Lyubich, Fran\c{c}ois Maucourant, and Nessim Sibony for useful discussions related to this article.

\section{Invariant currents and periodic curves}\label{par:prelim}

In this section, we collect general results concerning the dynamics of automorphisms of compact K\"ahler surfaces $X$. 
We refer to \cite{Cantat:Milnor}, and the references therein, for a complete exposition. (the proofs of a few technical
complements are given in the appendix, \S~\ref{par:appendixwhole})

\subsection{Cohomology groups}

Let $X$ be a compact K\"ahler surface. 
Let $H^k(X ; \R)$ and $H^k(X ; \C)$  denote the real and complex de Rham cohomology groups of $X$. 
If $\eta$ is a closed differential form (or a closed current, see below),  its cohomology class is denoted by $[\eta]$.

For $0\leq p,\, q \leq 2$, let $H^{p,q}(X;\C)$ denote the subspace of $H^{p+q}(X;\C)$ of all classes represented by closed $(p,q)$-forms, and let $h^{p,q}(X)$ be its dimension. Hodge theory implies that 
\[
H^k(X ; \C) = \bigoplus_{p+q=k} H^{p,q}(X).
\] 
Complex conjugation exchanges $H^{p,q}(X;\C)$ and $H^{q,p}(X;\C)$; thus, $H^{1,1}(X;\C)$ inherits a real structure, with real part  
\[
H^{1,1}(X ; \R) := H^{1,1}(X;\C) \cap H^2(X;\R).
\]
The intersection form is an integral quadratic form on $H^2(X;\Z)$. It satisfies 
\[
 \forall u ,v \in H^{1,1}(X;\R) \ , \  \langle u \vert v \rangle : = \int_X \tilde u \wedge \tilde v , 
 \]  
where $\tilde u$ and $\tilde v$ are closed $2$-forms on $X$ representing $u$ and $v$. By Hodge index theorem $\langle \cdot \vert \cdot \rangle$ is non-degenerate and of signature $(1, h^{1,1}(X)-1)$  on $H^{1,1}(X;\R)$. 
This endows $H^{1,1}(X;\R)$ with the structure of a Minkowski space. 

The subset of $H^{1,1}(X;\R)$ consisting of classes of K\"ahler forms is called the {\bf{K\"ahler cone}} of $X$.  By definition, the closure of the K\"ahler cone is the {\bf{nef cone}}. These cones intersect only one of the two connected components of $\{ u \in H^{1,1}(X;\R) \ , \   \langle u \vert u \rangle = 1 \}$.
We denote by $\H(X)$ this connected component. With the distance given by 
$\cosh(\dist(u,v))=\langle u \vert v\rangle$, 
 this is a model of the hyperbolic space of dimension $h^{1,1}(X)-1$.

\subsection{Action on cohomology groups (see \cite{Cantat:Milnor}, \S 2)}\label{actioncoho}

Every $f \in \Aut(X)$ induces a linear invertible mapping $f^*$ on $H^{1,1}(X;\R)$ which is an isometry for the intersection product.  Since the K\"ahler cone is $f^*$-invariant, so is $\H(X)$. 
Let $\lambda_f$ denote the spectral radius of $f^*\colon H^{1,1}(X;\R)\to H^{1,1}(X;\R)$. 
\begin{itemize}
\item $\lambda_f$ coincides with the spectral radius of $f^*$ acting on the full cohomology space $\oplus_{k=0}^4 H^k(X;\C)$.
\item The topological entropy of $f$ is equal to  $\log \lambda_f $. 
\item When $\lambda_f> 1$  the eigenvalues of $f^*$ on $H^{1,1}(X;\R)$ (resp. on $H^2(X;\R)$) are precisely $\lambda_f$, $\lambda_f^{-1}$ (which are both simple), and complex numbers of modulus $1$. The eigenlines corresponding to $\lambda_f$ and
$\lambda_f^{-1}$ are isotropic, and they intersect the nef cone. 
\end{itemize}

From the third item, we can fix a nef eigenvector $\theta_f^+$ (resp. $\theta_f^-$) for the eigenvalue $\lambda_f$ (resp. $\lambda_f^{-1}$). Let $\Pi_f$ be the subspace generated by $\theta_f^+$ and $\theta_f^-$, and let $\Pi_f^\perp$ be its orthogonal complement in $H^{1,1}(X;\R)$. The intersection form  has signature $(1,1)$ on $\Pi_f$ and is negative definite on $\Pi_f^\perp$.

\subsection{Contraction of periodic curves}\label{contraper}

If $C\subset X$ is a complex curve, one denotes by $[C]\in H^2(X;\Z)$ the dual of the homology class of $C$ for
the natural pairing between $H^2(X;\Z)$ and $H_2(X;\Z)$. We call it the cohomology class of $C$, this 
is an element of $H^{1,1}(X;\R)$.  If $C$ is periodic, then $[C]$ is fixed by a non trivial iterate of $f^*$ and belongs to $\Pi_f^\perp$. 

\begin{pro}[see \cite{Cantat:Milnor}, \S 4.1] \label{prop:contraction}
Let $f$ be an automorphism of a compact K\"ahler surface $X$ with positive entropy. There exists a (singular)
surface $X_0$, a birational morphism $\pi \colon X \to X_0$ and  an automorphism $f_0$ of $X_0$ such that 
\begin{enumerate}
\item $\pi\circ f=f_0\circ \pi$.
\item A curve $C\subset X$ is contracted by $\pi$ if and only if $[C] \in \Pi_f^\perp$. In particular $\pi$ contracts the periodic curves of $f$.
\item If $C$ is a connected periodic curve, then the genus of $C$ is $0$ or $1$.
\end{enumerate}
\end{pro}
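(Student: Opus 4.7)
The plan is to build $\pi$ by applying Grauert's contraction theorem to the irreducible curves of $X$ whose class lies in $\Pi_f^\perp$, then to control the genus of each connected periodic component via dynamics.

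First I would show that periodic curves have class in $\Pi_f^\perp$. Any $[C]\in H^{1,1}(X;\R)$ decomposes uniquely as $[C] = a\theta_f^+ + b\theta_f^- + \gamma$ with $\gamma\in\Pi_f^\perp$; the relation $(f^n)^*[C] = [C]$ then becomes $a\lambda_f^n = a$ and $b\lambda_f^{-n} = b$, forcing $a = b = 0$ since $\lambda_f > 1$. The same computation applied to the $f^*$-invariant class $K_X$ yields $K_X\in\Pi_f^\perp$, a fact I would keep in reserve for step (3).

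Next I would establish finiteness. Since $\Pi_f^\perp$ carries a negative definite intersection form, any class $[C]\in\Pi_f^\perp$ represented by an irreducible curve has $[C]^2 < 0$; combined with the fact that distinct irreducible curves meet non-negatively, this forces each such class to be realized by a unique irreducible curve. A short computation, using negative definiteness of $\Pi_f^\perp$ and non-negativity of cross-intersections (splitting a hypothetical linear relation into positive and negative parts and comparing both expressions of the self-intersection of the resulting effective class), shows that the classes of the irreducible curves $C_i$ with $[C_i]\in\Pi_f^\perp$ are linearly independent; hence there are only finitely many such $C_i$ and their intersection matrix is negative definite. Since the condition $[C]\in\Pi_f^\perp$ is preserved by $f^*$, the automorphism $f$ permutes the set $\{C_i\}$, so each is periodic. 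Grauert's contraction theorem then yields the birational morphism $\pi\colon X\to X_0$ to a normal analytic surface collapsing precisely $\bigcup_i C_i$ to a finite set of points, and $f$ descends to an automorphism $f_0\in\Aut(X_0)$ with $f_0\circ\pi = \pi\circ f$. This proves (1) and (2).

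The main obstacle is (3). Given a connected periodic curve $C = \bigcup C_i$, a suitable iterate $g := f^N$ fixes each component $C_i$ setwise and restricts to an automorphism of it. If the normalization of some $C_i$ had geometric genus $\geq 2$, the finiteness of its automorphism group would let a further iterate act as the identity on $C_i$, placing an entire curve of general type inside the fixed locus of a surface automorphism of positive entropy. I expect this configuration to be ruled out by the growth rate $\sim\lambda_f^{Nk}$ of isolated periodic points of $g^k$, for instance via the holomorphic Lefschetz formula or a direct analysis of the transverse dynamics along $C_i$ (whose normal bundle is preserved by $dg$ and scaled by a fixed constant). Once each component is rational or elliptic, one combines this component-wise bound with adjunction on $X$, using $K_X\in\Pi_f^\perp$ to keep the intersection numbers inside the negative definite space, and with the combinatorics of how the $C_i$ meet, to bound the arithmetic genus of the connected configuration $C$ by $1$. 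Turning this last dynamical and combinatorial step into a rigorous proof is the technical core of the argument.
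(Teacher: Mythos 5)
Your handling of parts (1) and (2) matches the paper's. The paper itself only asserts that the intersection form is negative definite on $\Pi_f^\perp$ and that Grauert--Mumford then produces $\pi$; your extra remarks (that each class in $\Pi_f^\perp$ supports at most one irreducible curve, the linear-independence argument by splitting a relation into effective parts $D_+ = D_-$ and using $D_+^2<0$ against $D_+\cdot D_-\geq 0$, finiteness, and $f$-permutation of the $C_i$) are the standard Zariski-type details the paper suppresses, and they are correct. Likewise your observation that $K_X\in\Pi_f^\perp$ is correct and indeed useful.

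The genuine gap is part (3), and you yourself flag it as such. The paper does not prove (3): it attributes the irreducible case to Castelnuovo and the connected reducible case to Diller--Jackson--Sommese. Your proposed dynamical route has two soft spots. First, from ``some iterate $g$ fixes a component $C_i$ of geometric genus $\geq 2$ pointwise'' to a contradiction with the growth rate of isolated periodic points is not a short step: a curve of fixed points of $g$ does not by itself produce extra isolated fixed points of $g^k$, and the holomorphic Lefschetz formula along a higher-genus fixed curve contributes a term that must be analyzed carefully (the usual argument goes instead through the constancy of the normal multiplier by compactness of $C_i$, or through adjunction combined with the a priori classification of surfaces carrying a positive-entropy automorphism --- torus, K3, Enriques, or rational --- where on K3/Enriques the conclusion is immediate from $K_X$ torsion, $C^2<0$, and $2p_a-2=C^2$). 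Second, even granting that every irreducible component is rational or elliptic, bounding the arithmetic genus of the \emph{connected} configuration $C=\bigcup C_i$ by $1$ is exactly the content of the Diller--Jackson--Sommese result; ``adjunction plus combinatorics of how the $C_i$ meet'' names the ingredients but does not constitute an argument (one has to rule out, e.g., cycles of several rational components producing $p_a\geq 2$, and the combinatorics interacts nontrivially with the negative-definiteness of the dual graph). So the proposal does not contain a proof of (3), and the part it leaves open is precisely the part the paper itself delegates to the literature.
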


Indeed, the intersection form being negative definite on $\Pi_f^\perp$, Grauert-Mumford criterium allows to contract all irreducible curves $C$ with $[C] \in \Pi_f^\perp$, and only those curves. This yields the desired morphism $\pi$. Property (3) is more difficult to establish, and is due to Castelnuovo for irreducible curves and to Diller, Jackson, and Sommese for the general case (see \S~\ref{appendix:contraction}).

\begin{rem}
The surface $X_0$ is projective (see \S~\ref{appendix:contraction}) but it may be singular; 
for instance, if $f\colon X\to X$ is a Kummer example on a K3 or rational surface, then $X_0$ is automatically singular. 
We shall make use of basic notions from complex analysis and pluri-potential theory on complex analytic spaces. For this, 
we refer to the first chapter of \cite{Demailly:1985} and chapter 14 in \cite{Chirka}. 
\end{rem}

\begin{rem}
The class $\theta^+_f+\theta^-_f\in H^{1,1}(X;\R)$ has positive self intersection and is in the nef cone. After contraction of all periodic curves by the morphism $\pi_0\colon X\to X_0$, there is no curve $E$ with $\langle (\pi_0)_*\theta^+_f+(\pi_0)_*\theta^-_f \vert [E]\rangle=0$. Hence, 
$(\pi_0)_*(\theta^+_f + \theta^-_f)$ is a K\"ahler class on $X_0$ (see \S~\ref{appendix:contraction}). 
\end{rem}

\subsection{Invariant currents and continuous potentials} \label{par:invcurrents}
We refer to \cite[Chapter 3]{GH} for an account concerning currents on complex manifolds.

Let $T$ be a closed positive current on $X$. It is locally equal to $dd^c u$ where $u$ is a plurisubharmonic function.  By definition  $u$ is a  {\bf{local potential}} of $T$, it is unique up to addition of a pluriharmonic function. Every closed positive current $T$ has a cohomology class $[T]$ in $H^{1,1}(X,\R)$. For instance, if $C\subset X$
is a complex curve and $T_C$ is the current of integration on $C$, then $[T_C]=[C]$.

If $f \in \Aut(X)$ then one can define $f^*T$ by duality: the value of $f^*T$ on a $(1,1)$-form $\omega$
is equal to the value of $T$ on $(f^{-1})^*\omega$. If $u$ is a local potential for $T$, then
 $u \circ f$ is a local potential for $f^* T$. 
 
\begin{thm}[see \cite{Dinh-Sibony:2005, M, Cantat:Milnor}]\label{curcur}
Let $f$ be an automorphism of a compact K\"ahler surface $X$ with positive entropy $\log \lambda_f$.
There exists a unique closed positive current $T_f^+$ of bidegree $(1,1)$ on $X$ such that $[T_f^+]$ coincides with the nef class $\theta_f^+$. Its  local  potentials are H\"older continuous. It satisfies $f^*T_f^+ = \lambda_f T_f^+$. 
\end{thm}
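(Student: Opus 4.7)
The plan is to build $T_f^+$ as a $dd^c$-perturbation of a smooth reference form in the nef class $\theta_f^+$, using the $\partial\bar\partial$-lemma and a geometric series argument (following Sibony), and then to establish positivity by realizing $T_f^+$ as a weak limit of smooth positive currents.

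First, I would fix a smooth closed real $(1,1)$-form $\kappa_+$ whose class is $\theta_f^+$. Since $[f^*\kappa_+] = \lambda_f[\kappa_+]$ in $H^{1,1}(X;\R)$, the $\partial\bar\partial$-lemma furnishes a smooth real function $v$ on $X$ with $f^*\kappa_+ - \lambda_f\kappa_+ = dd^c v$. I would then look for $T_f^+$ in the form $\kappa_+ + dd^c u$; the relation $f^*T_f^+ = \lambda_f T_f^+$ reduces, up to an additive constant (the only pluriharmonic functions on compact $X$), to the fixed-point equation $u = \lambda_f^{-1}(u\circ f + v)$, whose unique bounded solution is the geometric sum
\[
u := \sum_{k\geq 0}\lambda_f^{-k-1}\, v\circ f^k.
\]
This series converges uniformly because $\lambda_f > 1$ and $v$ is bounded, so $u$ is continuous. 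H\"older regularity comes from the standard interpolation between the uniform bound $\|v\|_\infty$ on the $k$-th term and its Lipschitz bound $\|Dv\|_\infty\|Df\|_\infty^k$: splitting the sum at $N\sim -\log|x-y|/\log\|Df\|_\infty$ yields a H\"older exponent $\alpha = \log\lambda_f/\log\|Df\|_\infty \in (0,1)$.

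The main difficulty is positivity of $T_f^+ = \kappa_+ + dd^c u$: the class $\theta_f^+$ is only nef, so $\kappa_+$ need not be semipositive, and positivity of $T_f^+$ is not formal. To handle this, I would fix any K\"ahler form $\omega$. Its class decomposes as $[\omega] = a\,\theta_f^+ + b\,\theta_f^- + \omega^\perp$ with $\omega^\perp\in\Pi_f^\perp$ and $a > 0$, because $\theta_f^+$ is nef and $\langle[\omega]\vert\theta_f^+\rangle > 0$. Hence $\lambda_f^{-n}(f^*)^n[\omega] \to a\,\theta_f^+$ in $H^{1,1}(X;\R)$, and the closed positive currents $S_n := a^{-1}\lambda_f^{-n}(f^n)^*\omega$ have uniformly bounded mass. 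Running the same telescoping argument as above with $\omega$ in place of $\kappa_+$ (absorbing the exact cohomological error with another $\partial\bar\partial$-lemma) produces $S_n = \kappa_+ + dd^c u_n + \varepsilon_n$ with $u_n \to u$ uniformly and smooth $\varepsilon_n \to 0$. Thus $S_n \to T_f^+$ weakly, and positivity is inherited from the $S_n$.

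For uniqueness, let $T'$ be another closed positive $(1,1)$-current with $[T'] = \theta_f^+$ and $f^*T' = \lambda_f T'$. Writing $T' = \kappa_+ + dd^c u'$ with $u'$ quasi-plurisubharmonic (hence bounded above) and normalized so that $\int_X u'\,\omega\wedge\omega = \int_X u\,\omega\wedge\omega$, the invariance forces $u'\circ f - \lambda_f u' + v$ to be a constant, which the normalization kills. Iterating and using the uniform upper bound on $u'$ together with the $L^1$-compactness of quasi-plurisubharmonic functions with fixed mean shows that $\lambda_f^{-n}u'\circ f^n \to 0$ in $L^1$, so $u' = u$ almost everywhere and $T' = T_f^+$. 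The crux of the argument is the positivity step: producing the limit current abstractly via the cohomological equation is straightforward, but identifying it with a genuine positive current requires the honest approximation by the smooth positive currents $S_n$, which in turn relies on the uniform control of the potentials furnished by the geometric series.
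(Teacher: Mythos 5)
The construction of $T_f^+$ via the geometric series $u=\sum_{k\geq 0}\lambda_f^{-k-1}v\circ f^k$, the H\"older estimate by splitting the sum, and the identification of $T_f^+$ as the weak limit of the positive currents $\lambda_f^{-n}(f^n)^*\omega$ are all in the spirit of Dinh--Sibony and Moncet, and that part of your proposal is sound modulo one detail that deserves care: after decomposing $[\omega]=a\,\theta_f^+ + b\,\theta_f^- + \omega^\perp$, the component in $\Pi_f^\perp$ does not telescope as a geometric series. You need to check that $(f^n)^*\kappa_\perp$ can be written as $\alpha_n + dd^c\phi_n$ with $\alpha_n$ lying in a fixed finite--dimensional space of smooth forms and $\|\phi_n\|_\infty$ growing at most \emph{linearly} in $n$ (this follows from the orbit of $[\kappa_\perp]$ being bounded, since $f^*$ is an isometry of a negative definite form on $\Pi_f^\perp$, by iterating the relation $\phi_{n+1}=\psi_n+\phi_n\circ f$ with $\|\psi_n\|_\infty$ uniformly bounded); only then does $\lambda_f^{-n}(f^n)^*\kappa_\perp\to 0$ weakly, and only then do you get $S_n\to T_f^+$. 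Saying one \emph{``absorbs the exact cohomological error with another $\partial\bar\partial$-lemma''} glosses over exactly this growth control. Also note the intersection $\langle[\omega]\,\vert\,\theta_f^-\rangle$, not $\langle[\omega]\,\vert\,\theta_f^+\rangle$, is what gives $a>0$.

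The genuine gap is in the uniqueness step. The theorem asserts that $T_f^+$ is the \emph{unique closed positive current whose cohomology class is $\theta_f^+$}, with no invariance hypothesis; the invariance $f^*T_f^+=\lambda_fT_f^+$ is a \emph{consequence}, not an assumption. Your argument starts from ``let $T'$ be another closed positive current with $[T']=\theta_f^+$ \emph{and} $f^*T'=\lambda_fT'$'', which proves only uniqueness within the invariant class --- a strictly weaker statement, and in fact the easy part. (The paper's own Remark after the theorem, citing Dinh--Sibony's strengthening to $dd^c$-closed currents, makes it clear the stated uniqueness is unconditional.) Moreover, even the weaker uniqueness is not closed in your write-up: to show $\lambda_f^{-n}u'\circ f^n\to 0$ in $L^1$ you invoke ``$L^1$-compactness of quasi-plurisubharmonic functions with fixed mean'', but the class $\theta_f^+$ is nef and \emph{isotropic} ($\theta_f^{+2}=0$), not K\"ahler or even big, so the standard compactness theorems for $\kappa_+$-psh functions do not apply directly; the uniform upper bound $u'\leq M$ gives $\limsup_n\lambda_f^{-n}u'\circ f^n\leq 0$ pointwise but does not by itself rule out a strictly negative $L^1$-limit. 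The unconditional uniqueness requires a different idea --- e.g.\ showing $T_1\wedge T_2=0$ for any two closed positive currents in the class using $\theta_f^{+2}=0$ and continuity of potentials, then exploiting the laminar/geometric structure, or the equidistribution $\lambda_f^{-n}(f^n)^*S\to T_f^+$ for arbitrary $S$ with a careful $L^1$ analysis of $\lambda_f^{-n}u_S\circ f^n$ --- and none of this appears in the proposal.
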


Similarly, there exists a unique closed positive current $T_f^-$ of bidegree $(1,1)$ satisfying $f^*T_f^- = \lambda_f^{-1} T_f^-$. 

\begin{rem}
In a recent preprint \cite{Dinh-Sibony:preprint2}, Dinh and Sibony strengthen Theorem~\ref{curcur} by showing that $T^+_f$ (resp. $T^-_f$)
is the unique $dd^c$-closed positive current whose cohomology class is $[T^+_f]$ (resp.  $[T^-_f]$).
\end{rem}

\begin{rem} \label{pushpush}
When contracting the periodic curves, as in Proposition~\ref{prop:contraction}, one may get a singular surface $X_0$. Theorem~\ref{thm:continuous-potentials}  shows that the image of $T^\pm_f$ by $\pi\colon X\to X_0$ is a closed positive current on $X_0$ with continuous potentials, even around the singularities. 
\end{rem}

Let $C$ be a Riemann surface and $\theta\colon C\to X$ be a non-constant holomorphic 
mapping. The  pull-back $\theta^*(T^+_f)$  is  locally defined as $dd^c (u^+\circ \theta)$
where $u^+$ is a local potential; by definition, this measure (resp. its image on $\theta(C)$) 
is called the {\bf{slice}} of $T^+_f$ by $\theta$.
The same definition applies for $\theta^*(T^-_f)$. 

\subsection{Definition and properties of $\mu_f$} \label{defprop}

In what follows, we can (and do) fix the following data: a K\"ahler form $\kappa$ on $X$ and 
eigenvectors  $\theta_f^+$ and $\theta^-_f$ with respect to the eigenvalues $\lambda_f$ and $\lambda_f^{-1}$ for the endomorphism  $f^*$ of  $H^{1,1}(X;\R)$ such that 
\begin{equation}\label{eq:normalization}
\langle \theta^+_f\vert \theta^-_f\rangle = 1, \quad \langle \theta^+_f\vert [\kappa] \rangle = \langle \theta^-_f\vert [\kappa]\rangle = 1.
\end{equation}
Then, consider the currents $T^\pm_f$ provided by Theorem~\ref{curcur}. The wedge product $T_f^+ \wedge T_f^-$ is locally defined as the $dd^c$-derivative of $u^+ dd^c u^-$, where $u^+$ and $u^-$ are local potentials for $T^+$ and $T^-$. We define  $$\mu_f := T_f^+ \wedge T_f^- .$$
This is an $f$-invariant probability measure on $X$. The dynamics of $f$ with respect to $\mu_f$ is ergodic and mixing; moreover, $\mu_f$  is the unique $f$-invariant measure with maximal entropy, and it describes the equidistribution of periodic points (see Section \ref{autoabs} and \cite{Cantat:Milnor}).

\section{Parametrizations of stable manifolds}\label{par:PESIN}

In this section we see how Pesin theory provides parametrizations of the stable and unstable manifolds 
of the automorphism $f$ by entire holomorphic curves, with a control of their derivatives. 
We provide the details of the proof of this classical fact, because the control of the derivative will be used
systematically in our forthcoming renormalization argument, and this control is hard to localize in the literature. We refer 
to \cite{Jonsson-Varolin} for a similar and more general statement in arbitrary dimension.

\subsection{Oseledets theorem and  Lyapunov exponents (see \cite{Y, Katok-Hasselblatt} and \cite{BLS1, Cantat:Milnor})}

As before let $X$ be a complex projective surface and $f$ be an automorphism of $X$ with positive entropy $\log \lambda_f$. Let $T_f^\pm$ be the invariant currents introduced in Section~\ref{par:invcurrents}. The normalization chosen in Section~\ref{defprop}, 
Equation~\eqref{eq:normalization} implies that $T^+_f$ and $T^-_f$  have mass $1$ with respect to the K\"ahler form $\kappa$, and that $\mu_f = T_f^+ \wedge T_f^-$ is an $f$-invariant probability measure on $X$.

Since $\mu_f$ has positive entropy and is ergodic, it has one negative and one positive Lyapunov exponents; we denote them by $\lambda_s$ and $\lambda_u$, with $\lambda_s < 0 < \lambda_u$ (each of these exponents has multiplicity $2$ if one views $f$ as
a diffeomorphism of the $4$-dimensional manifold $X$).

In what follows, $\epsilon$ always denotes a positive real number that satisfies $\epsilon \ll \min(\vert \lambda_s \vert , \lambda_u)$. The set $\Lambda$ will be a Borel subset of $X$ of total $\mu_f$-measure; its precise definition depends on $\epsilon$ and may change from one paragraph to another. By construction, we can (and do) assume that $\Lambda$ is  invariant: indeed, $\Lambda$  can always be replaced by $\cap_{n \in \Z} f^n(\Lambda)$. 
A measurable function $\alpha : \Lambda \to ]0,1]$ is $\epsilon$-{\bf{tempered}} if it satisfies $e^{-\epsilon} \alpha(x)  \leq \alpha (f(x)) \leq  e^\epsilon  \alpha(x)$ for every $x \in \Lambda$. 

We use the same notation $\parallel \cdot \parallel$ for the standard hermitian norm on $\C^2$ and for the hermitian norm on the tangent bundle $TX$ induced by the K\"ahler form $\kappa$. The distance on $X$ is denoted $\dist_X$; ${\rm{B}}_x(r)$ is  the ball of radius $r$ centered at~$x$.

\begin{thm}[Oseledets-Pesin]\label{OP}
Let $X$ be a complex projective surface and let $f$ be an automorphism of $X$ with positive entropy. 

There exist an $f$-invariant Borel subset $\Lambda\subset X$ with $\mu_f(\Lambda)=1$,  two $\epsilon$-tempered functions $q : \Lambda \to ]0,1]$, $\beta : \Lambda \to ]0,1]$, and a family of holomorphic mappings $(\Psi_x)_{x \in \Lambda}$ satisfying the following properties.
\begin{enumerate}
\item $\Psi_x$ is defined on the bidisk  $\disk(q(x))\times \disk(q(x))$, takes values in $X$, maps the origin to the point $x$, and is a diffeomorphism onto its image.
\item $\beta(x) \parallel z_1-z_2 \parallel  \leq \dist_X(\Psi_x(z_1),\Psi_x(z_2)) \leq \parallel z_1-z_2 \parallel$ for all pairs of points
$z_1$, and $z_2$ in the bidisk.
\item The local diffeomorphism  $f_x := \Psi_{f(x)}^{-1} \circ f \circ \Psi_x$ is well defined near the origin in $\disk(q(x))\times \disk(q(x))$, and the matrix of $D_0 f_x$ is diagonal with coefficients $a(x)$ and $b(x)$ that satisfy
\[
 \vert a(x) \vert \in e^{\lambda_u} \cdot [e^{-\epsilon} , e^{\epsilon}] , \quad \vert b(x) \vert \in e^{\lambda_s} \cdot [e^{-\epsilon} , e^{\epsilon}] . 
\]
Moreover $f_x$ is $\epsilon$-close to the linear mapping $D_0 f_x$ in the ${\mathcal{C}}^1$ topology.
\end{enumerate}
\end{thm}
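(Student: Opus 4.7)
The plan is to deduce this from the Oseledets multiplicative ergodic theorem combined with the classical Pesin construction of Lyapunov charts, taking advantage of the fact that $f$ is a biholomorphism to ensure that the charts are holomorphic. First, I would apply the Oseledets theorem to the real smooth diffeomorphism $f$ on the $4$-manifold $X$, for the ergodic invariant probability measure $\mu_f$. Since $f$ is holomorphic, $Df$ is $\C$-linear, so each Oseledets subspace is automatically a complex subspace; combined with the fact that $\mu_f$ admits exactly two distinct real Lyapunov exponents (each of real multiplicity $2$), this yields, on a full-measure invariant Borel set $\Lambda_0 \subset X$, a measurable $Df$-invariant splitting $T_xX = E^u(x) \oplus E^s(x)$ into complex lines with Lyapunov exponents $\lambda_u > 0 > \lambda_s$.

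Second, I would introduce the Lyapunov (Pesin) Hermitian inner product
\[
\langle v, w \rangle'_x := \sum_{n \geq 0} e^{-2n(\lambda_u - \epsilon)} \langle D_xf^n v,\, D_xf^n w\rangle, \qquad v, w \in E^u(x),
\]
with the analogous sum over $n \leq 0$ on $E^s(x)$, and declaring $E^u(x) \perp E^s(x)$. These series converge by the Oseledets estimates and produce a Hermitian metric $\langle\cdot,\cdot\rangle'_x$ in which $D_xf$ is block diagonal with operator norm in $e^{\lambda_u}[e^{-\epsilon}, e^{\epsilon}]$ on $E^u$ and in $e^{\lambda_s}[e^{-\epsilon}, e^{\epsilon}]$ on $E^s$. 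The comparison function $C(x) := \|\cdot\|_x / \|\cdot\|'_x$ is then $\epsilon$-tempered along orbits, via the classical trick $\tilde C(x) = \sup_{n \in \Z} C(f^n x)\, e^{-\epsilon |n|}$, which is finite a.e. and satisfies $\tilde C \circ f \leq e^{\epsilon}\tilde C$. I would then select measurably a $\C$-linear isometry $L_x : (\C^2, {\rm std}) \to (T_xX, \langle\cdot,\cdot\rangle'_x)$ sending the two coordinate axes to $E^u(x)$ and $E^s(x)$; using a finite holomorphic atlas on $X$ and the holomorphic chart of this atlas whose image contains $x$, one builds a holomorphic map $\tilde\Psi_x$ from a bidisk in $\C^2$ into $X$ with $\tilde\Psi_x(0) = x$ and $D_0\tilde\Psi_x = L_x$. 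The conjugate $f_x := \tilde\Psi_{f(x)}^{-1} \circ f \circ \tilde\Psi_x$ is a holomorphic germ whose derivative at the origin equals $L_{f(x)}^{-1}\circ D_xf\circ L_x$; this is diagonal with entries $a(x), b(x)$ of exactly the prescribed moduli, by the very construction of $\langle\cdot,\cdot\rangle'_x$.

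It remains to set the tempered sizes. After rescaling $L_x$ by an absolute constant if needed, I would define $q(x)$ as the largest $r \in (0,1]$ on which (i) $\tilde\Psi_x$ restricted to $\disk(r) \times \disk(r)$ is a diffeomorphism onto its image satisfying the bilipschitz bounds of item (2) with some $\beta(x) > 0$, and (ii) $f_x$ is defined on $\disk(r)\times\disk(r)$ and is $\epsilon$-close in the $\mathcal{C}^1$ topology to its linear part $D_0 f_x$. Taylor-expanding $\tilde\Psi_x$ and $f_x$ at the origin and estimating the quadratic remainder via the temperedness of $C$ at $x$ and $f(x)$, one checks that $q$ and $\beta$ can themselves be chosen $\epsilon$-tempered; setting $\Lambda := \bigcap_{n \in \Z} f^n(\Lambda_0')$ for a suitable full-measure subset $\Lambda_0' \subset \Lambda_0$ and $\Psi_x := \tilde\Psi_x$ then gives the invariant set and charts of the statement. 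The hard part is precisely this last step: producing genuinely $\epsilon$-tempered sizes $q(x)$ requires balancing the linear Lyapunov rates of $Df$ against the nonlinearity of $f$ along a $\mu_f$-typical orbit, and showing — via the invariance of $\mu_f$ and the tempered growth of $C$ — that the nonlinear distortion cannot overwhelm the linear part. This is the classical technical heart of nonuniformly hyperbolic theory, and for complete details in arbitrary dimension one can follow the arguments of \cite{Jonsson-Varolin}.
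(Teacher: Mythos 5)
Your proposal reconstructs a standard proof of the Oseledets--Pesin chart theorem, which the paper itself does not prove: Theorem~\ref{OP} is stated as a known result with citations to Oseledets, Pesin, Katok--Hasselblatt and Jonsson--Varolin, and the paper's expository effort in Section~\ref{par:PESIN} goes into the subsequent linearization (Proposition~\ref{xi}) and the construction of the entire parametrizations (Proposition~\ref{affpara}), not into the chart theorem. So there is no ``paper proof'' to compare against, but your route is the classical one, and you correctly identify the two places where holomorphicity matters: $\C$-linearity of $Df$ forces the Oseledets splitting into complex lines, and the charts can be taken holomorphic by post-composing an atlas chart with the complex-linear isometry $L_x$.

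There is, however, a real error in the Lyapunov metric you write down. With
\[
\langle v, w \rangle'_x = \sum_{n \geq 0} e^{-2n(\lambda_u - \epsilon)}\, \langle D_xf^n v, D_xf^n w\rangle \qquad (v,w \in E^u(x)),
\]
the general term grows like $e^{2n\epsilon + o(n)}\Vert v\Vert^2$ since $\Vert D_xf^n v\Vert = e^{n\lambda_u + o(n)}\Vert v\Vert$ on $E^u$, so the series diverges $\mu_f$-a.e.; the sign must be $e^{-2n(\lambda_u + \epsilon)}$ for convergence. Even after that fix, a one-sided sum over forward iterates only yields the upper bound $\Vert D_xf\Vert'_x \leq e^{\lambda_u + \epsilon}$ on $E^u$ (the recursion gives $\Vert D_xfv\Vert'^2_{f(x)} = e^{2(\lambda_u + \epsilon)}(\Vert v\Vert'^2_x - \Vert v\Vert^2_x)$, which carries no useful lower bound). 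To obtain the two-sided estimate $\vert a(x)\vert \in e^{\lambda_u}\,[e^{-\epsilon}, e^{\epsilon}]$ required by item~(3) of the theorem, one should use the standard two-sided Lyapunov norm
\[
\Vert v\Vert'_x = \sum_{m \in \Z} e^{-m\lambda_u - \vert m\vert\epsilon}\,\Vert D_xf^m v\Vert,
\]
whose convergence on $E^u$ follows from the two-sided Oseledets asymptotics, and for which both inequalities $e^{\lambda_u - \epsilon}\Vert v\Vert'_x \leq \Vert D_xfv\Vert'_{f(x)} \leq e^{\lambda_u + \epsilon}\Vert v\Vert'_x$ are immediate. The rest of your outline---temperedness of the comparison function via the $\sup_n C(f^n x)e^{-\epsilon\vert n\vert}$ trick, choice of tempered chart size $q(x)$ by balancing the nonlinear remainder against the Lyapunov metric, and the reference to \cite{Jonsson-Varolin} for complete details---is the correct and standard sequence of steps.
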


The global {\bf{stable manifold}} of a point $x$ is the set $W^s(x)$ of points $x'$ such that the distance between $f^n(x)$ and
$f^n(x')$ goes to $0$ as $n$ goes to $+\infty$. The {\bf{local stable manifold}} $W^{s, loc}(x)$ is the connected component of $W^s(x) \cap \Psi_x(\disk(q(x))\times \disk(q(x)))$ that contains $x$.

In $\disk(q(x))\times \disk(q(x))$, the inverse image by $\Psi_x$ of the local stable manifold     
is a vertical graph; in other words, it can  be parametrized by a holomorphic map 
\[\gamma_x : \disk(q(x)) \to \disk(q(x))\times \disk(q(x)), \quad z \mapsto (g_x(z),z), \]  
where $g_x$ is holomorphic and satisfies $g_x(0) = 0$, $g_x'(0) = 0$, and $\Lip g_x \leq 1$. 
We have $W^s(x)=\cup_{n\geq 0}f^{-n}(W^{s,loc}(f^n(x))$.

\begin{nota}\label{newnew}
For $x$ in $\Lambda$, we set  $ \sigma_x := \Psi_x \circ \gamma_x$. This is a holomorphic parametrization of the local
stable manifold $W^{s, loc}(x)$. By construction, 
\begin{itemize}
\item $\beta(x) \leq \parallel \sigma_x'(0) \parallel \leq 1$,
\item $f (W^{s,loc}(x)) \subset W^{s,loc}(f(x))$ and $W^{s, loc}(x) \subset {\rm{B}}_x(1)$,
\item $\lim_{n \to +\infty} \dist_X( f^n(x) , f^n(y))= 0$ for every $y \in W^{s, loc}(x)$.  
\end{itemize}
\end{nota}

We denote by $F_x : \disk(q(x)) \to \disk(q(f(x)))$ the mapping that satisfies
\[
 f \circ \sigma_x = \sigma_{f(x)} \circ F_x,
 \]
and  by $M_x : \disk(q(x)) \to \disk(q(f(x)))$ the linear mapping given by  
\[
M_x(z) := m_x \cdot z, \; {\text{ with }} \quad  m_x := F_x'(0).
\]
By construction,  $\vert m_x \vert \in e^{\lambda_s} \cdot [e^{-\epsilon} , e^{\epsilon}]$ (note that $m_x$ is equal to the complex
number $b(x)$ of Theorem~\ref{OP}).

\subsection{Parametrization of stable manifolds}\label{sub:parastable}

\subsubsection{Linearization along local stable manifolds}\label{par:varolin} 
The following proposi\-tion provides a linearization of $f$ along the stable manifolds. 
We keep the same notations as in the previous paragraph; in particular, $\epsilon>0$ 
is fixed and $\Lambda$ is given by Theorem~\ref{OP}.

\begin{pro}\label{xi}
Let $f$ be an automorphism of a complex projective surface $X$ with positive entropy. If $\epsilon$ is small enough, 
there exist a real number $c := c(\epsilon, \lambda_s) \in ]0,1]$ and holomorphic injective functions $(\eta_x)_{x \in \Lambda}$, such that
\begin{enumerate}
\item $\eta_x$ is defined on $\disk(c q(x))$, with values in $\disk(q(x))$, and it satisfies $\eta_x(0)=0$ and $\eta_x'(0)=1$;
\item $f \circ (\sigma_x \circ \eta_x) = (\sigma_{f(x)} \circ \eta_{f(x)}) \circ M_x$ on $\disk(c q(x))$. 
\end{enumerate}
\end{pro}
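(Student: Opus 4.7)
The plan is to produce $\eta_x$ as the inverse of a non-autonomous K{\oe}nigs linearization. The identity to be proved, $F_x\circ\eta_x=\eta_{f(x)}\circ M_x$, asserts that $\eta_x$ conjugates the linear model $M_x$ back into the contracting cocycle $F_x$ on the stable manifold. I would first construct $\phi_x$ with $\phi_x(0)=0$, $\phi_x'(0)=1$ and
\[
\phi_{f(x)}\circ F_x \;=\; M_x\circ \phi_x,
\]
and then set $\eta_x:=\phi_x^{-1}$. Writing $F_x^n:=F_{f^{n-1}(x)}\circ\cdots\circ F_x$ and $m_x^n:=\prod_{k=0}^{n-1}m_{f^k(x)}$, I define
\[
\phi_x^{(n)}(z)\;:=\;(m_x^n)^{-1}\,F_x^n(z),
\]
which by a one-line telescoping satisfies $\phi_{f(x)}^{(n)}\circ F_x=m_x\,\phi_x^{(n+1)}$; consequently any limit $\phi_x=\lim\phi_x^{(n)}$ will satisfy the desired conjugation automatically, and the normalization $\phi_x^{(n)}(0)=0$, $(\phi_x^{(n)})'(0)=1$ passes to the limit.

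The core work is convergence. I would estimate
\[
\phi_x^{(n+1)}(z)-\phi_x^{(n)}(z) \;=\; (m_x^{n+1})^{-1}\bigl[F_{f^n(x)}(F_x^n(z))-m_{f^n(x)}\,F_x^n(z)\bigr],
\]
bound the bracket by Cauchy with $C|F_x^n(z)|^2/q(f^n(x))$ (since $F_y\colon\disk(q(y))\to \disk(1)$ vanishes to order one at $0$ with derivative $m_y$), and combine three ingredients: the contraction $|F_x^n(z)|\le e^{n(\lambda_s+2\epsilon)}|z|$ obtained inductively from item~(3) of Theorem~\ref{OP}, the lower bound $|m_x^{n+1}|\ge e^{(n+1)(\lambda_s-\epsilon)}$, and the temperedness $q(f^n(x))\ge e^{-n\epsilon}q(x)$. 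The outcome is a pointwise bound of the form $(C/q(x))\,e^{n\lambda_s+O(n\epsilon)}|z|^2$, valid on $\disk(c\,q(x))$ for a suitable $c=c(\epsilon,\lambda_s)$, which is geometrically summable once $\epsilon$ is small compared with $|\lambda_s|$. Before chaining these estimates one must verify inductively that $F_x^k(z)$ remains inside $\disk(q(f^k(x)))$ so the next factor $F_{f^k(x)}$ can be applied; this follows from the same contraction plus temperedness bounds, and is precisely where the shrinking factor $c$ enters.

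Once the series converges uniformly on $\disk(c\,q(x))$, its limit $\phi_x$ is holomorphic with $\phi_x(0)=0$ and $\phi_x'(0)=1$; the same estimate applied to the derivatives shows that $\phi_x$ is $\mathcal{C}^1$-close to the identity, hence (shrinking $c$ one last time) injective on $\disk(c\,q(x))$ with image containing $\disk(c\,q(x)/2)$. Setting $\eta_x:=\phi_x^{-1}$ on this image and relabeling the constant yields the claimed map, and property~(2) of the proposition follows by inverting $\phi_{f(x)}\circ F_x=M_x\circ\phi_x$ and composing with $\sigma_{f(x)}$ using $f\circ\sigma_x=\sigma_{f(x)}\circ F_x$ from Notation~\ref{newnew}. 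The step I expect to be the main obstacle is the careful juggling of exponential rates in the convergence bound: the quadratic contraction $e^{2n\lambda_s}$ must dominate both the divergence $e^{-(n+1)\lambda_s}$ coming from $(m_x^{n+1})^{-1}$ and the temperedness blow-up $e^{n\epsilon}$ coming from $1/q(f^n(x))$. This is the source of the smallness requirement on $\epsilon$ and the reason why the radius of linearization must itself be proportional to the tempered function $q(x)$ rather than uniform over $\Lambda$.
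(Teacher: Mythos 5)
Your proposal is correct and takes essentially the same approach as the paper: both construct $\tilde\eta_x$ (your $\phi_x$) as the limit of $M_x^{-1}\circ\cdots\circ M_{f^{n-1}(x)}^{-1}\circ F_{f^{n-1}(x)}\circ\cdots\circ F_x$, establish convergence via a telescoping quadratic Cauchy estimate balanced against the exponential rates $\lambda_s$, $\epsilon$ and the temperedness of $q$, and then invert on a slightly smaller disk after a Cauchy-formula derivative bound gives the Lipschitz control. The only cosmetic difference is that the paper proves its bound \eqref{intion} by induction on $m$ rather than by directly expanding the $n$-th difference, which is equivalent to your chaining argument.
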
 


\begin{proof} We split the proof into four steps. 

\vspace{0.16cm}

\noindent$\bullet${\sl{ Preliminary choices.-- }} Choose $\epsilon > 0$ such that $\lambda_s +6 \epsilon < 0$. Thus, the series 
\[
s(\epsilon) := \sum_{m \geq 0} e^{m(\lambda_s + 6\epsilon)}
\]
converges. Then,  decrease $\epsilon$ so that 
\[
s(\epsilon) e^{2 \epsilon} (e^\epsilon -1) \leq 1/2, \; {\text{ and }}¬†\quad e^{\lambda_s} (e^\epsilon - 1) \leq 1,
\]
and set $c:={1 \over 24} e^{\lambda_s}(e^\epsilon -1)$. Then apply Theorem~\ref{OP} and define
\[
s := s(\epsilon) \ , \   q'(x) := 12c q(x) \ {\text{ and }} \quad 
Q(x) := 2 e^{ - \lambda_s + 2 \epsilon} q(x)^{-1}.
\] 

\noindent$\bullet${\sl{ Contraction properties of $F_x$.-- }} For $x$ in $\Lambda$, denote the Taylor expansion of $F_x(z)$ by
$M_x (z) + \sum_{p \geq 2} a_p(x) z^p$. Since $F_x$ maps $\disk(q(x))$ into $\disk(q(f(x))$, Cauchy formula yelds $\vert a_p(x) \vert \leq q(f(x))/q(x)^p$ ; hence
\begin{equation}\label{fstep}
 \forall z \in \disk(  q(x) /2 ), \  \vert F_x(z) - M_x(z) \vert \, \leq \, 2 {q(f(x)) \over q(x)^2} \vert z \vert ^2  \, \leq \,  2 {e^\epsilon \over q(x)} \vert z \vert ^2 .
\end{equation}
Here, and in what follows, we use that $q$ is $\epsilon$-tempered, i.e.  $q(f(x)) \in q(x) \cdot [e^{-\epsilon} , e^\epsilon]$.
Thus, the definition of $q'(x)$ implies that 
\begin{equation}\label{sstep}
\forall z \in \disk(q'(x)), \  \vert F_x(z)  \vert \leq  \vert M_x(z) \vert + \vert F_x(z) - M_x(z) \vert \leq  e^{\lambda_s + 2 \epsilon} \vert z \vert .
\end{equation}
In particular, we obtain the inclusions 
\[
F_x(\disk(q'(x))) \subset \disk(e^{\lambda_s + 2 \epsilon} q'(x)) \subset \disk(q'(f(x))).
\]

\noindent$\bullet${\sl{The functions $\tilde \eta_x$.-- }} From the previous inclusion, we can define (for all $x$ in $\Lambda$, $z$ in $\disk(q'(x))$, and $m\geq 0$)
\[
 \tilde \eta_{m,x}(z) := M_x^{-1} \circ \cdots M_{f^{m-1}(x)}^{-1} \circ F_{f^{m-1}(x)} \circ \cdots \circ F_x (z), 
\]
with the convention $\tilde \eta_{0,x} := \Id_{\disk(q'(x))}$. These functions satisfy $\tilde \eta_{m, f(x)} \circ F_x = M_x \circ \tilde \eta_{m,x}$ on $\disk(q'(x))$. Let us show by induction that
\begin{equation}\label{intion}
\forall z \in \disk(q'(x)), \ \vert \tilde \eta_{m+1, x}(z) - \tilde \eta_{m, x}(z)  \vert \leq e^{m(\lambda_s + 6 \epsilon)} \vert z \vert ^2 Q(x) . 
 \end{equation}
The case $m= 0$ is deduced from (\ref{fstep}); indeed $\vert \tilde \eta_{1, x}(z) - \tilde \eta_{0, x}(z)  \vert$ is equal to 
\[
\vert M_x^{-1} \circ F_x (z) - z \vert = m_x ^{-1} \vert F_x(z) - M_x(z) \vert \leq 2  {e^{-\lambda_s + 2\epsilon} \over q(x) }   \vert z \vert ^2 =  \vert z \vert ^2 Q(x) . 
\]
Now assume that (\ref{intion}) is satisfied for some $m \geq 0$. In particular
\[
\forall w \in \disk(q'(f(x))), \ \vert \tilde \eta_{m+1, f(x)}(w) - \tilde \eta_{m, f(x)}(w)  \vert \leq e^{m(\lambda_s + 6 \epsilon)} \vert w \vert ^2 Q(f(x)) . 
\]
For $z\in \disk(q'(x))$, we know that $ F_x(z) \in\disk(q'(x))$, hence we can replace $w$ by $F_x(z)$ to obtain 
\[  \vert \tilde \eta_{m+1, f(x)}(F_x(z)) - \tilde \eta_{m, f(x)}(F_x(z))  \vert \leq e^{m(\lambda_s + 6 \epsilon)} \vert F_x(z) \vert ^2 Q(f(x)).
\]
Multiplying by $\vert m_x ^{-1} \vert$ we obtain
\[
 \forall z \in \disk(q'(x)), \ \vert \tilde \eta_{m+2,x}(z) - \tilde \eta_{m+1, x}(z)  \vert \leq e^{m(\lambda_s + 6 \epsilon)} \vert m_x ^{-1} \vert \vert F_x(z) \vert ^2 Q(f(x)) . 
 \]
Using (\ref{sstep}) the right hand side is less than or equal to 
\[
 e^{m(\lambda_s + 6 \epsilon)}  e^{-\lambda_s + \epsilon} e^{2(\lambda_s + 2 \epsilon)} \vert z \vert ^2 e^\epsilon Q(x) = e^{(m+1)(\lambda_s + 6 \epsilon)}\vert z \vert ^2 Q(x) , 
 \]
which establishes (\ref{intion}) for $m+1$ as desired. 

Since $\tilde \eta_{0,x}(z) = z$ we can write 
\[
\forall x \in \Lambda, \; \forall z \in \disk(q'(x)), \quad \tilde \eta_{n,x}(z) = z + \sum_{m=0}^{n-1} ( \tilde \eta_{m+1,x} - \tilde \eta_{m,x}) (z),
\]
and Equation (\ref{intion}) implies that $\tilde \eta_{n,x}$ converges locally uniformly on $\disk(q'(x))$ to a function $\tilde \eta_x$ satisfying 
\[
\tilde \eta_x'(0) = 1, \quad \vert (\tilde \eta_x  - \Id)(z) \vert \leq s  \vert z \vert ^2 Q(x), \; {\text{and}}\quad  \tilde \eta_{f(x)} \circ F_x = M_x \circ  \tilde \eta_x.
\] 

\noindent$\bullet${\sl{ Conclusion.-- }}
We now invert $\tilde \eta_x$ and $\tilde \eta_{f(x)}$. First let us verify that $\Lip (\tilde \eta_x - \Id) \leq 1/2$. For that purpose, apply Cauchy's formula 
\[(\tilde \eta_x - \Id)' (z) = \frac{1}{2i\pi} \int_C \frac{(\tilde \eta_x - \Id) (a)}{ (a-z)^{2}}\,  d\! a
\] 
where $C$ is the circle of radius $q'(x)$ and $z$ is taken in the disk $\disk(q'(x)/2)$; the previous estimate $\vert (\tilde \eta_x - \Id)(z) \vert \leq s  \vert z \vert ^2 Q(x)$ and the definitions of $q'(x)$ and $Q(x)$ yield
\[
  \vert (\tilde \eta_x - \Id)' (z) \vert \leq s Q(x) q'(x) \leq 1/2 , 
 \]
 for all points $ x \in \Lambda$, and $ z \in \disk(q'(x)/2)$; this
 implies $\Lip (\tilde \eta_x - \Id) \leq 1/2$ on $\disk(q'(x)/2)$ as desired. 
 
 We deduce 
\begin{equation}\label{opop}
     {1 \over 2} \vert z_1 - z_2 \vert \leq \vert \tilde \eta_x(z_1) - \tilde \eta_x(z_2)  \vert \leq  {3 \over 2} \vert z_1 - z_2 \vert . 
 \end{equation}
 for all points $z_1$ and $ z_2$ in $ \disk(q'(x)/2)$.
It follows that $\tilde \eta_x$ can be inverted on the disk $\disk(q'(x)/4)$, and we define  
\[
\eta_x := (\tilde \eta_x)^{-1} : \disk(q'(x)/4) \to \disk(q'(x)/2).
\] 
This function satisfies 
\begin{equation}\label{opop2}
 \forall z_1, \, z_2 \in \disk(q'(x)/4), \quad  {2 \over 3} \vert z_1 - z_2 \vert \leq \vert \eta_x(z_1) - \eta_x(z_2)  \vert \leq  {2} \vert z_1 - z_2 \vert . 
 \end{equation}
 
Compose each term of the equality  $\tilde \eta_{f(x)} \circ F_x = M_x \circ  \tilde \eta_x$ on the right by $\eta_x$; one obtains $\tilde \eta_{f(x)} \circ F_x \circ \eta_x = M_x$ on $\disk(q'(x)/4)$. Now, Equations \eqref{sstep} and \eqref{opop} lead to 
\[
\vert \tilde \eta_{f(x)} \circ  F_x \circ \eta_x  \vert \leq q'(f(x)) / 4
\] 
on $\disk(q'(x)/12)$. Composition on the left by $\eta_{f(x)}$  gives $F_x \circ \eta_x = \eta_{f(x)} \circ M_x$ on $\disk(q'(x)/12) = \disk(cq(x))$.  To complete the proof, it suffices to compose  the latter expression on the left by $\sigma_{f(x)}$; the relation $\sigma_{f(x)} \circ F_x = f \circ \sigma_x$ provides $f \circ (\sigma_x \circ \eta_x) = (\sigma_{f(x)} \circ \eta_{f(x)}) \circ M_x$ on $\disk(cq(x)))$ as desired. 
 \end{proof}

\subsubsection{Local and global stable manifolds}\label{par:affine-param} Definition \ref{newnew} and Proposition~\ref{xi} allow to introduce the following definition.

\begin{nota}
For every $x \in \Lambda$, define $\xi^{s, loc}_{x}\colon  \disk(c q(x)) \to W^{s, loc}(x)$ by 
\[
\xi^{s, loc}_{x} := \sigma_x \circ \eta_x .
\]
By construction, $\xi^{s,loc}_x$ is holomorphic, injective and satisfies 
\begin{itemize}
\item $\xi^{s, loc}_{x}(0) =x$ and  $\beta(x) \leq \vert  (\xi^{s, loc}_{x})'(0)  \vert \leq 1$; 
\item  $2\beta(x)/3 \leq \vert  (\xi^{s, loc}_{x})'   \vert \leq 2$ on $\disk(cq(x))$; 
\item $f \circ \xi^{s, loc}_{x} = \xi^{s, loc}_{f(x)} \circ M_x$ on $\disk(c q(x))$. 
\end{itemize}
\end{nota}

The global stable manifold satisfies
\[ W^s(x) = \cup_{n \geq 0} f^{-n} \, (W^{s, loc}(f^n(x))). \] 
Thus, by construction, $W^s(x)$ is a simply connected Riemann surface.  

\begin{pro}\label{affpara}
For every $x \in \Lambda$, the Riemann surface $W^s(x)$ is biholomorphic to $\C$. Moreover, there exists a biholomorphism $\xi^s_x : \C \to W^s(x)$
 such that 
 \begin{itemize}
\item$\xi^s_x(0)=x$,
\item $\xi^s_x = \xi^{s, loc}_{x}$ on $\disk(c q(x))$, 
\item $f \circ \xi^s_x  = \xi^s_{f(x)}  \circ M_x$ on $\C$. 
\end{itemize}
\end{pro}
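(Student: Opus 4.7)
The plan is to construct $\xi^s_x$ as the increasing union of local pieces obtained by conjugating $\xi^{s,loc}_{f^n(x)}$ back to $x$ via $f^{-n}$. Setting $m_n(x) := m_x m_{f(x)} \cdots m_{f^{n-1}(x)}$ with $m_0(x) := 1$, define for each $n \geq 0$
\[
\xi^{(n)}_x : \disk\bigl(cq(f^n(x))/|m_n(x)|\bigr) \to X, \qquad \xi^{(n)}_x(z) := f^{-n} \circ \xi^{s,loc}_{f^n(x)}\bigl(m_n(x)\, z\bigr).
\]
Iterating the conjugacy $f \circ \xi^{s,loc}_{y} = \xi^{s,loc}_{f(y)} \circ M_y$ of Proposition~\ref{xi} shows in one line that $\xi^{(n+1)}_x$ restricts to $\xi^{(n)}_x$ on its domain, so the $\xi^{(n)}_x$ glue into a single holomorphic map defined on $\bigcup_n \disk\bigl(cq(f^n(x))/|m_n(x)|\bigr)$. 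The $n=0$ case already yields $\xi^s_x(0) = x$ and $\xi^s_x = \xi^{s,loc}_x$ on $\disk(cq(x))$.

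The next step is to show that this union exhausts $\C$. By Theorem~\ref{OP}(3), $|m_n(x)| \leq e^{n(\lambda_s + \epsilon)}$, and since $q$ is $\epsilon$-tempered, $q(f^n(x)) \geq e^{-n\epsilon} q(x)$. Therefore
\[
\frac{cq(f^n(x))}{|m_n(x)|} \geq cq(x)\, e^{-n(\lambda_s + 2\epsilon)},
\]
and the initial choice $\lambda_s + 2\epsilon < 0$ from the proof of Proposition~\ref{xi} forces this quantity to tend to $+\infty$ with $n$. This produces a holomorphic map $\xi^s_x : \C \to X$. The functional equation $f \circ \xi^s_x = \xi^s_{f(x)} \circ M_x$ holds near $0$ from the relation $f \circ \xi^{s,loc}_x = \xi^{s,loc}_{f(x)} \circ M_x$, and extends to all of $\C$ by the identity principle for holomorphic maps into $X$.

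Finally, I would show that $\xi^s_x$ is a biholomorphism onto $W^s(x)$. For injectivity: if $\xi^s_x(z_1) = \xi^s_x(z_2)$, choose $n$ so large that both $z_1, z_2$ lie in the domain of $\xi^{(n)}_x$; injectivity of $\xi^{s,loc}_{f^n(x)}$ then forces $m_n(x) z_1 = m_n(x) z_2$, hence $z_1 = z_2$. For surjectivity: the identity $W^s(x) = \bigcup_n f^{-n}(W^{s,loc}(f^n(x)))$ together with $W^{s,loc}(f^n(x)) = \xi^{s,loc}_{f^n(x)}(\disk(cq(f^n(x))))$ gives $\xi^s_x(\C) = W^s(x)$. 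Since $\xi^s_x$ is holomorphic, injective, and has non-vanishing derivative (each factor in the chain rule for $\xi^{(n)}_x$ is nonzero, thanks to the lower bound on $|(\xi^{s,loc}_{f^n(x)})'|$ and to $f^{-n}$ being a diffeomorphism), it is a biholomorphism onto $W^s(x)$, which is therefore a Riemann surface biholomorphic to $\C$. The only subtle point is the quantitative balance between the contraction rate of $M_x$ along the orbit and the temperedness defect of $q$, encoded by the inequality $\lambda_s + 2\epsilon < 0$; the rest is routine bookkeeping on the functional equations.
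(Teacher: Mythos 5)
Your construction is the paper's proof, made explicit: the paper defines $\xi^s_x(z) := f^{-m(z)} \circ \xi^{s,loc}_{f^{m(z)}(x)} \circ M^{m(z)}_x(z)$ for $m(z)$ large enough, which is exactly your $\xi^{(n)}_x$ evaluated at suitable $n$, and the gluing, domain exhaustion, injectivity, and functional-equation arguments all match. One claim in your surjectivity step is, however, false: the equality $W^{s,loc}(f^n(x)) = \xi^{s,loc}_{f^n(x)}(\disk(cq(f^n(x))))$ does not hold. While $\sigma_{f^n(x)}\colon \disk(q(f^n(x))) \to W^{s,loc}(f^n(x))$ is a biholomorphism, the map $\xi^{s,loc}_{f^n(x)} = \sigma_{f^n(x)} \circ \eta_{f^n(x)}$ is only defined on $\disk(cq(f^n(x)))$ and $\eta_{f^n(x)}$ takes values in $\disk(6cq(f^n(x))) \subsetneq \disk(q(f^n(x)))$ (recall $c \leq 1/24$); so the image of $\xi^{s,loc}_{f^n(x)}$ is a proper open subset of $W^{s,loc}(f^n(x))$. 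The conclusion $\xi^s_x(\C) = W^s(x)$ is nonetheless correct, but the justification requires the standard Pesin-theoretic fact that for any $y \in W^s(x)$ the renormalized coordinate $\sigma_{f^n(x)}^{-1}(f^n(y))/q(f^n(x))$ tends to $0$, so that $f^n(y)$ eventually lies in the \emph{smaller} image $\xi^{s,loc}_{f^n(x)}(\disk(cq(f^n(x))))$. The paper elides this point with the phrase ``by the definition of $W^s(x)$'', so the issue is minor, but the equality as you wrote it is incorrect.
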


\begin{proof}
Set $M^m_x := M_{f^{m-1}(x)} \circ \ldots \circ M_x : \C \to \C$ for every $m \geq 1$, and observe that $\vert M^m_x(z) \vert \in e^{m\lambda_s} \cdot [e^{-m\epsilon} , e^{m\epsilon}] \cdot \vert z \vert$. Then, define 
\[
\forall z \in \C, \ \ \xi^s_x (z) : = f^{-m(z)} \circ \xi_{f^{m(z)}(x)}^{s, loc} \circ M^{m(z)}_x(z) ,
\]
where $m(z)$ is a large positive integer, so that $M^{m(z)}_x(z) \in \disk(c q(f^{m(z)}(x)))$; such an integer exists because the function $q$ is $\epsilon$-tempered. One easily verifies that (i) the definition of $\xi^s_x$ does not depend on $m(z)$ and (ii) $f \circ \xi^s_x  = \xi^s_{f(x)}  \circ M_x$ on $\C$  by analytic continuation. The map $\xi_x^s :  \C \to W^s(x)$ is a biholomorphism by the definition of $W^s(x)$ and the fact that $f$ has empty critical set.
\end{proof}

\begin{rem} \label{rem:affpara2}
The definition of the biholomorphism  $\xi_x^s : \C \to W^s(x)$ depends on the local parametrizations $(\xi^{s, loc}_{x})_{x \in \Lambda}$, hence on Pesin's theory. In particular, the derivative $(\xi_x^s)'(0)$ depends measurably on $x$. 

Two biholomorphisms $\C \to W^s(x)$ differ by an affine automorphism $z \mapsto  a z + b$ where $(a,b) \in \C^*\times \C$. Thus, (i) the stable manifold $W^s(x)$ inherits a natural affine structure and (ii) every biholomorphism $\C \to W^s(x)$ sending $0$ to $x$ is equal to $\xi_x^s$ modulo composition with a homothety $z\mapsto a z$. 
\end{rem}

\begin{rem} \label{rem:affpara3}
The fact that the stable manifolds are isomorphic to $\C$ is well known. Let us summarize the proof given in \cite[section 2.6]{BLS1}. First, remark that the stable manifolds are Riemann surfaces which are homeomorphic to $\R^2$; thus, $W^s(x)$ is conformally equivalent to the unit disk $\disk$ or the affine line $\C$. By ergodicity of $\mu_f$, one can assume that almost all stable manifolds have the same conformal type. If they were disks, $f$ would induce isometries from $W^s(x)$ to $W^s(f(x))$ with respect to the Poincar\'e metric, because   conformal automorphisms 
of the unit disk are isometries; this would contradict the fact that $f$ is a contraction along the stable manifolds. 
\end{rem}

\subsection{Ahlfors-Nevanlinna currents and stable manifolds}

From Proposition~\ref{affpara},  the stable manifolds of $f$ are parametrized by entire curves $\xi^s_x\colon \C\to W^s(x)$. We associate Ahlfors currents to them: these closed positive currents coincide with $T^+_f$.

\subsubsection{Ahlfors-Nevanlinna currents}\label{sub:ahlfors}

Let $M$ be a complex space, let $\kappa$ be a k\"ahler form on $M$, and let $\xi \colon \C \to M$ be a non-constant entire curve.  Let $A(r;\xi)$ be the area of $\xi(\disk_r)$, defined as the integral of $\xi^*\kappa$ on $\disk_r$, and let $T(r;\xi)$ be the logarithmic average 
\[
T(r;\xi) : =\int_{s=0}^{r}A(s;\xi)\frac{ds}{s}.
\]
The family of Ahlfors-Nevalinna currents $N(r;\xi)$ is then defined by
\[
N(r;\xi)\colon \alpha \mapsto \frac{1}{T(r;\xi)}\int_{s=0}^r \{\xi(\disk_s) \} (\alpha) \frac{ds}{s}
\]
where $\{\xi(\disk_s)\}$ is the current of integration on $\xi(\disk_s)$ (counting multiplicities).

\begin{lem}[Ahlfors, see \cite{Brunella:1999}]
There exist sequences $(r_i)_{i\in \N}$ tending to $+\infty$ such that $N(r_i;\xi)$ converges to a closed positive current on $X$. 
\end{lem}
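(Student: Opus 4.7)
My plan is to follow the classical Ahlfors argument: extract a weak limit by the mass normalization, and then use Ahlfors' length--area inequality to force closedness along a suitable subsequence.

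For the first step, each $N(r;\xi)$ is a positive $(1,1)$-current, being a positive combination of currents of integration, and by the very definition of $T(r;\xi)$ its mass against $\kappa$ is
\[
N(r;\xi)(\kappa) = \frac{1}{T(r;\xi)}\int_0^r A(s;\xi)\,\frac{ds}{s} = 1.
\]
Since $\xi$ is non-constant and $\kappa$ is strictly positive, $A(s;\xi)$ is bounded below by a positive constant for $s$ large, so $T(r;\xi)\to +\infty$ and the definition makes sense for every large $r$. On the compact K\"ahler surface $X$ the set of positive currents of bounded mass is sequentially compact for the weak topology, so any sequence $r_i\to +\infty$ admits a subsequence along which $N(r_i;\xi)$ converges weakly to a positive $(1,1)$-current $N_\infty$ of mass $1$, which is in particular non-zero.

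The crucial step is to show that this limit is closed. Using the Riesz-type reformulation $N(r;\xi)(\omega) = T(r;\xi)^{-1}\int_{\disk_r}\log(r/|z|)\,\xi^*\omega$ and integrating by parts against a smooth test $1$-form $\beta$ (the logarithmic singularity at $z=0$ is innocuous, since $\log(r/\epsilon)\cdot 2\pi\epsilon \to 0$ as the puncture shrinks), one obtains
\[
|dN(r;\xi)(\beta)| \le \frac{C\|\beta\|_\infty}{T(r;\xi)}\int_0^r L(s;\xi)\,\frac{ds}{s},
\]
where $L(s;\xi)$ denotes the length of $\xi(\partial\disk_s)$ in the K\"ahler metric. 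Writing $\xi^*\kappa = h\,\tfrac{\ii}{2}dz\wedge d\bar z$ and applying Cauchy--Schwarz to $L(s;\xi) = \int_0^{2\pi}\sqrt{h(se^{i\theta})}\,s\,d\theta$ yields the classical Ahlfors bound $L(s;\xi)^2 \le 2\pi s\,A'(s;\xi)$. A Borel-lemma argument, obtained by contradiction (if the averaged ratio stayed bounded below, the inequality $A'(s)\ge c\,A(s)^2/s$ would force $A$ to blow up in finite ``logarithmic time''), then shows
\[
\liminf_{r\to +\infty}\frac{1}{T(r;\xi)}\int_0^r L(s;\xi)\,\frac{ds}{s} = 0.
\]
Refining the subsequence $(r_i)$ so that this ratio tends to zero gives $dN(r_i;\xi)\to 0$ in the sense of currents, hence $dN_\infty = 0$.

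The main obstacle is the quantitative Ahlfors step --- comparing the boundary term $\int_0^r L(s;\xi)\,ds/s$ with the characteristic $T(r;\xi)$. It is a purely one-dimensional estimate that ultimately rests on the length--area inequality and on the fact that the entire curve has infinite total area. All remaining ingredients --- positivity and non-triviality of the limit, existence of the weak accumulation point --- are then immediate consequences of the mass normalization.
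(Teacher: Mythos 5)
The paper states this lemma as a citation to Brunella and gives no proof of its own; your argument correctly reconstructs the classical Ahlfors length--area scheme that underlies that reference --- the mass normalization $N(r;\xi)(\kappa)=1$ gives weak compactness, Fubini rewrites the boundary mass as $T(r)^{-1}\int_0^r L(s)\,ds/s$, and length--area plus a Borel-type lemma make this vanish along a subsequence --- so the approach matches.

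One caution on the Borel step: as stated, the parenthetical is slightly off, since a lower bound on the \emph{averaged} ratio $T(r)^{-1}\int_0^r L(s)\,ds/s$ does not by itself produce the pointwise differential inequality $A'(s)\ge c\,A(s)^2/s$. The clean way to finish is to apply Cauchy--Schwarz in the form
\[
\int_\rho^r L(s)\,\frac{ds}{s}\;\le\;\Big(\int_\rho^r\frac{L(s)^2}{s\,A(s)}\,ds\Big)^{1/2}\Big(\int_\rho^r\frac{A(s)}{s}\,ds\Big)^{1/2}\;\le\;\sqrt{2\pi\log\frac{A(r)}{A(\rho)}}\cdot\sqrt{T(r)},
\]
using $L^2\le 2\pi s A'$, and then to rule out $\liminf_r \log A(r)/T(r)>0$: setting $\tau(t)=T(e^t)$, a bound $\log A(e^t)\ge\delta\tau(t)$ would give $\tau'(t)=A(e^t)\ge e^{\delta\tau(t)}$, and integrating $\tau'e^{-\delta\tau}\ge 1$ forces $\tau$ to blow up in finite time, a contradiction. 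This is exactly the ``blow-up in finite logarithmic time'' mechanism you invoke, but it must be run on $T(r)$ rather than on $A(r)$ directly.
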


By definition, these closed currents are the {\bf{Ahlfors-Nevanlinna currents}} determined
by $\xi$. Every such current $A$ has a cohomology class $[A]$ in $H^{1,1}(X,\R)$ (see \S~\ref{par:invcurrents}). 

\begin{pro}[see \cite{Brunella:1999}]\label{geneahlfors}
Let $X$ be a compact K\"ahler surface and $\xi\colon \C\to X$ be a non-constant entire curve. 
Let $A$ be an Ahlfors current determined by $\xi$.
\begin{itemize}
\item[(1)] If $\xi(\C)$ is contained in a curve $E$, then the genus of $E$ is equal to $0$ or $1$ and $A$ is
equal to $Area(E)^{-1}\{E\}$. 
\item[(2)] If the area $A(r;\xi)$ is bounded by a constant which does not depend on $r$, then $\xi(\C)$
is contained in a compact curve $E\subset X$. 
\end{itemize}
If $\xi(\C)$ is not contained in a compact curve, then 
\begin{itemize}
\item[(3)]  $\langle[A]\vert [C]\rangle \geq 0$ for every curve $C \subset X$;
\item[(4)] $[A]$ is in the nef cone of $X$ and $[A]\cdot [A]\geq 0$.
\end{itemize}
\end{pro}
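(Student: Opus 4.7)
I would treat the four statements in order, with (3) being the technical core and (4) following from it by classical surface theory. First, for (1): if $\xi(\C)\subset E$, then $\xi$ lifts through the normalization $\nu\colon \tilde E\to E$ to a non-constant holomorphic map $\tilde\xi\colon\C\to\tilde E$, which by uniformization forces $\tilde E$ (hence $E$) to have genus $0$ or $1$ (otherwise the universal cover of $\tilde E$ would be the disk, contradicting Liouville). Since each current $\{\xi(\disk_s)\}$ is supported on $E$, every weak limit of $N(r;\xi)$ is a closed positive $(1,1)$-current supported on $E$, hence a non-negative multiple of $\{E\}$; the constant is fixed by the tautological identity $N(r;\xi)(\kappa)=1$, which is immediate from the definitions of $A(r;\xi)$ and $T(r;\xi)$, and this yields $A=\mathrm{Area}(E)^{-1}\{E\}$. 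For (2), if $A(r;\xi)$ is uniformly bounded, then $\xi(\C)$ has locally finite $\kappa$-area, and Bishop's theorem on finite-volume analytic sets (see \cite{Chirka}) implies that its closure is an analytic subset of $X$; being one-dimensional in a projective surface, it is algebraic by Chow/GAGA.

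For (3), the heart of the matter is the classical Ahlfors lemma. Choosing a smooth closed $(1,1)$-form $\alpha$ representing $[C]$ and regularizing $\{C\}$ appropriately, one obtains an estimate of the shape
\[
\bigl|\{\xi(\disk_s)\}(\alpha)-\#(\xi^{-1}(C)\cap\disk_s)\bigr|\leq c\cdot L(s;\xi),
\]
where $L(s;\xi)$ is the $\kappa$-length of $\xi(\partial\disk_s)$ and $c$ is independent of $s$. Ahlfors' selection lemma produces a sequence $r_i\to+\infty$ along which $L(r_i;\xi)/T(r_i;\xi)\to 0$, so the error terms disappear in the logarithmic average defining $A$. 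Since $\xi(\C)\not\subset C$, the intersection count $\#(\xi^{-1}(C)\cap\disk_s)$ is a non-negative integer, and passing to the limit gives $\langle[A]\,\vert\,[C]\rangle\geq 0$.

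Assertion (4) is then formal. By (3), $[A]$ pairs non-negatively with every effective class, so Kleiman's criterion places $[A]$ in the nef cone (the closure of the K\"ahler cone). For any K\"ahler class $[\omega]$ the sum $[A]+t[\omega]$ is K\"ahler for every $t>0$; hence $([A]+t[\omega])^2>0$, and letting $t\to 0^+$ yields $[A]\cdot[A]\geq 0$.

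\textbf{Main obstacle.} The delicate step is the Ahlfors lemma behind (3): the selection of the exceptional sequence $(r_i)$ relies on an isoperimetric-type inequality relating $T(r;\xi)$ and $L(r;\xi)$, and one must carefully regularize the current $\{C\}$ by smooth closed $(1,1)$-forms while controlling the resulting error. Once this step is in place, (1), (2), and (4) are comparatively routine.
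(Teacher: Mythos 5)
The paper offers no proof of this proposition at all --- it simply cites Brunella's paper for all four items --- so there is no internal argument to compare against. Your sketches of (1), (2), and (4) are standard and essentially correct: (1) is uniformization plus the fact that a closed positive $(1,1)$-current supported on an irreducible curve is a multiple of the current of integration, with the mass normalization $N(r;\xi)(\kappa)\equiv 1$ pinning down the constant; (2) is Bishop's finite-volume extension theorem plus Chow/GAGA; and (4) follows from (3) once one invokes Kleiman-type duality between the nef cone and the cone of curves (on a compact K\"ahler surface one should really pass through the Zariski decomposition of the pseudo-effective class $[A]$ to eliminate a possible negative part, but this is routine).

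The genuine gap is in (3). The displayed estimate
\[
\bigl|\{\xi(\disk_s)\}(\alpha)-\#(\xi^{-1}(C)\cap\disk_s)\bigr|\leq c\cdot L(s;\xi)
\]
with a constant $c$ independent of $s$ is simply false. Indeed, the left-hand side equals the boundary term $\bigl|\int_{\partial\disk_s}d^c(\varphi\circ\xi)\bigr|$, where $\varphi=\log\Vert\sigma\Vert^2_h$ is the Poincar\'e--Lelong potential with $dd^c\varphi=\{C\}-\alpha$. The one-form $d^c\varphi$ blows up along $C$, so the only available bound is $\sup\Vert d\varphi\Vert\cdot L(s;\xi)$ with $\sup\Vert d\varphi\Vert=+\infty$; regularizing $\{C\}$ only trades this infinity for a Lipschitz constant that diverges as the regularization parameter tends to $0$, and the intersection count is recovered only in that same limit, so the two-parameter passage to the limit is precisely the uncontrolled step. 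The correct argument, which is the one in Brunella's paper, avoids any length estimate: normalize the hermitian metric so that $\Vert\sigma\Vert_h\leq 1$; then Jensen's formula gives
\[
N(r;\xi)(\alpha)=\frac{N_\xi(r,C)+m_\xi(r,C)}{T(r;\xi)}+\frac{\varphi(\xi(0))}{T(r;\xi)},
\]
where both the counting function $N_\xi(r,C)$ and the proximity function $m_\xi(r,C)=-\tfrac{1}{2\pi}\int_0^{2\pi}\varphi(\xi(re^{\ii\theta}))\,d\theta$ are non-negative. Since $T(r;\xi)\to\infty$ when $\xi(\C)$ is not contained in a curve, letting $r\to\infty$ along the subsequence that defines $A$ gives $\langle[A]\mid[C]\rangle\geq 0$ directly. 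In other words, the ``main obstacle'' you identified --- controlling an absolute error via the Ahlfors length-area lemma --- is not the mechanism here; the proof exploits the \emph{sign} of the proximity term, not a bound on it, and the only place Ahlfors selection enters is in the already-granted statement that the limit $A$ is a closed current.
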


\begin{rem}
In Section~\ref{par:ds-corps-texte} and Section~\ref{par:ds-appendix}, we shall also study the {\bf{Ahlfors currents}}, defined as closed positive limits of 
\[
\frac{1}{A(r;\xi)} \{\xi(\disk_r\}.
\]
Such closed currents exist, but Properties (3) and (4) may a priori fail for them. 
\end{rem}

\subsubsection{Stable and unstable Ahlfors currents}
By the following theorem, the Ahlfors currents associated to  the stable manifolds of $f$ are equal to $T^+_f$; 
to state it, recall that $\Lambda$ is the set of full $\mu_f$-measure introduced in Theorem~\ref{OP}.

\begin{thm}[see \cite{Cantat:Milnor}]\label{TequalA} Let $f$ be an automorphism of a compact K\"ahler
surface $X$ with positive entropy. Let  $x$ be a point of $\Lambda$, and $\xi^s_x\colon \C\to X$ be a  parametrization of the stable manifold $W^s(x)$. 
If $\xi^s_x(\C)$ is not contained in a (compact) periodic curve, then all Ahlfors-Nevanlinna currents associated to $\xi^s_x$ coincide
with $T^+_f$.
\end{thm}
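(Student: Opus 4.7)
The plan is to identify the Ahlfors--Nevanlinna current $A$ with $T^+_f$ by pinning down its cohomology class: if I can show $[A]=\theta^+_f$, the uniqueness statement in Theorem~\ref{curcur} will immediately give $A=T^+_f$.

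The easy half consists of three remarks about $[A]$. First, since $\xi^s_x(\C)=W^s(x)$ is assumed not to be contained in a periodic curve, item~(4) of Proposition~\ref{geneahlfors} puts $[A]$ in the nef cone of $X$ and yields $[A]^2\ge 0$. Second, from the very construction of $N(r;\xi^s_x)$ the $\kappa$-mass of this current equals $1$ for every $r$, hence $\langle[A]\vert[\kappa]\rangle=1=\langle\theta^+_f\vert[\kappa]\rangle$, by the normalization~\eqref{eq:normalization}. Third, the $\lambda_f$-eigenspace of $f^*$ on $H^{1,1}(X;\R)$ is one-dimensional and spanned by $\theta^+_f$ (\S~\ref{actioncoho}); so once I have checked that $[A]$ is a $\lambda_f$-eigenvector, the equality $[A]=\theta^+_f$ will follow from the mass normalization.

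The substantive step is to prove $f^*[A]=\lambda_f[A]$. I would proceed by a renormalization argument based on the functional equation $f\circ\xi^s_x=\xi^s_{f(x)}\circ M_x$ from Proposition~\ref{affpara}. Iterating it gives
\[
f^n\circ\xi^s_x=\xi^s_{f^n(x)}\circ M^n_x,\qquad |m^n_x|\in e^{n\lambda_s}\cdot[e^{-n\epsilon},e^{n\epsilon}],
\]
and hence both the Nevanlinna transformation $T(r;f^n\circ\xi^s_x)=T(|m^n_x|\,r;\xi^s_{f^n(x)})$ and the area identity $A(r;f^n\circ\xi^s_x)=\int_{\disk_r}(\xi^s_x)^*(f^n)^*\kappa$. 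Because $T^+_f$ has H\"older-continuous potentials (Theorem~\ref{curcur}), the convergence $\lambda_f^{-n}(f^n)^*\kappa\to T^+_f$ commutes with the slicing along the stable parametrization, so
\[
\lambda_f^{-n}\,A(r;f^n\circ\xi^s_x)\longrightarrow\psi(r),
\]
where $\psi(r)$ denotes the mass of the slice $(\xi^s_x)^*T^+_f$ on $\disk_r$. Balancing these asymptotics against the Pesin contraction $|m^n_x|\approx e^{n\lambda_s}$ along suitable subsequences of radii produces $f^*[A]=\lambda_f[A]$, whence $[A]=\theta^+_f$ and $A=T^+_f$ by uniqueness.

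The hard part will be this renormalization: matching the exponentially small factor $|m^n_x|\approx e^{n\lambda_s}$ along stable manifolds with the exponentially large factor $\lambda_f^n$ coming from the construction of $T^+_f$, in such a way as to rule out the spurious alternative $f^*[A]=\lambda_f^{-1}[A]$ (which would force $[A]\propto\theta^-_f$). A clean implementation should exploit the compactness, inside the nef cone $\cap\,\Pi_f$, of the family of cohomology classes of Ahlfors--Nevanlinna currents attached to stable parametrizations of $\mu_f$-generic points, together with the continuity of the slicing operation under weak convergence of currents with H\"older potentials.
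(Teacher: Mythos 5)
Your overall strategy is reasonable — pin down $[A]$ in cohomology, then invoke the uniqueness part of Theorem~\ref{curcur}. The three ``easy'' observations are correct: $[A]$ nef with $[A]^2\ge 0$, $\kappa$-mass equal to $1$, and one-dimensionality of the $\lambda_f$-eigenspace. But the eigenvector step is where the proposal breaks down, and it does so for a concrete reason you have not noticed.

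Compute the limit you introduce. Using $(f^n)^*T^+_f=\lambda_f^n T^+_f$, $f^n\circ\xi^s_x=\xi^s_{f^n(x)}\circ M^n_x$, and $\vert m^n_x\vert\le e^{n(\lambda_s+\epsilon)}\to 0$, one gets
\[
(\xi^s_x)^*T^+_f \ = \ \lambda_f^{-n}\,(M^n_x)^*\bigl((\xi^s_{f^n(x)})^*T^+_f\bigr) .
\]
Since $T^+_f$ has continuous potentials, and along a Pesin subsequence the curves $\xi^s_{f^n(x)}$ are uniformly bounded on a fixed disk, the mass of $(\xi^s_{f^n(x)})^*T^+_f$ on $\disk_{\vert m^n_x\vert r}$ is $O(1)$ (in fact $o(1)$). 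The factor $\lambda_f^{-n}$ then forces $(\xi^s_x)^*T^+_f = 0$: the slice of $T^+_f$ by a generic stable parametrization vanishes identically. Consequently the quantity $\psi(r)=\lim_n \lambda_f^{-n} A(r; f^n\circ\xi^s_x)$ that you plan to balance against the Pesin contraction is $0$ for every $r$. The displayed renormalization therefore produces the trivial inequality $A(r;f^n\circ\xi^s_x)=o(\lambda_f^n)$ and yields no information about $f^*[A]$; the step ``Balancing these asymptotics \ldots produces $f^*[A]=\lambda_f[A]$'' is a genuine gap, not just a hard technical point, because the object you planned to balance has degenerated to zero.

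The vanishing $(\xi^s_x)^*T^+_f=0$ is, however, exactly the fact that closes the argument if used correctly, and it lets you bypass the eigenvector route entirely. Since $T^+_f$ has a global decomposition $T^+_f=\omega+dd^c u$ with $\omega$ a smooth closed form representing $\theta^+_f$ and $u$ continuous, the wedge product $A\wedge T^+_f$ is well defined and
\[
\int_X A\wedge T^+_f \ = \ \langle [A] \,\vert\, \theta^+_f\rangle .
\]
On the other hand, $\{\xi^s_x(\disk_s)\}\wedge T^+_f$ equals the restriction of $(\xi^s_x)^*T^+_f$ to $\disk_s$, which is identically zero; averaging over $s$ and using the convergence of Ahlfors--Nevanlinna currents against a current with continuous potentials (the same mechanism as in Lemma~\ref{lem:pull-back-cv}), one obtains $A\wedge T^+_f=0$, hence $\langle[A]\vert\theta^+_f\rangle=0$. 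Now Hodge index theorem does the rest: $[A]$ is nef and nonzero, $\theta^+_f$ is nef, isotropic, and nonzero, both lie in the closure of the same positive cone, and the only nonzero vectors in that closed cone orthogonal to an isotropic vector are its scalar multiples. Thus $[A]=c\,\theta^+_f$, and $c=1$ by your mass normalization. Uniqueness in Theorem~\ref{curcur} then gives $A=T^+_f$. This is strictly shorter and automatically rules out the ``spurious'' alternative $[A]\propto\theta^-_f$, because $\langle\theta^-_f\vert\theta^+_f\rangle=1\ne 0$: orthogonality to $\theta^+_f$ is precisely the dynamical input distinguishing stable from unstable manifolds, and you already had it in hand without realizing it.
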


For $\xi^s_x$ one can take the parametrization of Proposition~\ref{affpara}. A similar result holds for unstable manifolds and the current $T^-_f$. 

\begin{rem} 
In \cite{Dinh-Sibony:preprint2}, Dinh and Sibony prove the following strengthening of Theorem~\ref{TequalA}: {\sl{if $\xi\colon \C\to X$ is 
an entire curve such that 
\begin{itemize}
\item $\xi(\C)$ is not contained in an algebraic periodic curve of $f$, and
\item the family of entire curves $f^n\circ \xi$, $n\geq 1$, is locally equicontinuous (i.e. is a normal family of
entire curves),
\end{itemize}
then all Ahlfors-Nevanlinna currents of $\xi$ coincide with $T^+_f$}}. We shall not need this version of Theorem~\ref{TequalA}.
\end{rem}

\subsubsection{Dinh-Sibony theorem}\label{par:ds-corps-texte}

The following result is proved for complex manifolds in \cite{Dinh-Sibony:preprint}. The proof 
extends to the case of compact complex spaces. To state it, recall that a {\bf{Brody curve}} is a non-constant entire
holomorphic curve $\xi\colon \C\to M$ whose velocity $\parallel \xi'(z)\parallel$ is uniformly bounded. 

\begin{thm}[Dinh-Sibony, \cite{Dinh-Sibony:2005}]\label{thm:DS} Let $M$ be a compact complex analytic space. Let $T$ be a closed 
positive current of bidegree $(1,1)$ on $M$ which is locally defined by continuous potentials. Let $\xi \colon \C \to M$
be a Brody curve such that $\xi^*(T)=0$. Then, there is an Ahlfors current $S$ determined by $\xi$ that satisfies $T\wedge S=0$.  
\end{thm}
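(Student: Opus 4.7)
The guiding heuristic is that $\xi^*T=0$ says ``$\xi(\C)$ is $T$-null'', and an Ahlfors current is a renormalized limit of the disks $\xi(\disk_r)$; so $T\wedge S$ should vanish up to boundary terms which become negligible on a well-chosen subsequence (this is exactly Ahlfors' compensation trick). The plan is to work locally with a continuous potential $u$ of $T$, use $\xi^*T=0$ to show that $u\circ\xi$ is pluriharmonic wherever defined, and then estimate the total mass of $T\wedge \{\xi(\disk_r)\}$ by a Stokes/Jensen type formula in which only the boundary $\xi(\partial\disk_r)$ contributes.

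First I would set $S_r := A(r;\xi)^{-1}\{\xi(\disk_r)\}$. These are positive currents of mass $1$ (with respect to $\kappa$), so Banach--Alaoglu produces a subsequence $r_i\to\infty$ with $S_{r_i}\to S$ weakly, and such an $S$ is by definition an Ahlfors current for $\xi$. If it happens that $A(r;\xi)$ stays bounded, Proposition \ref{geneahlfors}(2) puts $\xi(\C)$ in a compact curve $E$, and the limit $S$ is a multiple of $\{E\}$; then $\xi^*T=0$ forces the continuous potential $u$ of $T$ to be harmonic on the smooth part of $E$ along $\xi(\C)$, which, combined with continuity of $u$ at the singular points of $E$, gives $T\wedge\{E\}=0$ directly. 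So the core case is $A(r;\xi)\to\infty$.

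In the core case, cover $M$ by finitely many charts $U_\alpha$ on which $T=dd^c u_\alpha$ with $u_\alpha$ continuous; the relation $\xi^*T=0$ means $dd^c(u_\alpha\circ\xi)=0$ on the open set $\xi^{-1}(U_\alpha)$. Since $u_\alpha$ is continuous, the Bedford--Taylor/Demailly wedge $T\wedge S_r$ is a well-defined positive measure (it is locally $dd^c(u_\alpha S_r)/A(r;\xi)$, which makes sense because $u_\alpha$ is continuous and $S_r$ is positive, even though $S_r$ is not closed). Using a partition of unity subordinate to the cover and Stokes' formula, the total mass of $T\wedge S_r$ is equal to a sum of boundary terms of the shape
\[
\frac{1}{A(r;\xi)}\int_{\partial\disk_r}\chi_\alpha\,d^c(u_\alpha\circ\xi),
\]
because the interior term $\int_{\disk_r}dd^c(u_\alpha\circ\xi)$ vanishes by pluriharmonicity. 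The Brody condition $\|\xi'\|\le C$ bounds each such boundary integral by a constant multiple of $r\cdot\mathrm{osc}(u_\alpha)$, hence the total mass of $T\wedge S_r$ is $O(r/A(r;\xi))$.

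Finally, I would apply the classical Ahlfors compensation lemma: when $A(r;\xi)\to\infty$, one can select $r_i\to\infty$ with $r_i/A(r_i;\xi)\to 0$ (this is the content of the standard logarithmic--derivative trick: if no such sequence existed, then $A(r;\xi)\le C r$ eventually, contradicting the elementary fact that $\int_1^R A(r;\xi)\,dr/r\to\infty$ combined with the growth of $T(r;\xi)$). Along this subsequence, the total mass of $T\wedge S_{r_i}$ goes to zero. Because $u_\alpha$ is continuous, the wedge operation $R\mapsto T\wedge R$ is continuous on positive currents for the weak topology, so $T\wedge S_{r_i}\to T\wedge S$, and thus $T\wedge S=0$. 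The main obstacle is the last continuity point: one must check that weak convergence $S_{r_i}\to S$ plus continuity of the potentials really transmits to weak convergence of the wedge products even though the $S_{r_i}$ are not closed; this is handled by writing the wedge locally as $dd^c(u_\alpha S_r)$ and using that $u_\alpha S_r\to u_\alpha S$ weakly by uniform continuity of $u_\alpha$.
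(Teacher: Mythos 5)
The paper itself does not prove this statement: it cites Dinh--Sibony and only remarks that their proof, written for manifolds, extends to compact complex spaces. So your proposal has to be assessed on its own merits.

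Your overall plan — work with a (local) continuous potential $u$, integrate by parts on $\disk_r$ to reduce $\langle T\wedge S_r,\varphi\rangle$ to boundary integrals over $\partial\disk_r$, and then extract a subsequence along which those boundary contributions are negligible — is in the right spirit and is indeed the skeleton of the Dinh--Sibony argument. The weak-continuity issue you isolate at the end (that $S_r$ is not closed and one must show $u_\alpha S_{r_i}\to u_\alpha S$) is correctly identified and correctly resolved by the continuity of $u_\alpha$. The handling of the degenerate case $A(r;\xi)$ bounded via Proposition~\ref{geneahlfors}(2) is also correct.

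There are, however, two genuine gaps. \emph{First}, when you bound the boundary integral $\int_{\partial\disk_r}\chi_\alpha\,d^c(u_\alpha\circ\xi)$ ``by a constant multiple of $r\cdot\mathrm{osc}(u_\alpha)$ using the Brody condition,'' this does not follow from the Brody bound $\|\xi'\|\le C$ alone: the form $d^c(u_\alpha\circ\xi)$ involves the derivative of $u_\alpha\circ\xi$, and $u_\alpha$ is merely continuous. What makes this work is the interior gradient estimate for harmonic functions: $u_\alpha\circ\xi$ is harmonic on $\xi^{-1}(U_\alpha)$, and the Brody bound guarantees that points $z$ with $\xi(z)\in\mathrm{supp}\,\chi_\alpha\Subset U_\alpha$ lie at a uniform distance from $\partial\,\xi^{-1}(U_\alpha)$, so that $|\nabla(u_\alpha\circ\xi)|$ is controlled by $\mathrm{osc}(u_\alpha)$ divided by that radius. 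This is fixable but must be said, since as written the step is unjustified. You also dropped the companion boundary term $\int_{\partial\disk_r}(u_\alpha\circ\xi)\,d^c\bigl((\chi_\alpha\varphi)\circ\xi\bigr)$ from the Green identity (it is the easy one, bounded by $O(L(r;\xi))$, but it should appear).

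\emph{Second, and more seriously}, the subsequence selection is incorrect. After obtaining the bound $O(r/A(r;\xi))$, you claim to find $r_i\to\infty$ with $r_i/A(r_i;\xi)\to 0$, arguing that otherwise $A(r;\xi)\le Cr$, which would ``contradict the growth of $T(r;\xi)$.'' But $A(r;\xi)\le Cr$ only yields $T(r;\xi)=\int_1^r A(s;\xi)\,ds/s\le Cr$, which is no contradiction at all. Indeed, for $\xi(z)=e^z$ mapping $\C\to\P^1$ one has $\|\xi'\|$ bounded (Brody) while $A(r;\xi)\asymp r$, so $\liminf r/A(r;\xi)>0$; that example is of course a curve in a compact curve and so falls into your Step~1, but it shows that your general compensation argument, as stated, proves nothing. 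This is exactly the step that the Ahlfors isoperimetric lemma handles for $L(r;\xi)/A(r;\xi)$ (via $L(r)^2\le 2\pi r A'(r)$), but no such elementary mechanism disposes of the new term $\int_{\partial\disk_r}(\varphi\circ\xi)\,d^c(u_\alpha\circ\xi)$, which scales like $r$ rather than like $L(r;\xi)$. The actual Dinh--Sibony argument avoids creating this term in the first place, essentially by working with the logarithmically averaged Nevanlinna currents $N(r;\xi)$ and the Lelong--Jensen formula, which converts circle averages of $u\circ\xi$ into disk integrals of $dd^c(u\circ\xi)=\xi^*T=0$ without ever differentiating the continuous potential on $\partial\disk_r$. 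Your proof cannot be completed along the lines you propose without supplying a replacement for this crucial compensation step.
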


\begin{rem}
Note that $\xi^*T$ and $T\wedge S$ are well defined because $T$ has continuous potentials: locally, 
$\xi^*T$ is the positive measure defined by $dd^c(u\circ\xi)$, where $u$ is 
a local potential for $T$; similarly,  $T\wedge S$ is locally defined as the current $dd^c(u\cdot S)$.
\end{rem} 

The following corollary is a crucial ingredient in the proof of Theorem \ref{thm:normal}; it is  proved in Section~\ref{app-appendix}, 
together with  a corollary concerning Fatou sets of automorphisms of complex projective surfaces.

\begin{cor}\label{cor:DS}
Let $X$ be a complex projective surface and let $f$ be an automorphism of $X$ with positive entropy. 
Let $\pi\colon X\to X_0$ be the contraction of the periodic curves of $f$. Then, \begin{itemize}
\item[(1)] the Ahlfors currents $A_\nu$ of every non-constant entire curve $\nu\colon \C\to X_0$  satisfy 
\[
 \int_X A_\nu\wedge \pi_*(T^+_f+T^-_f) \, >0;
\] 
(this product is well defined because $\pi_*T^+_f$ and $\pi_*T^-_f$ have continuous potentials on $X_0$)
\item[(2)] there is no non-constant entire curve $\xi\colon \C\to X_0$ with $\xi^*(\pi_*(T^+_f+T^-_f))=0$.
\end{itemize}
\end{cor}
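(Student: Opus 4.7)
The plan is to exploit the fact that, after contracting periodic curves, the push-forward $\pi_*(T^+_f + T^-_f)$ is a closed positive current on $X_0$ whose cohomology class is \emph{K\"ahler} (by the remark following Proposition~\ref{prop:contraction}) and whose local potentials are \emph{continuous} (Remark~\ref{pushpush}). I would represent this current globally as $\omega_0 + dd^c u$, with $\omega_0$ a K\"ahler form on $X_0$ (in the singular sense) in the same class and $u$ a continuous function. For any closed positive current $S$ on $X_0$, integration by parts then yields
\[
\int_{X_0} S \wedge \pi_*(T^+_f + T^-_f) \;=\; \int_{X_0} S \wedge \omega_0,
\]
which is strictly positive as soon as $S$ is non-zero. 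This single observation reduces both assertions to exhibiting non-zero closed positive currents.

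For part (1), I would simply note that any Ahlfors current $A_\nu$ of the non-constant entire curve $\nu:\C\to X_0$ has mass $1$ with respect to the reference K\"ahler form on $X_0$ (being a weak limit of $\{\nu(\disk_r)\}/A(r;\nu)$), hence is non-zero; applying the key inequality with $S=A_\nu$ gives (1).

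For part (2), I would argue by contradiction, assuming that $\xi:\C\to X_0$ is a non-constant entire curve with $\xi^*(\pi_*(T^+_f+T^-_f))=0$. First I would reduce to the case where $\xi$ is a Brody curve: if $\xi$ is not Brody, Brody's reparametrization lemma (Zalcman) produces affine reparametrizations $\xi_k(z)=\xi(a_kz+b_k)$ converging locally uniformly to a non-constant Brody curve $\xi_\infty:\C\to X_0$. The vanishing is preserved in the limit, because for a local continuous potential $v$ of $\pi_*(T^+_f+T^-_f)$ along the image, each composition $v\circ\xi_k=(v\circ\xi)\circ(a_k\cdot +b_k)$ is harmonic on $\C$, and continuity of $v$ combined with the local uniform convergence $\xi_k\to\xi_\infty$ forces $v\circ\xi_\infty$ to be the local uniform limit of harmonic functions, hence harmonic; thus $\xi_\infty^*(\pi_*(T^+_f+T^-_f))=0$. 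Now Theorem~\ref{thm:DS} (in its version for compact complex spaces) applied on $X_0$ with $T=\pi_*(T^+_f+T^-_f)$ provides an Ahlfors current $S$ of $\xi_\infty$ satisfying $T\wedge S=0$, which flatly contradicts part~(1) applied to $\nu=\xi_\infty$.

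The main delicacy lies in the Brody reduction step: one must pass the harmonicity of $v\circ\xi_k$ to a locally uniform limit, and this relies crucially on the \emph{continuity} of the potentials of $\pi_*(T^+_f+T^-_f)$, which is precisely why the periodic curves need to be contracted first (Remark~\ref{pushpush}). Everything else is a direct consequence of the K\"ahler nature of the target cohomology class combined with Dinh--Sibony's theorem.
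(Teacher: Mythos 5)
Your proof follows the same route as the paper's: for (1), the decomposition $\pi_*(T^+_f+T^-_f)=\kappa+dd^c w$ with $\kappa$ K\"ahler and $w$ continuous (from the appendix, Theorem~\ref{thm:continuous-potentials}) reduces the pairing to $\int A_\nu\wedge\kappa>0$; for (2), one reparametrizes via Zalcman to produce a Brody curve on which the vanishing persists (the paper invokes Lemma~\ref{lem:pull-back-cv} where you argue directly via harmonicity of $w\circ\xi_k$ and locally uniform limits, which is the same substance), then applies Dinh--Sibony's theorem to contradict (1). Correct, and essentially identical to the paper's argument.
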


\section{Product structure and absolute continuity}\label{par:abso-conti}

The currents $T^{\pm}_f$ have a common geometric property, called weak laminarity. To describe this property, we relate
it to the dynamical notion of Pesin boxes, and explain that the measure $\mu_f$ has a product structure in these
boxes. These properties lead to the proof of Proposition~\ref{pro:abso-conti}: it says that  
all slices of $T^-_f$ and $T^+_f$ by Riemann surfaces give rise to absolutely continuous 
measures; in other words, one can transfer the regularity assumption on $\mu_f$ to a (rather weak) regularity property 
of $T^+_f$ and $T^-_f$. 

We refer to \cite{BLS1, Cantat:Acta, Dujardin:Duke}, as well as \cite{Cantat:Milnor}, for proofs of the results used in this section. 

\subsection{Laminations and quasi-conformal homeomorphisms}

\subsubsection{Quasi-conformal homeomorphisms (see \cite{Ahlfors:book})}\label{quasico}

Let $h\colon \U \to \V$ be an orientation preserving homeomorphism between two Riemann surfaces. 
One says that $h$ is $K$-quasi-conformal if $h$ is absolutely continuous on lines
and 
\[
\vert \partial_{\overline{z}} h \vert \leq \frac{K-1}{K+1} \vert \partial_{z} h \vert
\]
almost everywhere, see \cite[Chapter II]{Ahlfors:book}. A $1$-quasi-conformal 
mapping is conformal, hence holomorphic.

\begin{lem}[see \cite{Ahlfors:book},Chapter II, Theorem 3]
If $h$ is a quasi-conformal homeomorphism, then $h$ is absolutely continuous 
with respect to Lebesgue measure. 
\end{lem}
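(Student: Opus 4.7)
The plan is to establish the classical Lusin $N$-property for $h$: any Borel set $E \subset \U$ with zero Lebesgue measure has image $h(E) \subset \V$ with zero Lebesgue measure. Since absolute continuity is local, I can reduce to the case where $\U, \V \subset \C$ are relatively compact planar domains. The hypotheses give me two inputs to combine: (i) $h$ is ACL, so by Fubini and the one-dimensional Lebesgue differentiation theorem, the partial derivatives $\partial_z h$ and $\partial_{\bar z} h$ exist at Lebesgue-a.e. point of $\U$; (ii) the Beltrami inequality $|\partial_{\bar z} h| \leq \tfrac{K-1}{K+1} |\partial_z h|$ a.e., which, combined with the computation $|Dh|^2 = (|\partial_z h|+|\partial_{\bar z} h|)^2$ and $J_h = |\partial_z h|^2 - |\partial_{\bar z} h|^2$ at points of differentiability, yields the pointwise distortion bound $|Dh(z)|^2 \leq K\, J_h(z)$ almost everywhere.

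Next I would establish that $J_h$ is locally integrable. Given a closed rectangle $R \subset \U$, ACL on horizontal and vertical lines lets me apply a one-dimensional change of variables slice by slice and conclude, by Fubini and the transformation $du\,dv = J_h\, dx\,dy$ at differentiability points, that $\int_R J_h \, dm \leq m(h(R))$; since $h$ is a homeomorphism and $R$ has relatively compact image, the right-hand side is finite. So $J_h \in L^1_{\rm loc}(\U)$.

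The core step is the comparison estimate
\[
m(h(E)) \;\leq\; K \int_E J_h \, dm \quad \text{for every Borel } E \subset \U.
\]
I would prove this by a Vitali covering argument. At every Lebesgue point $z_0$ of both $J_h$ and $|Dh|^2$ (which is a set of full measure), the image $h(\disk(z_0,r))$ is contained, for $r$ small enough, in a Euclidean disk centered at $h(z_0)$ of radius at most $(1+o(1))\sup_{\disk(z_0,r)} |Dh|\cdot r$, so its area is at most $\pi(1+o(1)) r^2 \cdot K\, J_h(z_0)$ by the distortion bound and the Lebesgue point property. Covering a bounded open neighbourhood $U$ of $E$ by a Vitali family of such disks and summing gives $m(h(U)) \leq K \int_U J_h \, dm$, and then regularity of Lebesgue measure both in the domain (to make $\int_U J_h$ close to $\int_E J_h$) and in the target (to make $m(h(U))$ close to $m(h(E))$, using that $h$ is a homeomorphism) delivers the desired inequality for arbitrary Borel $E$. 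Once this is in hand, if $m(E)=0$ then $\int_E J_h\, dm = 0$ (since $J_h \in L^1_{\rm loc}$), hence $m(h(E))=0$, which is exactly the absolute continuity statement.

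The main obstacle is the Vitali covering step: one needs that the pointwise bound $|Dh|^2 \leq K J_h$ can be upgraded into an infinitesimal control on how disks distort into disks. This is where quasi-conformality (as opposed to mere a.e. differentiability plus a Beltrami inequality in the distributional sense) is essential. The Lebesgue-point argument sketched above is the cleanest way to bypass the subtlety, but one must carefully check that the $o(1)$ error is uniform along the chosen Vitali subfamily, which is standard but not entirely automatic and is exactly the content of Theorem~3 in Chapter~II of Ahlfors' book.
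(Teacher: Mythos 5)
The paper does not actually prove this lemma: it is quoted verbatim from Ahlfors' book (Chapter II, Theorem 3), and the text that follows in the paper merely unpacks what the statement entails (existence of a.e.\ partial derivatives from the ACL property, local integrability of the Jacobian, and the change-of-variables formula $\Leb(h(A))=\int_A {\sf{Jac}}(h)$). So there is no ``paper's proof'' to compare against; what you have written is a reconstruction of Ahlfors' argument, and it should be judged on its own.

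Your reconstruction identifies the right strategy (reduce to the Lusin $N$-property, derive the pointwise distortion bound $|Dh|^2 \le K J_h$, get $J_h \in L^1_{\rm loc}$, run a covering argument) but it has a genuine gap precisely where you suspected trouble, and the difficulty is somewhat different from the one you flagged. The Vitali covering step is \emph{circular}, not merely in need of a uniformity check. After extracting a disjoint Vitali subfamily $\{\disk_i\}$ of good disks covering a full-measure subset of the open set $U$, you are left with a null set $N \subset U$ (the union of the Vitali leftover and the set of points where $h$ fails to be totally differentiable or fails to be a Lebesgue point), and the estimate $m(h(U)) \le K\int_U J_h$ requires knowing $m(h(N))=0$. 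But ``$h$ maps null sets to null sets'' is exactly the $N$-property you are trying to prove. Replacing the almost-cover version of Vitali by the $5r$-covering lemma does not cure this, since the family of good disks is only centered at a full-measure set, not at every point of $U$. This is why the classical proofs do not proceed from the pointwise a.e.\ bound alone: Ahlfors uses the \emph{geometric} definition of quasiconformality (bounds on moduli of ring domains) to obtain a \emph{uniform} distortion estimate for the images of small squares, uniform over all squares and not just a.e., and it is this uniform estimate that feeds into the covering argument without circularity. Alternatively, one can first upgrade $Dh\in L^2_{\rm loc}$ to ``$h$ is a $W^{1,2}_{\rm loc}$-homeomorphism'' and invoke Reshetnyak's theorem that planar $W^{1,2}_{\rm loc}$-homeomorphisms satisfy the $N$-property, but that theorem is itself a substantial input, not a corollary of the Vitali computation.

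Two smaller points. First, you pass freely from the ACL property (a.e.\ existence of \emph{partial} derivatives) to a.e.\ \emph{total} differentiability; this requires the Gehring--Lehto theorem (an open ACL map of a planar domain is a.e.\ differentiable) and should be cited. Second, the slice-by-slice Fubini argument you propose for $\int_R J_h \le m(h(R))$ does not quite work as stated, because the two-dimensional Jacobian does not factor as a product of one-dimensional derivatives along lines; the standard route to this inequality for a.e.-differentiable injective maps is again a covering/degree argument (it is the ``easy'' direction, so no circularity arises there). In short: the architecture is correct and matches Ahlfors, but the key covering step hides the statement being proved, and the repair requires input you have not supplied.
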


To describe this statement in more details, fix a local co-ordinate $z$ on $\U$,
with Lebesgue measure $\Leb$ equal to the area form 
\[
 \frac{\ii}{2}dz\wedge d\zbar.
\]
 Then, the partial derivatives of $h$ 
are well defined almost everywhere because $h$ is absolutely continuous on lines,
its jacobian determinant is locally integrable, and
\[
\Leb(h(A))=\int_A {\sf{Jac}}(h)(z)\frac{\ii}{2}dz\wedge d\zbar
\]
for every Borel subset $A$ of $\U$. 

\subsubsection{Laminations in bidisks (see \cite{Douady:Bourbaki, Ghys:Lamination})}\label{par:lamination-intro}

By definition, a {\bf{horizontal graph}} in the bidisk $\disk\times \disk$ is the graph $\{(z,\varphi(z)); z\in \disk\}$ of a 
holomorphic function $\varphi\colon \disk \to \disk$; thus, horizontal graphs are smooth analytic subsets of 
$\disk\times \disk$ that intersect every vertical disk $\{z\} \times \disk$ in exactly one point. Vertical graphs are
images of horizontal graphs by permutation of the co-ordinates.

Consider a family of disjoint, horizontal, holomorphic graphs in  $\disk\times \disk$. If $m$ is
a point on the transversal $\{0 \}\times \disk$ which is contained in one of these graphs, 
one denotes by $\varphi_m\colon \disk \to \disk$ the holomorphic function such that 
$z\mapsto (z,\varphi_m(z))$ parametrizes the graph through $m$. By Montel and Hurwitz theorems, 
one can extend this family of graphs in a unique way into a lamination $\L$ of a compact
subset $\K$ of $\disk\times \disk$ by disjoint horizontal graphs. The leaf of $\L$ through a point
$m$ is denoted $\L(m)$.  

If $z_1$ and $z_2$ are two points on $\disk$, the vertical disks 
\[
\Delta_j=\{(z,w)\, \vert \; z=z_j\}\quad (j=1, 2)
\]
are transverse to the lamination $\L$. 
Denote by $h_{z_1,z_2}$ the holonomy from $\Delta_1$ to $\Delta_2$. 
More generally, if $\Delta$ and $\Delta'$ are two complex analytic transversals (intersecting each
leaf into exactly one point), one gets a holonomy map from $\Delta$ to $\Delta'$. 
By the $\Lambda$-Lemma (see \cite{Douady:Bourbaki}), {\sl{the holonomy  is automatically quasi-conformal}}; 
in particular, it is absolutely continuous with respect to Lebesgue measure.  The quasi-conformal constant of $h_{z_1,z_2}$ satisfies 
\[
0\leq K(z_1,z_2)-1\leq \vert z_2-z_1\vert.
\]
Hence it converges to $1$ 
when $\Delta'$ converges to $\Delta$ in the ${\mathcal{C}}^1$-topology.

\subsection{Laminarity and Pesin boxes}\label{pesinboxes}

\subsubsection{Pesin boxes (see \cite[Section 4]{BLS1}, and \cite{Cantat:Acta, Cantat:Milnor, Dujardin:Duke})}\label{par:pesinboxesdef}

A {\bf{Pesin box}} $\Pes$ for the automorphism $f\colon X\to X$ consists
in an open subset $\U$ of $X$ which is biholomorphic to a bidisk $\disk\times \disk$ 
together with two transverse laminations $\L^s$ and $\L^u$. To fix the ideas the lamination 
$\L^u$ is horizontal: its leaves $\L^u(m)$ are horizontal graphs. These graphs
$\L^u(m)$ intersect the vertical transversal $\{0\}\times \disk$ onto a compact
set $\K^-$ and the union of these graphs is homeomorphic to the product $\disk\times \K^{-}$.
Similarly, $\L^s$ is a lamination by vertical graphs with support  homeomorphic
to $\K^+\times \disk$. 

 Given a point $w\in \K^-$ and a point $w'\in \K^+$, the horizontal leaf $\L^u((0,w))$ intersect the 
vertical leaf $\L^s((w',0))$ in a unique point $[w,w']\in \U$. This provides a homeomorphism $h$
between the product $\K^-\times \K^+$ and the intersection $\K\subset \U$ of the 
supports of $\L^s$ and $\L^u$.  Moreover, by definition, a Pesin box $\Pes=(\U, \L^u, \L^s)$ 
must satisfy the following properties.
\begin{enumerate}
\item[(0)] For $\mu_f$-almost
every point $x\in \K$, the leaf $\L^u(x)$ (resp. $\L^s(x)$) is contained in the global stable
manifold $W^u(x)$ (resp. $W^s(x)$).
\item[(1)] There is a measure $\nu^+$ whose support is $\K^+$  such that the laminar current 
\[
T^+_\Pes:=\int_{w\in \K^+} \{\L^s(w)\} d\nu^+(w)
\]
is dominated by the restriction of $T^+_f$ to $\U$ and 
coincides with $T^+_f$ on the set of continuous $(1,1)$-forms whose support
is a compact subset of the support of $\L^s$. 

\item[(2)] There is, similarly, a uniformly laminar current $T^-_\Pes$ associated to the lamination $\L^u$ and
a transverse measure $\nu^-$ whose support is $\K^-$; this current is the restriction of $T^-_f$ to the support of the unstable lamination~$\L^u$.

\item[(3)] Via the homeomorphism $h\colon \K^-\times \K^+\to \K$, the measure $\mu_f$ corresponds to the product
measure $\nu^+\otimes \nu^-$, i.e.  $\mu_{f\vert \K} = h_*(\nu^+\otimes \nu^-)$.

\end{enumerate}

In a Pesin box $\Pes$, the measure $\nu^+$ can be identified to the conditional measure
of $\mu_f$ with respect to the lamination $\L^s$ (see property (3)). One way to specify this fact is the following. By property (1), 
one can slice $T^+_f$ with an  unstable 
leaf $\L^u(m)$ to get a measure $(T^+_f)_{\vert \L^u(m)}$ (see \S~\ref{par:invcurrents}); then restrict this measure to the intersection of $\L^u(m)$
with the support of the stable lamination, and then push it on $\K^+$ (using the holonomy of $\L^s$); again, one
gets $\nu^+$.

{\sl{Pesin boxes exist, and their union has full $\mu_f$-measure}}: this comes from Pesin theory of non-uniformly 
hyperbolic dynamical systems, and from the fact that $T^+_f$ and $T^-_f$ are  Ahlfors currents of entire curves parametrizing generic
stable and unstable manifolds. See \cite[Section 4]{BLS1} (and also \cite{Cantat:Acta, Cantat:Milnor, Dujardin:Duke}).

\subsubsection{Laminar structure of $T^\pm_f$ (see \cite{BLS1})}\label{par:T+laminar}

The previous section says that $T^\pm_f$  is uniformly laminar current in each Pesin box.
In fact, $T^\pm_f$ is a sum of such currents. More precisely, there is a countable family of
Pesin boxes $\Pes_i=(\U_i, \L^u_i, \L^s_i)$, with transverse measures $\nu^\pm_i$, 
such that the support of the stable laminations
$\L^s_i$ are disjoint, and $T^+_f$ is the sum 
\[
T^+_f=\sum_i T^+_{\Pes_i}
\]
where $T^+_{\Pes_i}$ is the laminar current 
\[
T^+_{\Pes_i}=\int_{w\in \K^+} \{\L_i^s(w)\} \; d\nu_i^+(w).
\]

\subsection{Absolute continuity of the slices of the invariant currents}

\subsubsection{Absolute continuity of the transverse measures $\nu^\pm$ in Pesin boxes}\label{par:Pesin-AbsoC}

\begin{lem}
Let $\Pes=(\U, \L^u, \L^s)$ be a Pesin box with transverse measures $\nu^+$ and $\nu^-$ as in section \ref{pesinboxes}. If $\mu_f$ is absolutely continuous with respect to Lebesgue measure on $X$, then $\nu^+$ and $\nu^-$  are absolutely continuous with respect to Lebesgue measure. 
\end{lem}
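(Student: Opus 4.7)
My plan is to argue by contradiction: starting from a putative exceptional set $A\subset \K^+$ with $\nu^+(A)>0$ but Lebesgue measure zero (measured on the horizontal transversal $\disk\times\{0\}$), I will use the product structure $\mu_{f\vert \K}=h_*(\nu^-\otimes \nu^+)$ to build a set $B\subset X$ that is $\mu_f$-large but Lebesgue-null, contradicting the hypothesis $\mu_f\ll\Leb$.

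Take $B := h(\K^-\times A)$. The product structure immediately gives $\mu_f(B)=\nu^-(\K^-)\,\nu^+(A)>0$; here $\nu^-(\K^-)>0$ because the Pesin box has positive $\mu_f$-mass. To see that $\Leb(B)=0$, I would enlarge $B$ to the saturation
\[
\tilde B := \bigcup_{a\in A}\L^s((a,0)),
\]
and estimate its Lebesgue measure by slicing at every height $w\in\disk$. Since each leaf $\L^s((a,0))$ is a vertical graph $\{(\varphi_a(w),w):w\in\disk\}$, the horizontal slice of $\tilde B$ at height $w$ is exactly the image of $A$ under the holonomy of $\L^s$ from the transversal $\disk\times\{0\}$ to $\disk\times\{w\}$. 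The $\Lambda$-lemma recalled in \S\ref{par:lamination-intro} ensures this holonomy is quasi-conformal, and by \S\ref{quasico} quasi-conformal homeomorphisms are absolutely continuous with respect to Lebesgue measure. Hence the slice is a null set for every $w$, and Fubini yields $\Leb(\tilde B)=0$, so $\Leb(B)=0$, the desired contradiction.

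Interchanging the roles of $\L^s$ and $\L^u$, and slicing by vertical transversals instead, gives $\nu^-\ll\Leb$ by the same argument. I do not expect any genuine obstacle; the only point of care is the measurability of $\tilde B$, which holds because the continuous parametrization $(a,w)\mapsto(\varphi_a(w),w)$ realizes $\tilde B$ as the image of the Borel set $A\times\disk$ under a continuous map, so $\tilde B$ is analytic and Fubini applies. Conceptually, all the real work has already been done: the product structure of $\mu_f$ on Pesin boxes is given by \S\ref{pesinboxes}, and absolute continuity of holonomy is a consequence of the $\Lambda$-lemma; the lemma is simply an unpacking of these two facts.
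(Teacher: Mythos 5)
Your proof is correct and is essentially the paper's argument phrased in contrapositive form: both saturate a transversal set by the lamination, slice the saturation by parallel transversals, invoke absolute continuity of the holonomy (via the $\Lambda$-lemma) to see each slice is Lebesgue-null, and apply Fubini together with the product structure $\mu_{f\vert\K}=h_*(\nu^-\otimes\nu^+)$. The paper works directly from a Lebesgue-null $A$ to conclude $\nu^\pm(A)=0$, whereas you argue by contradiction from $\nu^+(A)>0$; these are the same proof.
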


The proof resides on the absolute continuity of the holonomy of $\L^u$ and $\L^s$, which we obtained from the $\Lambda$-lemma. We provide this proof because it is closely related to the arguments of Section~\ref{par:lam-to-holo}. There is a more general approach, due to Pesin, which necessitates a direct proof of the absolute continuity of the stable and unstable laminations (see \cite{Bar-Pesin}, chapter 8, and \cite{Pesin:ETHZ}, chapter 7). 
 
\begin{proof}
Let $\Delta$ be the vertical disk $\{0\}\times \disk$; it is transverse to $\L^u$. Let $A\subset \Delta$ be 
a Borel subset with Lebesgue measure $0$. Let $\L^u(A)$ be the union of the
leaves of $\L^u$ that intersect $A$. Since the holonomy maps are absolutely continuous (see \S~\ref{par:lamination-intro}), 
every  slice of $\L^u(A)$ by a vertical disk has Lebesgue measure $0$. Thus, by 
Fubini theorem and the absolute continuity of $\mu_f$, $\mu_f(\L^u(A))=0$. 
Since $\mu_f=h_*(\nu^+\otimes \nu^-)$ in $\K$ (see Property (3) of Pesin boxes), one concludes that $\nu^+(A)=0$.
This shows that $\nu^+$ is absolutely continuous with respect to Lebesgue measure. 
The argument is similar for~$\nu^-$.
\end{proof}

\subsubsection{Slices of the invariant currents}
 The following general lemma will be useful several times.

\begin{lem}\label{lem:pull-back-cv}
Let $M$ be a complex manifold. Let $T$ be a closed positive $(1,1)$-current with continuous potentials on $M$.
Let ${\U}$ be an open subset of $\C$. Let $\nu_n\colon {\U} \to M$ be a sequence of holomorphic mappings
that converges uniformly to $\nu\colon {\mathcal{U}}\to M$ on compact subsets of ${\U}$.  Then, the sequence
of measures $\nu_n^*T$ converges weakly to $\nu^*T$ as $n$ goes to $\infty$.
\end{lem}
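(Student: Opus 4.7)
The plan is to reduce the statement to the fact that $dd^c$ is continuous on distributions, using the local existence of continuous potentials.

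First I would work locally. Fix a test $(1,1)$-form $\varphi$ with compact support $K \subset \U$. By uniform convergence of $\nu_n$ to $\nu$ on $K$, there exists a compact set $K' \subset M$ containing $\nu(K)$ and $\nu_n(K)$ for all $n$ large enough. Cover $K'$ by finitely many open charts $V_j$ on which $T$ admits a continuous plurisubharmonic potential $u_j$, i.e.\ $T|_{V_j} = dd^c u_j$. Choose a partition of unity $(\chi_j)$ subordinate to the $V_j$. Since pull-back and weak convergence are linear, it suffices to handle one piece at a time and, up to shrinking $K$, to assume that $\nu(K)$ together with all $\nu_n(K)$ (for $n$ large) lie in a single chart $V$ on which $T = dd^c u$ with $u$ continuous.

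Next I would translate the problem into one about distributions on $\U$. On the relevant domain one has
\[
\nu_n^* T = dd^c(u \circ \nu_n), \qquad \nu^* T = dd^c(u \circ \nu),
\]
as currents on $\U$, the pull-back being well defined precisely because $u$ is continuous. Since $u$ is continuous on the compact set $K'$ and $\nu_n \to \nu$ uniformly on $K$, the functions $u \circ \nu_n$ converge uniformly on $K$ to $u \circ \nu$. Uniform convergence implies convergence in the sense of distributions, and $dd^c$ is a continuous linear operator on the space of distributions. Thus $dd^c(u \circ \nu_n) \to dd^c(u \circ \nu)$ weakly on a neighborhood of $K$.

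Concretely, for a smooth test function $\varphi$ supported in $K$,
\[
\langle \nu_n^* T , \varphi \rangle = \int_\U (u \circ \nu_n) \, dd^c \varphi \longrightarrow \int_\U (u \circ \nu) \, dd^c \varphi = \langle \nu^* T , \varphi \rangle,
\]
where the convergence follows from the dominated convergence theorem applied to $u \circ \nu_n \to u \circ \nu$ (bounded uniformly because $u$ is bounded on $K'$). Reassembling via the partition of unity gives the weak convergence $\nu_n^* T \to \nu^* T$ on all of $\U$.

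The only subtlety — which I do not expect to be genuinely hard — is that potentials are only local, so one must verify that the different local definitions of $\nu_n^* T$ glue consistently. This is immediate: two continuous potentials for $T$ on an overlap differ by a pluriharmonic function $h$, and $dd^c(h \circ \nu_n) = 0 = dd^c(h \circ \nu)$, so the constructions agree. The main (mild) technical point is therefore just the combination of uniform convergence of $\nu_n$ with the continuity of $u$, which lets one pass from holomorphic convergence of maps to distributional convergence of composite potentials, after which the continuity of $dd^c$ on $\mathcal{D}'(\U)$ finishes the argument.
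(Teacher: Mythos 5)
Your proof is correct and takes essentially the same route as the paper: localize to a chart where $T = dd^c u$ with $u$ continuous, observe $u\circ\nu_n \to u\circ\nu$ uniformly, and conclude by dominated convergence applied to $\int (u\circ\nu_n)\,dd^c\varphi$. The extra bookkeeping with the partition of unity and the consistency of local potentials is fine but not needed in the paper's terser version, which simply restricts to test forms supported where $\nu$ maps into a single potential chart.
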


\begin{proof}
Let $\V$ be an open subset of $M$ on which $T$ is given by a continuous potential $u$. If $\nu$
maps $\U'\subset \U$ into $\V$, then for every test function $\varphi$ with support contained in 
$\U'$, the dominated convergence theorem implies that 
\[
\langle \nu^*(T) \vert \varphi \rangle = \int_{\U'} u\circ \nu(z) dd^c\varphi(z)= \lim_{n\to \infty} \int_{\U'} u \circ \nu_n(z)dd^c\varphi(z) = \lim_{n\to \infty} \langle \nu_n^*T\vert \varphi\rangle.
\]
The result follows.\end{proof}

\begin{pro}\label{pro:abso-conti}
Let $C$ be a Riemann surface. Let $\theta\colon C\to X$ be a non-constant holomorphic 
mapping. If $\mu_f$ is absolutely continuous with respect to Lebesgue measure (on $X$), 
then the measures $\theta^*(T^+_f)$ and $\theta^*(T^-_f)$ are absolutely continuous with respect to 
Lebesgue measure (on $C$). 
\end{pro}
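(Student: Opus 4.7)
The plan is to transfer the absolute continuity of $\mu_f$ to a regularity of $T^+_f$ along holomorphic curves, exploiting the laminar decomposition $T^+_f = \sum_i T^+_{\mathcal{P}_i}$ from Section~\ref{par:T+laminar} together with the absolute continuity of the transverse measures $\nu^+_i$ established in Section~\ref{par:Pesin-AbsoC}. By symmetry it is enough to treat $T^+_f$, and by a covering argument one can work on a small disk $D\subset C$. Since $T^+_f$ has continuous potentials, $\theta^*(T^+_f)=dd^c(u^+\circ\theta)$ is a well-defined positive Borel measure on $D$, and the goal is to show that every Lebesgue-null Borel subset $E\subset D$ is $\theta^*(T^+_f)$-null.

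The heart of the argument is the analysis of one Pesin box $\mathcal{P}_i=(\mathcal{U}_i,\mathcal{L}^u_i,\mathcal{L}^s_i)$ with transverse measure $\nu^+_i$. Since $T^+_{\mathcal{P}_i}$ is dominated by $T^+_f|_{\mathcal{U}_i}$ (property~(1) of Section~\ref{par:pesinboxesdef}), its plurisubharmonic potential lies below the continuous potential $u^+$, so $\theta^*(T^+_{\mathcal{P}_i})$ is a well-defined positive Borel measure on $D_i:=\theta^{-1}(\mathcal{U}_i)$, dominated by $\theta^*(T^+_f)|_{D_i}$. If $\theta(D_i)$ is contained in a single stable leaf $\mathcal{L}^s_i(w_0)$, then atomlessness of $\nu^+_i$ (which is absolutely continuous by Section~\ref{par:Pesin-AbsoC}) together with disjointness of the remaining leaves forces $\theta^*(T^+_{\mathcal{P}_i})|_{D_i}=0$. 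Otherwise, I invoke the holonomy projection $h_i$ from $\mathrm{supp}(\mathcal{L}^s_i)$ to the transversal supporting $\nu^+_i$; by the $\lambda$-Lemma (Section~\ref{par:lamination-intro}), $h_i$ is quasi-conformal, so $h_i\circ\theta$ is non-constant and absolutely continuous on $D_i$. Writing the geometric pullback as
\[
\theta^*(T^+_{\mathcal{P}_i})(E) \;=\; \int \#\bigl(\theta^{-1}(\mathcal{L}^s_i(w))\cap E\bigr)\, d\nu^+_i(w)
\]
and applying a coarea identity for the quasi-regular map $h_i\circ\theta$ expresses this as the integral over $E$ of the product of the locally integrable Jacobian of $h_i\circ\theta$ with the Lebesgue density of $\nu^+_i$ composed with $h_i\circ\theta$. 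Both factors are finite almost everywhere, so $\theta^*(T^+_{\mathcal{P}_i})(E)=0$ whenever $E$ has zero Lebesgue measure.

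The principal obstacle is the passage from the pieces $\theta^*(T^+_{\mathcal{P}_i})$ back to $\theta^*(T^+_f)$: positivity and domination give $\sum_i\theta^*(T^+_{\mathcal{P}_i})\le\theta^*(T^+_f)$, but the reverse inequality is not automatic, because the individual $T^+_{\mathcal{P}_i}$ may fail to have continuous potentials. The approach is to truncate at $T^{(N)}_f:=\sum_{i\le N}T^+_{\mathcal{P}_i}$ and exploit that the remainder $T^+_f-T^{(N)}_f$ is a positive current whose mass decreases to zero. Regularizing the (non-continuous) potentials of $T^{(N)}_f$ by convolution from above, and using Lemma~\ref{lem:pull-back-cv} together with the uniform control provided by the continuous potential $u^+$, should identify the limit of $\theta^*T^{(N)}_f$ with $\theta^*T^+_f$. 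Once this identification is established, $\theta^*(T^+_f)$ is a countable sum of absolutely continuous measures and is therefore absolutely continuous. The analogous argument, with stable and unstable laminations exchanged, settles the case of $T^-_f$ and completes the proof.
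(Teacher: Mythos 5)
Your overall strategy matches the paper's: decompose $T^+_f$ as a countable sum of uniformly laminar pieces $T^+_{\Pes_i}$ supported in Pesin boxes, transfer absolute continuity of $\mu_f$ to the transverse measures $\nu^+_i$ via the product structure, and use quasi-conformality of holonomy to conclude that each slice $\theta^*(T^+_{\Pes_i})$ is absolutely continuous. However, the ``principal obstacle'' you identify is largely an artifact of not knowing a key fact from \cite{BLS1}: Lemmas 8.2 and 8.3 there establish that each laminar piece $T^+_{\Pes_j}$ \emph{does} have continuous local potentials, and moreover that on a transversal $\V$ where $\theta$ is injective and transverse to the stable lamination, the slice $\theta^*(T^+_{\Pes_j})$ coincides exactly with the holonomy pull-back of $\nu^+_j$. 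Once this is in hand, your truncation-plus-regularization scheme is unnecessary: each $\theta^*(T^+_{\Pes_j})$ is a bona fide positive measure dominated by $\theta^*(T^+_f)$, and since $\theta^*(T^+_f)$ is locally finite, the countable sum converges. A second point your proposal leaves implicit is the control of tangency points of $\theta(C)$ with $\L^s_j$ and of the zeros of $\theta'$: your coarea formula degenerates there. The paper handles this explicitly by invoking \cite[Lemma 6.4]{BLS1} (every tangency is an isolated limit of transverse intersections, so the tangency set is discrete) together with the observation that $\theta^*(T^+_{\Pes_j})$ has a continuous potential and hence carries no atoms. In short, the architecture of your argument is correct, but the two bibliographic inputs you are missing --- continuity of the potentials of the laminar pieces, and discreteness of the tangency locus --- are exactly what makes the paper's proof close cleanly.
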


\begin{proof}
According to Section~\ref{par:T+laminar}, the current $T^+_f$ is a countable sum of uniformly laminar currents 
\[
T^+_f=\sum_{j=1}^\infty T^+_{\Pes_j} ,
\]
where $\Pes_j=(\U_j, \L^u_j, \L^s_j)$  is a family of Pesin boxes with disjoint supports for the laminations $\L^s$. 
Fix one of these  Pesin boxes $\Pes_j$ and an open subset $\V$ of $C$ such that $\theta_{\vert \V}\colon \V\to X$ is injective and $\theta(\V)$ is transverse to 
$\L^s_j$. It is proven in \cite[Lemmas 8.2 and 8.3]{BLS1} that $T^+_{\Pes_j}$ has continuous potentials 
and that $\theta^*(T^+_{\Pes_j})$ coincides on $\V$ with the pull back of the transverse measure $\nu^+_j$. Since
$\theta$ is holomorphic and the holonomy of $\L^s$ is absolutely continuous, the pull back 
of this measure is absolutely continuous. Thus, $\theta^*(T^+_{\Pes_j})$ is absolutely continuous
on the complement of the zeros of $\theta'$ and the tangency points between $\theta(C)$ and $\L^s$. 
The following lemma shows that the set of points $z \in C$ such that $\theta(z)$ is tangent to the lamination $\L^s_j$
is actually a discrete subset of $C$. The same property holds for the zeros of $\theta'$.

\begin{lem}[\cite{BLS1}, Lemma 6.4]\label{lem:BLS6.4} Let $C$ and $D$ be complex submanifolds of 
$\C^2$ such that (i) $C\cap D = \{p\}$ and (ii) $T_pC=T_pD$. Let $U$ be a bounded 
neighborhood of $p$. If $D'$ is sufficiently close to $D$ but $D\cap D'=\emptyset$,
then the intersection of $D'$ and $C$ in $U$ is non-empty and non-tangential 
at all intersection points. 
\end{lem}

Since $\theta^*(T^+_{\Pes_j})$ has a continuous potential, this measure has no atom. Thus, $\theta^*(T^+_{\Pes_j})$ is absolutely continuous
with respect to the Lebesgue measure on $C$. Coming back to $T^+_f$, we deduce that its pull-back is a countable sum of absolutely continuous
measures; since $\theta^*(T^+_f)$ is locally finite, this measure has a $L^1_{loc}$ density on $C$. \end{proof}

\subsection{Lebesgue density points}

Assume that $\mu_f$ is absolutely continuous with respect to Lebesgue measure. According to Proposition \ref{pro:abso-conti} the pull-back of $T^+_f$ and $T^-_f$ by a curve 
$\theta\colon C\to X$ are absolutely continuous: in local co-ordinates 
\[
\theta^*(T^\pm_f)=\varphi^\pm(z)\frac{\ii}{2}dz\wedge d\zbar 
\]
where $\varphi^+$ and $\varphi^-$ are non negative elements of $L^1_{loc}(\Leb)$. Recall that a {\bf{Lebesgue density point}}   for a function $\varphi\in L^1_{{\rm{loc}}}(\Leb)$ is a point $z$ such that
\[
\frac{1}{\pi r^2}\int_{\disk(z,r)}\vert \varphi(w)-\varphi(z)\vert \frac{\ii}{2}dw\wedge d\wbar \rightarrow 0
\]
as $r$ goes to $0$. This notion does not depend on the choice of local co-ordinates; moreover, the set
of density points of $\varphi$ has full Lebesgue measure. Thus, on each curve
$W\subset X$, there is a well defined set of density points 
\[
\Dens(W;T^+_f)=\{m\in W\, \vert \; m \; \, {\text{is a density point of }} T^+_{f\vert W}\}.
\]
In particular, each unstable manifold $W^u_f(x)$ contains a subset of full Lebesgue measure
$\Dens(W^u_f(x); T^+_f)$.

Let us now fix a Pesin box $\Pes=(\U, \L^s, \L^u)$, together with an isomorphism $\U\simeq \disk \times \disk$; 
the unstable lamination $\L^u$ is a union of horizontal graphs and the stable one is a union of vertical graphs.
One can identify $\L^u$ to a family of horizontal disks $\disk\times \{y\}$ for $y$ in $\K^-$ in two ways: 
\begin{itemize}
\item with the holonomy maps $h_{0, z}$ from the vertical $\{0\}\times \disk$ to the vertical $\{z\}\times \disk$; this parametrization is 
given by 
\[
(x,y) \in \disk \times \K^- \mapsto (x, h_{0,x}(y)).
\]
\item with the graphs $\varphi_m\colon \disk \to \disk$:
\[
(x,y)\in \disk\times \K^-\mapsto (x,\varphi_{(0,y)}(x)).
\]
\end{itemize}
We obtain a similar pair of homeomorphisms between $\K^+\times \disk$ and the support of $\L^s$. Each leaf $\L^u(m)$  contains a set of density points $\Dens(\L^u(m); T^+_f)$ and the union of these sets 
is a subset of the support of $\L^u$. 

\begin{lem} \label{lem:fullmeasure}
Let $\Pes$ be a Pesin box. If $\mu_f$ is absolutely continuous, the union of the density 
points $D := \cup_m \Dens(\L^u(m); T^+_f)$ is a subset of the support of $\L^u$ with full Lebesgue measure (resp. $\mu_f$-measure).
\end{lem}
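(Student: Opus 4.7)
Both assertions are Fubini-type consequences of (i) Proposition~\ref{pro:abso-conti} and Lebesgue's density theorem on each leaf, (ii) the absolute continuity of the holonomy of $\L^u$ and $\L^s$ furnished by the $\Lambda$-lemma (see~\S\ref{par:lamination-intro} and~\S\ref{quasico}), and (iii) the absolute continuity of $\nu^+$ with respect to Lebesgue established in~\S\ref{par:Pesin-AbsoC}. First, for each $y \in \K^-$ parametrize the unstable leaf $\L^u((0,y))$ by $\theta_y(x) := (x, \varphi_{(0,y)}(x))$. Proposition~\ref{pro:abso-conti} gives $\theta_y^*(T^+_f) = \psi_y(x)\,\frac{\ii}{2}\,dx \wedge d\bar x$ with $\psi_y \in L^1_{loc}(\disk)$, and Lebesgue's density theorem yields that $\Dens_y := \theta_y^{-1}(\Dens(\L^u((0,y));T^+_f))$ has full Lebesgue measure in $\disk$.

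For the Lebesgue statement, parametrize the support of $\L^u$ by
\[
\Phi \colon \disk \times \K^- \to \disk \times \disk, \qquad \Phi(x,y) = (x, h_{0,x}(y)),
\]
where $h_{0,x}$ is the $\L^u$-holonomy from $\{0\}\times\disk$ to $\{x\}\times\disk$. By the $\Lambda$-lemma, each $h_{0,x}$ extends to a quasi-conformal homeomorphism of the disk, hence is absolutely continuous in both directions with an a.e.-defined Jacobian $J_{h_{0,x}}$. Let $E$ denote the complement of $D$ in the support of $\L^u$. Fubini on the bidisk combined with the change-of-variables formula for $h_{0,x}$ gives
\[
\Leb(E) \;=\; \int_{\disk}\! \int_{\K^-} \mathbf{1}_{\{x \notin \Dens_y\}}\, J_{h_{0,x}}(y)\, d\Leb(y)\, d\Leb(x).
\]
Swapping the order of integration, the inner integral is taken over $\Dens_y^c$, which has Lebesgue measure zero by the first step; hence $\Leb(E) = 0$.

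For the $\mu_f$-statement, exploit the product structure $\mu_f|_{\K} = h_*(\nu^+\otimes\nu^-)$. Fix $y\in\K^-$: the map $w' \in \K^+ \mapsto x(y,w')$ sending $w'$ to the $\disk$-parameter of $[y,w'] \in \L^u((0,y))$ is the restriction of the $\L^s$-holonomy from the horizontal transversal $\disk\times\{0\}$ to the transversal $\L^u((0,y))$; the $\Lambda$-lemma again makes it quasi-conformal, hence absolutely continuous in both directions. Since $\Dens_y^c$ has Lebesgue measure zero, so does $\{w' : x(y,w') \notin \Dens_y\}$; as $\nu^+\ll\Leb$ by~\S\ref{par:Pesin-AbsoC}, this set has $\nu^+$-measure zero. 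Fubini applied to $\nu^+\otimes\nu^-$ then yields $\mu_f(\K\setminus D) = 0$, and since $\mu_f|_{\U}$ is concentrated on $\K$ (by the product structure in the Pesin box), this gives $\mu_f$-measure zero for the complement of $D$ in the support of $\L^u$.

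The main technical subtlety lies in the Lebesgue step: because $\K^-$ is merely a compact set and not a smooth transversal, the Fubini computation is not immediate, and one needs the holonomy to behave well between non-smooth transversals. This is precisely what the $\Lambda$-lemma delivers by extending $h_{0,x}$ to a quasi-conformal homeomorphism of the full vertical disk, which in turn licenses the change-of-variables above.
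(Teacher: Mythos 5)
Your argument is essentially the paper's: both rest on Lebesgue density along each leaf, the $\Lambda$-lemma's guarantee that the holonomies $h_{0,x}$ are quasi-conformal hence absolutely continuous, and a Fubini double-slicing. The paper first works in the ``graph'' coordinates $\Phi(x,y)=(x,\varphi_{(0,y)}(x))$, concludes that $\Phi^{-1}(D)$ has full measure in $\disk\times\K^-$ (Fubini once), then re-slices vertically and transports via $h_{0,x}$; you skip the intermediate step and write the explicit change-of-variables identity $\Leb(E)=\int_\disk\int_{\K^-}\mathbf{1}_{\{x\notin\Dens_y\}}J_{h_{0,x}}(y)\,d\Leb(y)\,d\Leb(x)$ before swapping the order. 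The two are interchangeable; the paper's version dodges the (minor but real) measurability discussion around the joint map $(x,y)\mapsto J_{h_{0,x}}(y)$ that your formula implicitly requires, at the cost of being less explicit. One small economy you missed: the $\mu_f$-statement is immediate from the Lebesgue one, since $\mu_f\ll\Leb$ is the standing hypothesis, so the whole product-structure argument via $\nu^+\otimes\nu^-$ in your second paragraph is unnecessary. Also, the ``subtlety'' you flag about $\K^-$ not being a smooth transversal is a bit of a red herring: the transversals $\{z\}\times\disk$ are smooth; $\K^-$ is merely the trace of the lamination on one of them, and the $\Lambda$-lemma's extension of $h_{0,x}$ to the full vertical disk is what makes the change of variables legitimate, exactly as you say.
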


\begin{proof}
Consider the second parametrization of the lamination $\L^u$,
\begin{eqnarray*}
 \disk\times \K^-  \longrightarrow  \disk\times \disk \simeq \U  \\
\Phi\colon  (x,y)  \mapsto  (x, \varphi_{(0,y)}(x) )
\end{eqnarray*}
and pull-back the density set $D$ by $\Phi$ in $\disk\times \disk$. Since each $\varphi_{(0,y)}$
is holomorphic, the pull-back of $\Dens(\L^u(0,y);T^+_f)$ on the horizontal disk $\disk\times¬†\{y \}$
has full Lebesgue measure. Thus, by Fubini theorem, it has full Lebesgue measure on 
$\disk\times \K^-$. Then, slice $\Phi^{-1}(D)$ by vertical disks and apply Fubini theorem again: for almost
all points $x$ in $\disk, \{x\} \times \K^-$ intersects $\Phi^{-1}(D)$ onto a subset of full measure $\{x\} \times \K^-$. 
Then, come back to $\U$ with the first parametrization: since the holonomy maps $h_{0,x}$ are absolutely 
continuous and map verticals to verticals, the result follows from Fubini theorem (with respect 
to the first projection in $\U\simeq \disk\times \disk$).
\end{proof}

\section{Renormalization along stable manifolds}\label{par:Renorma-Stable}

Our main goal in this section is the following theorem. 

\begin{thm}\label{thm:pullback-lebesgue}
Let $f$ be an automorphism of a complex projective surface $X$ with positive entropy. 
Assume that the measure of maximal entropy $\mu_f = T_f^+ \wedge T_f^-$ is absolutely continuous with respect to 
Lebesgue measure. Then there exists a measurable subset $\Lambda\subset X$ such that (i) $\mu_f(\Lambda)=1$ 
and (ii) every stable manifold $W^s(x)$ for $x\in \Lambda$ is parametrized by an injective entire curve
$\xi_x^s\colon \C\to W^s(x)$ satisfying 
\begin{equation}\label{sas}
\xi_x^s(0)=x \ \ \textrm{ and } \ \  (\xi^s_x)^*T^-_f=\frac{\ii}{2}dz\wedge d{\overline{z}}.
\end{equation}
\end{thm}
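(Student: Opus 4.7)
The plan is to start from the entire parametrization $\xi_x^s\colon\C\to W^s(x)$ provided by Proposition~\ref{affpara}, and to renormalize it by a measurable family of homotheties $z\mapsto z/\sqrt{c_x}$ of $\C$, so that the pulled-back measure becomes the standard Lebesgue form. By Remark~\ref{rem:affpara2}, such a rescaling produces a new biholomorphism $\C\to W^s(x)$ that still sends $0$ to $x$ and still conjugates $f$ to a linear contraction; the content of the theorem thus reduces to the existence of the correct scale $c_x$ for $\mu_f$-a.e.\ $x$.

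First I apply Proposition~\ref{pro:abso-conti} to $\theta=\xi_x^s$: the measure $\nu_x:=(\xi_x^s)^*T_f^-$ is absolutely continuous on $\C$, so $\nu_x=\varphi_x(z)\,\tfrac{\ii}{2}\,dz\wedge d\zbar$ with $\varphi_x\in L^1_{\mathrm{loc}}(\C,\R_+)$. Combining the conjugacy $f\circ\xi_x^s=\xi_{f(x)}^s\circ M_x$ of Proposition~\ref{affpara} with $f^*T_f^-=\lambda_f^{-1}T_f^-$ (Theorem~\ref{curcur}) yields the self-similar scaling identity
\[
\varphi_x(z) \;=\; \lambda_f^{\,n}\,|m_x^n|^2\,\varphi_{f^n(x)}(m_x^n z), \qquad m_x^n := m_{f^{n-1}(x)}\cdots m_x,
\]
valid for Lebesgue-a.e.\ $z\in\C$ and every $n\geq 0$, on a full-measure invariant Borel set $\Lambda\subset X$.

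The heart of the argument is a renormalization step whose conclusion is that $\varphi_x$ is essentially constant, equal to some $c_x>0$, for $\mu_f$-a.e.\ $x$. The intuition is that by Lebesgue differentiation and $f$-invariance of $\mu_f$, one may assume that $0$ is a Lebesgue density point of $\varphi_{f^n(x)}$ for every $n\geq 0$; the scaling identity at $z=0$ then gives the balance $\varphi_x(0)=\lambda_f^{\,n}|m_x^n|^2\varphi_{f^n(x)}(0)$, which rewrites the general identity as
\[
\varphi_x(z) \;=\; \varphi_x(0)\cdot\frac{\varphi_{f^n(x)}(m_x^n z)}{\varphi_{f^n(x)}(0)}.
\]
Since $|m_x^n|\sim e^{n\lambda_s}\to 0$ by Oseledets, the ratio should tend to $1$, forcing $\varphi_x\equiv \varphi_x(0)$. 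The main obstacle is that this pointwise limit is not free: the Lebesgue density property at $0$ only holds $\mu_f$-a.e.\ in $y=f^n(x)$, with no uniformity along the orbit. I expect to sidestep this by working with masses of balls rather than pointwise values. Rewriting the scaling identity as
\[
\frac{\nu_x(\disk(z_0,R))}{\pi R^2} \;=\; \lambda_f^{\,n}|m_x^n|^2\cdot\frac{\nu_{f^n(x)}\!\left(\disk(m_x^n z_0,R|m_x^n|)\right)}{\pi R^2|m_x^n|^2},
\]
one observes that the balls $\disk(m_x^n z_0,R|m_x^n|)$ are contained in $\disk(0,(|z_0|+R)|m_x^n|)$ with bounded eccentricity, and that $T_f^-$ has continuous potentials (Theorem~\ref{curcur}), so Lemma~\ref{lem:pull-back-cv} guarantees the continuity of $r\mapsto\nu_{f^n(x)}(\disk(0,r))$ and Lebesgue differentiation applies along this family of balls. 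This forces the left-hand side to be independent of $R$ and $z_0$, and applying Lebesgue differentiation at density points of $\varphi_x$ yields $\varphi_x\equiv c_x$ a.e.

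To conclude, one checks that $c_x>0$ $\mu_f$-a.e.: the product structure $\mu_f|_{\K}=h_*(\nu^+\otimes\nu^-)$ in a Pesin box through $x$ (Section~\ref{pesinboxes}) identifies the slice of $T_f^-$ by the stable leaf $W^s(x)$ with (part of) the non-zero transverse measure $\nu^-$, so $\nu_x$ is non-trivial and $c_x>0$. Finally, setting $\tilde\xi_x^s(z):=\xi_x^s(z/\sqrt{c_x})$ with a measurable choice of square root produces, by Remark~\ref{rem:affpara2}, a biholomorphism $\C\to W^s(x)$ sending $0$ to $x$ and satisfying $(\tilde\xi_x^s)^*T_f^- = \tfrac{\ii}{2}dz\wedge d\zbar$, as required.
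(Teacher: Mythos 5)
Your plan is in the right spirit --- a renormalization along stable manifolds, governed by the conjugacy $f\circ\xi_x^s=\xi_{f(x)}^s\circ M_x$ and the identity $f^*T_f^-=\lambda_f^{-1}T_f^-$ --- and the first step (using Proposition~\ref{pro:abso-conti} to get $\nu_x=\varphi_x\,\tfrac{\ii}{2}dz\wedge d\zbar$ and deriving the scaling identity) as well as the closing rescaling via Remark~\ref{rem:affpara2} are both correct. The concluding positivity of $c_x$ via the Pesin-box product structure is also fine.

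However, the central step is not correct as written, and the obstacle you flag in passing is not actually removed by the ``masses of balls'' workaround. You push the identity forward, $\varphi_x(z)=\lambda_f^{\,n}|m_x^n|^2\varphi_{f^n(x)}(m_x^n z)$, and try to take $n\to\infty$. The Lebesgue differentiation argument you invoke --- that averages over $\disk(m_x^n z_0,R|m_x^n|)$ tend to a pointwise value at $0$ --- is a statement about a \emph{fixed} integrable function as the balls shrink. Here the density $\varphi_{f^n(x)}$ changes with $n$ at the same time as the balls shrink, so the Vitali/bounded-eccentricity version of Lebesgue differentiation never engages: there is no reason that $\nu_{f^n(x)}(\disk(m_x^n z_0,R|m_x^n|))/(\pi R^2|m_x^n|^2)$ converges, no reason that $0$ is a Lebesgue density point of $\varphi_{f^n(x)}$ for all $n$ with any uniformity, and no control on the sequence $\varphi_{f^n(x)}(0)$ (nor on $\lambda_f^n|m_x^n|^2$, for which you never extract a convergent subsequence). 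Continuity of the potentials and Lemma~\ref{lem:pull-back-cv} give weak convergence of pull-backs along a \emph{convergent} sequence of parametrizations --- which is precisely what your forward iteration does not supply.

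The paper resolves exactly this difficulty by reversing the direction of renormalization and anchoring the argument at a fixed point. One works in a fixed Pesin box and a good subset $Q=Q_{l,m}$ on which $\beta$, $cq$, and $\varphi_\cdot(0)$ are all pinched in $[1/l,l]$ (via Lusin), applies Poincar\'e recurrence to produce a subsequence $x_n=f^{-n}(x)\in Q$ with $x_n\to x$, uses Montel to extract a limit local parametrization $\xi$ of $W^{s,\mathrm{loc}}(x)$ from the $\xi^{s,\mathrm{loc}}_{x_n}$, and then writes $(\xi_n^s)^*T_f^-=\lambda_f^n M_n^*(\xi_x^s)^*T_f^-$. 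Now the density that enters the Lebesgue-differentiation step is always $\varphi_x$, at the single fixed point $x$; the bounds on $Q$ give $\lambda_f^n|m_n|^2\in[l^{-2},l^2]$ hence a convergent subsequence, and Lemma~\ref{lem:pull-back-cv} applies to the convergent sequence $\xi_n^s\to\xi$. The outcome is first a local identity $\xi_x^*T_f^-=\alpha(x)\tfrac{\ii}{2}dz\wedge d\zbar$ near $0$, then a transfer from $\xi_x$ to $\xi_x^s$ (producing a holomorphic factor $|h_x|^2$ bounded above and below), and finally a second recurrence-plus-exhaustion argument to show $|h_x|$ is constant on all of $\C$. To repair your write-up you would need this change of direction, the fixed good set providing two-sided a priori bounds on the density at the origin, and the Montel compactness of the local parametrizations; as it stands the forward version has a genuine gap.
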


\begin{rem} The parametrization of an unstable manifold $W^s(x)$ by $\C$ is unique up to composition by an affine transformation $z\mapsto az+b$ of $\C$. Thus,  
\begin{enumerate}
\item every biholomorphism $\C \to W^s(x)$ with properties (\ref{sas}) is equal to $\xi_x^s$ up to composition by a homothety $z\mapsto az$ with $\vert a\vert =1$;
\item  the parametrization $\xi^s_x$ is the same as the parametrization defined in Section \ref{par:affine-param} up to 
composition by a dilatation $z\mapsto az$, $a\neq 0$. This is the reason why we do not introduce a new notation. 
\end{enumerate}
\end{rem}

We prove Theorem~\ref{thm:pullback-lebesgue} in three steps:
\begin{itemize}
\item (see \S~\ref{par:densitecste}) We exhibit local parametrizations $\xi_x$ of a neighborhood of $x$ in $W^s(x)$ such that
$\xi_x^*T^-_f = \alpha(x) \frac{\ii}{2}dz\wedge d{\overline{z}}$.

\item (see \S~\ref{par:densitecste-affine}) Let $\xi_x^s : \C \to W^s(x)$ be the global parametrization of $W^s(x)$ defined in section \ref{par:affine-param}. Using the first step we obtain
that  $(\xi^s_x)^*T^-_f= \alpha(x) \vert h_x(z)\vert^2 (i/2)dz\wedge d{\overline{z}}$ for some holomorphic function $h_x$ on $\disk(\beta(x)\rho(x)/4)$. 

\item Finally, we show that $\vert h_x \vert$ is indeed constant by using recurrence and exhaustion arguments.

\end{itemize}

\subsection{First step: smoothness of a local density}\label{par:densitecste}

In the following proposition $\Lambda$, $\beta$ and $q$ are respectively the measurable set and the $\epsilon$-tempered functions  introduced in Theorem~\ref{OP}. Let $c > 0$ be the constant introduced in Proposition~\ref{xi}.

\begin{pro}\label{densitecste}
Let $f$ be an automorphism of a complex projective surface $X$ with positive entropy $\log \lambda_f$. Assume that $\mu_f$ is 
absolutely continuous with respect to Lebesgue measure. 
\begin{enumerate}
\item   Then for every $x \in \Lambda$ there exist $\rho(x) > 0$ and an injective holomorphic mapping $\xi_x : \disk(\rho(x)) \to W^{s, loc}(x)$ such that 
\begin{eqnarray*}
&(i) & \xi_x(0)  =  x  \ {\textrm{ and}} \quad \beta(x)  \leq   \vert \xi_x'(0) \vert  \leq  1,  \\
&(ii) & 2\beta(x)/3  \leq   \vert \xi_x'(z) \vert  \leq  2  \ {\textrm{ on }} \  \disk(\rho(x)), \\
& (iii) & \xi_x^* T_f^-  =  \alpha(x) \cdot   \frac{\ii}{2} dz\wedge d \bar z \ {\textrm{ on }}\; \disk(\rho(x))  \ {\textrm{ for some }} \; \alpha(x) > 0.
\end{eqnarray*}
 
\item The Lyapunov exponents of $\mu_f$ satisfy
\[
\lambda_s = - {1 \over 2} \log \lambda_f \ \textrm{ and } \  \lambda_u =  {1 \over 2} \log \lambda_f . 
\]
\end{enumerate}
\end{pro}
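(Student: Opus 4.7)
The plan is to extract a self-similarity for the density of $(\xi^{s,loc}_x)^* T_f^-$ from its intertwining with the dynamics, and then combine it with the Lebesgue density theorem and Oseledets' theorem to obtain both assertions simultaneously.

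\textbf{Functional equation.} By Proposition~\ref{pro:abso-conti}, write $(\xi^{s,loc}_x)^* T_f^- = \varphi_x(z)\,\frac{\ii}{2}dz\wedge d\bar{z}$ with $\varphi_x \in L^1_{\rm loc}(\disk(cq(x)))$. Combining the conjugacy $f\circ \xi^{s,loc}_x = \xi^{s,loc}_{f(x)}\circ M_x$ from Proposition~\ref{xi} with $f^* T_f^- = \lambda_f^{-1} T_f^-$ from Theorem~\ref{curcur} and iterating yields
\[
\varphi_x(z) = \lambda_f^n\,|m^n_x|^2\, \varphi_{f^n(x)}(m^n_x z)\qquad \text{a.e. on } \disk(cq(x)),
\]
with $m^n_x = \prod_{k=0}^{n-1}m_{f^k(x)}$ satisfying $\frac{1}{n}\log|m^n_x|\to\lambda_s$ by Oseledets.

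\textbf{Lyapunov exponents.} The analogue of Lemma~\ref{lem:fullmeasure} for stable leaves and $T_f^-$ (proved identically, swapping the roles of $T_f^+,\L^u$ with $T_f^-,\L^s$), together with the positivity of the density of $\mu_f$, shows that for $\mu_f$-a.e.\ $x$ the origin is a Lebesgue density point of $\varphi_x$ with $\varphi_x(0)>0$. Averaging the functional equation on $\disk(r)$ and letting $r\to 0$ produces the pointwise identity $\varphi_x(0)=\lambda_f^n|m^n_x|^2\varphi_{f^n(x)}(0)$; taking logarithms, dividing by $n$, and applying Birkhoff's theorem (to $\log|m_y|$ whose integral is $\lambda_s$, and to $y\mapsto \log\varphi_y(0)$, which is $\mu_f$-integrable by standard Pesin theory) yields $\log\lambda_f+2\lambda_s=0$. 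The same argument applied to $f^{-1}$, whose unique measure of maximal entropy is again $\mu_f$, gives $2\lambda_u=\log\lambda_f$, proving (2).

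\textbf{Constancy of the density.} For (1), integrate the functional equation on the ball $\disk(r,z)$ to obtain
\[
A(r,z;\varphi_x) = \lambda_f^n|m^n_x|^2\, A(r|m^n_x|,\,m^n_x z;\,\varphi_{f^n(x)}),
\]
where $A(s,w;\varphi) := (\pi s^2)^{-1}\int_{\disk(s,w)}\varphi\,\Leb$. By Luzin's theorem, fix a compact $K\subset \Lambda$ of positive $\mu_f$-measure on which $y\mapsto\varphi_y(0)$ is continuous and bounded away from zero; by Egorov, also ensure that $A(s,0;\varphi_y)\to\varphi_y(0)$ uniformly for $y\in K$ as $s\to 0$. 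Poincaré recurrence provides a sequence $n_k\to\infty$ with $f^{n_k}(x)\in K$; since $|z|<r$ forces $\disk(r|m^{n_k}_x|,m^{n_k}_x z)\subset\disk((r+|z|)|m^{n_k}_x|,0)$ and an analogous lower inclusion, the right-hand side above equals $\lambda_f^{n_k}|m^{n_k}_x|^2\,\varphi_{f^{n_k}(x)}(0)\,(1+o(1))=\varphi_x(0)\,(1+o(1))$ via the pointwise identity at $0$. Hence $A(r,z;\varphi_x)=\varphi_x(0)$ for every $z$ and every small $r>|z|$, so Lebesgue differentiation yields $\varphi_x\equiv\varphi_x(0)=:\alpha(x)>0$ almost everywhere on a disk $\disk(\rho(x))$. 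Setting $\xi_x:=\xi^{s,loc}_x|_{\disk(\rho(x))}$ provides the desired parametrization, the estimates (i)--(ii) being inherited from the corresponding bounds on $\xi^{s,loc}_x$ in Section~\ref{par:affine-param}.

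\textbf{Main obstacle.} The delicate step is the third: establishing uniform Lebesgue differentiation on a Pesin-regular compact set and simultaneously controlling the off-center radius $m^{n_k}_x z$ inside the shrinking ball. This passage to the limit in two coupled parameters $(r|m^{n_k}_x|, m^{n_k}_x z)$ does not follow from mere $L^1_{\rm loc}$ regularity of $\varphi_y$, and requires a careful combination of Luzin/Egorov on the Pesin-regular subset with the approximate product structure of $\mu_f$ in a neighborhood of $K$.
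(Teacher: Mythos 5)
Your proposal takes a genuinely different route from the paper. The paper iterates \emph{backward}: it uses Poincar\'e recurrence to find $f^{-n_j}(x)\to x$ inside a Lusin set $Q_{l,m}$ on which $\beta$ and $\varphi_\cdot(0)$ are continuous and bounded, extracts a Montel limit $\xi=\lim \sigma_{n}\circ\eta_{n}$ of the local parametrizations $\xi^{s,loc}_{f^{-n}(x)}$, and then passes to the limit in $(\xi_n^s)^*T_f^- = \lambda_f^n|m_n|^2\,\varphi_x(m_n z)\,\frac{\ii}{2}dz\wedge d\bar z$. Crucially, the density appearing on the right is always $\varphi_x$ at the \emph{fixed} base point, evaluated at $m_n z\to 0$ uniformly on compacts; so a single density-point hypothesis at $x$ plus Lemma~\ref{lem:pull-back-cv} closes the argument, and the Lyapunov bound comes for free from the boundedness of $\varphi_\cdot(0)$ at returning times. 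You instead iterate \emph{forward}, keep $\xi_x^{s,loc}$ fixed, and try to control $\varphi_{f^n(x)}$ near $0$ along returning times --- this is workable in principle but forces you to carry uniform Lebesgue differentiation along the orbit, which the paper's backward Montel argument avoids entirely.

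Two steps in your writeup have genuine gaps. First, in the Lyapunov exponent step you invoke the integrability of $y\mapsto\log\varphi_y(0)$ ``by standard Pesin theory'' to conclude $\tfrac{1}{n}\log\varphi_{f^n(x)}(0)\to 0$; this integrability is not standard and is not established. The paper avoids the claim by restricting to a Lusin set $Q_{l,m}$ on which $\varphi_\cdot(0)\in[l^{-1},l]$ and only using returning times, so that the term is bounded. You should do the same: your Poincar\'e recurrence setup already furnishes what you need. Second, and more seriously, in the constancy step you assert that the inclusion $\disk(r|m^{n_k}_x|,m^{n_k}_x z)\subset\disk((r+|z|)|m^{n_k}_x|,0)$ (and its counterpart) makes the right-hand side equal to $\lambda_f^{n_k}|m^{n_k}_x|^2\,\varphi_{f^{n_k}(x)}(0)\,(1+o(1))$. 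But these inclusions only yield the two-sided bound
\[
\frac{(r-|z|)^2}{r^2}\,\varphi_x(0)\ \le\ A(r,z;\varphi_x)\ \le\ \frac{(r+|z|)^2}{r^2}\,\varphi_x(0),
\]
which degenerates as $r\downarrow|z|$ and does not give $A(r,z;\varphi_x)=\varphi_x(0)$. The fix is to Egorov-ize the \emph{strong} Lebesgue density-point quantity $\tfrac{1}{\pi s^2}\int_{\disk(s)}|\varphi_y(u)-\varphi_y(0)|\,du\to 0$ (this is precisely the paper's definition of density point), rather than the plain averages $A(s,0;\varphi_y)$. With the absolute-value version, off-center balls with offset comparable to the radius \emph{do} give convergence to $\varphi_y(0)$, and the sandwich becomes tight. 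Your ``Main obstacle'' paragraph correctly senses the difficulty; the way to resolve it is not an appeal to the product structure of $\mu_f$ but to the stronger form of the Lebesgue point condition already embedded in Lemma~\ref{lem:fullmeasure}.
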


\begin{proof} 
We prove the first assertion together with the estimate $\vert \log \lambda_f +2\lambda_s\vert \leq 2\epsilon$. The
second assertion then follows from this estimate, applied  to both $f$ and  $f^{-1}$ for arbitrary small $\epsilon >0$. 

\vspace{0.16cm}

$\bullet${\sl{ Good subsets $Q_{l,m}$.--}} To prove the proposition it suffices to work in a fixed Pesin box $\Pes$
because the union of all Pesin boxes has full $\mu_f$-measure. Let us recall that $\sigma_x : \disk(q(x)) \to W^{s, loc}(x)$ is the injective parametrization of the local stable manifold introduced in Section~\ref{par:affine-param}. 
Let $\eta_x  : \disk(c q(x)) \to \disk(q(x))$ be the holomorphic function of Proposition \ref{xi}; it satisfies 
$\eta_x(0)=0$, $\eta_x'(0)=1$, and
\[ f \circ (\sigma_x \circ \eta_x) = (\sigma_{f(x)} \circ \eta_{f(x)}) \circ M_x .
\] 
We recall that $\xi^{s, loc}_x = \sigma_x \circ \eta_x$ and that it is equal to the restriction of $\xi^s_x$ on $\disk(c q(x))$. Changing $\Lambda$ in another invariant subset of full measure if necessary, Lemma \ref{lem:fullmeasure} implies that for every $x \in \Pes \cap \Lambda$ there exists a function $\varphi_x \in L^1_{loc}(\disk(cq(x)))$ such that $0$ is a Lebesgue density point of $\varphi_x$ and 
 \begin{equation}\label{densitecourant} 
 (\xi^s_x)^*T_f^- = \varphi_x(z) \cdot \frac{\ii}{2}  dz \wedge d\bar z \ \textrm{ on } \ \disk(c q(x)) . 
 \end{equation}
 Since the origin $0$ is a Lebesgue density point of $\varphi_x$, the value $\varphi_x(0)$ is a well
 defined non-negative number. Let us define  for every $l \geq 1$:
\[
 Q_l := \Pes  \cap \Lambda \cap \{ 1/l \leq c q(x) \} \cap \{ 1/l \leq \beta(x)  \leq 1 \} \cap \{ 1/l \leq \varphi_x(0) \leq l   \} .
\]
Then apply Lusin's theorem to find for every $m \geq 1$ a subset $Q_{l,m}\subset Q_l$ of measure $(1-1/m)\mu_f(Q_{l})$ on which $\beta$ is continuous. One may assume $Q_{l,m}\subset Q_{l,m+1}$, and we have  
\[ \mu_f(\cup_{l,m \geq 1} Q_{l,m})  =  \mu(\Pes) . \]
 Fix a pair of integers $(l,m)$ and denote $Q_{l,m}$ by $Q$ in what follows. Since the union of the sets
$Q_{l,m}$ has full $\mu_f$-measure, we only need to prove the proposition for $\mu_f$-generic points $x\in Q$. 

\vspace{0.16cm}

$\bullet${\sl{ Montel property.--}}
Let $\tilde f : Q \to Q$ be the first return map defined as $\tilde f(x) := f^{r(x)}(x)$ where $r(x)$ is the smallest integer $r\geq 1$ satisfying $f^r(x) \in Q$. 
The induced measure $\tilde \mu(\cdot) := \mu(Q \cap \cdot  )/ \mu(Q)$ is $\tilde f$-invariant and ergodic. 
Let $x$ be a generic point of $Q$. Birkhoff's ergodic theorem, applied to $\tilde f$, yields a sequence $(n_j)_j$ depending on $x$ such that $f^{-n_j}(x)$ is contained in $Q$ and converges to $x$.  To simplify the exposition we avoid the indices of the subsequence and write $f^{-n}(x)$ instead of $f^{-n_j}(x)$. Define $x_n := f^{-n}(x)$, $\eta_n=\eta_{x_n}$, and  $\sigma_n = \sigma_{x_n}$. 

Since $cq \geq 1/l$ on $Q=Q_{l,m}$, one can consider the restriction 
$\eta_n : \disk(1/l) \to \disk(q(x_n)) \subset \disk$. Montel's and Hurwitz's theorems provide  a subsequence (still denoted $\eta_n$)
such that 
\begin{enumerate}
\item[(i)] $\eta_n$ converges towards an injective, holomorphic function   $\eta : \disk(1/l) \to \disk$  such that $\eta (0)=0$ and $\eta '(0)=1$. Moreover,  $\vert \eta(z) \vert \leq l \vert z \vert$ on $\disk(1/l)$ by Schwarz's lemma.
\end{enumerate}

We can also consider the restriction $\sigma_n : \disk(1/l) \to W^{s, loc}(x_n)$. For large enough $n$, its image is contained in the ball $B_x(2)$, and  the image of $\Psi_x \circ \Psi_{x_n}^{-1} \circ \sigma_n$ is a graph above the vertical axis. Moreover $\vert \sigma_n'(0) \vert \geq \beta(x_n) \geq 1/l$. Montel's theorem and the continuity of $\beta$ on $Q = Q_{l,m}$ then yield 
\begin{enumerate}
\item[(ii)] $\sigma_n$ converges towards an injective holomorphic mapping $\sigma : \disk(1/l) \to W^{s,loc}(x)$ such that $\sigma(0)=x$ and $\vert \sigma'(0) \vert  \geq \beta(x) \geq 1/l$.
\end{enumerate} 
Thus,  the function $\xi:=\sigma \circ \eta : \disk(1/l^2) \to W^{s, loc}(x)$ is well defined, injective and satisfies $\sigma \circ \eta (0) = x$; by construction,  the sequence of parametrizations $\xi_n^s:=\sigma_n\circ \eta_n$ converges towards the holomorphic
map $\xi$ on $\disk(1/l^2)$. This map will be our desired $\xi_x$, it is a non-constant holomorphic mapping with values in the
stable manifold $W^s(x)$ (we shall compare it with $\xi_x^s$ in Section \ref{par:densitecste-affine}). Since all $\xi_n^s$ satisfy
the Lipschitz property listed in Section~\ref{par:affine-param}, we get 
\[
2\beta(x)/3 \leq \vert  \xi ' (z)  \vert \leq 2 \textrm{ on } \disk(1/l^2). 
\] 

\vspace{0.16cm}

$\bullet${\sl{ Renormalization.--}}
The identity $(f^n)^* T_f^- = \lambda_f^{-n} T_f^-$ for $n \geq 1$ implies
\[
 (f^n \circ \xi_n^s)^* T_f^- = \lambda_f^{-n} (\xi_n^s)^* T_f^- \ \textrm{ on } \ \disk(1/l^2) .
 \]
Proposition \ref{xi} yields  
\[ (f^n \circ \xi_n^s)^* T_f^- = (\xi_x^s \circ M_n)^* T_f^- \ \textrm{ on } \ \disk(1/l^2),\]
where $M_n$ is defined by $M_n := M_{x_{-1}} \circ \cdots \circ M_{x_{-n}}$. Note that  $M_n(z) = m_n \cdot z$ on $\disk(1/l^2)$ with $\vert m_n \vert \in e^{n\lambda_s} \cdot [e^{-n\epsilon} , e^{n\epsilon}]$. 
Combining these equations, one gets
\begin{equation}\label{poop}
 (\xi_n^s)^* T_f^- =  \lambda_f^{n} \, M_n^* \ (\xi_x^s)^* T_f^-  \ \textrm{ on } \ \disk(1/l^2). 
 \end{equation}
Now, denote by $\varphi_n$ the density of $(\xi_n^s)^*T^-_f$ and by $\varphi_x$ the density for $(\xi_x^s)^*T^-_f$, as in Equation \eqref{densitecourant}. Equation \eqref{poop} gives 
\[
\varphi_n(z) = \lambda_f^n \vert m_n\vert^2 \varphi_x(z).
\]
Since $x_n$ and $x$ are in the set $Q$, the origin is a Lebesgue density point for the densities $\varphi_n$
and $\varphi_x$, and $l^{-1}\leq \varphi_n(0), \; \varphi_x(0)\leq l$. Thus, 
\[
l^{-2}\leq \lambda_f^n   \vert m_n\vert^2\leq l^2.
\]
Taking logarithms and dividing by $n$ leads to
\[
\vert \log \lambda_f + 2\lambda_s   \vert < 2\epsilon
\]
as desired. Moreover, taking a subsequence, one may assume that $\lambda_f^n\vert m_n\vert ^2$ converges to 
a positive real number $\theta\in [l^{-2}, l^2]$. 

Now, we come back to the equation (\ref{poop}) which can be written
\begin{equation}\label{dovdov}
 (\xi_n^s)^* T_f^- = \lambda_f^{n}   \vert m_n \vert^2 \cdot \varphi_x (m_n z) \cdot  \frac{\ii}{2}  dz \wedge d\bar z \ \textrm{ on } \ \disk(1/l^2)  . 
 \end{equation} 
The left hand side converges to $(\xi)^* T_f^-$ in the sense of distributions on $\disk(1/l^2)$, because $T_f^-$ has continuous potentials (see Lemma~\ref{lem:pull-back-cv}). The right hand side converges in the sense of distributions to 
\[
\theta \cdot \varphi_x(0)  \cdot \frac{\ii}{2}  dz \wedge d\bar z,
\]
because $M_n$ converges locally uniformly to the constant mapping $0$ on compact subsets of $\disk(1/l^2)$ and  $0$ is a Lebesgue density point for $\varphi_x$. As a consequence,
\[
 (\xi)^* T_f^- =  \theta \cdot  \varphi_x(0)  \cdot \frac{\ii}{2}  dz \wedge d\bar z \ \textrm{ on } \ \disk(1/l^2)  . 
 \]
Setting $\xi_x=\xi$, $\rho(x) = 1/(l^2)$ and $\alpha(x) = \theta \, \varphi_x(0)$ we get 
 $\xi_x^* T_f^- = \alpha(x) \cdot \frac{\ii}{2}  dz \wedge d\bar z$ on $\disk(\rho(x))$, as desired
 ($\rho(x)$ can be defined as the best constant $l$ for which  $x$ is in $Q_{l,m}$). 
\end{proof}

\subsection{Second step: from $\xi_x$ to $\xi^s_x$}\label{par:densitecste-affine}

We  need to translate Proposition~\ref{densitecste} in terms of the global parame\-trization $\xi_x^s\colon \C\to W^s(x)$. Proposition~\ref{densitecste} asserts that there is a parametrization $\xi_x\colon \disk(\rho(x))\to W^s(x)$ of a
small neighborhood of $x$ in $W^s(x)$ such that $\xi_x^*T^-_f=\alpha(x)(\ii/2)dz\wedge d{\bar{z}}$
on $\disk(\rho(x))$. Both $\xi_x$ and $\xi_x^s$ satisfy the Lipschitz property 
\[
2\beta(x)/3\leq \vert \xi'(z)\vert \leq 2
\]
on their domain of definition, thus $(\xi_x^s)^{-1}\circ \xi_x$ is defined on $\disk(\rho(x))$ and the
modulus of its derivative is bounded from below by $\beta(x)/3$ and from above by $3\beta(x)^{-1}$; moreover, its derivative at the origin is in between $\beta(x)$ and $\beta(x)^{-1}$. 
This function is injective and its domain of definition is $\disk(\rho(x))$. By Koebe one quarter theorem, 
its image contains the disk of radius $\beta(x)\rho(x)/4$, so that the reciprocal function is defined on 
$\disk(\beta(x)\rho(x)/4)$ and has derivative between $ \beta(x)/3$ and $3\beta(x)^{-1}$ on this 
disk. 

In order to pull back $T^-_f$ by $\xi_x^s$, one can first compute its pull back by $\xi_x$,
the result being $\alpha(x) (\ii/2) dz\wedge d\bar z$, and then take its pull-back under 
$(\xi_x)^{-1}\circ \xi_x^s$. This gives
\begin{equation}\label{piip}
(\xi_x^s)^*T^-_f=\alpha(x) \vert h_x(z)\vert \frac{\ii}{2} dz\wedge d\bar z
\end{equation}
on $\disk(\beta(x)\rho(x)/4)$, where $h_x$ is holomorphic and $\vert h_x(z)\vert$
is bounded by 
\begin{equation}\label{eq:bound-on-h}
\beta(x)/3\leq \vert h_x(z)\vert \leq 3\beta(x)^{-1}.
\end{equation}

\subsection{Final step for the proof of Theorem~\ref{thm:pullback-lebesgue}}

Now, fix a Pesin box $\Pes$, and a subset $Q=Q_{l,m}$ as in the proof of Proposition~\ref{densitecste}.
Then, there exists an integer $N$ such that 
\[
1/N \leq   (\beta \rho/4)^2 \leq 1,  \quad 1/N \leq  \beta/3 \leq 1, \quad {\text{and}} \quad1/N \leq  \alpha \leq N 
\]
on the set $Q$. Apply Birkhoff ergodic theorem: for a generic point $x$ of $Q$ there is a sequence of points 
$f^{-n_i}(x)\in Q$ that converges towards $x$. As above, we drop the
index $i$ from $n_i$, and write  
$x_n$ for $f^{-n}(x)$, $\xi_n^s$
for $\xi_{x_n}^s$. As in Equation \eqref{poop}, we obtain
\begin{equation}\label{koko}
 (\xi_x^s) ^*  T_f^- =  \lambda_f^n \, M_n^* \ (\xi_{x_n}^s)^* T_f^-   \textrm{ on } \ \C,
 \end{equation}
 where $M_n$ is a linear map $z \mapsto m_n \cdot z$ with $\vert m_n \vert \in e^{n \lambda_s} \cdot [e^{-n\epsilon},e^{n\epsilon}]$. 
 Equation \eqref{piip} shows that the right hand side of Equation \eqref{koko} has density  
\begin{equation}\label{inter}
 \lambda_f^{n}   \vert M_n' \vert^2 \cdot\alpha(x_n) \cdot \vert h_n ( M_n (z)) \vert^2  
 \end{equation}
on $M_n^{-1} (\disk(1/N))$, where $h_n$ is holomorphic with modulus in $[1/N , N]$. Again, evaluation at $z=0$ gives $\lambda_f^n \vert m_n \vert^2 \cdot \alpha(x_n) \in [1/N^5, N^5]$, so that a subsequence  of $\lambda_f^n \vert m_n \vert^2 \cdot \alpha(x_n)$ converges to some $\theta$ in $[1/N^5, N^5]$. Moreover  Montel's theorem and Equation~\eqref{eq:bound-on-h} imply that the sequence $(h_n)_n$ is equicontinuous on $\disk(1/N)$; thus a subsequence converges locally uniformly to a function $h : \disk(1/N) \to \C$ with modulus in $[1/N , N]$. 

Let $\gamma(x) := \theta \vert h (0) \vert ^2$. Let $K$ be a compact subset of $\C$ and restrict the study to integers  $n \geq 1$ such that $K \subset M_{n}^{-1}({\disk(1/N)})$. Since $(M_n)_n$ converges uniformly to zero on $K$, we get 
\[
(\xi^s_x)^*T^-_f =  \gamma(x) \frac{\ii}{2}dz\wedge d\bar z \ \ \textrm{ on } \ \ K .
\]
The same formula holds on $\C$ by compact exhaustion.
Changing $\xi^s_x$ into $\xi_x^s(az)$ with $\vert a\vert^{-2} := \gamma(x)$, 
we obtain the parametrization promised by Theorem~\ref{thm:pullback-lebesgue}.

\section{Normal families of entire curves}\label{par:NormalStable}

\subsection{Singularities} 

In this section, we need to work on the surface $X_0$, obtained by contracting all periodic curves of $f$. 
This surface may be singular. We refer to \cite{Demailly:1985, Chirka} for all necessary concepts of pluri-potential 
theory on singular analytic spaces. 

Note that, up to now, all arguments were done on the smooth surface $X$, most of them at a generic point for $\mu_f$, but
they could have been done on $X_0$ directly. (we worked on $X$ to simplify the notations).

\begin{rem}\label{rem:inj-proj}
Birkhoff ergodic theorem tells us that the forward orbit of a $\mu_f$-generic point equidistributes with respect to $\mu_f$. 

Let $\xi\colon \C\to X$ be an injective parametrization of a stable (resp. unstable) manifold $W^s(x)$. Assume that
$\xi$ is not contained in a periodic algebraic curve. Let $D$ be an irreducible periodic algebraic curve, and suppose
$f(D)=D$ for simplicity. If $W^s(x)$ intersects $D$, the forward orbit of $x$ converges towards $D$. On the other
hand, $\mu_f(D)=0$ because $\mu_f$ does not charge any proper analytic subset of $X$. Thus, $x$ is not a generic
point with respect to $\mu_f$. 

Thus, in the definition of the set of generic points $\Lambda$ in Oseledets-Pesin theorem (Theorem~\ref{OP}), we can add the hypothesis that
the global stable manifolds $W^s(x)$, $x\in \Lambda$, do not intersect the periodic curves of $f$ (in $X_0$, they do not
go through the singularities). In particular, the injective parametrization $\xi^s_x\colon \C\to X$ remains injective when 
one projects it into $X_0$.
\end{rem}

In what follows, we keep the same notation $\xi^{u/s}_x$ for the unstable and stable manifolds, 
but consider them as entire curves in $X_0$. We also keep the same notation $T^\pm_f$ for
the invariant currents.

\subsection{The family of entire curves $\A^u_f$}\label{norfam}

Let $f$ be an automorphism of a complex projective surface with positive entropy and let $T_f^\pm$ be the invariant currents of $f$ defined in Section~\ref{defprop}. Let $\pi : X \to X_0$ be the morphism that contracts the periodic curves of $f$ (see Proposition~\ref{prop:contraction}). Let $X_0^{reg}$ be the smooth part of $X_0$. Let $T_0^\pm := \pi_* T_f^\pm$; Remark  \ref{pushpush} shows that these currents are well defined and have continuous potentials.
 
\begin{defi}
Let  $\A^u_f$ be the family of entire curves $\xi\colon \C\to X_0$ such that 
\[
\xi^*(T^+_0)=\frac{\ii}{2}dz \wedge d\zbar \quad {\text{and}}\quad \xi^*(T^-_0)=0.
\]
\end{defi}

If $\mu_f$ is absolutely continuous, Theorem \ref{thm:pullback-lebesgue} and Remark~\ref{rem:inj-proj} show that almost every unstable manifold $W^u_f(x)$ can be parametrized by an injective entire curve $\xi^u_x\colon\C\to X_0$ that belongs to $\A^u_f$.
In particular $\A^u_f$ is not empty.

\subsection{Zalcman's theorem and compactness of $\A^u_f$}

Let $M$ be a compact complex space. Fix a hermitian metric on $M$ (see \cite{Demailly:1985}). 

We say that a sequence of entire curves $\xi_n\colon \C\to M$ converges towards  $\xi \colon \C \to M$ if $\xi_n$ converges locally uniformly to $\xi$. 
A family $\E$ of entire curves on $M$ is {\bf{closed}} if the limit of every converging
sequence $(\xi_n)\in \E^\N$ is an element of $\E$. It is {\bf{normal}} if every sequence of elements of $\F$ contains a converging subsequence. It is {\bf{compact}} if it is non empty, normal and closed.

\begin{lem}[Zalcman \cite{Z}] Let $M$ be a compact, complex analytic space. If a sequence of entire curves $\xi_n \colon \C \to M$ is not normal then there exists a sequence of affine automorphisms $z\mapsto a_n z+b_n$ such that 
\begin{itemize}
\item[(1)] $\lim_n a_n = 0$,  
\item[(2)] the sequence $\nu_n \colon \C \to M$ defined by $z \mapsto \xi_n(a_n z +b_n)$ converges towards a Brody curve $\nu\colon \C\to M$.
\end{itemize}
\end{lem}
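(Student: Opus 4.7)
The plan is to run the classical Zalcman rescaling trick, adapted to a compact complex analytic target. The only extra care needed beyond the standard smooth Riemann surface case concerns the definition of the derivative norm and the passage to the limit in $M$.

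First, I would use the hermitian metric on $M$ (fixed at the start of the section) to define the pointwise velocity $\Vert \xi'(z) \Vert$ of an entire curve $\xi\colon\C\to M$; if $M$ has singularities, this is still well defined away from the singular locus and can be read off on any local embedding of $M$ into some $\C^N$. The key reformulation is the Marty-type criterion: the family $(\xi_n)$ fails to be normal on some compact subset of $\C$ if and only if $\Vert \xi_n'\Vert$ fails to be uniformly bounded on that compact. This rests on Arzel\`a--Ascoli together with compactness of $M$, plus the fact that uniformly Lipschitz entire curves into a compact analytic space form a normal family (both directions are standard and extend from the smooth case by the local embedding).

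Granted non-normality, pick $R>0$ and points $w_n\in\disk(0,R/2)$ with $\Vert\xi_n'(w_n)\Vert\to\infty$. Consider the continuous function $z\mapsto (R-\vert z\vert)\Vert\xi_n'(z)\Vert$ on $\overline{\disk(0,R)}$; it vanishes on the boundary and attains its maximum at some $b_n\in\disk(0,R)$, with maximal value $M_n:=(R-\vert b_n\vert)\Vert\xi_n'(b_n)\Vert\to+\infty$. Set
\[
a_n := \frac{1}{\Vert\xi_n'(b_n)\Vert}, \qquad \nu_n(z):=\xi_n(a_n z + b_n).
\]
Then $a_n\to 0$ because $a_n\leq R/M_n$. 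By maximality of $b_n$, for $\vert z\vert<M_n$ one gets $a_nz+b_n\in\disk(0,R)$ and
\[
\Vert\nu_n'(z)\Vert \;=\; a_n\,\Vert\xi_n'(a_nz+b_n)\Vert \;\leq\; \frac{R-\vert b_n\vert}{R-\vert a_nz+b_n\vert} \;\leq\; \frac{1}{1-\vert z\vert/M_n},
\]
which is $\leq 2$ for $\vert z\vert\leq M_n/2$. Since $M_n\to\infty$, the sequence $(\nu_n)$ is eventually $2$-Lipschitz on every compact subset of $\C$.

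Now apply Arzel\`a--Ascoli, using compactness of $M$: after extracting a subsequence, $\nu_n$ converges locally uniformly to a holomorphic map $\nu\colon\C\to M$ (holomorphy of the limit follows from locally embedding $M$ in a projective space and using Montel on the components). The uniform bound on the velocities passes to the limit, so $\Vert\nu'\Vert\leq 1$ everywhere; and evaluating at $z=0$ gives $\Vert\nu_n'(0)\Vert=a_n\Vert\xi_n'(b_n)\Vert=1$, whence $\Vert\nu'(0)\Vert=1$ and $\nu$ is non-constant. Thus $\nu$ is a Brody curve, proving the lemma.

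The only step that is not entirely routine is the Marty-type criterion and the Arzel\`a--Ascoli argument on a singular analytic space; once one works through a local embedding $M\hookrightarrow\P^N$, both reduce to the standard smooth statement applied to the coordinate functions, so this is the place where one must be a little careful, but no genuine obstacle arises.
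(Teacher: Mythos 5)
The paper states this lemma as a citation to Zalcman's original work \cite{Z} and gives no proof, so there is no "paper proof" to compare against; you are filling in a result the authors treat as known. Your argument is the standard Zalcman--Pang rescaling scheme and it is correct: the Marty-type criterion for normality via uniformly bounded velocities, the choice of $b_n$ maximizing $(R-|z|)\Vert\xi_n'(z)\Vert$, the estimate
\[
\Vert\nu_n'(z)\Vert \leq \frac{R-|b_n|}{R-|a_n z+b_n|} \leq \frac{1}{1-|z|/M_n},
\]
and the passage to the limit via Arzel\`a--Ascoli through a local embedding of $M$ into $\C^N$ all check out; the evaluation $\Vert\nu_n'(0)\Vert=1$ forces the limit to be non-constant, hence a Brody curve. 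Your explicit flag about the singular locus is the right place to be careful (the hermitian metric and derivative norm must be taken from local embeddings, and the limit curve is allowed to meet $\Sing(M)$), but indeed no real obstruction appears once one argues component-wise in the embeddings. One cosmetic remark: as with every version of Zalcman's lemma, the conclusion holds only after passing to a subsequence of the $\xi_n$; the statement in the paper is phrased slightly loosely, but that is exactly how the authors use it in the proof of Theorem \ref{thm:normal}, and your proof supplies precisely the subsequence they implicitly extract.
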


The following result is a fundamental step in the proof of our main theorem. Its statement does not require any assumption 
on $\mu_f$.

\begin{thm}\label{thm:normal} Let $f$ be an automorphism of a complex projective surface $X$ with positive entropy, 
with $\pi\colon X\to X_0$ the birational morphism that contracts the $f$-periodic curves.  
If non-empty, $\A^u_f$ is a compact family. 
\end{thm}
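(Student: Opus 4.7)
\medskip

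\textbf{Plan of proof.} The family $\A^u_f$ is assumed non-empty, so it remains to establish closedness and normality. For closedness, let $(\xi_n)$ be a sequence in $\A^u_f$ converging locally uniformly to $\xi\colon\C\to X_0$. Since $T_0^+$ and $T_0^-$ both have continuous potentials (Remark \ref{pushpush}), Lemma \ref{lem:pull-back-cv} gives $\xi_n^*T_0^\pm \to \xi^*T_0^\pm$ weakly. The first limit is $\tfrac{\ii}{2}dz\wedge d\bar z$, which is non-zero and in particular forces $\xi$ to be non-constant; the second is $0$. Hence $\xi\in\A^u_f$.

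For normality, I argue by contradiction via Zalcman's reparametrization. Suppose $(\xi_n)\subset \A^u_f$ is not normal. Then there exist affine maps $L_n(z)=a_n z+b_n$ with $a_n\to 0$ such that the reparametrized sequence $\nu_n:=\xi_n\circ L_n$ converges locally uniformly to a \emph{non-constant} Brody curve $\nu\colon\C\to X_0$. Since affine pull-back scales a form of type $\tfrac{\ii}{2}dz\wedge d\bar z$ by $|a_n|^2$, one computes
\[
\nu_n^* T_0^+ = L_n^*\bigl(\xi_n^*T_0^+\bigr) = |a_n|^2\,\tfrac{\ii}{2}dz\wedge d\bar z \;\longrightarrow\; 0,
\qquad
\nu_n^* T_0^- = L_n^*\bigl(\xi_n^*T_0^-\bigr) = 0.
\]
Applying Lemma \ref{lem:pull-back-cv} once more (both currents $T_0^\pm$ have continuous potentials on $X_0$), we conclude that $\nu^*T_0^+ = 0$ and $\nu^*T_0^- = 0$, and therefore $\nu^*\bigl(\pi_*(T_f^+ + T_f^-)\bigr)=0$.

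But $\nu$ is a non-constant entire curve into $X_0$, so this equality flatly contradicts Corollary \ref{cor:DS}(2). This contradiction shows that every sequence in $\A^u_f$ is normal, completing the proof of compactness.

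\medskip

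The only nontrivial input is Corollary \ref{cor:DS}(2) (which ultimately rests on the Dinh--Sibony Theorem \ref{thm:DS}), and that is precisely the point where the geometry of $X_0$ and the nef/Kähler character of $\pi_*(T_f^+ + T_f^-)$ enters. Once that tool is available, closedness is essentially tautological and normality is a clean Zalcman-plus-scaling argument, the scaling being what kills the positive-current contribution in the limit.
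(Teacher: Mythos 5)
Your proof is correct and follows essentially the same route as the paper's: closedness via Lemma \ref{lem:pull-back-cv}, and normality by Zalcman reparametrization combined with the scaling $|a_n|^2\to 0$ and a contradiction with Corollary \ref{cor:DS}(2). The small extra remarks you add (non-constancy of the closedness limit, the explicit affine-pullback computation) are harmless clarifications of steps the paper leaves implicit.
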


\begin{proof}
Lemma~\ref{lem:pull-back-cv} implies that $\A^u_f$ is closed. Let us prove that $\A^u_f$ is a normal family. If not, Zalcman's Lemma provides a sequence  $\xi_n\in \A^u_f$ and automorphisms $g_n \colon z \mapsto a_n z+b_n$ such that 
\[
\nu_n\colon z \mapsto \xi_n(a_n z +b_n)
\]
converges towards a Brody curve $\nu\colon \C\to X_0$. This curve satisfies
$$\nu^*(T^+_0) = \lim_{n\to \infty} \nu_n^*(T^+_0) =  \lim_{n\to \infty}¬†g_n^*(\frac{\ii}{2}dz\wedge d\zbar)  =  0  $$
because $\lim_n a_n =0$. Similarly, $\nu^*(T^-_0)=0$. 
This contradicts the second assertion in Corollary~\ref{cor:DS}.
\end{proof}

\subsection{The compact family $\B^u_f$ of unstable manifolds }\label{par:topoB}

We assume that  $\mu_f$ is absolutely continuous, so that $\A^u_f$  is not empty.

\begin{defi}
Let $\B^u_f$ be the smallest compact subset  of $\A^u_f$ that contains all injective parametrizations of unstable manifolds which are in $\A^u_f$. We set 
\[
\B^u_f(X_0)=X_0^{reg}  \bigcap \bigcup_{\xi \in \B^u_f}  \xi(\C)   .
\]
\end{defi}

Note  that $\A^u_f$ and $\B^u_f$ are invariant under translation and rotation: if $\xi$ is an element of one of these
sets, then $z\mapsto \xi(e^{i\theta} z+b)$ is an element of the same set for all $b\in \C$ and $\theta \in \R$.
Using this remark, one verifies that $\B^u_f(X_0)$ is a closed subset of $X_0^{reg}$, because $\B^u_f$ is a 
compact family of entire curves.
The sets $\B^s_f$ and $\B^s_f(X_0)$ are defined in a similar way, with parametrizations of stable manifolds
such that $(\xi^s_x)^*T^-_f= (\frac{\ii}{2})dz\wedge d\overline{z}$.

Let us now derive further properties of  $\B^u_f$ and $\B^u_f(X_0)$.
 
\begin{lem}\label{lem:identity-principle}
Let $\eta_1, \eta_2$ be elements of  $\B^u_f$. Then 
either $\eta_1(\C)$ and $\eta_2(\C)$ are disjoint or $\eta_1(\C) = \eta_2(\C)$. In the later case $\eta_1(z)=\eta_2(e^{i\theta} z+b)$ on $\C$  for some $b\in \C$ and $\theta \in \R$.
\end{lem}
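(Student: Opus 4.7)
Assume $\eta_1(\C) \cap \eta_2(\C) \neq \emptyset$; otherwise the statement holds trivially. Pick intersection parameters $\eta_1(z_1) = \eta_2(z_2) = p \in X_0^{reg}$ and, using the translation invariance of $\B^u_f$, replace $\eta_i(\cdot)$ by $\eta_i(\cdot + z_i)$ to assume $\eta_1(0) = \eta_2(0) = p$. The plan is then to study the closed analytic subset
\[
Z := \{(z,w) \in \C^2 : \eta_1(z) = \eta_2(w)\},
\]
which contains $(0,0)$, and to split the argument according to whether $(0,0)$ is an isolated point of $Z$ or not. Throughout I use that $\eta_1$ and $\eta_2$ are injective, as non-constant locally uniform limits of the injective parametrizations $\nu_n := \xi^u_{x_n}$ and $\nu'_n := \xi^u_{y_n}$ provided by the definition of $\B^u_f$, with $x_n, y_n \to p$.

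If $(0,0)$ is not isolated in $Z$, then $Z$ carries a one-dimensional component through the origin. Its image under the first coordinate projection $\C^2\to\C$ is a non-discrete analytic subset of $\C$, hence all of $\C$, so $\eta_1(\C) \subset \eta_2(\C)$; by symmetry the two images coincide. Then $h := \eta_1^{-1} \circ \eta_2 \colon \C \to \C$ is a holomorphic bijection, hence affine, and the shared normalization $\eta_i^* T_0^+ = \frac{\ii}{2}\, dz\wedge d\overline{z}$ forces the linear coefficient of $h$ to have modulus one. After rearrangement this gives the desired relation $\eta_1(z) = \eta_2(e^{i\theta}z+b)$.

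The main obstacle is to rule out the isolated case. My approach is to transfer the intersection to the approximating maps via stability of isolated holomorphic intersection multiplicities: since $\Phi_n := (\nu_n, \nu'_n) \to (\eta_1, \eta_2)$ locally uniformly and $\Phi^{-1}(\Delta_{X_0})$ has an isolated zero of positive multiplicity at $(0,0)$, for $n$ large there exist $(z_n, w_n) \to (0,0)$ with $\nu_n(z_n) = \nu'_n(w_n)$. This forces $W^u(x_n) \cap W^u(y_n) \neq \emptyset$; and because distinct unstable manifolds are disjoint, $W^u(x_n) = W^u(y_n)$ for $n$ large. Both $\nu_n$ and $\nu'_n$ parametrize this common manifold injectively with $\xi^* T_0^+ = \frac{\ii}{2}\, dz\wedge d\overline{z}$, so they differ by $\nu'_n = \nu_n \circ g_n$ with $g_n(w) = e^{i\theta_n}w + b_n$ in the subgroup of $\Aut(\C)$ preserving the standard area form.

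The point of working with the pair $(z_n, w_n)$ is that injectivity of $\nu_n$ applied to $\nu_n(z_n) = \nu_n(g_n(w_n))$ yields $z_n = e^{i\theta_n}w_n + b_n$, and hence $b_n = z_n - e^{i\theta_n}w_n \to 0$; this sidesteps what would otherwise be a delicate a priori boundedness problem for the translation parameters. After extracting a subsequence with $\theta_n \to \theta$, passage to the limit in $\nu'_n = \nu_n \circ g_n$ (via Lemma \ref{lem:pull-back-cv} or plain uniform convergence) yields $\eta_2(w) = \eta_1(e^{i\theta}w)$, so $\eta_1(\C) = \eta_2(\C)$ and $Z$ coincides with the line $\{z = e^{i\theta}w\}$. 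This contradicts the isolation of $(0,0)$ in $Z$, so only the first case can occur and the lemma follows.
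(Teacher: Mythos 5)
Your overall plan---work with the analytic set $Z=\{(z,w):\eta_1(z)=\eta_2(w)\}$ and split on whether $(0,0)$ is isolated---is a genuinely different route from the paper's, which invokes the Hurwitz-type Lemma~6.4 of \cite{BLS1} for the dichotomy of images and then uses local inversion plus area-preservation for the affine relation. Your ``isolated'' case is the interesting new content: the degree-theoretic stability of the zero, the deduction that $W^u(x_n)=W^u(y_n)$, the clever use of the limit point $(z_n,w_n)$ to bound the translation parameter $b_n$, and the passage to the limit to produce the line $\{z=e^{i\theta}w\}\subset Z$ (note that for the contradiction only the \emph{inclusion} of the line in $Z$ is needed, not equality), are all sound. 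This is a reasonable replacement for the paper's citation of BLS1 Lemma~6.4.

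However, the ``non-isolated'' case has two genuine gaps. First, the statement that the image of a one-dimensional component $Z'$ of $Z$ under the first projection ``is a non-discrete analytic subset of $\C$, hence all of $\C$'' is wrong: the projection of an analytic set to a factor need not be analytic unless the projection is proper (Remmert), and indeed $p_1(Z')$ could a priori be something like $\C^*$. Second, the map $h:=\eta_1^{-1}\circ\eta_2$ requires $\eta_1$ to be globally injective, and your opening assertion that ``$\eta_1,\eta_2$ are injective as non-constant locally uniform limits of injective parametrizations'' does not hold for maps into a surface: Hurwitz's theorem on univalence is a one-variable statement, and a limit of injective curves into $\C^2$ can fail to be injective (e.g.\ $\xi_n(z)=(z^3,z/n)\to(z^3,0)$). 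The paper never claims elements of $\B^u_f$ are globally injective; it sidesteps the issue by inverting $\eta_2$ only \emph{locally} near a point where $\eta_2'\neq0$. The fix for your case 1 is along the same lines: parametrize a branch $\gamma=(\gamma_1,\gamma_2)$ of $Z'$ through $(0,0)$, pick $t_0$ with $\gamma_1'(t_0)\neq0\neq\gamma_2'(t_0)$, define $\varphi:=\gamma_2\circ\gamma_1^{-1}$ locally so that $\eta_1=\eta_2\circ\varphi$ near $\gamma_1(t_0)$, use the pullback normalization $\eta_i^*T_0^+=\tfrac{\ii}{2}\,dz\wedge d\overline{z}$ to get $|\varphi'|\equiv1$ and hence $\varphi(z)=e^{i\theta}z+b$, and extend $\eta_1=\eta_2\circ\varphi$ to all of $\C$ by the identity principle. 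That yields the affine relation directly, and the equality of images then follows as a corollary rather than being what you need to prove first.
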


\begin{proof} The first property follows from Hurwitz's lemma (see \cite{BLS1}, Lemma 6.4): 

\begin{lem}[Hurwitz]
Let $C_n$ and $D_n$ be two families of irreducible curves in the unit ball of $\C^2$. Assume that 
$C_n\cap D_n$ is empty for all $n$, that $C_n$ converges to an irreducible curve $C$ uniformly
and that $D_n$ converges to an irreducible curve $D$ uniformly. Then either $C\cap D$ is empty
or $C$ coincides with $D$. 
\end{lem}

To prove the second property, assume that $\eta_1$ and $\eta_2$ have the same image~$W$. Let 
$m$ be a point of $W$ and fix two points $z_1$ and $z_2$ such that $\eta_1(z_1) = \eta_2(z_2)=m$. Assume that $m$ and $z_2$ satisfy $(\eta_2)'(z_2)\neq 0$; one can always find such pairs $(m,z_2)$
because $\eta_2$ is not constant. Then $\eta_2$ determines a local diffeomorphism from 
a  neighborhood   of $z_2$ in $\C$ to a neighborhood  of $m$ in $W$. 
The map $\varphi=\eta_2^{-1}\circ \eta_1$ is defined on a small disk centered at $z_1$, is holomorphic, 
and preserves $(\ii/2) dz\wedge d\bar z$. Thus, $\varphi(z)=e^{i\theta} z+b$ for some $b\in \C$ and $\theta \in \R$. As a consequence, there is a non-empty open 
subset of $\C$ on which $\eta_2(e^{i\theta} z+b)=\eta_1(z)$; this property holds on $\C$ by analytic 
continuation. 
\end{proof}

\begin{lem}\label{lem:topoB}
Let $x$ be an element of $\B^u_f(X_0)$ and $\xi$ be an element of $\B^u_f$ with $x=\xi(0)$. 
Let $\epsilon$ be a positive real number. 
There exists a neighborhood $\V$ of $x$ in $X_0^{reg}$ such that: for every $\eta \in \B^u_f$ with $\eta(0) \in \V$, there exists $\theta \in \R$ satisfying
\[
 \forall z \in \disk_{r_0}, \quad   \dist_{X_0}(\eta(e^{i\theta}z), \xi(z))\leq \epsilon.
\]
\end{lem}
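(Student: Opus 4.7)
The plan is to argue by contradiction, using the compactness of $\B^u_f$ (Theorem~\ref{thm:normal}) to extract a limit curve and then pin down its parametrization via the identity principle of Lemma~\ref{lem:identity-principle}. Assume the conclusion fails: there exist $\varepsilon_0 > 0$ and a sequence $\eta_n \in \B^u_f$ with $\eta_n(0) \to x$ such that, for every $n$ and every $\theta \in \R$,
\[
\sup_{z \in \disk_{r_0}} \dist_{X_0}\bigl(\eta_n(e^{i\theta} z), \xi(z)\bigr) > \varepsilon_0.
\]
Compactness yields a subsequence (still denoted $\eta_n$) converging locally uniformly on $\C$ to some $\eta_\infty \in \B^u_f$, and continuity of evaluation at $0$ gives $\eta_\infty(0) = x = \xi(0)$.

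Since $\eta_\infty(\C)$ and $\xi(\C)$ meet at $x$, the dichotomy in Lemma~\ref{lem:identity-principle} forces $\eta_\infty(\C) = \xi(\C)$ and produces $\theta_\infty \in \R$, $b_\infty \in \C$ with
\[
\eta_\infty(z) = \xi(e^{i\theta_\infty} z + b_\infty) \text{ on } \C, \qquad \xi(b_\infty) = \xi(0) = x.
\]
The crucial step is to upgrade this to $b_\infty = 0$. This relies on the global injectivity of $\xi \colon \C \to X_0$: the elements of $\B^u_f$ are locally uniform limits of injective parametrizations of unstable manifolds $W^u(y) \simeq \C$, and a Hurwitz-type argument (combined with the rigidity coming from the normalization $\xi^* T^+_0 = \frac{\ii}{2} dz \wedge d\bar z$, which prevents $\xi$ from factoring through a non-trivial branched cover) shows that injectivity persists in the limit. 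Injectivity of $\xi$ gives $\xi^{-1}(x) = \{0\}$, hence $b_\infty = 0$.

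With $b_\infty = 0$ one has $\eta_\infty(z) = \xi(e^{i\theta_\infty} z)$, a pure rotation of $\xi$. Setting $\theta := -\theta_\infty$ and using locally uniform convergence, $\eta_n(e^{i\theta} z) \to \xi(z)$ uniformly on $\disk_{r_0}$, so $\sup_{z \in \disk_{r_0}} \dist_{X_0}(\eta_n(e^{i\theta} z), \xi(z)) < \varepsilon_0$ for all large $n$, contradicting the assumption and closing the proof.

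The hard part will be excluding a nontrivial translation $b_\infty$: Theorem~\ref{thm:normal} combined with Lemma~\ref{lem:identity-principle} identifies $\eta_\infty$ with $\xi$ only up to a Euclidean motion of $\C$, and only the rotational part can be absorbed into the variable appearing in the conclusion. Everything therefore hinges on the global injectivity of the entire curves in $\B^u_f$, which is the one piece of rigidity one needs beyond the compactness package delivered by Theorem~\ref{thm:normal}.
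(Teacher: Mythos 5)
Your proof follows the paper's strategy exactly: argue by contradiction, use the compactness of $\B^u_f$ (Theorem~\ref{thm:normal}) to extract a limit $\eta_\infty$ with $\eta_\infty(0)=x$, and invoke Lemma~\ref{lem:identity-principle} to compare $\eta_\infty$ with $\xi$. You are also right to flag the translation $b_\infty$ as the delicate point: Lemma~\ref{lem:identity-principle} only yields $\eta_\infty(z)=\xi(e^{i\theta_\infty}z+b_\infty)$, and the paper's proof passes over the reduction to $b_\infty=0$ in silence when it asserts that $\eta$ must be ``equal to $\xi$ up to a rotation''.

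However, the way you propose to close that gap is incorrect. Curves in $\B^u_f$ need \emph{not} be injective, and neither the Hurwitz heuristic nor the normalization $\xi^*T^+_0=\tfrac{\ii}{2}dz\wedge d\bar z$ rescues injectivity in the limit. For the Hurwitz part: the sequence $f_n(z)=(z^2,\,z(z-1/n))$ consists of injective entire maps $\C\to\C^2$ converging locally uniformly to $f(z)=(z^2,z^2)$, which satisfies $f(z)=f(-z)$; so injectivity does not survive passage to the limit for maps into a surface. For the normalization part: $\tfrac{\ii}{2}dz\wedge d\bar z$ is invariant under every Euclidean rotation, so a curve invariant under a finite rotation group can perfectly well have Lebesgue slice, and such curves do occur in $\B^u_f$. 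Concretely, in a classical Kummer example $X_0=A/\iota$ with $\iota(x,y)=(-x,-y)$, the projection of an unstable line passing through a $2$-torsion (fixed) point $p$ of $\iota$ is a locally uniform limit of injective unstable parametrizations, lies in $\A^u_f$, and satisfies $\xi(-z+c)=\xi(z)$ for a suitable $c$; translating the base point shows it also satisfies the hypotheses of the lemma with $\xi(0)\in X_0^{\mathrm{reg}}$ and $\xi^{-1}(\xi(0))\neq\{0\}$. So the injectivity your argument ``hinges on'' is false.

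The resolution is different: one should apply Lemma~\ref{lem:identity-principle} a second time, to $\xi$ against itself. If $b_\infty\neq 0$, then $\xi(b_\infty)=\eta_\infty(0)=\xi(0)$, and the identity principle produces a nontrivial affine isometry $\psi$ of $\C$ with $\xi\circ\psi=\xi$ and $\psi(0)=b_\infty$. Then
\[
\eta_\infty(z)=\xi\bigl(e^{i\theta_\infty}z+b_\infty\bigr)=\xi\bigl(\psi^{-1}(e^{i\theta_\infty}z+b_\infty)\bigr),
\]
and the affine isometry $z\mapsto\psi^{-1}(e^{i\theta_\infty}z+b_\infty)$ fixes the origin (because $\psi^{-1}(b_\infty)=0$), hence is a pure rotation. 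Thus $\eta_\infty$ is a rotation of $\xi$ after all, and your final convergence step goes through. In short, the translation is not ruled out by rigidity of $\xi$; it is absorbed by the internal symmetry of $\xi$ that any failure of injectivity forces.
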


\begin{proof} 
If not, there exists a sequence $\eta_n \in \B^u_f$ such that $\eta_n(0) \in B_x({1 \over n})$ and 
\[ 
\forall \theta \in \R,\,  \exists z_{n,\theta} \in \disk_{r_0}, \, \dist(\eta_n(e^{i\theta}z_{n,\theta}), \xi(z_{n,\theta})) >  \epsilon .
\]
For every angle $\theta \in \R$, choose a limit point $z_\theta \in \overline{ \disk}_{r_0}$ of the sequence $z_{n,\theta}$. By compactness of $\B^u_f$, one can assume that $\eta_n$ converges to some $\eta \in \B^u_f$ with $\eta(0)=x$. By construction, 
\[
\forall \theta \in \R, \;  \dist_{X_0}(\eta (e^{i\theta}z_{\theta}), \xi(z_{\theta})) \geq  \epsilon .
 \]
In particular $\eta$ is not equal to $\xi$ up to a rotation, contradicting Lemma~\ref{lem:identity-principle}.
\end{proof}

\begin{pro}\label{pro:stable/B}
Assume that the measure $\mu_f$ is absolutely continuous with respect to Lebesgue measure. 
Let $\xi$ be an element of $\B^u_f$. Then 
\begin{enumerate}
\item $\overline{\xi(\C)}\cap X_0^{reg}$ is contained in $\B^u_f(X_0)\cap \B^s_f(X_0)$;
\item $\forall z\in \C$, $\xi(z)$ is contained in the image of an entire curve $\nu\in \B^s_f$;
\item the set of points $z\in \C$ such that $\xi$ intersects a stable manifold $\xi^s_y$ of $f$ 
transversely at $\xi(z)$ is dense in $\C$.
\end{enumerate}
The same result holds if one permutes the roles of stable and unstable parame\-trizations.
\end{pro}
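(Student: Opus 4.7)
The plan is to treat the three assertions in the order (1)-unstable, (3), (2), and then the stable half of (1), bootstrapping from a direct compactness argument to a dynamical density argument and back. For the unstable half of (1): if $p \in \overline{\xi(\C)} \cap X_0^{reg}$ and $\xi(z_n) \to p$, the translates $\eta_n(z) := \xi(z + z_n)$ lie in $\B^u_f$ by translation invariance, and $\eta_n(0) \to p$; compactness of $\B^u_f$ (Theorem~\ref{thm:normal}) yields a limit $\eta \in \B^u_f$ with $\eta(0) = p$, hence $p \in \B^u_f(X_0)$.

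The heart of the argument is assertion (3). Fix $z_0 \in \C$ and set $p := \xi(z_0)$. I would use the renormalization along the forward orbit of $p$,
\[
\xi_n(z) := f^n\!\left(\xi\bigl(z_0 + \lambda_f^{-n/2} z\bigr)\right).
\]
The relations $(f^n)^* T_0^\pm = \lambda_f^{\pm n} T_0^\pm$ together with the translation invariance of the defining equations of $\A^u_f$ show $\xi_n \in \A^u_f$; since Pesin unstable parametrizations are preserved by this renormalization (up to an $\A^u_f$-admissible change of parameter) and $\B^u_f$ is the smallest compact subfamily of $\A^u_f$ containing them, one has $\xi_n \in \B^u_f$. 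Extract a subsequence $\xi_{n_k} \to \eta \in \B^u_f$ and set $q := \eta(0) = \lim_k f^{n_k}(p)$. Choose the subsequence so that $q$ lies in (the support of the laminations of) a Pesin box $\Pes$. Then $\eta(\disk_{r_0})$ is a local unstable leaf of $\Pes$ which crosses the stable lamination of $\Pes$ transversely at a dense set of points; each such point is carried by a global stable manifold $W^s(y)$ with $\xi^s_y \in \B^s_f$ (Theorem~\ref{thm:pullback-lebesgue}). Lemma~\ref{lem:topoB} forces $\xi_{n_k}(\disk_{r_0})$ to be $\mathcal{C}^0$-close to $\eta(\disk_{r_0})$, and Hurwitz's Lemma~\ref{lem:BLS6.4} upgrades this proximity to actual transverse intersections of $\xi_{n_k}$ with those stable leaves. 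Pulling these intersections back by $f^{-n_k}$ yields transverse intersections of $\xi$ with stable manifolds on a disk around $z_0$ of radius $\lambda_f^{-n_k/2} r_0$, which is arbitrarily small as $k \to \infty$.

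Both (2) and the stable half of (1) then follow by compactness. For (2): if $\xi(z_k) = \nu_k(w_k) \to \xi(z_0) = p$ are transverse intersection points with $\nu_k \in \B^s_f$, then $\tilde\nu_k(z) := \nu_k(z + w_k) \in \B^s_f$ and $\tilde\nu_k(0) \to p$; compactness of $\B^s_f$ produces $\nu \in \B^s_f$ with $\nu(0) = p$. For the stable half of (1), (2) gives $\xi(\C) \subset \B^s_f(X_0)$, and the closedness of $\B^s_f(X_0)$ in $X_0^{reg}$ (proved by the translation-and-compactness argument applied to $\B^s_f$) extends the inclusion to $\overline{\xi(\C)} \cap X_0^{reg}$.

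The hard part is ensuring, in the renormalization step, that the accumulation point $q$ can be chosen inside a Pesin box. Since $p = \xi(z_0)$ is not assumed $\mu_f$-generic, Birkhoff's theorem does not apply directly to its forward orbit. The resolution should come from first observing that $\xi(\C) \subset \mathrm{supp}(\mu_f)$ — a consequence of $\xi$ being a $\B^u_f$-limit of Pesin unstable parametrizations whose images lie in $\mathrm{supp}(\mu_f)$ — so that the whole orbit of $p$ remains in $\mathrm{supp}(\mu_f)$ by $f$-invariance. Since the countable union of Pesin boxes has full $\mu_f$-measure, it is dense in $\mathrm{supp}(\mu_f)$, and a Poincaré-type recurrence argument inside $\mathrm{supp}(\mu_f)$ then yields a subsequence of $(f^n(p))$ converging into a fixed Pesin box.
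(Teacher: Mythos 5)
Your argument for the unstable half of (1) and the reduction of (2) to (3) are fine (they essentially re-derive the closedness of $\B^u_f(X_0)$, $\B^s_f(X_0)$, which the paper has already established in \S~\ref{par:topoB}). The problem is the argument for (3), which is where you have departed entirely from the paper and where two genuine gaps appear.

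First, the recurrence step. You need the accumulation point $q = \lim_k f^{n_k}(\xi(z_0))$ to land in (the compact support $\K$ of the laminations of) a fixed Pesin box. Birkhoff applies to $\mu_f$-a.e. point, but $p = \xi(z_0)$ is an \emph{arbitrary} point of $\xi(\C)$. Your proposed fix --- that $\xi(\C)\subset\mathrm{supp}(\mu_f)$, the Pesin boxes are dense in $\mathrm{supp}(\mu_f)$, and a ``Poincar\'e-type recurrence argument'' then forces a return --- does not close the gap: Poincar\'e recurrence is a measure-theoretic statement about almost every point, and the orbit closure of a single point of $\mathrm{supp}(\mu_f)$ may well be a compact invariant set of $\mu_f$-measure zero (e.g.\ a saddle periodic orbit, which always lies in $\mathrm{supp}(\mu_f)$ by equidistribution of periodic points) disjoint from every Pesin box. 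Density of a full-measure set gives no control on the $\omega$-limit set of an individual orbit.

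Second, even granting that $q \in \K$, you assert that $\eta(\disk_{r_0})$ ``is a local unstable leaf of $\Pes$ which crosses the stable lamination of $\Pes$ transversely at a dense set of points''. But the leaves of the Pesin-box lamination $\L^u$ are genuine local unstable manifolds (Property~(0) of \S~\ref{par:pesinboxesdef}), whereas $\eta$ is merely a limit in $\B^u_f$; nothing identifies $\eta$ with $\L^u(q)$. To conclude that $\eta$ is carried by, or tangent to, the unstable lamination near $q$ you would need to know that $\B^u_f$ organizes itself into a lamination near $q$ --- but that is exactly Proposition~\ref{pro:local-lami}, which is proved \emph{after} the present statement and cites Assertion~(3) of it. Invoking it here would be circular.

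The paper's proof avoids both problems by never invoking the orbit of an individual point. For the stable half of (1): $\xi\in\A^u_f$ gives $\xi^*(T^+_0)=\tfrac{\ii}{2}dz\wedge d\zbar$, which has full support on $\C$, hence $\xi(\C)\subset\mathrm{supp}(T^+_0)$; since $T^+_0$ is the Ahlfors--Nevanlinna current of every generic stable parametrization $\xi^s_y$ (Theorem~\ref{TequalA}), $\mathrm{supp}(T^+_0)\subset\overline{\xi^s_y(\C)}\subset \B^s_f(X_0)\cup(X_0\setminus X_0^{reg})$. For (3), the paper uses the laminar structure of $T^+_f$: because $T^+_f$ is laminar with continuous potentials, the slice $\xi^*T^+_f$ is \emph{geometric} in the sense of \cite[\S 8]{BLS1}, which gives directly that $\xi^s_y(\C)$ meets $\xi(\C)$ in a dense set; Lemma~\ref{lem:BLS6.4} then perturbs tangential intersections to transverse ones. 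This is shorter, works pointwise on all of $\xi(\C)$, and needs no recurrence. If you want to salvage your renormalization scheme, you would need to restrict to the full-measure set of $z_0$ for which $\xi(z_0)$ is $\mu_f$-generic (which is enough for density) \emph{and} replace the Pesin-box lamination step by the geometric-slicing argument; at that point you have essentially reconstructed the paper's proof with extra machinery.
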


\begin{proof} (recall that, for simplicity, the current $\pi_*(T^\pm_f)$ is still denoted $T^\pm_f$)

Assertion (2) is weaker than assertion (1). Thus, we only prove (1) and (3). 
First, by definition, $\xi(\C)\cap X_0^{reg}$ is contained in $\B^u_f(X_0)$. Since $\B^u_f(X_0)$ is a closed subset of $X_0^{reg}$, it contains ${\overline{\xi(\C)}}\cap X_0^{reg}$; similarly, $\B^s_f(X_0)$ contains $\overline{\eta(\C)}\cap X_0^{reg}$ for all curves $\eta\in \B^s_f$.
Since $\xi^* T^+_f$ coincides with Lebesgue measure, it has full support. But $T^+_f$ is an Ahlfors-Nevanlinna current for every stable manifold $\xi^s_x\colon \C\to X_0$ (see \S~\ref{TequalA}). Taking $\xi^s_y$ in $\B^s_f$, we obtain 
\[
\xi(\C)\subset {\overline{\xi^s_y(\C)}} \subset \B^s_f(X_0),
\]
and the first assertion follows from these inclusions. 

To prove assertion (3), we use the same curve $\xi^s_y\in \B^s_f(X_0)$. Recall that $T^+_f$ is laminar and has continuous potentials. By \cite[section 8]{BLS1}, this implies that the slice of $T^+_f$ by $\xi$ is geometric; in particular, $\xi^s_y(\C)$ intersects $\xi(\C)$ on a dense subset. By Lemma~\ref{lem:BLS6.4}, each of these intersections can be approximated by transverse ones. This  completes the proof of the proposition.
\end{proof}

\subsection{Local laminations}

Every $\xi \in \B^u_f$ is a uniform limit of (generic) unstable manifolds $\xi_n^u \in \A^u_f$; in particular, $\xi^u_n$ takes values in $X_0^{reg}$ and $(\xi^u_n)'(z) \neq 0$ for every $z \in \C$. But it may happen that $\xi'(z)=0$ for some $z\in \C$. For instance, 
in (singular) Kummer examples, the velocity vanishes for the stable and unstable manifolds when they pass through
the singularities of the surface.

\begin{defi} The {\bf{velocity}} $v(x)$ at a point $x \in \B^u_f(X_0)$ is defined by 
\[v(x) := \parallel \xi'(0) \parallel, 
\] 
where $\xi$ is any element of $\B^u_f$ such that $\xi(0)=x$. 
The set $\B^u_f(X_0)$ is partitioned into the set 
\[
\B^{u,+}_f(X_0) := \left\{ x\in \B^u_f(X_0) \, , \, v(x) > 0 \right\}
\]
and 
\[\B^{u,0}_f(X_0) := \left\{ x\in \B^u_f(X_0) \, , \, v(x) = 0 \right\}.\] 
\end{defi}

The fact that the velocity is well defined, i.e. does not depend on the choice for $\xi$, follows 
from Lemma \ref{lem:identity-principle}.

\begin{pro}\label{pro:local-lami}
Assume that $\mu_f$ is absolutely continuous. Let $x$ be a point in $\B^{u,+}_f(X_0)$ and $\xi$ be an element of $\B^u_f$ such that $\xi(0) =x$. 
There are neighborhoods $\U\subset \U'$ of $x$ in $X_0^{reg}$ such that
\begin{enumerate}
\item $\U'$ is isomorphic to a bidisk $\disk \times \disk$.
\item The connected component of $\xi(\C)\cap \U'$ that contains
$x$ is a horizontal graph in $\U$.
\item There is a lamination $\L^u$ of the whole open set $\U$ by horizontal graphs, 
each of which is contained in the image of a curve $\eta\in \B^u_f$. In particular $x$ is an interior point of $\B^u_f(X_0)$ in the complex surface $X_0^{reg}$.
\item If $\eta$ is an element of  $\B^u_f$, $\Delta$ is an open subset of $\C$
and $\eta(\Delta)$ is contained in $\U$ then $\eta(\Delta)$ is contained in a leaf
of this lamination.
\item There is a transversal to the lamination $\L^u$ which is a piece of a stable manifold of $f$.
\item The support of $\mu_f$ in $\U$ coincides with $\U$.
\end{enumerate}
\end{pro}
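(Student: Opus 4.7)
Since $x\in \B^{u,+}_f(X_0)$, we have $\xi'(0)\neq 0$, so I pick holomorphic coordinates on a bidisk neighborhood $\U'\cong \disk\times\disk$ of $x$ in $X_0^{reg}$ in which the connected component of $\xi(\C)\cap \U'$ through $x=0$ is the horizontal disk $\disk\times\{0\}$. Proposition \ref{pro:stable/B}(3) yields $z_0\in \C$ with $\xi(z_0)\in \U'$ at which $\xi$ crosses a stable manifold $W^s(y)$ of $f$ transversely; choosing $y$ in the Pesin set $\Lambda$ of Theorem \ref{thm:pullback-lebesgue}, $W^s(y)$ is parametrized by an entire curve $\nu:=\xi^s_y\in \B^s_f$ with $\nu^* T^-_f=\tfrac{\ii}{2}dz\wedge d\bar z$. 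After possibly shrinking $\U'$, the connected component $\Sigma\subset \U'$ of $W^s(y)\cap \U'$ through $\xi(z_0)$ is a smooth holomorphic transversal to the horizontal direction, which will serve as the transversal required by (5).

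\textbf{Step 2 --- The transversal lies in $\B^u_f(X_0)$.} First, $\B^u_f(X_0)$ is closed in $X_0^{reg}$: if $p_n=\xi_n(z_n)\to p\in X_0^{reg}$ with $\xi_n\in \B^u_f$, translate to $\tilde\xi_n(z):=\xi_n(z+z_n)\in \B^u_f$ (using the translation invariance noted in \S\ref{par:topoB}) and extract a limit $\tilde\xi\in \B^u_f$ by compactness (Theorem \ref{thm:normal}); then $\tilde\xi(0)=p\in \B^u_f(X_0)$. Next, decompose $T^-_f=\sum_i T^-_{\Pes_i}$ as in \S\ref{par:T+laminar}: each leaf of the unstable Pesin lamination $\L^u_i$ is a piece of an unstable manifold of a Pesin-generic point, hence lies in the image of an element of $\B^u_f$, so $\mathrm{supp}(\nu^* T^-_{\Pes_i})\subset \nu^{-1}(\B^u_f(X_0))$. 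Since the total slice $\nu^* T^-_f$ is Lebesgue and hence has full support, the union $\bigcup_i \mathrm{supp}(\nu^* T^-_{\Pes_i})$ is dense in $\C$; being contained in the closed set $\nu^{-1}(\B^u_f(X_0))$, this forces $\nu^{-1}(\B^u_f(X_0))=\C$, and therefore $\Sigma\subset \B^u_f(X_0)$.

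\textbf{Step 3 --- Building the lamination (assertions (3), (4)).} For each $t\in \Sigma$ close to $x$, pick $\eta_t\in \B^u_f$ with $\eta_t(0)=t$. Lemma \ref{lem:topoB} makes $\eta_t$ uniformly $\epsilon$-close to $\xi$ on $\disk_{r_0}$ after a suitable rotation of $\C$; in our coordinates, the local image of $\eta_t$ near $t$ is thus a horizontal graph $G_t=\{(z_1,\varphi_t(z_1))\}\subset \U'$ close to $\disk_{r_0}\times\{0\}$ translated by the second coordinate of $t$. The $G_t$ are pairwise disjoint by Lemma \ref{lem:identity-principle}. For each fixed $a$, the map $t\mapsto \varphi_t(a)$ from $\Sigma$ to $\disk$ is continuous, injective, and uniformly $\epsilon$-close to the identity; a Rouché/degree argument then shows that for a bidisk $\U\subset \U'$ with radii slightly shrunk by $\epsilon$, the family $\{G_t\cap \U\}_{t\in \Sigma\cap \U}$ fills $\U$ as a horizontal lamination $\L^u$, giving (3). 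Assertion (4) is immediate from Lemma \ref{lem:identity-principle}: if $\eta\in \B^u_f$ has $\eta(\Delta)\subset \U$ and $\eta$ meets a leaf $G_t$, then $\eta(\C)=\eta_t(\C)$, and by connectedness each component of $\eta(\Delta)$ sits inside one connected component of $\eta_t(\C)\cap \U$, hence in a single leaf.

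\textbf{Step 4 --- Full support of $\mu_f$ (assertion (6)) and main obstacle.} Through the homeomorphism $\Phi\colon \disk_r\times \Sigma\to \U$, $(z,t)\mapsto \eta_t(z)$, induced by the lamination, $T^-_f|_\U$ becomes $\int_\Sigma [G_t]\,dt$ (since $\nu^* T^-_f=\Leb$), while along each leaf $\eta_t^* T^+_f=\Leb$ by Theorem \ref{thm:pullback-lebesgue}. Consequently $\mu_f|_\U=T^+_f\wedge T^-_f|_\U$ corresponds under $\Phi$ to the product Lebesgue measure on $\disk_r\times \Sigma$, whose support is everything; hence $\mathrm{supp}(\mu_f)\cap \U=\U$. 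The main obstacle I anticipate is Step 2, where I must upgrade the purely measure-theoretic content of Pesin theory (unstable leaves fill a Cantor-like set of the transversal $\Sigma$) to the honest statement that the entire transversal lies in $\B^u_f(X_0)$; this relies crucially on the compactness of $\B^u_f$ (Theorem \ref{thm:normal}) to make $\B^u_f(X_0)$ closed and on the Lebesgue normalization of $\nu^*T^-_f$ (Theorem \ref{thm:pullback-lebesgue}) to make the support argument work. Once this is in hand, Step 3's lamination-filling argument is a standard holonomy/$\Lambda$-lemma exercise.
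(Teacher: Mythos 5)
Your argument follows the same overall strategy as the paper's: smooth graph coordinates near $x$, a transverse stable manifold from Proposition~\ref{pro:stable/B}(3), showing the transversal lies in $\B^u_f(X_0)$, and then using Lemma~\ref{lem:topoB}, Lemma~\ref{lem:identity-principle}, and a degree/$\Lambda$-lemma argument to fill a bidisk with horizontal graphs. There are two genuine variations worth recording. First, for the inclusion $\Sigma\subset\B^u_f(X_0)$ (your Step~2), the paper simply invokes Proposition~\ref{pro:stable/B}(1), whose own proof uses the Ahlfors--Nevanlinna identification $T^+_f=$ Ahlfors current (Theorem~\ref{TequalA}); you instead reprove it via the laminar decomposition $T^-_f=\sum_i T^-_{\Pes_i}$ and the closedness of $\B^u_f(X_0)$, which is a valid alternative but note that not every leaf of $\L^u_i$ is a genuine unstable manifold of a generic point --- only a $\nu_i^-$-full-measure (hence dense) set is, so the step really uses density plus closedness rather than leafwise inclusion. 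Second, for assertion~(6) you identify $\mu_f\vert_\U$ directly as a product Lebesgue measure through $\Phi$; this requires justifying that all the mass of $T^-_f$ in $\U$ sits on leaves of $\L^u$ (which follows from assertion~(4) and the exact decomposition $T^-_f=\sum_i T^-_{\Pes_i}$, but deserves an explicit line), and for the leafwise slice $\eta_t^*T^+_f=\frac{\ii}{2}dz\wedge d\bar z$ the reference should be the defining property of $\A^u_f$ rather than Theorem~\ref{thm:pullback-lebesgue} itself. The paper's own proof of (6) is more local: it takes an arbitrary ball $B\subset\U$, uses full support of $T^-_f$ to find an unstable leaf in $B$, builds a transverse stable lamination nearby, and concludes $T^+_f\wedge T^-_f>0$ there. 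Both routes are correct; yours is more explicit about the product structure, the paper's avoids having to globalize it.
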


\begin{figure}[h]
\centering\epsfig{figure=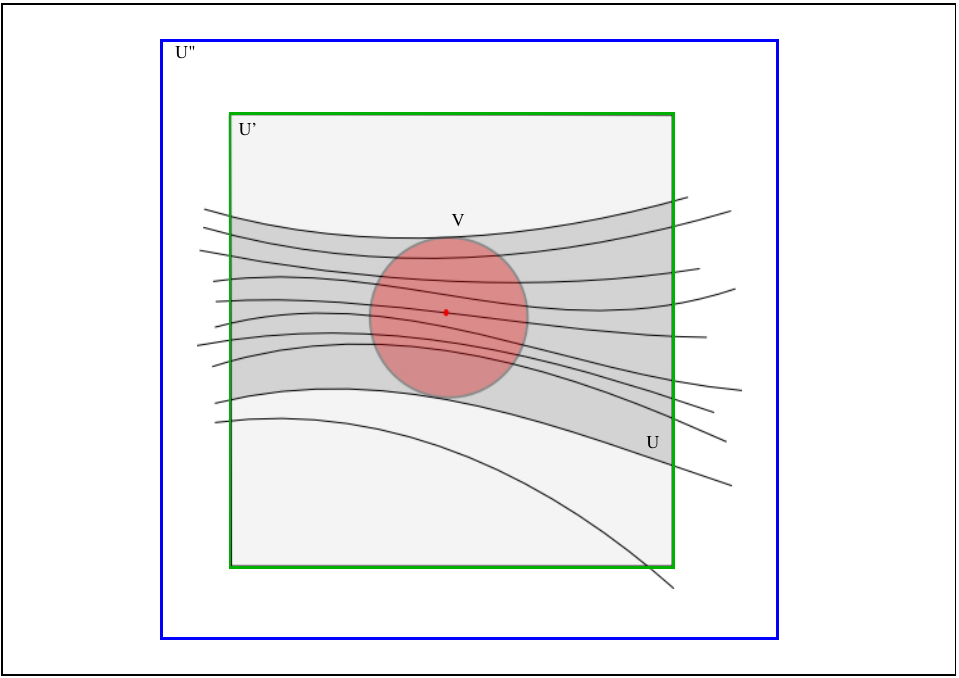}
\caption{{\small{The point $x$ is the red point right in the middle. The open set $\V$ is the red shaded ball, the open set $U$ is grey; $U'$ is light grey with a green contour, and $U"$ is white with a blue contour. The horizontal curves represent pieces of unstable manifolds (one of them is not a graph above $\disk$).}}}
\end{figure}

\begin{proof}
By definition, $\xi'(0)\neq 0$.
Hence, there exist $r_0>0$ and neighborhoods $\U'\subset \U''$ of $x$ in $X_0^{reg}$ such that
\begin{itemize}
\item $\xi(\disk_{r_0})$ is a smooth curve,
\item  the pair of open sets $(\U', \U'')$ is isomorphic to a pair of bidisks $(\disk \times \disk,\disk_R\times \disk_R)$, with $R>1$,
\item  $\xi(\partial \disk_{r_0}) \subset X_0^{reg} \setminus \U' $ and $\xi(\disk_{r_0})$ is contained in $\U''$,
\item  $\xi(\disk_{r_0})\cap \U'$ is a horizontal graph in the bidisk $\U'$.
\end{itemize}
Changing $(\U', \U'')$ if necessary there exists $r< r_0$ such that $\xi(\disk_{r_0})\cap \U'=\xi(\disk_r)$. 

\begin{lem}\label{graf}
Let $r_0 > 0$ and $\U'$ as above. 
There exists a neighborhood $\V$ of $x$ in $X_0^{reg}$ such that for every $\eta \in \B^u_f$ with $\eta(0) \in\V$, the curve $\eta(\disk_{r_0})\cap \U'$ is a horizontal graph in $\U'$.
\end{lem}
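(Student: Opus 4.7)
The plan is to combine Lemma~\ref{lem:topoB}, which gives $C^0$-proximity of nearby members of $\B^u_f$ after rotation, with Cauchy estimates and a Rouch\'e / Hurwitz-type degree argument, to transfer the horizontal graph property from $\xi$ to every $\eta \in \B^u_f$ with $\eta(0)$ in a small neighborhood of $x$.

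First I would extract quantitative data from $\xi$. Identify $\U' \cong \disk\times\disk$ via the fixed chart with coordinate projections $p_1, p_2$, and set $W := \xi^{-1}(\U')\cap\disk_{r_0}$; the horizontal graph hypothesis says that $p_1\circ\xi: W \to \disk$ is a biholomorphism, hence of degree $1$. Since $\xi(\partial\disk_{r_0})\subset X_0^{reg}\setminus\U'$ is compact, after slightly shrinking $r_0$ if necessary, I can assume $\delta_1 := \dist_{X_0}(\xi(\partial\disk_{r_0}), \overline{\U'}) > 0$. Injectivity of $\xi$ on $\disk_{r_0}$ (which follows from the smoothness of $\xi(\disk_{r_0})$) then forces $\overline W \Subset \disk_{r_0}$: indeed, for $z\in\partial W$, if $p_1\circ\xi(z) = a$ were in $\disk$, continuity of the graph function and injectivity of $\xi$ would put $z\in W$, a contradiction; hence $|p_1\circ\xi| = 1$ on $\partial W$ and $\xi(\partial W) \subset \partial\disk\times\overline\disk$.

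Next I would apply Lemma~\ref{lem:topoB} with $\epsilon < \delta_1$ to find $\V$ such that, for every $\eta \in \B^u_f$ with $\eta(0) \in \V$, there is $\theta \in \R$ with $\rho := \eta\circ R_\theta$ satisfying $\dist_{X_0}(\rho(z), \xi(z)) < \epsilon$ on $\disk_{r_0}$. Consequently $\rho(\partial\disk_{r_0})\cap\overline{\U'} = \emptyset$, so $W_\rho := \rho^{-1}(\U')\cap\disk_{r_0}$ is relatively compact in $\disk_{r_0}$ and $\eta(\disk_{r_0})\cap\U' = \rho(W_\rho)$. By Cauchy estimates applied to $\rho - \xi$ in the chart $\U''\cong\disk_R\times\disk_R$, $\rho'$ is uniformly close to $\xi'$ on a slightly smaller disk containing $\overline W$; in particular $p_1\circ\rho$ is a local biholomorphism on a neighborhood of $\overline{W_\rho}$.

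The final step is to verify that $p_1\circ\rho: W_\rho \to \disk$ is a proper biholomorphism of degree $1$, which yields the horizontal graph. For each $a\in\disk$ I would run a local Rouch\'e argument in a small topological disk $D_a\subset W$ around the unique preimage $(p_1\circ\xi)^{-1}(a)$: on $\partial D_a$, the modulus $|p_1\circ\xi - a|$ is bounded below by a positive constant which for $\epsilon$ small enough exceeds $|p_1\circ\rho - p_1\circ\xi|$, so $p_1\circ\rho - a$ has exactly one zero in $D_a \subset W_\rho$. I expect the main obstacle to be ensuring that $p_1\circ\rho$ has no additional preimages of $a$ in $W_\rho\setminus W$, equivalently that $\rho$ exits $\U'$ only through the vertical wall $\partial\disk\times\overline\disk$. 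This requires a careful quantitative analysis near $\xi(\partial W)$, which may touch the ``corner'' $\partial\disk\times\partial\disk$ (in which case $|p_2\circ\xi|$ approaches $1$ on $\partial W$); here one likely must exploit structural information from $\eta\in\B^u_f$ beyond mere $C^0$-closeness — namely the constraint $\eta^*T^-_f = 0$, which forces $\eta$ to be compatible with the local unstable direction — to rule out the pathological scenario in which $\rho$ escapes $\U'$ across the horizontal wall $\overline\disk\times\partial\disk$ near a corner.
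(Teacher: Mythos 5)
Your proof takes essentially the same route as the paper: apply Lemma~\ref{lem:topoB} to get $C^0$-closeness of $\eta$ to $\xi$ (after a rotation), project by $p_1 = \pi$, and invoke Rouch\'e on the compactly contained subdisk $\disk_r \subset \disk_{r_0}$ on which $\pi\circ\xi$ is a biholomorphism onto $\disk$. The paper's version is compressed to a few lines and concludes directly from the single Rouch\'e comparison $\vert \pi\circ\eta - \pi\circ\xi\vert < \vert \pi\circ\xi\vert$; you add Cauchy estimates for the derivative and a pointwise Rouch\'e argument for each target $a\in\disk$, which is the natural elaboration.

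Where you diverge is in the last paragraph, and it is worth flagging. You correctly identify the delicate point --- excluding extra preimages of $a$ in $W_\rho\setminus W$, i.e.\ excluding the scenario where $\rho$ exits $\U'$ through the horizontal wall or re-enters $\U'$ past the radius $r$ --- but your proposed fix (invoking the laminar constraint $\eta^*T^-_f = 0$) is not what the paper uses and is more than what is needed. In the paper's set-up, this issue is absorbed in the sentence preceding the lemma: ``Changing $(\U',\U'')$ if necessary there exists $r<r_0$ such that $\xi(\disk_{r_0})\cap\U'=\xi(\disk_r)$.'' That freedom, together with the buffer $\U''\simeq\disk_R\times\disk_R$, is used precisely so that $\xi$ exits $\U'$ cleanly through the vertical wall and $\xi(\disk_{r_0}\setminus\disk_{r'})$ stays at a definite positive distance from $\overline{\U'}$ for a suitable $r<r'<r_0$; then $C^0$-closeness for $\epsilon$ below that distance rules out re-entry by $\eta$, without any dynamical input. (Concretely: normalize so that $x=(0,0)$; by Schwarz the graph function $\varphi$ satisfies $\vert\varphi(w)\vert\le\vert w\vert$, so after slightly shrinking the bidisk the corner $\partial\disk\times\partial\disk$ is avoided, and the exit is transversal through $\partial\disk\times\overline\disk$.) So the structure of your argument is right and matches the paper; just replace the appeal to $\eta^*T^-_f = 0$ by this elementary normalization of $(\U',\U'')$ and a choice of $\epsilon$ smaller than the distance from $\xi(\overline{\disk_{r_0}}\setminus\disk_{r'})$ to $\overline{\U'}$.
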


\begin{proof}
Apply Lemma~\ref{lem:topoB}: given $\epsilon>0$, there is a neighborhood $\V$ of $\xi(0)$ such that every curve 
$\eta\in \B^u_f$ with $\eta(0)\in V$ is $\epsilon$-close to $\xi$ on $\disk_{r_0}$ after composition
with a rotation $z\mapsto e^{i\theta}z$. All we need to prove, is that $\eta$ is also a horizontal graph if
$\epsilon$ is small enough. 

Let $\pi\colon \U''\simeq \disk_R\times \disk_R\to \disk_R$ denote the first projection. The holomorphic mappings $\pi\circ \xi$ 
and $\pi\circ \eta$ are $\epsilon$-close on $\disk_{r_0}$. On the smaller disk $\disk_r$, the map $\pi\circ\xi$ is one-to-one, with 
image equal to the unit disk $\disk$. If $\epsilon$ is small enough, $\vert \pi\circ \eta-\pi\circ \xi\vert < \vert \pi\circ\xi\vert$
on the boundary of $\disk_{r_0}$. Thus, Rouch\'e theorem implies that $\pi\circ \eta\colon\disk_r\to \disk$ is also one-to-one, and the result follows. 
\end{proof}

Fix $\V$ as in the previous lemma. 
The curves $\eta(\disk_{r_0})\cap \U'$ with $\eta \in \B^u_f$ and $\eta(0) \in \V$ form a family 
of horizontal graphs in $\U'$. Let $\U$ be the union of these graphs. By Lemma \ref{lem:identity-principle} it is laminated by disjoint horizontal graphs; we denote this lamination by $\L^u$. It remains to prove that $\U$ is an open 
neighborhood of $\xi(\disk_{r})$ in $\U'$. 

Apply Proposition~\ref{pro:stable/B}, Assertion~(3), to the curve $\xi$. One can find an element $\xi^s_y$ of $\B^s_f$ such that 
$\xi^s_y(0) \in \xi(\disk_r)\cap\V$, with transverse intersection. Let $\Delta$ be a disk centered at the origin for which
$\xi^s_y(\Delta)$ is contained in $\V$. Apply Proposition~\ref{pro:stable/B}, Assertion (1), but to the stable manifold $\xi^s_y$: every point of $\xi^s_y(\Delta)$ is contained in the image of a curve $\eta\in \B^u_f$. Thus, the set 
$\U$ contains $\xi^s_y(\Delta)$. But $\U$ is laminated, and the $\lambda$-lemma implies that the holonomy of a lamination 
are (quasi-conformal) homeomorphisms. Thus, $\U$ contains a neighborhood of $\xi(\disk_r)$.

For property (5), shrink $\Delta$ and $\U$ to assure that $\xi^s_y(\Delta)$ is  transverse to~$\L^u$. 

The previous argument shows that the support of the restriction $T^-_{f\vert \U}$   coincides with $\U$ because its slice
$\xi^s_y(T^-_f)_{\vert \Delta}$ is the Lebesgue measure (see the proof of Proposition~\ref{pro:stable/B}).
To prove property (6), fix a small ball $B\subset \U$. Since $T^-_f$ charges $B$, 
there is an unstable manifold that enters $B$. Take a point $x'\in B$ on this unstable manifold which intersects a stable manifold transversally, and apply properties (1) to (5) for the stable manifolds: we get a stable lamination $\L^s$ of a neighborhood ${\mathcal{W}}$ of $x'$ which is transverse to $L^u$. In ${\mathcal{W}}$, the product of $T^-_f$ and $T^u_f$ is strictly positive; hence $\mu_f(B)>0$.
\end{proof}

\section{Holomorphic foliation and Hartogs extension}\label{par:CONCLUSION}

We  conclude the proof of the main theorem. The first step is to promote the local laminations obtained in Proposition \ref{pro:local-lami} into local holomorphic foliations. Then we use Hartogs phenomenon to extend the foliation into a global, singular foliation of $X$. The theorem eventually follows from a classification of automorphisms preserving holomorphic foliations. 

\subsection{From laminations to holomorphic foliations}\label{par:lam-to-holo}

Proposition \ref{pro:local-lami} asserts that $\B^u_f$ determines near every $x \in \B^{u,+}_f(X_0)$ a local lamination denoted by $\L^u$. In this section we promote these local laminations into local holomorphic foliations. 

\begin{pro}\label{pro:from-lami-to-foli}
The local laminations $\L^u$ are holomorphic. 
\end{pro}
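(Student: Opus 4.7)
The plan is to work in the bidisk $\U \cong \disk \times \disk$ of Proposition \ref{pro:local-lami}, writing the leaves of $\L^u$ as horizontal graphs $L_m = \{(z, \varphi_m(z)) : z \in \disk\}$ parametrized by their intersection $m \in \Delta'$ with the stable transversal $\xi^s_y(\Delta')$ furnished by item~(5) of that proposition. Each $\varphi_m$ is holomorphic in $z$ by construction, so to upgrade $\L^u$ to a holomorphic foliation it suffices to show that the holonomy $H_{z_1, z_2}: m \mapsto \varphi_m(z_2)$ between any two holomorphic transversals is holomorphic. By the $\Lambda$-lemma (see \S\ref{par:lamination-intro}) we already know that $H_{z_1,z_2}$ is quasi-conformal, with Beltrami modulus satisfying $|\mu_{H_{z_1, z_2}}| \leq c\,|z_2 - z_1|$; the point is to show that this Beltrami differential vanishes.

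I will exploit the $f$-invariance of $\L^u$ together with the contraction that $f$ exerts on the transverse (stable) direction. Since $f$ maps unstable leaves to unstable leaves, for every $n \geq 1$ the conjugate
\[
H^{(n)} := f^n \circ H_{z_1, z_2} \circ f^{-n}
\]
is again a holonomy of the $\L^u$-lamination, between two holomorphic transversals whose mutual separation is controlled by the stable contraction rate $e^{n(\lambda_s + \epsilon)} < 1$. Applying the $\Lambda$-lemma inside a bidisk chart of $\L^u$ containing $f^n(p)$ then yields
\[
|\mu_{H^{(n)}}|(p) \leq C(f^n(p)) \cdot e^{n(\lambda_s + \epsilon)}\, |z_2 - z_1|,
\]
where $C(\cdot)$ is a constant depending on the local Pesin-box geometry at $f^n(p)$.

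Because $f^{\pm n}$ are holomorphic, the Beltrami modulus is invariant under this conjugation and the chain rule gives $|\mu_{H^{(n)}}|(p) = |\mu_{H_{z_1, z_2}}|(f^{-n}(p))$. Setting $q = f^{-n}(p)$, we obtain
\[
|\mu_{H_{z_1, z_2}}|(q) \leq C(f^n(q)) \cdot e^{n(\lambda_s + \epsilon)}\, |z_2 - z_1|.
\]
By Poincar\'e recurrence and ergodicity of $\mu_f$, for $\mu_f$-almost every $q \in \U$ there is a subsequence $n_k \to \infty$ along which $f^{n_k}(q)$ stays in a fixed compact sub-lamination where $C(\cdot)$ is bounded; the right-hand side therefore tends to $0$ along $n_k$, and we conclude that $|\mu_{H_{z_1, z_2}}|(q) = 0$ almost everywhere. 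Thus $H_{z_1, z_2}$ is conformal, hence holomorphic, so $(z, m) \mapsto \varphi_m(z)$ is separately holomorphic in each variable and therefore jointly holomorphic by Hartogs's theorem on separate analyticity, producing the desired holomorphic foliation of $\U$.

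The main obstacle is controlling the tempered constant $C(f^n(q))$: it depends on the Pesin-box size at $f^n(q)$ and is only $\epsilon$-tempered, so the argument works only at $\mu_f$-generic $q$. To propagate holomorphicity to all of $\U$ one can use that the $\mu_f$-generic points are dense in $\B^u_f(X_0) \cap \U$ (by item~(6) of Proposition~\ref{pro:local-lami}) together with the compactness of the family $\B^u_f$ and Lemma~\ref{lem:topoB}: a separately holomorphic map on a dense set extends, via locally uniform convergence of leaves in $\B^u_f$, to a separately holomorphic map on the whole bidisk.
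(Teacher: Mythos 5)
Your strategy is the same as the paper's: this is Ghys's renormalization argument, combining the $\Lambda$-lemma with Poincar\'e recurrence along the orbit to kill the quasi-conformal distortion of the holonomy. However, the direction of iteration in the renormalization step is reversed, and this makes the key estimate fail rather than merely change sign.

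You set $H^{(n)} := f^n \circ H_{z_1,z_2} \circ f^{-n}$ and claim that the transversals of $H^{(n)}$ have ``mutual separation'' of order $e^{n(\lambda_s+\epsilon)}$. The transversals of $H^{(n)}$ are $f^n(\Delta_{z_1})$ and $f^n(\Delta_{z_2})$, and the quantity that enters the $\Lambda$-lemma bound $K-1\leq \vert z_2-z_1\vert$ is their separation measured along the unstable leaves, i.e.\ the length of the path $\gamma\subset\L^u(m)$ joining their base points. Under $f^{n}$ that path is \emph{expanded}, at rate $\approx e^{n\lambda_u}$; for $n$ large the two transversals no longer fit in a single bidisk chart, and the $\Lambda$-lemma gives nothing. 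What shrinks like $e^{n\lambda_s}$ under $f^n$ is the \emph{diameter} of the stable transversals, which is not the parameter in the $\Lambda$-lemma estimate. The correct move, as in the paper, is to conjugate by $f^{-n}$: the pieces of $f^{-n}(\Delta_{z_1})$ and $f^{-n}(\Delta_{z_2})$ inside a recurrent Pesin box are vertical disks whose horizontal separation tends to $0$ at rate $\approx e^{-n\lambda_u}$ (Remark~\ref{rem:affpara2} and Proposition~\ref{affpara}: $\alpha_n\to 0$). Once the sign is fixed, the rest of what you wrote --- invariance of the Beltrami modulus under holomorphic conjugation, Poincar\'e recurrence to keep the Pesin constant bounded along a subsequence, and the conclusion $\mu_{H}=0$ a.e.\ --- is essentially the paper's proof.

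Two smaller remarks. First, the argument should be run with transversals that are pieces of stable manifolds, not arbitrary vertical disks: conjugation by $f^{\pm n}$ then sends transversals to transversals with controlled geometry in Pesin coordinates, and the passage to arbitrary vertical disks is obtained afterwards by density (Proposition~\ref{pro:stable/B}(3)), exactly as the paper does. Second, the concluding appeal to Hartogs on separate analyticity is not wrong (the lamination does fill the open set $\U$ by item~(3) of Proposition~\ref{pro:local-lami}), but it is superfluous: once the holonomy between vertical transversals is conformal, the lamination is holomorphic by definition.
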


\begin{proof} The following argument is due to Ghys (see \cite{Ghys:Inventiones, Cantat:Acta}). 

Let $\U$ and $ \U'$ be connected open subsets of $X_0^{reg}$ such that $\U'$ is biholomorphic to a bidisk $\disk\times \disk$, 
$\U$ is contained in $\U'$,  and the lamination $\L^u$ of $\U$  is made of disjoint horizontal graphs (as in Fig. 1). Denote by $(x_1,x_2)$ the
coordinates in $\U'\simeq \disk\times \disk$ and by $\pi_1$ and $\pi_2$ the two projections ($\pi_i(x_1,x_2)=x_i$, $i=1,2$).
 Let $p=(p_1,p_2)$ be a point of $\U$ and $\Delta_p\subset \U'$ be the vertical curve $\{x_1=p_1\}$. Denote 
 by $\gamma$ a continuous path in the leaf through $x$ that starts at $x$ and ends at another point 
 $q=(q_1,q_2)$. The holonomy map
from the transversal $\Delta_p$ to the transversal $\Delta_q :=\{x_1=q_1\} $ is a homeomorphism $h_\gamma$ from $\Delta_{p,\U} :=\Delta_p\cap \U$ to 
its image $\Delta_{q,\U} :=\Delta_q\cap \U$. According to the $\Lambda$-lemma,  this homeomorphism $h_{p,q}$ is 
$K(p,q)$-quasi-conformal, with a constant $K(p,q)$ satisfying
\[
0\leq K(p,q)-1\leq \dist(p,q).
\]
Recall that a $1$-quasi-conformal map is conformal, hence holomorphic (see \S~\ref{quasico}). 

Instead of looking at vertical disks, we first choose pairs of disks 
$\Delta$ and $\Delta'$ which are contained in stable manifolds of $f$ and are transverse to the lamination. 
Let $\gamma$ be a path in a leaf $\L^u(m)$ that joins the intersection points $x :=\Delta\cap \L^u(m)$ and
$x' :=\L^u(m)\cap \Delta'$, let $h_\gamma$ be the (germ of) holonomy from $\Delta$ to $\Delta'$. Applying 
$f^{-n}$ and using Remark~\ref{rem:affpara2} there exists $\alpha_n \in \C^*$ such that 
\[
f^{-n}\circ \xi^u_x(z)=\xi^u_{f^{-n}(x)}(\alpha_n z) .
\] 
Proposition \ref{affpara} precisely asserts that $\lim_n \alpha_n   =0$ for almost every point $x$. 

Using Poincar\'e recurrence theorem we can assume that $f^{-n}(x) \in \U$. This and the compactness of $\B^u_f$ imply that the path $\gamma$ is shrunk uniformly under the action of $f^{-n}$; simultaneously, 
the disks $\Delta$ and $\Delta'$ are mapped to large disks $f^{-n}(\Delta)$ and $f^{-n}(\Delta')$: the connected components
of $f^{-n}(\Delta)\cap \U$ and $f^{-n}(\Delta')\cap \U$ containing $f^{-n}(x)$ and $f^{-n}(x')$ are (smaller) vertical  disks in $\U$, and their
relative distance goes to $0$ with $n$. Thus, the holonomy $h_{-n}$ between these disks is $K_n$-quasi-conformal, with $\lim_n K_n = 1$. 
But $f^n$ conjugates $h_{-n}$ with $h_\gamma$ on a small neighborhood of $x$ in $\Delta$. Hence, for almost every point $x \in \Delta$ (with respect to the conditional measure of $\mu_f$ and therefore also with respect to Lebesgue measure), and for every $\epsilon>0$, the holonomy
$h_\gamma$ is $(1+\epsilon)$-quasi-conformal on a small neighborhood $\Delta_\epsilon\subset \Delta$ of $x$. This implies that $h_\gamma$ is $(1+\epsilon)$-quasi-conformal for all $\epsilon>0$; hence, $h_\gamma$ is indeed conformal. 

We have proved that the holonomy between two transversal disks which are contained in stable manifolds is holomorphic. By Proposition~\ref{pro:stable/B} these transversal disks form a dense subset of transversals. Consequently, the holonomy between all pairs of vertical disks $\Delta_x$ and $\Delta_y$ is holomorphic. This implies that
the lamination $\L^u$ is holomorphic. \end{proof}

\subsection{A holomorphic foliation $\F^u$ on $\B^{u,+}_f(X_0)$}\label{gluglu}

\begin{lem}
The local holomorphic foliations $\L^u$ defined near every point of $\B^{u,+}_f(X_0)$ can be glued together to provide a holomorphic foliation $\F^u$ of the open set $\B^{u,+}_f(X_0)$. 
\end{lem}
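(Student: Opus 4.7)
The plan is straightforward: the hard work has already been done in Proposition~\ref{pro:local-lami} and Proposition~\ref{pro:from-lami-to-foli}, and it remains only to verify that the locally defined holomorphic foliations agree on overlaps.

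First, I would use Proposition~\ref{pro:local-lami}(3) to note that $\B^{u,+}_f(X_0)$ is open in $X_0^{reg}$: every point $x$ of $\B^{u,+}_f(X_0)$ admits an open neighborhood $\U_x \subset X_0^{reg}$ which is biholomorphic to a bidisk and equipped with a lamination $\L^u_x$ by horizontal graphs, each contained in the image of some $\eta \in \B^u_f$. By Proposition~\ref{pro:from-lami-to-foli}, this lamination is holomorphic, and by construction $\U_x \subset \B^{u,+}_f(X_0)$.

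Next, I would verify the gluing. Take two points $x, y \in \B^{u,+}_f(X_0)$ with $\U_x \cap \U_y \neq \emptyset$ and fix a point $p$ in the overlap. Let $L_p$ denote the leaf of $\L^u_x$ through $p$, parametrized (as a piece of an entire curve) by some $\eta_p \in \B^u_f$ with $\eta_p(z_0) = p$. Then $\eta_p^{-1}(\U_y)$ is an open subset $\Delta$ of $\C$ containing $z_0$, and $\eta_p(\Delta) \subset \U_y$. Proposition~\ref{pro:local-lami}(4), applied to $\U_y$, shows that $\eta_p(\Delta)$ is contained in a leaf of $\L^u_y$. Therefore the connected component of $L_p \cap \U_x \cap \U_y$ containing $p$ is also a piece of a leaf of $\L^u_y$. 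Running the argument symmetrically, the connected components of leaves of $\L^u_x$ and $\L^u_y$ inside $\U_x \cap \U_y$ coincide pointwise. This compatibility relation lets us patch the local holomorphic foliations into a single holomorphic foliation $\F^u$ defined on the open set $\B^{u,+}_f(X_0)$.

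There is no serious obstacle: the uniqueness statement in Lemma~\ref{lem:identity-principle} together with item (4) of Proposition~\ref{pro:local-lami} is precisely the compatibility condition required to glue. The only minor point to keep in mind is that the different bidisk charts $\U_x$ may come with different choices of ``horizontal'' coordinates, but this is irrelevant for the underlying foliation: what is intrinsic is the partition of $\U_x$ into the leaves, and that partition is what we are gluing.
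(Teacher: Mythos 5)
Your proof is correct and follows essentially the same route as the paper: both you and the authors reduce the gluing to the fact, recorded in item (4) of Proposition~\ref{pro:local-lami}, that any piece of a curve $\eta\in\B^u_f$ lying in a chart $\U$ must sit inside a leaf of the local lamination, and then invoke Proposition~\ref{pro:from-lami-to-foli} for holomorphicity. The paper phrases the conclusion a bit more globally (the foliation is \emph{the unique} holomorphic foliation of $\B^{u,+}_f(X_0)$ having the generic unstable manifolds as leaves, or equivalently $\eta^*\omega^u=0$ for a defining $1$-form), whereas you spell out the chart-by-chart compatibility, but these are the same argument.
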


\begin{proof}
Propositions~\ref{pro:local-lami}  and \ref{pro:from-lami-to-foli} show that if $x$ is in $\B^{u,+}_f(X_0)$, then there exists a neighborhood $\U$ of $x$ and a holomorphic foliation $\L^u$ of $\U$ such that all entire curves $\eta\in \B^u_f$ are tangent to $\L^u$. More precisely, if $\eta$ is an element of $\B^u_f$ and $\eta(\Delta)$ is contained in $\U$, then 
$\eta(\Delta)$ is contained in a leaf of $\L^u$; in other words, if $\omega^u$ is a holomorphic $1$-form on $\U$ which defines the foliation $\L^u$, then $\eta^*\omega^u=0$ on $\Delta$. The holomorphic foliation $\F^u$ can therefore be defined as the unique
holomorphic foliation of $\B^{u,+}_f(X_0)$ such that the generic unstable manifolds of $f$ are leaves of $\F^u$.
\end{proof}

\subsection{Hartogs extension of the holomorphic foliation $\F^u$}\label{VZ}
The holomorphic foliation $\F^u$ is defined on the open set $\B^{u,+}_f(X_0)$. Now, we extend it to $X_0^{reg}$ (i.e. to $\B^{u,0}_f(X_0)$).

\begin{pro}\label{pro:localZ}
Let $x$ be an element of $\B^{u,0}_f(X_0)$. There exists a neighborhood $\V$ of $x\in X_0^{reg}$ such that
the holomorphic foliation $\F^u$ extends as a (singular) holomorphic foliation of $\V$. 
\end{pro}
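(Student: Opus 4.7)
The strategy is to view the foliation $\F^u$ near $x$ as a holomorphic section of the projectivized tangent bundle $\P(T\V)$ defined on the open set $\V \cap \B^{u,+}_f(X_0)$, and to extend this section across the bad set $Z := \V \cap \B^{u,0}_f(X_0)$ by first producing a continuous extension and then invoking the Hartogs / removable-singularity principle.

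First I would take a coordinate chart $\V \simeq \disk \times \disk$ of $X_0^{reg}$ around $x$, so that $\P(T\V)$ is trivialized as $\V \times \P^1$. On the open subset $\V \cap \B^{u,+}_f(X_0)$, the holomorphic foliation $\F^u$ of \S\ref{gluglu} gives a holomorphic section $\sigma$ of $\V \times \P^1$ assigning to each point the tangent direction of its leaf. To extend $\sigma$ continuously across $Z$, observe that for any $y \in Z$ there is $\xi \in \B^u_f$ with $\xi(z_0) = y$ and $\xi'(z_0) = 0$; the first non-vanishing derivative of $\xi$ at $z_0$ then defines a direction in $\P^1 \simeq \P(T_y X_0)$. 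This direction is independent of the choice of $\xi$ by Lemma~\ref{lem:identity-principle} (two parametrizations of the same leaf differ by an affine change of variables), and it depends continuously on $y$ by the compactness of $\B^u_f$ (Theorem~\ref{thm:normal}) combined with the uniform convergence on fixed disks provided by Lemma~\ref{lem:topoB}. This yields a continuous extension $\tilde\sigma \colon \V \to \V \times \P^1$.

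Next, $\tilde\sigma$ is continuous on $\V$, holomorphic on the open dense subset $\V \setminus Z$, and takes values in the compact manifold $\P^1$. By Hartogs's phenomenon and standard removable-singularity theorems for continuous maps into compact complex manifolds (in any affine chart of $\P^1$, $\tilde\sigma$ is a bounded continuous function holomorphic on a dense open set, hence holomorphic by distributional Cauchy--Riemann), $\tilde\sigma$ is in fact holomorphic on all of $\V$. The holomorphic direction field $\tilde\sigma$ defines a singular holomorphic foliation of $\V$ whose singular set (where no canonical $1$-form defining the foliation can be normalized) is an analytic subset of codimension $\geq 2$, hence a finite set of points. This extends $\F^u$ as required.

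The main obstacle is the continuity of $\tilde\sigma$ on all of $\V$: one must verify that whenever a sequence $y_n \in \V \cap \B^{u,+}_f(X_0)$ converges to $y^* \in Z$, the tangent direction of the leaf through $y_n$ converges to the direction prescribed at $y^*$ by the first non-vanishing derivative of the corresponding curve $\xi^* \in \B^u_f$. This amounts to extracting uniform convergence of the reparametrized nearby curves on a fixed disk (Lemma~\ref{lem:topoB}), which by Cauchy estimates controls the convergence of all derivatives, together with the rigidity of parametrizations of a given leaf up to affine isomorphism of $\C$ (Lemma~\ref{lem:identity-principle}) to ensure the limiting direction is intrinsic. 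Once continuity is secured, the Hartogs / removable-singularity step is straightforward because the target $\P^1$ is compact.
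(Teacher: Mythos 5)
Your strategy is genuinely different from the paper's and, as written, has two significant gaps.

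\textbf{Gap 1: the domain where the slope field is defined.} You write that $\tilde\sigma$ is ``continuous on $\V$, holomorphic on the open dense subset $\V \setminus Z$,'' but $\tilde\sigma$ is in fact only defined on $\V \cap \B^u_f(X_0)$: on the $\B^{u,+}_f$-part you use the existing foliation, on $Z = \V\cap\B^{u,0}_f(X_0)$ you use the first non-vanishing derivative, and on $\V\setminus\B^u_f(X_0)$ there is no definition at all. For the removable-singularity step you therefore need $\V\subset\B^u_f(X_0)$, or at least that $\V\cap\B^{u,+}_f(X_0)$ is dense in $\V$. At this point of the argument neither fact is available: $\B^u_f(X_0)$ is known to be a closed subset of $X_0^{reg}$, $\B^{u,+}_f(X_0)$ is open by Proposition~\ref{pro:local-lami}, and $x$ lies on their common boundary. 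The equality $\B^u_f(X_0)=X_0^{reg}$ is precisely the corollary that the paper derives \emph{after} Proposition~\ref{pro:localZ}, so you cannot invoke it here. The paper's proof is built to dodge exactly this: rather than assume the slope field lives on a dense subset of a full neighborhood of $x$, it works with a concrete ``annular'' open set $\U_\infty$ formed from graphs $\eta_n(\disk_s)$ approaching $\xi(\disk_s)$, extends the slope function to the envelope of holomorphy $\widehat{\U_\infty}$, and uses the Behnke--Sommer theorem to conclude that this envelope swallows a neighborhood of $\xi(\disk_s)$, hence of $x$.

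\textbf{Gap 2: continuity of the extended direction field at $Z$.} You acknowledge this as the main obstacle, but the justification offered (uniform convergence of parametrizations plus Cauchy estimates) does not yield it. Suppose $y_n\in\B^{u,+}_f(X_0)$ converges to $y^*\in Z$, and $\eta_n\in\B^u_f$, $\eta_n(0)=y_n$, converge to $\eta\in\B^u_f$ with $\eta(0)=y^*$, $\eta'(0)=0$, and first non-vanishing derivative of order $k$. Cauchy estimates give $\eta_n^{(j)}(0)\to\eta^{(j)}(0)$ for every $j$, so in particular $\eta_n'(0)\to 0$; but this says nothing about the limit of the projectivized directions $[\eta_n'(0)]\in\P^1$, which is the quantity you need to control, and there is no abstract reason it should converge to $[\eta^{(k)}(0)]$. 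One would need a genuinely geometric argument exploiting disjointness of the curves (Lemma~\ref{lem:identity-principle}), the quasi-conformality of the holonomy, or the smoothness of the image curve from the Lyubich--Peters result cited in the paper -- none of which you supply. The envelope-of-holomorphy route avoids the question entirely, since it never asks for a continuous extension of the direction field across the bad locus; it extends a meromorphic function, and meromorphic extension is allowed to produce indeterminacy points, which become the (isolated) singularities of the extended foliation.

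In short, the continuity-plus-removable-singularity scheme is appealingly simple but rests on two facts that are not available at this stage of the argument, and the second one (continuity at the vanishing locus) may fail in a way that would produce exactly the isolated singularities the statement allows. The paper's Hartogs argument via Behnke--Sommer is what bridges both gaps simultaneously.
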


To prove this proposition, fix a curve $\xi \in \B^u_f$ with $\xi(0)=x$ (and $\xi'(0)=0$). 

\begin{rem} The curve $\xi(\disk)$ may have a singularity at $x$,  but
Proposition 15 of \cite{Lyubich-Peters} rules out this eventuality (see also \cite{Berger-Dujardin:preprint}). On the other hand,  this
argument does not exclude the possibility of a vanishing velocity.  (thanks to Misha Lyubich 
for pointing out this reference)
\end{rem}

Even if $\xi(\disk)$ is singular, there exists an open neighborhood $\V$ 
of $x$ such that 
\begin{itemize}
\item[(i)] $\V\subset \C^2$ up to a local choice of co-ordinates, 
\item[(ii)] the connected component of $\xi^{-1}(\V)$ containing $0$ is a disk $\disk_r$, 
\item[(iii)] $\xi'(0)=0$ but $\xi'$ does not vanish on $\disk_r\setminus\{0\}$. 
\end{itemize}
Fix a radius $s$ with $0< s< r/3$. Since $\xi'$ does not vanish on $\partial\disk_s$, there is an open neighborhood 
$\U\subset \V$ of $\xi(\partial \disk_s)$ and a holomorphic foliation $\L^u$ on $\U$ such that all entire curves
$\eta\in \B^u_f$ are tangent to $\L^u$ in $\U$. 

Fix $\epsilon>0$. Let $\theta \in \R$ and define $x_\theta := \xi(se^{i\theta})$. 
By Lemma~\ref{lem:topoB} there exists a neighborhood $\U(\theta,\epsilon)\subset \U$ of $x_\theta$
such that, if $\eta\in \B^u_f$ and $\eta(se^{i\theta})\in \U(\theta, \epsilon)$, then there exists a complex number $a$ with $\vert a \vert =1$ such that  
$\eta(z)$ and $\xi(az)$ are $\epsilon$-close on $\disk_r$. In particular, the curve $\eta(\disk_{2s})$ is contained in an $\epsilon$-neighborhood of 
$\xi(\disk_r)$. 

Thus, there is a neighborhood $\U_0\subset \U$ of $ \xi(\partial \disk_s)$ satisfying
\begin{itemize}
\item $\forall \, m\in \U_0$, the leaf of $\L^u$ through $m$ is contained in a disk $\eta(\disk_{2s})$ where
$\eta$ is an element of $ \B^u_f$ which is $\epsilon$-close to $\xi$ on $\disk_r$;
\item if $(x_n) \in \U_0^\N$ converges towards a point of $\xi(\disk_s)$, 
one can choose such disks $\eta_n(\disk_{2s})$ with $\eta_n\in \B^u_f$ converging towards $\xi$ 
uniformly on $\disk_r$.
\end{itemize}

Let $(x_n)_{n\geq 1}$ be such a sequence, with the additional property that $x_n$ is contained in an unstable
manifold of $f$ for all $n$. Then, $\eta_n$ is a parametrization of an unstable manifold, and therefore
$\eta_n'$ does not vanish. Hence, there is an open neighborhood $\U_n$ of $\eta_n(\disk_s)$ such that $\L^u$ extends as a holomorphic foliation of $\U_0\cup \U_n$. As explained in Section~\ref{gluglu}, the extensions of $\L^u$ to the open sets $\U_n$ are compatible, and determine a foliation of $\U_\infty := \cup_{n \geq 0} \U_n$. 

Note that $\U_\infty$ is a subset of $\V \subset \C^2$. The slopes of the leaves of
$\L^u$ determine a holomorphic function ${\sf{s}}^u\colon \U_\infty \to \P^1$ (with $\P^1$ the projective
line of all possible ``slopes''). Such a function extends as a meromorphic function $\widehat{{\sf{s}}^u}$ on 
the envelop of holomorphy $\widehat{\U_\infty}$  of $\U_\infty$ (see \cite{Ivashkovich}). The function  $\widehat{ {\sf{s}}^u}$ determines a (singular) holomorphic foliation on $\widehat{\U_\infty}$ which extends $\L^u$. 

It remains to show that $\widehat{\U_\infty}$ contains a neighborhood of $\xi(\disk_s)$, hence a neighborhood of $x$. For this purpose, we apply the following theorem to $D=\U_\infty$, $S_n=\eta_n(\disk_s)$ and $S=\xi(\disk_s)$, and we remark that the boundaries $\eta(\partial \disk_s)$ are contained in a compact neighborhood $K$ of $\xi(\partial \disk_s)$ with $K\subset \U_0$.

\begin{thm}[Behnke-Sommer, \cite{Chabat:vol2}, chapter 13] Let $D$ be a bounded domain of $\C^m$, $m\geq 2$. Let $S_n$ 
be a sequence of complex analytic curves which are properly contained in $D$. Assume that $S_n$ converges
towards a curve $S\subset \C^m$ and that the boundaries $\partial S_n$ converge to a curve $\Gamma\Subset D$. 
Then every holomorphic function $h\in {\mathcal{O}}(D)$ extends to a neighborhood of $S$. 
\end{thm}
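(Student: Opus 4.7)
The theorem is the Hartogs–Behnke–Sommer continuity principle for holomorphic extension: the holomorphic function $h\in\mathcal{O}(D)$ must extend along any family of analytic disks whose boundaries stay in a fixed compact subset of $D$. The plan is to parametrize the disks, apply Cauchy's integral formula along each $S_n$, pass to the limit to extend $h$ along $S$, and then deform in the normal directions to cover a full neighborhood of $S$.

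First I would choose holomorphic parametrizations $\phi_n\colon \overline{\disk}\to\overline{S_n}$ with $\phi_n(\partial\disk)=\partial S_n$, extracting (by Montel, using that $S_n$ lies in the bounded set $D$) a subsequence converging uniformly on $\overline{\disk}$ to a holomorphic $\phi\colon\overline{\disk}\to\overline{S}$ that parametrizes $S$ and sends $\partial\disk$ onto $\Gamma$. Because $\Gamma\Subset D$, a compact neighborhood $K\subset D$ of $\Gamma$ contains $\partial S_n$ for all large $n$, so $\sup_{\partial\disk}|h\circ\phi_n|$ is bounded uniformly; the maximum principle along the one-dimensional analytic curve $S_n\subset D$ extends the bound to $\overline{\disk}$. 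Cauchy's formula on $S_n$ then gives
\[
h(\phi_n(\zeta))=\frac{1}{2\pi i}\int_{\partial\disk}\frac{h(\phi_n(w))}{w-\zeta}\,dw,\qquad \zeta\in\disk,
\]
and passing to the limit $n\to\infty$ (using continuity of $h$ on a fixed neighborhood of $\Gamma$ together with dominated convergence) produces a holomorphic function $\tilde h$ on $\disk$ that agrees with $h\circ\phi$ near $\partial\disk$. This already yields an extension of $h$ along $S$ itself.

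To upgrade this to an open neighborhood of $S$ in $\C^m$, I would introduce an $(m-1)$-parameter family of translated disks $\phi_n^\sigma(\zeta):=\phi_n(\zeta)+\sigma$ for $\sigma$ in a small polydisk $P\subset\C^m$. The existence of a tubular neighborhood of $\Gamma$ inside $D$ (given by $\Gamma\Subset D$) ensures that $\phi_n^\sigma(\partial\disk)\subset D$ for all large $n$ and all sufficiently small $\sigma$. Applying Cauchy's formula to $\phi_n^\sigma$ and taking the limit in $n$ defines a function $H(\zeta,\sigma)$ jointly holomorphic in $(\zeta,\sigma)\in\disk\times P$; since $(\zeta,\sigma)\mapsto\phi(\zeta)+\sigma$ is a submersion whose image covers a neighborhood of every interior point of $S$, the function $H$ then descends to the sought holomorphic extension of $h$ near $S$.

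The hardest step is this last one: one must verify that the Cauchy-integral candidate $H$ obtained from the translated disks coincides with $h$ on the nonempty overlap with $D$, so that gluing yields a single-valued holomorphic extension. This is an analytic-continuation/uniqueness issue along the connected family $\{\phi_n^\sigma\}$ of disks, and is exactly where the hypothesis $\Gamma\Subset D$ plays its role by furnishing uniform normal room to displace boundaries without leaving $D$. Technical care is also needed in choosing the parametrizations $\phi_n$ so that their limit $\phi$ remains a well-defined holomorphic disk (rather than collapsing), which typically requires a uniform lower bound on areas—guaranteed here by the assumption that $\partial S_n$ converges to the honest curve $\Gamma$ rather than to a point.
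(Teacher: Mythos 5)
The paper does not prove this statement; it is quoted as a classical result (the Hartogs--Behnke--Sommer \emph{Kontinuit\"atssatz}) from Shabat's textbook, so there is no internal proof to compare against. Your outline---uniformize the $S_n$ by disks, apply Cauchy's formula on $\partial S_n$, pass to the limit, then sweep by translates to fill out a neighborhood of $S$---is the right heuristic, but it leaves the crucial step unproven, and the mechanism you propose for it does not quite work.

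The issue is the descent. For fixed $\sigma$, the Cauchy integral
\[
H_n(\zeta,\sigma)=\frac{1}{2\pi i}\int_{\partial\disk}\frac{h\bigl(\phi_n(w)+\sigma\bigr)}{w-\zeta}\,dw
\]
is holomorphic on $\disk$ as soon as $\phi_n(\partial\disk)+\sigma\subset D$, but it coincides with $h\bigl(\phi_n(\zeta)+\sigma\bigr)$ only when the \emph{entire} translated disk $\phi_n(\overline{\disk})+\sigma$ lies in $D$, i.e.\ only for $\sigma$ in a polydisk $P_n$ around the origin. In the only interesting case, where $S$ sticks out of $D$, the interiors $\phi_n(\disk)$ approach $\partial D$ and $P_n$ shrinks to $\{0\}$ as $n\to\infty$. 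Hence for fixed $\sigma_0\neq 0$ the set on which $H_n(\cdot,\sigma_0)$ is a pull-back of $h$ eventually vanishes, and the identity-theorem argument you invoke to conclude that the limit $H$ is constant along the fibers of $(\zeta,\sigma)\mapsto\phi(\zeta)+\sigma$ has nothing to get started on. You rightly flag this single-valuedness problem as ``the hardest step,'' but it is the substance of the continuity principle, not a deferrable technicality: one must run an open-and-closed monodromy argument along a connected chain of disks, or pass to the envelope of holomorphy regarded as a Riemann domain over $\C^m$ (which is how the paper actually uses the result, via $\widehat{\U_\infty}$). Without that, what you have built is a holomorphic function on $\disk\times P$, not an extension of $h$ to a neighborhood of $S$. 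A minor additional point: the hypothesis gives ``analytic curves,'' not analytic disks, so the Montel extraction $\phi_n\colon\overline{\disk}\to\overline{S_n}$ quietly adds a uniformization assumption---harmless for the paper's application, where the $S_n=\eta_n(\disk_s)$ are genuine disks, but an assumption nonetheless.
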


Since, on a surface, the singularities of a holomorphic foliation are isolated (they correspond to the indeterminacy points
of the ``slope function $\hat{s^u}$''), there is an open neighborhood of 
$x$ in which $\L^u$ has at most one singular leaf, namely the leaf $\xi(\disk_r)$.  

\begin{cor}
The set $\B^u_f(X_0)$ coincides with $X_0^{reg}$ and the foliation $\F^u$ extends to a (singular) holomorphic foliation of 
$X_0^{reg}$, and this foliation is $f$-invariant. Its lift to $X$ by the birational morphism $\pi\colon X\to X_0$ 
determines a (singular) $f$-invariant foliation of $X$.
\end{cor}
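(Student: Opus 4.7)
The plan is to prove four assertions: (a) $\B^u_f(X_0) = X_0^{reg}$; (b) $\F^u$ extends to a singular holomorphic foliation on $X_0^{reg}$; (c) this foliation is $f$-invariant; (d) it lifts to a singular $f$-invariant foliation of $X$. The main obstacle will be the openness statement needed for (a), at points $x\in \B^{u,0}_f(X_0)$ where the velocity vanishes; the Hartogs-type extension from Proposition~\ref{pro:localZ} produces a foliation abstractly, but one must identify each of its regular leaves with a parametrization belonging to the specific compact family $\B^u_f$.

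For (a), I would show that $\B^u_f(X_0)$ is a nonempty, open, and closed subset of the connected complex manifold $X_0^{reg}$. Nonemptiness comes from Theorem~\ref{thm:pullback-lebesgue} together with Remark~\ref{rem:inj-proj}; closedness was already observed in Section~\ref{par:topoB} as a consequence of the compactness of $\B^u_f$ (Theorem~\ref{thm:normal}): given $x_n = \xi_n(z_n)\to x\in X_0^{reg}$, precompose each $\xi_n$ with $z\mapsto z+z_n$ and extract a convergent subsequence. Openness at a point of $\B^{u,+}_f(X_0)$ is Proposition~\ref{pro:local-lami}(3). At a point $x\in \B^{u,0}_f(X_0)$, Proposition~\ref{pro:localZ} extends $\F^u$ to a singular holomorphic foliation on a neighborhood $\V\subset X_0^{reg}$ of $x$ (shrunk so that $x$ is its only singularity in $\V$). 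The regular leaves of $\F^u$ in $\V\setminus\{x\}$ are, by the very construction of the envelope $\widehat{\U_\infty}$, accumulated by curves $\eta_n(\disk_s)\subset \eta_n(\C)$ with $\eta_n\in \B^u_f$; combining the $\lambda$-lemma with the compactness of $\B^u_f$, every such leaf is contained in the image of an element of $\B^u_f$, which gives $\V\subset \B^u_f(X_0)$.

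Once (a) holds, the local extensions of $\F^u$ supplied by Propositions~\ref{pro:from-lami-to-foli} and~\ref{pro:localZ} coincide on overlaps by uniqueness of holomorphic foliations and glue to a singular holomorphic foliation on $X_0^{reg}$, yielding (b). For (c), a symmetric version of Proposition~\ref{affpara} applied to unstable manifolds provides $f\circ \xi^u_y = \xi^u_{f(y)}\circ L_y$ with $L_y$ a linear expansion; normalizing as in Theorem~\ref{thm:pullback-lebesgue} so that $(\xi^u_y)^*T^+_f$ is Lebesgue, one sees that $f$ maps the generating parametrizations of $\B^u_f$ (those of generic unstable manifolds) back into $\B^u_f$. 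Hence generic leaves of $\F^u$ are permuted by $f$, and uniqueness of the extended foliation forces $f^*\F^u=\F^u$. For (d), realize $\F^u$ on $X_0^{reg}$ as the kernel of a meromorphic $1$-form $\omega$; the pullback $\pi^*\omega$ is meromorphic on $X\setminus \Exc(\pi)$ and extends to a meromorphic form on the smooth compact surface $X$, defining the lifted singular foliation, whose $f$-invariance follows from $\pi\circ f = f_0\circ \pi$ and~(c).
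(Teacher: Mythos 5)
Your treatment of (a), (b), and (c) follows essentially the same route as the paper's. The paper is terse here---it simply states that Propositions~\ref{pro:local-lami} and~\ref{pro:localZ} give openness, that $X_0^{reg}$ is connected, and that $f$-invariance is immediate because the unstable manifolds are leaves. Your worry about identifying the regular leaves of the extended foliation near a point of $\B^{u,0}_f(X_0)$ with images of elements of $\B^u_f$ is a legitimate one which the paper leaves implicit, and the mechanism you propose (accumulation by the $\eta_n$'s together with compactness of $\B^u_f$ and the $\lambda$-lemma) is the right one.

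Part (d) is where you diverge from the paper, and this is where your argument has a genuine gap. You propose to realize $\F^u$ on the \emph{non-compact} surface $X_0^{reg}$ as the kernel of a single global meromorphic $1$-form $\omega$, then to pull back and ``extend $\pi^*\omega$ to a meromorphic form on $X$.'' Neither step is automatic. First, a singular holomorphic foliation on a quasi-projective open surface is defined by a cocycle of local holomorphic $1$-forms, i.e., by a section of $\mathcal{L}\otimes T^*$ for some \emph{analytic} line bundle $\mathcal{L}$; on a non-compact surface such a line bundle need not admit any meromorphic section, so a global defining $1$-form $\omega$ need not exist. (It does exist once one knows the foliation comes from a foliation on the projective surface $X$---but that is exactly what (d) is supposed to prove, so the argument as phrased is circular.) Second, a meromorphic $1$-form on $X\setminus\Exc(\pi)$ does not automatically extend across the divisor $\Exc(\pi)$: the divisor has codimension $1$, so the Levi/Hartogs removable-singularity theorems do not apply, and one can have essential singularities (think of $e^{1/z}\,dz$). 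The paper avoids both pitfalls by a different device: it describes $\pi^*\F^u$ on $X' = X\setminus\Exc(\pi)$ by local holomorphic vector fields $v_i$ with isolated zeros, packages the transition functions $g_{ij}$ into the line bundle $T^*_{\pi^*\F^u}$ on $X'$, and uses the fact that the periodic curves sit inside arbitrarily small pseudoconvex neighborhoods (because they are Grauert-contractible) to extend first the line bundle and then the holomorphic section of ${\mathcal{T}}\otimes TX$ across $\Exc(\pi)$. This pseudoconvexity argument is the actual content of (d) and is absent from your proposal.
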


\begin{proof}
The set $\B^u_f(X_0)$ is closed. Propositions \ref{pro:local-lami} and \ref{pro:localZ} show that $\B^u_f(X_0)$  is also open. 
Since $X_0^{reg}$ is isomorphic to the complement of finitely many curves in $X$, it is connected. 
Thus, $\B^u_f(X_0)$ is equal to $X_0^{reg}$, and $X_0^{reg}$ supports a (singular) holomorphic foliation $\F^u$, such that
every unstable manifold of $f$ is a leaf of $\F^u$. This implies that $f$ preserves $\F^u$. 

It remains to show that $\F^u$ lifts to a holomorphic foliation of $X$. In the complement $X'$ of the periodic curves
of $f$, the foliation $\pi^* \F^u$ is a well-defined holomorphic foliation. Given an open covering $\U_i$ of $X'$, 
the foliation $\pi^*\F^u$ is defined by local holomorphic vector fields $v_i$ with isolated zeroes, such that 
on $\U_i\cap \U_j$
\[
v_i=g_{i,j}v_j
\]
for some non-vanishing holomorphic function $g_{i,j}\in {\mathcal{O}}^*(\U_i\cap \U_j)$. This cocycle determines a
line bundle on $X'$: the cotangent bundle $T^*_{\pi^*\F^u}$ of $\pi^*\F^u$. Since the periodic curves can be contracted, 
they are contained in arbitrarily small pseudo-convex open subsets of $X$, and the line bundle $T^*_{\pi^*\F^u}$
can be extended to a line bundle ${\mathcal{T}}$ on $X$. The relations $v_i=g_{i,j}v_j$ can be thought of as defining
relations of a holomorphic section $v$ of ${\mathcal{T}}\otimes TX$ on the open subset $X'$. Again, this section 
extends to a global holomorphic section on $X$. This section corresponds to local vector fields  extending $\pi^*\F^u$ 
to a global, $f$-invariant, (singular) holomorphic foliation on $X$. 
\end{proof}

\subsection{Proof of the main theorem}
 
To complete the proof of the main theorem, we refer to the articles  \cite{Cantat:Acta} and \cite[Th\'eor\`eme 3.1]{Cantat-Favre}, in which the following theorem is proved:
\begin{thm}
 Let $X$ be a compact K\"ahler surface, with a (singular) holomorphic foliation $\F$ and an automorphism $f\colon X\to X$
 of positive entropy. If $f$ preserves $\F$, then $(X,f)$ is a Kummer example, and the stable (or unstable) manifolds of $f$ are leaves of $\F$.
 \end{thm}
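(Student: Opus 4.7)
The plan is first to show that the leaves of $\F$ coincide generically with the stable (or unstable) manifolds of $f$, and then to exploit the rigidity of projective surfaces equipped simultaneously with a positive-entropy automorphism and an invariant holomorphic foliation to conclude that $(X,f)$ must be Kummer.

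First, I would consider the tangent line bundle $T_\F$ of $\F$, an $f$-invariant line bundle defined on the smooth locus of $\F$. The restriction of $df$ to $T_\F$ is an $f$-invariant multiplicative cocycle along $\mu_f$-generic orbits. By Oseledets theorem this cocycle has a well-defined Lyapunov exponent equal to $\lambda_s$ or $\lambda_u$; by ergodicity of $\mu_f$ this exponent is constant almost everywhere, and without loss of generality it equals $\lambda_s$. Then for $\mu_f$-a.e.\ $x$, the line $T_\F|_x$ coincides with the stable Oseledets direction $E^s(x)$, so the leaf of $\F$ through $x$ is locally tangent to $E^s(x)$ and agrees locally with the stable manifold $W^s(x)$. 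Since both are one-dimensional complex submanifolds, analytic continuation yields that $W^s(x)$ is contained in the leaf of $\F$ through $x$ for $\mu_f$-a.e.\ $x$.

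Next, I would constrain the Chern class of $K_\F$. The $f$-invariance of $T_\F$ implies that $c_1(K_\F)\in H^{1,1}(X;\R)$ is $f^*$-fixed. The spectral structure of $f^*$ shows that the $f^*$-fixed classes all lie in $\Pi_f^\perp$, a negative-definite subspace, because $\Pi_f$ only carries the eigenvalues $\lambda_f,\lambda_f^{-1}\neq 1$. Hence $c_1(K_\F)^2\leq 0$. Combined with the existence of a dense transcendental $f$-invariant leaf (the generic $W^s(x)$, biholomorphic to $\C$), this rules out most classes in the Brunella--McQuillan--Mendes birational classification of foliations: $\F$ cannot be of general type, cannot define a fibration by rational or elliptic curves (a positive-entropy automorphism cannot preserve any fibration, by the product formula for dynamical degrees), and cannot be Hilbert modular. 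The only surviving possibility is that $\F$ is, birationally and after a finite orbifold cover, a linear foliation on a complex torus.

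Third, I would convert this birational information into the explicit Kummer data. Contracting the periodic curves of $f$ to obtain $\pi\colon X\to X_0$ and passing to the orbifold cover $\epsilon\colon Y\to X_0$ by a complex torus on which $\F$ lifts to a linear foliation, one obtains $\hat f\in \Aut(Y)$ intertwined with some $f_0\in \Aut(X_0)$, exhibiting $(X,f)$ as a Kummer example in the sense of the definition. The leaves of the linear foliation on $Y$ are affine translates of an eigenline of the linear lift of $\hat f$; their images on $X$ are precisely the stable (or unstable) manifolds of $f$, which are therefore leaves of $\F$. The hard step is the second one: executing the foliation-classification argument and showing that a positive-entropy $f$-invariant holomorphic foliation on a projective surface must come from a linear model on a torus. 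This is precisely Th\'eor\`eme~3.1 of \cite{Cantat-Favre} and the earlier work in \cite{Cantat:Acta}; the proof relies on the Baum--Bott index formula, on the Zariski decomposition of $K_\F$, and on a careful case-by-case analysis of the possible Kodaira dimensions of a foliation, and these ingredients cannot be avoided.
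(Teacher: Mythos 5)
The paper does not supply a proof of this theorem: it is stated verbatim as a result imported from \cite{Cantat:Acta} and \cite[Th\'eor\`eme 3.1]{Cantat-Favre}, and the surrounding text explicitly says ``To complete the proof of the main theorem, we refer to the articles \cite{Cantat:Acta} and \cite[Th\'eor\`eme 3.1]{Cantat-Favre}, in which the following theorem is proved.'' Your proposal ends in the same place: after two preparatory reductions you correctly identify the foliation-classification step as exactly the content of those references, and you concede it cannot be re-derived from scratch. So at the level of what the paper actually contains, your answer is consonant with it, and the two correctly cite the same external result.

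Your preparatory steps, which the paper does not carry out, are plausible but deserve a small caveat. In step~1, the Oseledets argument showing $T_\F\vert_x = E^s(x)$ for $\mu_f$-a.e.\ $x$ is correct: the cocycle $Df\vert_{T_\F}$ has an $f$-invariant Lyapunov exponent, hence constant by ergodicity, and the only possible values are $\lambda_s$ or $\lambda_u$ since any vector with a component in $E^u$ has exponent $\lambda_u$. But the passage from ``tangent to $E^s(x)$ at $x$'' to ``the leaf locally agrees with $W^s(x)$'' is not literally ``analytic continuation''; one needs the uniqueness part of the Hadamard--Perron stable-manifold theorem (or a graph-transform argument): the leaf through $x$ is an $f$-invariant local graph over $E^s(x)$, and the local stable manifold is the unique such invariant graph, so they coincide. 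The Chern-class computation in step~2 is correct: $c_1(K_\F)$ is $f^*$-fixed, fixed classes are orthogonal to $\theta^\pm_f$ (since $f^*$ is an isometry and these are eigenvectors for $\lambda_f^{\pm 1}\neq 1$), hence $c_1(K_\F)\in\Pi_f^\perp$ and $c_1(K_\F)^2\leq 0$. These observations are indeed part of how \cite{Cantat-Favre} proceeds, but since the paper itself offers no proof, there is nothing in the paper for them to match or diverge from.
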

 
The results of \cite{Cantat:Acta, Cantat-Favre} are more general: they classify triples $(X,\F,f)$ where $X$ is a smooth surface, $\F$ is a (singular) holomorphic foliation of $X$, and $f$ is a birational transformation of $X$ of infinite order preserving  $\F$. 
 
\section{Consequences and applications}\label{par:Consequences}

\subsection{Equivalent dynamical characterizations} 
As before, consider an automorphism $f$ of a complex projective surface $X$, with positive entropy $\log \lambda_f$.

\subsubsection{Ruelle's inequalities and absolute continuity}

The first part of the following result is due to Ruelle. The second part is proved by Ledrappier in \cite[Corollaire 5.6]{L}, in a more general setting. Here the local product structure of $\mu_f$ leads to a somewhat simplified proof. 

\begin{pro}[Ruelle, Ledrappier]\label{ledproduct}
Let $X$ be a complex projective surface and $f$ be an automorphism of $X$ with positive entropy $\log \lambda_f$. Then the Lyapunov exponents of $f$ with respect to $\mu_f$ satisfy 
\[
\lambda_s  \leq -{1 \over 2} \log \lambda_f \ \ {\text{ and }} \quad  \lambda_u  \geq  {1 \over 2} \log \lambda_f .
\]
If equality holds simultaneously in these two inequalities, then $\mu_f$ is absolutely continuous with respect to Lebesgue measure.
\end{pro}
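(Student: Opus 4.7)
The plan is to deduce the Ruelle-type inequalities from the classical Ruelle inequality applied to both $f$ and $f^{-1}$, and then to upgrade the equality case to absolute continuity of $\mu_f$ by combining Ledrappier's characterization of the SRB property through Pesin's entropy formula with the local product structure of $\mu_f$ in Pesin boxes (Section~\ref{pesinboxes}).

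For the first assertion, I would view $f$ as a smooth diffeomorphism of the real $4$-manifold underlying $X$. Since $f$ is holomorphic, each complex Lyapunov exponent has real multiplicity $2$, so the real Lyapunov spectrum of $\mu_f$ is $(\lambda_s,\lambda_s,\lambda_u,\lambda_u)$. Because $\mu_f$ is the unique measure of maximal entropy, its metric entropy equals $h_{\mu_f}(f)=h_{\mu_f}(f^{-1})=\log\lambda_f$. Applying Ruelle's inequality to $f$ and to $f^{-1}$ then yields
\[
\log\lambda_f \leq 2\lambda_u \quad \text{and} \quad \log\lambda_f \leq -2\lambda_s,
\]
which are exactly the desired estimates.

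For the equality case, I would proceed in three steps. First, saturation of both inequalities is equivalent to Pesin's entropy formula for $f$ and for $f^{-1}$; by Ledrappier's theorem this is equivalent to the fact that the conditional measures of $\mu_f$ along unstable manifolds (respectively, stable manifolds) are absolutely continuous with respect to the Riemannian volume on these curves. Second, I would transfer this property to the transverse measures $\nu^\pm$ of a Pesin box $\Pes=(\U,\L^u,\L^s)$: the laminar decomposition $T^+_f\vert_\U=\int\{\L^s(w)\}\,d\nu^+(w)$ implies that the slice of $T^+_f$ by a generic unstable leaf is the image of $\nu^+$ under the $\L^s$-holonomy, which is absolutely continuous by the $\Lambda$-lemma (\S~\ref{par:lamination-intro}); the conditional measure of $\mu_f$ along the leaf coincides with this slice, so absolute continuity of one is equivalent to absolute continuity of $\nu^+$, and symmetrically for $\nu^-$. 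Third, the product representation $\mu_f\vert_\K=h_*(\nu^+\otimes\nu^-)$, with $h$ built from two absolutely continuous transverse holonomies, shows that $\mu_f\vert_\K$ is absolutely continuous on $\K$; since Pesin boxes cover a set of full $\mu_f$-measure, the conclusion follows globally.

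The main technical point will be the identification carried out in the second step: the conditional measures of $\mu_f$ along unstable leaves (in the sense of Rokhlin disintegration) must be identified with the slices of $T^+_f$ by those leaves, pushed by holonomy onto the vertical transversal. This identification is essentially built into the definition of a Pesin box through properties (1)--(3) of Section~\ref{pesinboxes}, so once those properties are invoked the remainder is a Fubini-type computation powered by the absolute continuity of the $\L^s$- and $\L^u$-holonomies. It is precisely at this step that the dynamical hypothesis (equality in Ruelle) is used, via Ledrappier's theorem; everything else in the argument is soft.
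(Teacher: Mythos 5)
Your proposal is correct and follows essentially the same route as the paper: Ruelle's inequality applied to $f$ and $f^{-1}$ gives the two estimates; in the equality case, Ledrappier's theorem (Pesin's entropy formula $\Leftrightarrow$ SRB property of conditional measures) yields absolute continuity of the conditional measures along stable and unstable leaves, which is then converted into absolute continuity of $\nu^\pm$ and finally of $\mu_f=h_*(\nu^+\otimes\nu^-)$ using the Pesin-box product structure and quasi-conformality of the holonomies. The paper's proof is more compressed (it cites Ledrappier's Th\'eor\`eme 4.8 directly and leaves the identification of $\nu^\pm$ with conditionals implicit via property (3) of Pesin boxes), but the mechanism is identical to yours.
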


\begin{proof}
Ruelle's inequality states that $\log (\lambda_f) \leq 2 \, \lambda_u$ and $\log (\lambda_f) \leq - 2 \, \lambda_s$ (see \cite{R} and \cite{Katok-Hasselblatt}, p. 669).  Assume that equality holds simultaneously in these two inequalities.  Fix a Pesin box $\Pes$, as in Section~\ref{pesinboxes}. 
According to 
 \cite[Th\'eor\`eme 4.8]{L}, the conditional measures of $\mu_f$ with respect to the stable and unstable manifolds are absolutely continuous with respect to Lebesgue measure. In other words, both $\nu^+$ and $\nu^-$ are absolutely continuous with 
 respect to Lebesgue measure, because these measures coincide with the conditional measures of $\mu_f$ with respect to the stable and unstable laminations of $\Pes$.  
In the Pesin box,  $\mu_{f }$ corresponds to the product measure $\nu^+\otimes \nu^-$ via the homeomorphism $h$ of \S~\ref{par:pesinboxesdef}. Since the holonomy of the stable and unstable laminations are quasi-conformal, they are absolutely continuous with respect to Lebesgue measure (see \S~\ref{par:Pesin-AbsoC}). Hence, $\mu_f$ is absolutely continuous with respect to the Lebesgue measure in $\Pes$, and therefore in $X$. \end{proof}

\subsubsection{Proof of Corollary \ref{cor1}}

Our main theorem provides $(1) \Rightarrow (4)$, while the reverse implication is obvious. We proved $(2) \Rightarrow (1)$ in Proposition \ref{ledproduct} and $(1) \Rightarrow (2)$ in Proposition \ref{densitecste} (recall that this equivalence is also a consequence of \cite[Corollaire 5.6]{L}).  To prove $(2) \Leftrightarrow (3)$ we use Ruelle's inequality \cite{R}, which provides $\lambda_s \leq -{1 \over 2} \log \lambda_f$ and $\lambda_u \geq {1 \over 2} \log \lambda_f$, and Young's theorem \cite{Y2} which ensures that the generic limits in property $(3)$ are equal to $(1/\lambda_u - 1/\lambda_s) \log \lambda_f$. (note that Young's theorem is proved for ${\mathcal{C}}^\infty$-diffeomorphisms of compact surfaces, but her proof applies also to our context)

\subsection{K3 and Enriques surfaces}

\subsubsection{The Classification Theorem}

The Classification Theorem stated in the introduction is proved in \cite{Cantat-Favre, Cantat-Favre:II}. Let us add two remarks. 
Assertion (1) of the Classification Theorem rules out the case of Enriques surfaces. In particular, if $f$ is an automorphism of
an Enriques surface with positive entropy, then $\mu_f$ is singular with respect to Lebesgue measure. 
Assertion (3) is sharp, meaning that there are rational Kummer examples with $\lambda_f$ in $\Q(\zeta_l)$ for all possible
orders $l=3$, $4$, and $5$. 
For instance, an example is given in \cite{Cantat-Favre:II}, of an automorphism $g$ of an abelian surface $A$ such that 
$\lambda_g=\vert1+\zeta_5\vert^2$, where $\zeta_5$ is the primitive fifth root of unity $\exp(2\ii \pi/5)$. The linear transformation 
$(x,y)\mapsto (\zeta_5 x,\zeta_5 y)$ induces an automorphism of $A$, and the quotient is a rational surface: 
this gives examples of Kummer automorphisms on rational surfaces for which $\lambda_f$ has degree $4$.

\subsubsection{Proof of Corollary \ref{cor:K3Pic2}}
This corollary  is  a direct consequence of the second assertion of the Classification Theorem and 
the following classical result.

\begin{lem}\label{lem:Torelli-Pic2}
Let $X$ be a complex projective K3 surface, with Picard number equal to $2$. The group
of automorphisms of $X$ is infinite if and only if the intersection form does not represent $0$
and $-2$ in $\NS(X)$. If it is infinite, then it is virtually cyclic, and all elements of $\Aut(X)$
of infinite order have positive topological entropy. 
\end{lem}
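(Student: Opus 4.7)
The plan is to reduce the statement to a lattice-theoretic question on $\NS(X)$ via the global Torelli theorem for K3 surfaces, and then to analyze the resulting isometry group through the geometry of one-dimensional hyperbolic space.

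First I would invoke the global Torelli theorem: for a projective K3 surface $X$, the natural action of $\Aut(X)$ on $H^2(X;\Z)$ is faithful, and its image is exactly the group of Hodge isometries preserving the K\"ahler cone $\Ka(X) \subset H^{1,1}(X;\R)$. Since $\rho(X)=2$, the transcendental lattice $T(X) := \NS(X)^{\perp}$ has rank $20$; Nikulin's gluing theorem applied to the pair $(\NS(X), T(X))$ shows that every isometry of $\NS(X)$ preserving the positive cone $\mathcal{C}^+$ lifts to a Hodge isometry of $H^2(X;\Z)$ (acting as $\pm\Id$ on $T(X)$). Consequently the restriction map $\Aut(X) \to O^+(\NS(X))$ has finite kernel, and its image coincides up to finite index with the stabilizer of $\Ka(X)$ inside $O^+(\NS(X))$, where $O^+$ denotes isometries preserving $\mathcal{C}^+$.

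Since $\NS(X)$ has signature $(1,1)$, the set $\mathbb{H}^1 := \{v \in \NS(X)\otimes\R : v\cdot v=1\}\cap\mathcal{C}^+$ is a model of one-dimensional hyperbolic space on which $O^+(\NS(X))$ acts by isometries, with boundary consisting of the two isotropic rays in $\mathcal{C}^+$. An integer isometry of infinite order must be hyperbolic with two irrational fixed points on $\partial\mathbb{H}^1$. If the intersection form represents $0$ on $\NS(X)$, the boundary points are rational and can only be permuted by finitely many integer isometries, so $O^+(\NS(X))$ is finite and hence $\Aut(X)$ is finite. If the form does not represent $0$, a Pell-type argument produces an infinite cyclic subgroup of hyperbolic isometries of $\mathbb{H}^1$, so $O^+(\NS(X))$ is infinite and virtually cyclic, with each infinite-order element having spectral radius $>1$ on $\NS(X)\otimes\R$.

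The final step is to incorporate the walls. On a K3 surface, $\Ka(X)$ is obtained from $\mathcal{C}^+$ by removing the hyperplanes orthogonal to effective $(-2)$-classes (Riemann--Roch guarantees that each $(-2)$-class or its opposite is effective). If the form does not represent $-2$, there are no such walls, $\Ka(X)=\mathcal{C}^+$, its stabilizer in $O^+(\NS(X))$ has finite index, and $\Aut(X)$ is infinite and virtually cyclic; every infinite-order element then has spectral radius $\lambda_f>1$, so by the Gromov--Yomdin theorem recalled in Section~\ref{autoabs} its topological entropy equals $\log\lambda_f>0$. If the form does represent $-2$, then applying the infinite group $O^+(\NS(X))$ to a single $(-2)$-class produces infinitely many walls subdividing $\mathbb{H}^1$ into bounded intervals; the K\"ahler chamber is one such bounded interval, and since no hyperbolic translation of $\mathbb{H}^1$ can preserve a bounded interval, its stabilizer is finite, so $\Aut(X)$ is finite. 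Combining the three cases yields the stated equivalence and the additional properties. The main technical obstacle is the Nikulin extension step: one must carefully verify for projective K3 surfaces of Picard number $2$ that every isometry of $\NS(X)$ stabilizing $\Ka(X)$ really lifts (modulo finite kernel) to an automorphism of $X$, which requires an arithmetic gluing of the discriminant forms of $\NS(X)$ and $T(X)$ inside the unique even unimodular lattice of signature $(3,19)$.
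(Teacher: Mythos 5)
Your proof follows essentially the same route as the paper's: reduce via Torelli to isometries of the rank-$2$ hyperbolic lattice $\NS(X)$, observe that the positive-cone-preserving isometry group is finite when the form represents $0$ and is virtually infinite cyclic (Pell) otherwise, and use Riemann--Roch on $(-2)$-classes to decide whether the K\"ahler cone is a proper chamber. Where the paper simply asserts that the chamber stabilizer is finite once $-2$ is represented, you supply the missing argument (the orbit of a single $(-2)$-wall under the infinite cyclic group accumulates at both ends of $\partial\mathbb{H}^1$, so chambers are bounded intervals, and a hyperbolic translation cannot stabilize a bounded interval); that is a useful gap-filler. One overstatement to flag: it is not true that every isometry of $\NS(X)$ preserving $\mathcal{C}^+$ extends to a Hodge isometry of $H^2(X;\Z)$ acting as $\pm\Id$ on $T(X)$ --- Nikulin's gluing criterion only guarantees this for the finite-index subgroup acting as $\pm\Id$ on the discriminant group $A_{\NS(X)}$ --- but since that finite-index subgroup already suffices for the ``finite-index image'' conclusion you correctly state in the next sentence, the argument goes through.
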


\begin{proof}[Sketch of proof]
Let $\Isom(\NS(X))$ be the group of isometries of the lattice $\NS(X)$ with respect to the intersection form $\langle\cdot \vert \cdot\rangle$.

{\bf{Step 1.--}} By Hodge index theorem the intersection form has signature $(1,1)$ on $\NS_\R(X)$. Assume that this 
form represents $0$; this means that the two isotropic lines of $\langle\cdot \vert \cdot\rangle$ are defined
over $\Z$: they contain primitive elements $v_1$ and $v_2$ in $\NS(X)$. Since the isotropic
cone is $\Isom(\NS(X))$-invariant and the automorphisms of $\Z$ coincides with $\pm \Id$, a subgroup 
of index at most $4$ in $\Isom(\NS(X))$ preserves the two isotropic lines pointwise. Thus, $\Isom(\NS(X))$
has at most four elements. On the other hand, every element $f$ of $\Aut(X)$ determines an element $f^*$ 
in $\Isom(X)$ and the morphism $f\mapsto f^*$ has finite kernel (because the group of automorphisms of 
a K3 surface is discrete). Thus, if the intersection form represents $0$, the group $\Aut(X)$ is finite. 

{\bf{Step 2.--}} Now assume that $\langle\cdot \vert \cdot\rangle$ does not represent $0$. Consider the subgroup $\Isom(\NS(X))^+$ 
of $\Isom(\NS(X))$ that fixes the connected component $H$ of $\{u\in \NS_\R(X);\;  \langle u \vert u \rangle >0\}$ containing
ample classes. This group is infinite, and it is either cyclic, or dihedral (this is equivalent to the resolution of Pell-Fermat 
equations); more precisely, a subgroup of $\Isom(\NS(X))^+$ of index at most $2$ is generated by a hyperbolic isometry $\psi$, which dilates one of the isotropic lines by a factor $\lambda_\psi>1$ and contracts the other one by $1/\lambda_\psi$.

If the ample cone of $X$ coincides with $H$, Torelli theorem shows that the image of $\Aut(X)$ in 
$\Isom(\NS(X))$ is a finite index subgroup of $\Isom(\NS(X))^+$ (see \cite{BPVDVH}). Since the kernel of $f\mapsto f^*$ is finite, $\Aut(X)$
is virtually cyclic and, if $f$ is an automorphism of $X$ of infinite order, $f^*$ coincides with an iterate of $\psi^l$, $l\neq 0$. 
Thus, the topological entropy of $f$ is equal to $\vert l \vert \log(\lambda_\psi)$ and is  positive. 

{\bf{Step 3.--}} The ample cone of $X$ is the subset of classes $a$ in the cone $H$ such that $\langle a \vert [E]\rangle >0$ 
for all irreducible curves $E\subset X$ with negative self-intersection. But, on a K3 surface, such a curve is
a smooth rational curve with self-intersection $-2$. Thus, $H$ coincides with the ample cone if and only if
$X$ does not contain any $-2$-curve. On the other hand, Riemann-Roch formula implies that $X$ contains 
such a $-2$-curve if and only if the intersection form represents $-2$ on $\NS(X)$. To sum up, if $\langle\cdot \vert \cdot\rangle$
represents $-2$, the ample cone is a strict sub-cone of $H$. In that case, the group of isometries of $\NS(X)$ preserving
both $H$ and the ample cone is finite, so that $\Aut(X)$ is finite too.
\end{proof}

\subsubsection{Proof of Corollary \ref{cocor}} 

Let $Z$ be an Enriques surface.
The N\'eron-Severi group $\NS(Z)$ and its
intersection form  $q_Z$ form a non-degenerate lattice of dimension $10$ and signature $(1,9)$; this lattice is isomorphic
to ${\mathbb{U}}\oplus (-{\mathbb{E}}_8)$. For a generic Enriques surface, the action of $\Aut(Z)$ on 
$\NS(Z)$ is an embedding that realizes $\Aut(Z)$ as a finite index subgroup in ${\sf{O}}_{q_Z}(\Z)$: the image
coincides with the subgroup of matrices $B$ that preserve each connected component of the set $\{u\in \NS(Z)\otimes \R\vert \; q_Z(u)>0\}$ and are equal to the identity modulo $2$ (see \cite{Barth-Peters:1983}). Thus, $\Aut(Z)$ contains non-abelian free
groups and contains many elements $f$ with $\lambda_f>1$.
To complete the proof it suffices to apply Lemma~\ref{lem:CFKummer} (or, to notice that Enriques surfaces depend on $10$ parameters and tori depend on $4$ parameters).

\begin{rem}
There are examples of Enriques surfaces with finite automorphism group. There are examples for which 
$\Aut(Z)$ is infinite but no automorphism has positive entropy (see \cite{Barth-Peters:1983},  Section~4; in that example, $\Aut(Z)$
is finite by cyclic). If $\Aut(Z)$ is not virtually abelian, then it contains a non abelian free group made of automorphisms
with positive entropy (see \cite{Cantat:Milnor}); each of these automorphisms has a singular measure of maximal entropy.
\end{rem}

\subsubsection{Kummer surfaces} 

Building on the strategy of \cite{Cantat:Compositio}, it seems reasonable to expect the existence of 
Kummer surfaces $X$ such that 
(i) $X$ is a K3 surface, (ii) there is an automorphism
$f$ of $X$ of positive entropy with $\mu_f=\Omega_X\wedge{\overline{\Omega_X}}$, and (iii) there is 
an automorphism $g$ of $X$ of positive entropy such that $\mu_g$ is singular.

\subsection{Rational surfaces and Galois conjugates}

\subsubsection{Proof of Corollary \ref{thm:dyna-deg}}\label{par:egbdk}

Let $A$ be a complex abelian surface. Its Picard number is bounded from above
by $h^{1,1}(A;\R)$ hence by $4$. The dynamical degree $\lambda_g$ of every $g \in Aut(A)$ is the largest eigenvalue of $g^*$ on 
$\NS(A)\otimes_\Z \R$. As such, $\lambda_g$ is a root of the characteristic
polynomial of $g^*\colon \NS(A)\to \NS(A)$, and it is an algebraic integer of degree at most $4$. Passing to a finite
$g$-equivariant quotient $\pi\colon A\to X_0$, one does not change the topological entropy. Thus, if $(X,f)$ is a Kummer
example, the dynamical degree $\lambda_f$ is also an algebraic integer of degree at most $4$.

\begin{eg}
In \cite{Bedford-Kim:2010,Bedford-Kim:2012} and \cite{McMullen:2007}, Bedford and Kim and McMullen construct examples of automorphisms $f_n\colon X_n\to X_n$
of rational surfaces with positive entropy. Corollary \ref{thm:dyna-deg} shows that most of them 
have a singular measure of maximal entropy. For instance, with the notation of \cite{McMullen:2007}, the dynamical
degrees $\lambda_n$ of the ``Coxeter automorphisms'' form a sequence of Salem numbers that converges towards
the smallest Pisot number (a root of $\theta^3=\theta + 1$). This implies that the degree of $\lambda_n$ goes to 
$\infty$ with $n$. Note that the existence of automorphisms with a singular measure of maximal entropy had
already been observed in  \cite{McMullen:2007}, chapter 9.
\end{eg}

\subsubsection{Blanc's automorphisms}\label{par:Blanc-auto}

\begin{lem}\label{lem:ellcurve}
Let $f\colon X \to X$ be an automorphism of a complex projective surface with positive entropy $\log \lambda_f$. 
Assume that $X$ contains a curve of genus~$1$ which is periodic under the action of $f$. Then $\mu_f$ is singular
with respect to Lebesgue measure. 
\end{lem}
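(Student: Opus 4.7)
The plan is to argue by contradiction via the Main Theorem. Assume that $\mu_f$ is absolutely continuous with respect to Lebesgue measure; then $(X,f)$ is a Kummer example, so there exist a birational morphism $\pi\colon X\to X_0$ onto an orbifold $X_0$, a finite orbifold cover $\epsilon\colon Y\to X_0$ by a complex torus $Y$, and automorphisms $f_0$ of $X_0$ and $\hat f$ of $Y$ with $f_0\circ\pi=\pi\circ f$ and $f_0\circ\epsilon=\epsilon\circ\hat f$.

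Let $C\subset X$ be the given periodic curve of genus~$1$. Since $C$ is periodic, Proposition~\ref{prop:contraction} gives that $[C]\in\Pi_f^\perp$ and that $\pi$ contracts $C$ to a point of $X_0$. In particular, $C$ is contained in the exceptional divisor $E$ of $\pi$.

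The crucial observation is then that no connected subcurve of $E$ has arithmetic genus~$1$. Indeed, because $\epsilon\colon Y\to X_0$ is a finite orbifold cover and $Y$ is smooth, the singularities of $X_0$ are quotient singularities, locally analytically of the form $\C^2/H$ for finite (small) subgroups $H\leq \GL_2(\C)$, namely the stabilizers of preimages in~$Y$. By the classical theory of quotient surface singularities, the minimal resolution of any such singularity has exceptional divisor a tree of smooth rational curves (Hirzebruch--Jung strings for cyclic $H$, ADE-type configurations in general). Since $X$ is smooth, $\pi$ factors through this minimal resolution by a sequence of blow-ups at smooth points, which only introduce additional smooth rational exceptional components and cannot create cycles in the dual graph. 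Consequently, $E$ is globally a forest of smooth rational curves, and any connected union of its irreducible components has arithmetic genus~$0$. This contradicts the fact that $C\subset E$ has genus~$1$, and completes the proof.

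The only delicate point is the identification of the exceptional divisor of $\pi$ as a forest of rational curves. This hinges on two classical facts: the local structure of quotient singularities of surfaces and their resolutions, and the fact that any birational morphism between smooth surfaces is a sequence of blow-ups of points. Once these are invoked, the contradiction is immediate and no further analytic input is needed beyond the Main Theorem.
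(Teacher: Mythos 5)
Your proof is correct and takes a genuinely more explicit route than the paper's. The paper's argument is a one-liner: it records that an abelian surface automorphism of positive entropy preserves no curve of genus $1$, and declares that the lemma follows from this remark and the Main Theorem; the reduction---in particular, why a genus-$1$ curve cannot sit inside the exceptional divisor of the Kummer contraction---is left implicit, presumably because the authors have the concrete structure of Kummer examples (cf.\ the Classification Theorem) in mind. Your argument fills in this step abstractly, via the classical theory of quotient surface singularities: since $X_0$ is a finite torus quotient, its singular points are quotient singularities, whose minimal resolutions have exceptional divisors that are trees of smooth rational curves, and any further point blow-ups on a smooth surface preserve this forest structure; consequently every connected subcurve of the exceptional locus has arithmetic genus $0$. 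This buys a case-free, structural argument that does not rely on knowing which Kummer examples arise. One point deserves a line of justification that you omit: you silently identify the Kummer $\pi$ with the contraction morphism of Proposition~\ref{prop:contraction}. The identification is correct, but it is a fact: the Kummer $\pi$ contracts every $f$-periodic curve because otherwise its image in $X_0$ would pull back under $\epsilon$ to an $\hat f$-periodic curve in the torus $Y$, which positive entropy forbids (this is precisely what the paper's remark supplies), and conversely every curve contracted by a birational morphism intertwined with $f$ is $f$-periodic by finiteness. Finally, ``ADE-type configurations in general'' is slightly imprecise---ADE dual graphs arise only for du Val singularities, that is for $H\subset\SL_2(\C)$; for general small $H\subset\GL_2(\C)$ the resolution graph is still a tree of rational curves but not of ADE type---which is all your argument actually uses.
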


\begin{proof}
An automorphism of an abelian surface with positive entropy does not preserve any curve of genus~$1$. 
The lemma follows from this remark and our main theorem. 
\end{proof}

Let us now describe a construction due to Blanc (see \cite{Blanc:2008}). 

Consider a smooth plane cubic curve $C$. Given a point $q$ on $C$, there
is a unique birational involution $\sigma_q\colon \P^2_\C\dasharrow \P^2_\C$ that fixes $C$ point-wise and  
preserves the pencil of lines through $q$. The indeterminacy points of $\sigma_q$ are $q$ itself and the four
points $p$ of $C$ such that the line $(pq)$ is tangent to $C$ at $p$. Blowing up these points, one can lift
$\sigma_q$ to an automorphism of a rational surface $X_q$. The strict transform of $C$ is fixed point-wise
by this automorphism. 

Now, do that for $\ell$ points $q_i$ on the cubic $C$. Since $C$ is fixed point-wise by the
involutions $\sigma_{q_i}$, one can lift them simultaneously as automorphisms on the same surface (blowing up
$5\ell$ points). This provides an example of a rational surface $X_\ell$ with a huge group of automorphisms: Blanc 
proved that there is no relation between the involutions, they generate a subgroup of $\Aut(X_\ell)$ isomorphic
to the free product of $\ell$ copies of $\Z/2\Z$ (see \cite{Blanc:2008}, Theorem 6). 

There is a meromorphic $2$-form $\Omega_{\ell}$ on $X_\ell$ that does not vanish and 
has poles of order $1$ along the strict transform $C'$ of $C$ (two such forms are proportional). 

Now, define $f_l\colon X_\ell\to X_\ell$ to be the composition $\sigma_{q_1}\circ \ldots \circ \sigma_{q_\ell}$ of
the $\ell$ involutions. If $\ell \geq 3$, one gets an automorphism of $X_\ell$ of positive entropy.
The meromorphic $2$-form $\Omega_\ell$ determines a volume form $\Omega_\ell\wedge \overline{\Omega_\ell}$
with poles along $C'$; this form has infinite volume, as for
\[
\frac{\ii}{2}\frac{dz\wedge d\bar z}{z\bar z}
\]
near the origin in $\C$.  Thus, the transformation $f$ preserves a ``meromorphic'' volume form of infinite volume. 
Since $f$ fixes the strict transform of a smooth cubic curve, Lemma~\ref{lem:ellcurve} implies the following proposition.
\begin{pro}\label{blanc}
Let $f_l$ be Blanc's example described above, with $l\geq 3$. Then $\mu_f$ is singular with respect to Lebesgue measure. 
\end{pro}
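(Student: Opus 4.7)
The plan is to apply Lemma \ref{lem:ellcurve} directly: once I confirm that $f_\ell$ has positive entropy and that $X_\ell$ carries a periodic curve of genus one, the conclusion is immediate.

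First I would recall that Blanc's construction, reviewed just before the statement, already provides the two key ingredients. On one hand, for $\ell\geq 3$ the composition $f_\ell=\sigma_{q_1}\circ\cdots\circ\sigma_{q_\ell}$ is an automorphism of the rational surface $X_\ell$ of positive topological entropy (this is part of the cited result of Blanc and is also mentioned explicitly in the paragraph preceding the proposition). On the other hand, each involution $\sigma_{q_i}$ is constructed so as to fix the smooth plane cubic $C\subset \P^2_\C$ pointwise; hence so does the composition $f_\ell$. After blowing up the $5\ell$ points on $C$ used in the construction, the strict transform $C'\subset X_\ell$ is still pointwise fixed by every lifted $\sigma_{q_i}$, and therefore by $f_\ell$.

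Next I would check that $C'$ is a smooth curve of genus $1$. Since $C$ is a smooth plane cubic it has geometric genus $1$, and the blow-ups performed all have center at smooth points of $C$; a local computation in charts shows that the strict transform of a smooth curve under the blow-up of a smooth point remains smooth and carries the same geometric genus. Iterating over the $5\ell$ blow-ups, $C'$ is smooth of genus $1$. So $C'\subset X_\ell$ is a periodic (indeed fixed) curve of genus $1$.

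At this stage, both hypotheses of Lemma \ref{lem:ellcurve} hold for $(X_\ell,f_\ell)$, and the lemma immediately yields that $\mu_{f_\ell}$ is singular with respect to Lebesgue measure, which is exactly the statement of Proposition \ref{blanc}. There is no real obstacle: the content of the proof lies entirely in Lemma \ref{lem:ellcurve} (hence ultimately in the Main Theorem combined with the observation that a positive-entropy automorphism of an abelian surface preserves no curve of genus $1$), and the verification that Blanc's construction produces a fixed elliptic curve is straightforward.
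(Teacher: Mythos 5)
Your proof is correct and follows exactly the route the paper takes: observe that $f_\ell$ fixes the strict transform $C'$ of the smooth cubic $C$ (a genus-one periodic curve) and has positive entropy for $\ell\geq 3$, then invoke Lemma~\ref{lem:ellcurve}. The extra verification that $C'$ remains smooth of genus one after blowing up points of $C$ is a harmless elaboration of a step the paper treats as immediate.
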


\section{Appendix} \label{par:appendixwhole}

\begin{center}
{{(after Dinh-Sibony and Moncet)}}
\end{center}

\vspace{0.1cm}

{\small{

In this appendix, $f$ is an automorphism of a complex projective (resp. compact K\"ahler) surface $X$ with positive entropy $\log\lambda_f$, and $X_0$ is the surface obtained by blowing down all periodic curves of $f$.
Most of the results described in this appendix are due to Dinh, Sibony, and Moncet (see \cite{Dinh-Sibony:preprint} and \cite{M}).

\subsection{Contraction of periodic curves} \label{appendix:contraction}

The notations are as in Section~\ref{par:prelim}. The tensor product $\NS(X)\otimes_\Z A$, for $A$ a ring,  is denoted $\NS(X;A)$.

\subsubsection{Periodic curves and the space $\NN_f$}
Consider the subspace $\PC_f$ of $\NS(X; \Q)$ generated by all classes of $f$-periodic curves. We consider it
as a vector subspace of $\NS(X;\Q)$, and we denote by $\PC_f(\Z)$ its integral points. By construction, 
$\PC_f$ is a subspace of $\Pi_f^\perp$, because $\langle \theta^\pm_f\vert [C]\rangle=0$ for every periodic curve.

Denote by $\Psi_f\in \Z[t]$ the minimal polynomial of the algebraic integer $\lambda_f$. The characteristic polynomial
of $f^*\in \GL(\NS(X))$ is a product of $\Psi_f$ and cyclotomic factors. The vector space $\NS(X;\Q)$ splits as a direct
sum $\NN_f\oplus \NN_f^\perp$ such that 
\begin{itemize}
\item[(1)] $\NN_f$ and $\NN_f^\perp$ are $f^*$-invariant;
\item[(2)] the characteristic polynomial of $f^*\colon \NN_f\to \NN_f$ is equal to $\Psi_f$;
\item[(3)] the characteristic polynomial of $f^*\colon \NN_f^\perp\to \NN_f^\perp$ is a product of cyclotomic
factors;
\item[(4)] $\NN_f$ contains $\theta^+_f$ and $\theta^-_f$, and the intersection form is of Minkowski type
on~$\NN_f$; 
\item[(5)] $\NN_f^\perp$ contains $\PC_f$ and the intersection form is negative definite on $\NN_f^\perp$.
\end{itemize}

\begin{rem}
If $\PC_f=\Pi_f^\perp$ then the plane $\Pi_f$ is defined over $\Q$ and  $\lambda_f$ is a quadratic unit, so that 
$\PC_f$ is, usually, much smaller than $\Pi_f^\perp$. (see \cite{Cantat-Lamy:Acta}). 
\end{rem}

\begin{lem} Let $E\subset X$ be a smooth elliptic  curve. If $E$ is $f$-periodic the image of the restriction morphism
\[
{\mathrm{res}}_E\colon \NN_f\to \Pic (E)
\]
is finite.
\end{lem}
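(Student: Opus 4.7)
\medskip

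The plan is to split the argument into two pieces: first show that the image of the integral lattice $\NN_f \cap \NS(X;\Z)$ under ${\mathrm{res}}_E$ lands in $\Pic^0(E)$, and then exploit an equivariance argument against $f$ to force it to be torsion, hence finite.

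For the first piece, observe that the degree of $\mathrm{res}_E(u)$ for $u \in \NS(X;\Z)$ is given by the intersection number $\langle u \vert [E]\rangle$. Now $[E]$ is a periodic curve class, so $[E]\in \PC_f \subset \NN_f^\perp$ by property~(5) of the decomposition $\NS(X;\Q) = \NN_f \oplus \NN_f^\perp$; consequently $\langle u \vert [E]\rangle = 0$ for every $u \in \NN_f$, and the restriction lands in the kernel of the degree map, i.e. in $\Pic^0(E) \cong E$.

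For the second piece, let $k\geq 1$ be such that $f^k(E)=E$, and write $\phi := f^k\vert_E$, a biholomorphism of $E$. Write $\phi = T_c \circ \psi$ with $\psi$ a group automorphism of $(E,0)$ and $T_c$ a translation; translations act trivially on $\Pic^0(E)$, and $\psi^*$ has finite order on $\Pic^0(E) \cong E$ since $\Aut(E,0)$ is cyclic of order $2$, $4$, or $6$. Hence there exists $N\geq 1$ with $(\phi^N)^* = \mathrm{Id}$ on $\Pic(E)$ (noting that $\phi^*$ already acts as identity on the degree component). By the usual naturality of pullback under a proper intersection, $\mathrm{res}_E \circ f^{kN*} = (\phi^N)^* \circ \mathrm{res}_E = \mathrm{res}_E$, so $\mathrm{res}_E$ annihilates the sublattice $(f^{kN*}-\mathrm{Id})(\NN_f \cap \NS(X;\Z))$.

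It remains to check that this sublattice has finite index in $\NN_f \cap \NS(X;\Z)$; equivalently, that $f^{kN*}-\mathrm{Id}$ is invertible on $\NN_f\otimes \Q$. This is where the choice of $\NN_f$ is decisive: by construction the characteristic polynomial of $f^*\vert_{\NN_f}$ is the minimal polynomial $\Psi_f$ of the Salem (or reciprocal quadratic) number $\lambda_f>1$, so the eigenvalues of $f^{kN*}\vert_{\NN_f}$ are the $kN$-th powers of $\lambda_f$ and its Galois conjugates. None of these is equal to $1$, since $\lambda_f>1$ is not a root of unity. Thus $f^{kN*}-\mathrm{Id}$ is injective on $\NN_f\otimes \Q$, has finite cokernel on the integral lattice, and the image of $\mathrm{res}_E$ is therefore a finite subgroup of $\Pic(E)$. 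The main conceptual point (and the only possible friction) is the extraction of a single positive integer $kN$ for which $f^{kN}\vert_E$ acts trivially on $\Pic(E)$; once that reduction is made the algebraic argument is essentially forced.
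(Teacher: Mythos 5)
Your argument is correct and follows the same path as the paper's: orthogonality of $\NN_f$ to $\PC_f$ forces $\mathrm{res}_E$ into $\Pic^0(E)$, equivariance together with the finite order of the induced group automorphism of $\Pic^0(E)$ gives $\mathrm{res}_E\circ (f^{kN})^* = \mathrm{res}_E$, and the absence of $1$ among the eigenvalues of $(f^{kN})^*$ on $\NN_f$ (since $\lambda_f$ and its conjugates are not roots of unity) finishes. You spell out the translation-versus-linear-part decomposition of $f^k\vert_E$ and the passage to iterates a bit more explicitly than the paper, but the mechanism is identical.
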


\begin{proof}
Replacing $f$ by a positive iterate, one assumes that $E$ is $f$-invariant. The automorphism $f$ determines
a holomorphic automorphism $f^\sharp$ of the group $\Pic^0(E)$; since every holomorphic 
automorphism of an elliptic curve has finite order, we may assume that the automorphism 
$f^\sharp\colon \Pic^0(E)\to \Pic^0(E)$ is the identity. 

Every class $c$ in $\NN_f$ intersects $E$ trivially: $\langle [E]\vert c\rangle=0$, because $\NN_f$ is orthogonal to $\PC_f$. 
Thus the image of ${\mathrm{res}}_E$ is contained in $\Pic^0(E)$. Since ${\mathrm{res}}_E$ is equivariant under
the action of $f^*$ on $\NN_f$ and the action of $f^\sharp$ on $\Pic^0(E)$, we get 
\[
{\mathrm{res}}_E\circ f^*={\mathrm{res}}_E.
\]
Consequently, the kernel of ${\mathrm{res}}_E$ contains the image of $f^*-{\mathrm{Id}}_{\NN_f}$; but this image
has finite index in $\NN_f(\Z)$ because $1$ is not a root of $\Psi_f$.
\end{proof}

Let us now modify $X$ by a finite sequence of equivariant blow-ups to assume that every irreducible 
$f$-periodic curve is smooth. Thus, each of these curves is either a rational curve or a smooth elliptic curve
(see \S~\ref{contraper}).

\begin{lem}\label{lem:rest-trivial}
There is a finite index subgroup $\NN_f'$ of $\NN_f(\Z)$ such that every line bundle $L$ with 
first Chern class in $\NN_f'$ satisfies $L_{\vert E}={\mathcal{O}}_E$ for every $f$-periodic 
irreducible curve $E$. 
\end{lem}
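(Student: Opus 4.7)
The plan is to apply the previous lemma to each $f$-periodic irreducible curve in turn, and then to take the intersection of the finite-index subgroups so obtained. Two preliminary observations make this work. First, the list of irreducible $f$-periodic curves is finite: the morphism $\pi\colon X\to X_0$ of Proposition~\ref{prop:contraction} contracts precisely these curves, and the exceptional locus of a birational morphism between projective surfaces has only finitely many irreducible components. Denote this list by $E_1,\ldots,E_k$; after the equivariant blow-ups performed just before the statement, each $E_i$ is smooth and is therefore either a smooth rational curve or a smooth elliptic curve.

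Next I would dispatch the two types separately. For rational $E_i$, $\Pic(E_i)\simeq\Z$ is detected by the degree, and
\[
\deg(L_{|E_i})=\langle c_1(L)\,\vert\,[E_i]\rangle=0
\]
whenever $c_1(L)\in\NN_f$, because $[E_i]\in\PC_f\subset\NN_f^\perp$. Thus $L_{|E_i}=\mathcal{O}_{E_i}$ automatically for every $L$ with $c_1(L)\in\NN_f(\Z)$; no shrinking is required for these components. For each elliptic $E_i$, the previous lemma guarantees that the image of $\mathrm{res}_{E_i}\colon \NN_f\to \Pic(E_i)$ is finite, so that its kernel $\NN_{f,i}'$ inside $\NN_f(\Z)$ is of finite index. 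Setting
\[
\NN_f':=\bigcap_{E_i\ \mathrm{elliptic}}\NN_{f,i}',
\]
a finite intersection of finite-index subgroups, produces a finite-index subgroup of $\NN_f(\Z)$ whose line bundle representatives restrict trivially to every $E_i$, elliptic or rational.

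The only delicate point — and the reason why the previous lemma had to be invoked rather than a purely intersection-theoretic computation — is the distinction between the first Chern class in $\NS(X;\Z)$ and the actual line bundle in $\Pic(X)$: two line bundles sharing a Chern class differ by an element of $\Pic^0(X)$, whose restriction to an elliptic $E_i$ need not be trivial. The previous lemma controls this $\Pic^0$ contribution through the equivariance $\mathrm{res}_{E_i}\circ f^*=\mathrm{res}_{E_i}$ (after reducing to the case where the induced automorphism on $\Pic^0(E_i)$ is the identity) together with the fact that $f^*-\mathrm{Id}$ has finite cokernel on $\NN_f(\Z)$; I simply inherit this refinement component by component. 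No new obstruction appears, so the argument reduces to the bookkeeping above once the two preliminary facts are in place.
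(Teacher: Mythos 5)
Your proposal is correct and matches the paper's argument essentially verbatim: rational periodic components are handled automatically because $\deg(L_{|E_i})=\langle c_1(L)\,\vert\,[E_i]\rangle=0$ when $c_1(L)\in\NN_f$, and elliptic ones are handled by intersecting the finite-index kernels of $\mathrm{res}_{E_i}$ supplied by the preceding lemma. Your closing remark about the $\Pic^0(X)$ ambiguity is a sensible clarification of why the previous lemma is genuinely needed, but it does not change the argument.
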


\begin{proof} 
If $F$ is a rational periodic curve and if ${\mathrm{c}}_1(L)$ is an element of $\NN_f$, the  degree of $L_{\vert F}$ is equal to $0$; hence, $L_{\vert F}={\mathcal{O}}_F$.
Consequently, we can define $\NN'_f$ as the intersection of the kernels of ${\mathrm{res}}_E$,  where $E$ describe the finite set
of elliptic periodic curves.
 \end{proof}

\subsubsection{Big and nef classes (see \cite{Lazarsfeld})}\label{par:bignef}
The {\bf{pseudo-effective cone}} is the closure, in $\NS(X;\R)$, of the set of classes of effective $\R$-divisors. 
A class $u\in \NS(X;\R)$ is {\bf{big}} if it is in the interior of the pseudo-effective cone. 
A class $u\in \NS(X;\R)$ is big and nef if and only if it is nef and satisfies $u^2>0$. The set of big and 
nef classes forms a convex cone that contains the K\"ahler cone.

If the Chern class of a line bundle $L$ is big and nef, then the Kodaira dimension of $(X,L)$ is equal to $2$ (see \cite{Fujita, Lazarsfeld}). 

Let $u$ be a class with positive self-intersection. It is nef if and only if it intersects every irreducible curve $C\subset X$ non-negatively. In other words, the boundary of the big and nef cone is bounded by the quadratic cone $\{u^2=0\}$ and by hyperplanes 
\[
\langle u\vert [C]\rangle=0
\]
where $C$ is an irreducible curve (one may need infinitely many hyperplanes of this type to describe the boundary of the cone). 

\subsubsection{$X_0$ is projective}

\begin{thm}\label{thm:X0proj} Let $f$ be an automorphism of a projective surface $X$, with dynamical degree $\lambda_f>1$. Let 
$\pi\colon X\to X_0$ be the birational morphism that contracts all periodic curves of $f$, and only those curves.
The (singular) surface $X_0$ is projective.
\end{thm}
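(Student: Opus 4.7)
The plan is to follow the classical Artin--Grauert--Mumford strategy for contracting curves with negative definite intersection matrix: produce from an ample divisor on $X$ a $\Q$-divisor $D$ that is big and nef and has zero intersection with every periodic curve, descend a multiple of $D$ to a Cartier divisor $L_0$ on $X_0$, and conclude via the Nakai--Moishezon--Kleiman criterion on the proper (but possibly singular) surface $X_0$.

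First I would fix an ample integral class $A\in\NS(X;\Z)$ and list the irreducible periodic curves $E_1,\dots,E_r$. Since the intersection form is negative definite on $\NN_f^\perp\supseteq\PC_f$, the Gram matrix $M:=(E_i\cdot E_j)_{i,j}$ is invertible and negative definite. Solving the linear system $Ma=-b$ with $b_i:=A\cdot E_i>0$ yields $a=-M^{-1}b$ with $a_j>0$ for every $j$, because $-M^{-1}$ is positive definite. Setting $D:=A+\sum_j a_j E_j\in\NS(X;\Q)$, a direct check gives $D\cdot E_i=0$ for every periodic $E_i$, $D\cdot C\geq A\cdot C>0$ for every non-periodic irreducible curve $C$ (using $E_j\cdot C\geq 0$ when $C\neq E_j$), and $D^2=A\cdot D=A^2+\sum_j a_j(A\cdot E_j)>0$. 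Clearing denominators, $ND$ is integral, big and nef, and numerically trivial on each exceptional component.

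Next I would descend $ND$ to a Cartier divisor on $X_0$. The fibers of the analytic contraction $\pi\colon X\to X_0$ (which exists by Grauert's theorem, thanks to the negative definiteness of $\PC_f$) are either points or connected unions of rational and elliptic periodic curves, with genera controlled by Proposition~\ref{prop:contraction}(3). On a rational component $E_i\simeq\P^1$, numerical triviality automatically gives $\mathcal{O}_X(ND)|_{E_i}\cong\mathcal{O}_{E_i}$. On an elliptic component $E$, the restriction sits in $\Pic^0(E)$, and descent requires its vanishing; I expect this elliptic case to be the main technical point. To handle it, I would invoke Lemma~\ref{lem:rest-trivial} after a preliminary adjustment: decompose $ND=(ND)_{\NN_f}+(ND)_{\NN_f^\perp}$ along the $f^*$-invariant splitting, argue that the contribution of $(ND)_{\NN_f^\perp}$ to the restriction factors through a finite quotient that can be killed by a further multiplication, and apply Lemma~\ref{lem:rest-trivial} to put the $\NN_f$-part inside the finite-index sublattice $\NN'_f$ whose restriction to every periodic curve is trivial as a line bundle. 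After this adjustment, a standard cocycle/gluing argument on analytic neighborhoods of the $\pi$-fibers produces a Cartier divisor $L_0$ on $X_0$ with $\pi^*L_0\simeq\mathcal{O}_X(N'ND)$ for a further integer $N'$.

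Finally I would conclude by the Nakai--Moishezon--Kleiman criterion: $L_0$ is a Cartier divisor on the proper surface $X_0$ with $L_0^2=(N'ND)^2>0$, and for every integral curve $C\subset X_0$ with strict transform $\tilde C\subset X$ (necessarily non-periodic), $L_0\cdot C=N'ND\cdot\tilde C>0$ by construction of $D$. Hence $L_0$ is ample and $X_0$ is projective.
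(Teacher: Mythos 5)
Your strategy is genuinely different from the paper's: you construct, \`a la Zariski, a big and nef $\Q$-divisor $D=A+\sum a_jE_j$ orthogonal to every periodic curve, descend it, and invoke Nakai--Moishezon on $X_0$; the paper instead starts from line bundles whose Chern class lies in $\NN_f'$ and runs a Zariski--Fujita/Reider-type argument to show the associated linear system eventually defines the contraction morphism. Both routes are plausible in outline, and your first step is fine (though your stated reason that $a_j>0$ --- ``because $-M^{-1}$ is positive definite'' --- is not the right one: positive definiteness alone does not propagate positivity of entries; what is used is that $-M$ is a Z-matrix with positive definite symmetric part, i.e.\ an M-matrix, whose inverse is entrywise nonnegative).

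The genuine gap is exactly where you flagged it, the descent across elliptic periodic curves, and the fix you sketch does not work. Your $D$ has $c_1(\mathcal{O}_X(D))\notin\NN_f$, since the ample class $A$ typically has a nontrivial $\NN_f^\perp$-component. The paper's Lemma (the one preceding Lemma~\ref{lem:rest-trivial}) proves finiteness of the restriction morphism $\mathrm{res}_E\colon\NN_f\to\Pic(E)$, and the proof is \emph{dynamical}: the image lands in $\Pic^0(E)$, equivariance gives $\mathrm{res}_E\circ f^*=\mathrm{res}_E$, and the key point is that $1$ is not a root of $\Psi_f$, so $f^*-\mathrm{Id}$ has finite cokernel on $\NN_f(\Z)$. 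None of this applies to $\NN_f^\perp$: there $f^*$ acts with roots of unity as eigenvalues, so after an iterate it is the identity, and equivariance imposes no constraint at all. Thus there is no reason the $\NN_f^\perp$-contribution to $\mathcal{O}_X(ND)|_E$ is torsion in $\Pic^0(E)$, and multiplying $N$ will not kill it. Concretely, $\mathcal{O}_X(ND)|_E$ is a degree-zero bundle on an elliptic curve with no torsion constraint, so your cocycle/gluing step in analytic neighborhoods of the fibers does not go through. To make your plan work one would need to replace $A$ by a big-and-nef class in $\NN_f'$ from the start (the paper shows such classes exist near $\theta_f^++\theta_f^-$), which essentially collapses your route into the paper's.
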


Recall that $\theta^+_f$ and $\theta^-_f$ denote the co-homology classes of $T^+_f$ and $T^-_f$.

\begin{lem}
There is an open neighborhood ${\mathcal{W}}$ of the class $\theta^+_f+\theta^-_f$ in $\NN_f$ 
which is contained in the big and nef cone.  
\end{lem}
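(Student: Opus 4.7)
First I would establish that $u := \theta_f^+ + \theta_f^-$ is big and nef on $X$, and pairs strictly positively with every non-periodic irreducible curve. Nefness and the inequality $\langle u|u\rangle = 2\langle \theta_f^+|\theta_f^-\rangle = 2 > 0$ are immediate from the fact recalled in \S\ref{actioncoho} that $\theta_f^\pm$ are nef and isotropic with $\langle \theta_f^+|\theta_f^-\rangle = 1$. For the strict positivity, let $C$ be an irreducible curve: $\langle \theta_f^\pm|[C]\rangle \geq 0$ by nefness, and $\langle u|[C]\rangle = 0$ forces both summands to vanish, i.e.\ $[C] \in \Pi_f^\perp$, which by Proposition~\ref{prop:contraction} means $C$ is periodic.

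To upgrade this positivity into an open neighborhood inside $\NN_f$, I would descend to the projective surface $X_0$ via $\pi_0$ (projectivity of $X_0$ comes from Theorem~\ref{thm:X0proj}). The kernel of $(\pi_0)_* : \NS(X;\R) \to \NS(X_0;\R)$ is the subspace $\PC_f$ spanned by the periodic curves; since $\PC_f \subset \NN_f^\perp$, the restriction $(\pi_0)_*|_{\NN_f}$ is injective. Moreover, any $v \in \NN_f \subset \PC_f^\perp$ equals $\pi_0^*((\pi_0)_* v)$ (the exceptional correction vanishes since the intersection form is negative definite on $\PC_f$), so the projection formula yields $\langle v|w\rangle_X = \langle (\pi_0)_* v|(\pi_0)_* w\rangle_{X_0}$ for $v,w \in \NN_f$. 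In particular $((\pi_0)_* u)^2 = 2$, and $(\pi_0)_* u$ pairs strictly positively with every irreducible curve on $X_0$ (whose proper transform is non-periodic). The Nakai-Moishezon criterion on the projective surface $X_0$ then shows that $(\pi_0)_* u$ is ample.

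Since the ample cone of $X_0$ is open in $\NS(X_0;\R)$, there is an open neighborhood $\mathcal{W}_0$ of $(\pi_0)_* u$ consisting of ample classes. Define $\mathcal{W} := ((\pi_0)_*|_{\NN_f})^{-1}(\mathcal{W}_0)$; this is an open neighborhood of $u$ in $\NN_f$. For $v \in \mathcal{W}$ and any irreducible curve $C \subset X$: if $C$ is periodic then $[C] \in \NN_f^\perp$ gives $\langle v|[C]\rangle = 0$; if $C$ is non-periodic it maps to an irreducible $C_0 \subset X_0$ and the projection formula gives $\langle v|[C]\rangle = \langle (\pi_0)_* v|[C_0]\rangle > 0$ by ampleness of $(\pi_0)_* v$. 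Combined with $\langle v|v\rangle > 0$ (by continuity near $u^2 = 2$), this shows $v$ is big and nef on $X$.

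The only delicate point is the application of the Nakai-Moishezon criterion and the projection formula on the possibly singular normal projective surface $X_0$; I would handle this by invoking Mumford's intersection theory on normal projective surfaces (where $\R$-Cartier and $\R$-Weil divisors have a well-defined intersection pairing) together with the standard extension of Nakai-Moishezon to projective normal varieties. Everything else in the argument is a formal consequence of the orthogonal decomposition $\NS(X;\R) = \NN_f \oplus \NN_f^\perp$ and the basic geometry of the nef cone recalled in \S\ref{par:bignef}.
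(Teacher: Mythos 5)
Your opening paragraph is correct and matches the paper's first step: $u=\theta_f^++\theta_f^-$ is nef, $\langle u\vert u\rangle=2>0$ so it is big, and $\langle u\vert [C]\rangle=0$ forces $C$ periodic via Proposition~\ref{prop:contraction}. (Incidentally, your computation $\langle u\vert u\rangle=2\langle\theta_f^+\vert\theta_f^-\rangle=2$ is right; the paper has a small typo there.)

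The rest of the argument is circular. This lemma sits inside the proof of Theorem~\ref{thm:X0proj}: the lemma is invoked (through Remark~\ref{rem:konX0} and the finite-index sublattice $\NN_f'$) to produce big and nef line bundles whose linear systems are then analyzed to show that $X_0$ is projective. You cannot therefore take "projectivity of $X_0$ comes from Theorem~\ref{thm:X0proj}" as an input. Concretely: without that theorem, $X_0$ is only a compact normal analytic surface obtained from Grauert's contraction criterion, and the statement "$(\pi_0)_*u$ has $L^2>0$ and $L\cdot C>0$ for all curves, hence is ample by Nakai--Moishezon" is not the classical projective Nakai--Moishezon but rather Grauert's ampleness/projectivity criterion for abstract normal surfaces --- exactly the kind of deep result the paper's elementary argument is designed to circumvent. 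Likewise, the ampleness of $(\pi_0)_*u$ on $X_0$ (your Remark~\ref{rem:konX0} endpoint) is a \emph{consequence} of the theorem, not available before it.

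The paper's own proof of the lemma stays entirely on $X$ and in $\NS(X;\R)$: assuming a sequence $w_n\in\NN_f$ converging to $u$ with $w_n$ not nef, one gets irreducible curves $C_n$ with $\langle w_n\vert [C_n]\rangle<0$, necessarily non-periodic and pairwise distinct; normalizing $[C_n]/\parallel[C_n]\parallel\to c_\infty$ pseudo-effective, one deduces $\langle u\vert c_\infty\rangle=0$, hence $c_\infty\in\Pi_f^\perp$ and $c_\infty^2<0$ by Hodge index, contradicting $c_\infty^2=\lim\langle C_n\vert C_m\rangle/(\parallel[C_n]\parallel\,\parallel[C_m]\parallel)\geq 0$ for distinct irreducible curves. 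To repair your write-up you would need to replace steps two and three with some such compactness argument confined to $X$, or at least justify the ampleness criterion on the not-yet-projective $X_0$ independently of Theorem~\ref{thm:X0proj}.
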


\begin{proof}
Denote by $\Sigma$ the sum $\theta^+_f+\theta^-_f$. It is nef, because $\theta^+_f$ and $\theta^-_f$ are
nef. Since $\Sigma^2=\langle\theta^+_f\vert \theta^-_f\rangle>0$,  it is also big. If $C$ is a(n effective) curve then $\langle \Sigma\vert [C]\rangle \geq 0$, 
with equality if and only if $[C]$ is in $\Pi_f^\perp$,
if and only if $C$ is a periodic curve, if and only if $[C]\in \PC_f$ (see \S~\ref{contraper}). 

We now prove the lemma by contradiction. Since the condition $u^2>0$ is open, we may
assume that there is a sequence $(w_n)$ of classes $w_n\in \NN_f$ converging towards $\Sigma$ such that $w_n$ is not nef.  
Since $w_n$ is not nef, there exists an irreducible curve $C_n$ such that 
$\langle w_n\vert [C_n]\rangle<0$ (see \S~\ref{par:bignef}). In particular, the curve $C_n$ is not in $\PC_f$, because $w_n$ is an element of $\NN_f$; thus,  $\langle \Sigma\vert  [C_n]\rangle> 0$. Similarly, we may assume that the curves $C_n$ are pairwise distinct; otherwise, we could 
extract a constant subsequence $C_{n_j}=C$ and we would have $\langle \Sigma\vert [C]\rangle\leq 0$ because $\Sigma$
is the limit of $(w_n)$: this would contradict the fact that $C$ is effective but not in $\PC_f$.

Take a subsequence of $([C_n]/\parallel [C_n]\parallel)$ that converges to a pseudo-effective class $c_\infty$. We have 
$\langle \Sigma\vert c_\infty\rangle=0$, because $w_n$ converges towards $\Sigma$. Being pseudo-effective, $c_\infty$ is 
in $\Pi_f^\perp$ and consequently $c_\infty^2<0$. On the other hand, 
$c_\infty^2$ is the limit of $\langle C_n\vert C_{n+1}\rangle/(\parallel [C_n]\parallel\parallel [C_m]\parallel)$ and, as such, is non-negative.
This contradiction concludes the proof. 
\end{proof}

\begin{rem}
Since the cone of nef and big classes is $f^*$-invariant, this lemma implies that every element of the form 
$a\theta^+_f+b\theta^-_f$ with $a$ and $b$ positive is in this cone, and is in the relative interior of this cone
in $\NN_f(\R)$.
\end{rem}

To prove Theorem~\ref{thm:X0proj}, we may assume that all periodic curves of $f$ are smooth (otherwise, resolve
the singularities by a finite, equivariant, sequence of blow-ups).

Consider the set of all line bundles $L$ such that ${\mathrm{c}}_1(L)$ is in $\NN_f'$ and is big and nef. The Kodaira
dimension ${\mathrm{kod}}(X;L)$ of such a line bundle is equal to $2$, but the linear system $\vert L\vert$
may have fixed components. Write 
\[
L=M+F
\]
where $M$ is the mobile part and $F$ is the fixed part; ${\mathrm{kod}}(X;M)$ is equal to $2$, and the linear 
system $\vert M\vert$ has no fixed component. Taking sums (i.e. tensor products) $L_1+L_2$ the fixed part
decreases. Thus, there is an effective divisor $R$ such that $R$ is contained in the fixed part of $\vert L\vert$ 
for all such line bundles $L$ and $R$ is equal to the fixed part of $L$ if $L$ is ``sufficiently large''. By construction, 
$R$ is uniquely determined by $\NN_f$ and is therefore $f$-invariant. Thus, $R$ is a sum (with multiplicities)
of periodic curves. 

Take $L$ with fixed part equal to $R$ and consider its mobile part $M$. The linear system $\vert M\vert$ 
may have base points. Again, we may choose $L$ such that the set of base points is $f$-invariant. Blow-up these
base points (including infinitely near points), to get a new surface $Y$ with a birational morphism
$\epsilon \colon Y\to X$. By construction $f$ lifts to 
an automorphism $f_Y$ of $Y$ and  all irreducible periodic curves of $f_Y$ are smooth, because they are either strict transforms of
periodic curves of $f$ or exceptional divisors. The line bundle $L$ lifts to a big and nef bundle $L_Y$ such  
\[
L_Y=M_Y+R_Y
\]
where the fixed part $R_Y$ is made of $f_Y$-periodic curves and the mobile part $M_Y$ is base point free. 
Since the Kodaira dimension ${\mathrm{kod}}(Y;M_Y)$ is equal to $2$, the linear system $\vert M_Y\vert$
determines a morphism $\eta\colon Y\to Z$: the linear system $\vert M_Y\vert$ corresponds to the linear system $\vert M_Z\vert$  of 
hyperplane sections of $Z$. On the surface $Z$, one gets $L_Z=M_Z+R_Z$ with, now, an ample mobile part. 
In particular, $H^1(Z, mM_Z)$ vanishes if $m$ is large enough (see \cite{Lazarsfeld}, \S 1.4.D and 4.3). 
Thus, the morphism 
\[
H^0(Z; L_Z^{\otimes m})\to H^0(R_Z; L^{\otimes m}_{Z\vert R_Z})
\]
is onto if $m$ is large. Since, by Lemma~\ref{lem:rest-trivial},  the restriction of $L_Z$ to each irreducible component of $R_Z$ is trivial, 
there is a section of $L_Z^{\otimes m}$ which does not vanish on $R_Z$. Hence, $L_Z^{\otimes  m}$ has no
fixed component and is ample. On the other hand, $L$ (and thus $L_Z$) intersects trivially all periodic curves
of $f$ (resp. of $f_Z$). This implies that $\eta$ contracts all periodic curves of $f_Y$; in particular, 
$\eta$ contracts all exceptional divisors of $\epsilon$ and $\eta\circ\epsilon^{-1}\colon X\to Z$ is a morphism
that contracts all periodic curves of $f$. Since $Z$ is a projective surface,  $X_0$ is also
a projective surface. 

\begin{rem}\label{rem:konX0}
Once all periodic curves of $f$ have been contracted, the class $\Sigma$ becomes an ample class.
\end{rem}

\subsection{Continuous potentials}\label{par:subs-cont-pot}
Denote by   $\pi\colon X\to X_0$ the morphism that contracts all periodic 
curves of $f$ (and is an isomorphism in the complement of these curves). By the previous section, 
the surface $X_0$ is projective. Its N\'eron-Severi group $\NS(X_0)$ lifts to the orthogonal complement 
of $\PC_f$ in $\NS(X)$. Let $(u_i)$ be a basis of $\NS(X_0;\Q)$ such that every $u_i$ is ample. 
Each $u_i$ is the class of a K\"ahler form: up to a multiplicative factor, it is 
the restriction of the Fubini-Study form $\kappa_{FS}$ to $X_0$ for some embedding $\iota_i\colon X_0\to \P^{k_i}$ in a projective space; more precisely $u_i$ is the class of the form $\kappa_{i,0}=\iota_i^*\kappa_{FS}$. 
The pull-backs $\pi^*\kappa_{i,0}$ of these forms on $X$ give a finite set of smooth $(1,1)$-forms $\kappa_i$ which are
K\"ahler forms in the complement of the set of periodic curves, and vanish along them. 
Moreover, if $D$ is a connected component of the set of periodic curves, $\pi$ maps $D$ to 
a point $q$ of $X_0$; thus, each $\kappa_i$ is given by a local potential $w_i$ in some neighborhood
$\U$ of $D$. Moreover, adding a constant, $w_i$ vanishes identically on $D$. Now, use the fact that
the pre-image of  $\NS(X_0)$ in $\NS(X)$ is $f$-invariant, and apply $f^*$ to $\kappa_1$:
\[
\frac{1}{\lambda_f^n}(f^n)^*\kappa_1=\sum a_i(n)\kappa_i + dd^c(v_n)
\]
where the $a_i(n)$ are real numbers, $v_n\colon X\to \R$ is a smooth function, and $v_n$ is 
constant on every connected component of the set of periodic curves of $f$.
 
Then, apply the same strategy as in \cite{Dinh-Sibony:2005}:  the sequence ${\lambda_f^{-n}}(f^n)^*\kappa_1$ 
converges towards $T^+_f$;  each sequence of real numbers $a_i(n)$ converges to a real number $\alpha_i$; 
and $(v_n)$ converges uniformly to a H\"older continuous function $w_\infty$. Thus, 
\begin{eqnarray*}
T^+_f & = & dd^c\left(  \sum_i \alpha_i w_i + w_\infty \right) \\
& = & \sum_i \alpha_i \kappa_i + dd^c w_\infty,
\end{eqnarray*}
where the first expression is a local one and the second is global.
The function $w_\infty$ is constant on every connected component of the set of periodic curves of $f$; one can
therefore write $\pi_*(T^+_f)=\sum_i \alpha_i\kappa_{i,0} + dd^c(w_{\infty,0})$ for some continuous function $w_{\infty,0}$
on $X_0$.
There are analogous expressions for $T^-_f$. Similarly,  Remark~\ref{rem:konX0} shows that  
 \[
\pi_*(T^+_f+T^-_f)= \kappa + dd^c(w)
\]
where $\kappa$ is a k\"ahler form on $X_0$ and $w\colon X_0\to \R$ is a continuous function.

\begin{thm}\label{thm:continuous-potentials}
Let $f$ be an automorphism of a complex projective surface $X$, with positive entropy.
The current $T^+_f$ (resp. $T^-_f$) is a closed positive current with H\"older continuous potentials. If
$D$ is a connected component of the union of all $f$-periodic curves, there exists a neighborhood
${\mathcal{U}}$ of $D$ and a H\"older continuous  function $u\colon \U\to \R$ such that 
$T^+_f=dd^c u$ on $\U$ and $u=0$ on $D$. The projection $\pi_*T^+_f$ of $T^+_f$ on $X_0$ is a closed positive
current with continuous potentials, and $\pi_*(T^+_f+T^-_f)$ is co-homologous to a K\"ahler form on $X_0$. 
\end{thm}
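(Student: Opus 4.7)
The plan is to follow the Dinh--Sibony dynamical construction of $T^+_f$, but performed on the contracted model $X_0$, using crucially that $X_0$ is projective (Theorem~\ref{thm:X0proj}) and that the classes coming from $X_0$ span an $f^*$-invariant subspace of $\NS(X;\R)$ which is a Minkowski complement to $\PC_f$.

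First I would fix a basis $(u_i)$ of $\NS(X_0;\Q)$ consisting of ample classes, realize each $u_i$ via a projective embedding $\iota_i\colon X_0\to \P^{k_i}$ as the class of $\kappa_{i,0}:=\iota_i^*\kappa_{FS}$, and pull these back to smooth closed semipositive $(1,1)$-forms $\kappa_i:=\pi^*\kappa_{i,0}$ on $X$. By construction each $\kappa_i$ vanishes along every periodic curve of $f$. If $D$ is a connected component of the set of periodic curves and $\U$ a Stein neighborhood of $D$, then $\kappa_i|_\U=dd^c w_i$ for a smooth local potential $w_i\colon\U\to\R$ that one can normalize so that $w_i$ vanishes identically on $D$ (since $\pi(D)$ is a point and $\kappa_i$ is a pullback from $X_0$).

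Next, since the preimage of $\NS(X_0;\R)$ in $\NS(X;\R)$ coincides with the orthogonal complement of $\PC_f$, it is $f^*$-invariant and contains $\theta_f^+$. Hence for each $n\geq 0$ one can write
\[
\tfrac{1}{\lambda_f^{n}}(f^n)^*\kappa_1 \;=\; \sum_i a_i(n)\,\kappa_i \;+\; dd^c v_n
\]
for real numbers $a_i(n)$ and a smooth function $v_n\colon X\to\R$. Because both sides are pulled back from $X_0$ (modulo exact forms), the potential $v_n$ may be chosen to be the pullback of a smooth function on $X_0$; in particular $v_n$ is constant on every connected component of the set of periodic curves. I would then invoke the Dinh--Sibony argument (as in \cite{Dinh-Sibony:2005}): the numerical cohomological convergence $\lambda_f^{-n}(f^n)^*[\kappa_1]\to\theta_f^+$ forces $a_i(n)\to \alpha_i$, while a contraction estimate for $\lambda_f^{-1}f^*$ acting on the quotient of quasiplurisubharmonic potentials by constants shows that $(v_n)$ is a Cauchy sequence in the H\"older norm $\mathcal{C}^{0,\gamma}(X)$ for some $\gamma>0$. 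This step is the main obstacle: it requires the standard but nontrivial estimates relating the $L^\infty$ and $\mathcal{C}^{0,\gamma}$ norms for quasi-psh functions, together with the observation that a fixed positive iterate $f^{n_0}$ contracts the $dd^c$-cohomologous component by a definite factor on a well chosen family of local charts, so that one obtains a geometric series bound on $\|v_{n+1}-v_n\|_{\mathcal{C}^{0,\gamma}}$.

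Setting $w_\infty:=\lim_n v_n$ yields a H\"older continuous function on $X$, constant on each connected component of the periodic locus (because each $v_n$ was, and the convergence is uniform), and the local identity
\[
T_f^+ \;=\; dd^c\!\Bigl(\sum_i \alpha_i w_i + w_\infty\Bigr) \quad \text{on } \U
\]
provides a H\"older continuous local potential $u$ that vanishes on $D$ after subtracting the constant value of $w_\infty$ on $D$. The same construction applies verbatim to $T_f^-$ using $\lambda_f^{-1}$ in place of $\lambda_f$. Pushing forward by $\pi$, the identity $\pi_*T_f^+=\sum_i \alpha_i\kappa_{i,0}+dd^c w_{\infty,0}$, with $w_{\infty,0}$ the descent of $w_\infty$, exhibits $\pi_*T_f^+$ as a closed positive current with continuous potentials on $X_0$. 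Finally, Remark~\ref{rem:konX0} states that once all periodic curves are contracted the class $\Sigma=\theta_f^++\theta_f^-$ descends to an ample class on $X_0$; hence $\pi_*(T_f^++T_f^-)$ is cohomologous to a K\"ahler form on $X_0$, completing the proof.
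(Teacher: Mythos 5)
Your proposal follows essentially the same route as the paper's own proof in \S\ref{par:subs-cont-pot}: the choice of ample basis $(u_i)$ of $\NS(X_0;\Q)$, pulling back Fubini--Study forms via projective embeddings of $X_0$, normalizing local potentials to vanish on each connected component $D$, writing $\lambda_f^{-n}(f^n)^*\kappa_1 = \sum_i a_i(n)\kappa_i + dd^c v_n$ with $v_n$ constant on periodic components, and invoking the Dinh--Sibony contraction argument for the H\"older convergence, followed by Remark~\ref{rem:konX0} for the K\"ahler conclusion. The only place you elaborate beyond the paper is in spelling out the mechanism of the H\"older convergence (the geometric series bound on $\|v_{n+1}-v_n\|_{\mathcal{C}^{0,\gamma}}$), which the paper compresses into a reference to \cite{Dinh-Sibony:2005}; this is a faithful unpacking, not a different argument.
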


\subsection{Dinh-Sibony Theorem}\label{par:ds-appendix}

\subsubsection{Brody curves and Dinh-Sibony theorem}
Let $M$ be a compact K\"ahler manifold (or compact, complex analytic space). 
Fix a hermitian metric $\kappa$ on $M$. 
A {\bf{Brody curve}} $\xi\colon \C\to M$ is a non-constant entire curve such that $\parallel \xi'(z)\parallel_\kappa$ is
uniformly bounded. 

Let $\xi\colon \C\to M$ be a  non-constant entire curve, and denote by $A(r;\xi)$  the area of $\xi(\disk_r)$ and by 
$L(r;\xi)$ the length of $\partial\xi(\disk_r)$; more precisely,
\begin{eqnarray*}
A(r;\xi) & = & \int_{\xi(\disk_r)}\kappa \, = \, \int_0^r\int_0^{2\pi} \parallel \xi'(te^{\ii\theta})\parallel^2 tdtd\theta\\
L(r;\xi) & = & \int_0^{2\pi} \parallel \xi'(re^{\ii \theta})\parallel rd\theta.
\end{eqnarray*}
The currents 
\[
S_r=\frac{1}{A(r;\xi)}\{\xi(\disk_r)\}
\]
form a family of positive currents of mass $1$; the mass of their boundary is equal to the ratio $L(r;\xi)/A(r;\xi)$.
An {\bf{Ahlfors current}} for the curve $\xi$ is a closed positive current obtained as a limit of the currents $S_{r}$ 
along a subsequence $(r_n)$, with $r_n\to_n \infty$; Ahlfors currents always exist (see \cite{Brunella:1999}).

\begin{thm}[Dinh-Sibony, see \cite{Dinh-Sibony:preprint}]
Let $M$ be a compact K\"ahler manifold (resp. a compact, K\"ahler analytic space).
 Let $T$ be a closed positive $(1,1)$-current with continuous
potentials. Let $\xi\colon \C\to X$ be a Brody curve, such that 
$
\xi^*T=0.
$
Then there is at least one  Ahlfors current $S$ associated to $\xi$ such that $T\wedge S =0$.
\end{thm}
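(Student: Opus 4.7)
The plan is to combine a global quasi-potential decomposition of $T$ with a Jensen-type bound, then feed this into the Ahlfors length-area construction. First, since $M$ is a compact K\"ahler space and $T$ has continuous potentials, the $dd^c$-lemma produces
\[
T=\omega+dd^c\phi
\]
where $\omega$ is a smooth closed $(1,1)$-form representing $[T]$ and $\phi\colon M\to \R$ is a continuous (hence bounded) global quasi-potential. The hypothesis $\xi^*T=0$ then rewrites as $dd^c(\phi\circ\xi)=-\xi^*\omega$ as a signed measure on $\C$.

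Applied to the bounded continuous function $\phi\circ\xi$, the classical Jensen formula gives
\[
\int_{\disk_r}dd^c(\phi\circ\xi)=\frac{1}{2\pi}\int_0^{2\pi}\phi\bigl(\xi(re^{\ii\theta})\bigr)\,d\theta-\phi(\xi(0)),
\]
so $\left|\int_{\disk_r}\xi^*\omega\right|\leq 2\|\phi\|_\infty$ uniformly in $r$. I now split on whether $\xi(\C)$ is contained in a compact analytic curve. If not, Proposition~\ref{geneahlfors}(2) forces $A(r;\xi)\to\infty$; combined with $L(r;\xi)=O(r)$ (Brody), Ahlfors' length-area lemma produces $r_n\to\infty$ along which $L(r_n;\xi)/A(r_n;\xi)\to 0$, so $S_{r_n}$ converges weakly to a closed positive current $S$, which is an Ahlfors current of $\xi$ by definition. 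Because $\omega$ is smooth,
\[
\int_M\omega\wedge S=\lim_{n\to\infty}\frac{1}{A(r_n;\xi)}\int_{\disk_{r_n}}\xi^*\omega=0
\]
by the uniform Jensen bound. Since $\phi$ is continuous and $S$ is closed positive, $\phi\cdot S$ is a well-defined current, and $dd^c(\phi\cdot S)$ is exact on compact $M$, so
\[
\int_M T\wedge S=\int_M\omega\wedge S+\int_M dd^c(\phi\cdot S)=0.
\]
Bedford-Taylor theory ensures that $T\wedge S$ is a positive measure (continuous potentials wedged with a closed positive current), and a positive measure of total mass zero vanishes, yielding $T\wedge S=0$.

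If instead $\xi(\C)\subset E$ for a compact analytic curve $E$, then Proposition~\ref{geneahlfors}(1) identifies every Ahlfors current of $\xi$ as $\mathrm{Area}(E)^{-1}\{E\}$. By Picard's theorem the image of $\xi$ is dense in $E$ and $\xi$ is a local biholomorphism onto $E$ on a dense open subset of $\C$; so $\xi^*T=0$ forces the trace of $T$ along $E$ to vanish on a dense open subset, hence everywhere by continuity of the potentials of $T$, giving $T\wedge S=0$ for any such $S$.

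The main technical obstacle in the plan is the existence of the continuous global quasi-potential $\phi$ when $M$ is only a compact K\"ahler analytic space: it is standard on smooth compact K\"ahler manifolds via the $dd^c$-lemma, but in the singular setting it requires combining the continuity of the local potentials of $T$ with a patching argument through Stein neighborhoods of the singular locus. The rest of the argument is robust and follows the Dinh-Sibony strategy of \cite{Dinh-Sibony:preprint}.
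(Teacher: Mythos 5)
The paper does not reproduce a proof of this theorem; it simply states it and refers the reader to the Dinh--Sibony preprint, so there is no ``paper's own proof'' to compare against. Your overall strategy --- decompose $T=\omega+dd^c\phi$ with $\phi$ a bounded continuous global quasi-potential, bound a Nevanlinna-type quantity by $\|\phi\|_\infty$, extract a converging subsequence of the normalized currents, and close the argument with Stokes plus Bedford--Taylor positivity --- is indeed the right flavor of argument. But it has a genuine gap at the central estimate.

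The Jensen formula is misstated, and this is not a cosmetic slip. The quantity $\int_{\disk_r}dd^c(\phi\circ\xi)$ is the unintegrated counting function $n(r)$; Jensen's formula controls the \emph{Nevanlinna average} $N(R)=\int_0^R n(s)\,\frac{ds}{s}$, not $n(R)$ itself. The correct identity is
\[
\int_0^R\Bigl(\int_{\disk_s}dd^c(\phi\circ\xi)\Bigr)\frac{ds}{s}
 \;=\; \frac{1}{2\pi}\int_0^{2\pi}\phi\bigl(\xi(Re^{\ii\theta})\bigr)\,d\theta-\phi(\xi(0)),
\]
so what is uniformly bounded by $2\|\phi\|_\infty$ is $\bigl|\int_0^R\bigl(\int_{\disk_s}\xi^*\omega\bigr)\frac{ds}{s}\bigr|$, \emph{not} $\bigl|\int_{\disk_r}\xi^*\omega\bigr|$. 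In fact the raw quantity $\int_{\disk_r}\xi^*\omega$ need not be bounded: without positivity of $\omega$ it is a signed quantity of size up to $O(A(r;\xi))$, and one cannot read off from a bounded integral $\int_0^R g(s)\frac{ds}{s}$ that $g(R)$ itself is bounded.

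Once corrected, the bound you actually have normalizes naturally against the Nevanlinna characteristic $T(r;\xi)=\int_0^r A(s;\xi)\frac{ds}{s}$, i.e.\ it directly yields $\int_M\omega\wedge N=0$ for the Ahlfors--Nevanlinna currents $N$ (Section~\ref{sub:ahlfors} of the paper). But the theorem is stated for \emph{Ahlfors} currents $S$, normalized by $A(r;\xi)$, and the paper itself warns (Remark after Proposition~\ref{geneahlfors}) that the two families do not behave the same. To get the conclusion for an Ahlfors current you must produce a single sequence $r_n\to\infty$ along which \emph{both} $L(r_n;\xi)/A(r_n;\xi)\to 0$ (to ensure the limit is closed) \emph{and} $A(r_n;\xi)^{-1}\int_{\disk_{r_n}}\xi^*\omega\to 0$. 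The first condition holds outside a set of finite logarithmic measure; the second follows from the corrected Jensen bound \emph{only} after an additional argument controlling the possible sign oscillations of $r\mapsto\int_{\disk_r}\xi^*\omega$ on that exceptional set. You do not address this coordination, and with your (incorrect) pointwise Jensen bound the issue is invisible. The rest of the argument (Stokes with a continuous $\phi$ against a closed $S$, Bedford--Taylor positivity of $T\wedge S$, the degenerate case $\xi(\C)\subset E$) is fine modulo the concern you yourself raise about the global $dd^c$-lemma in the singular setting.
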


In what follows, we shall apply this result in the singular surface $X_0$.

\subsection{Applications}\label{app-appendix}

\subsubsection{Proof of Corollary~\ref{cor:DS}} 
The first property follows from the fact that $\pi_*(T^+_f+T^-_f)$ is co-homologous to a K\"ahler form. More precisely, Section~\ref{par:subs-cont-pot} shows that $\pi_*(T^+_f + T^-_f)$ is
equal to the sum of a k\"ahler form $\kappa$ plus $dd^c(w)$ for some globally defined continuous 
function. Hence, the total mass of the product measure $\pi_*(T^+_f+T^-_f)\wedge A_\nu$ is equal to the mass of $\kappa \wedge A_\nu$ and, therefore, to the mass of the current $A_\nu$ with respect to $\kappa$;  as such, it is positive.

To prove the second property, assume that such a curve $\xi$ exists. If it is not a Brody curve, apply Zalcman's re-parametrization Lemma and Lemma~\ref{lem:pull-back-cv} to change it in a Brody curve. Then, apply Dinh-Sibony theorem to construct a Ahlfors current that contradicts the first property.
 
\subsubsection{Fatou components} 

Let $M$ be a compact, complex analytic space, with a fixed hermitian metric. Let $\U$ be a 
subset of $M$. One says that $\U$ is {\bf{hyperbolically embedded}} in $M$ if there is
a uniform bound for the derivative $\parallel \varphi'(0)\parallel$ of all holomorphic
mappings $\varphi\colon \disk\to \U\subset M$. Brody's Lemma implies that there exists
a Brody curve $\xi\colon \C\to {\overline{\U}}$ if $\U$ is not hyperbolically embedded. If
$\U$ is hyperbolically embedded, it is Kobayashi hyperbolic, meaning that the Kobayashi
pseudo-distance is a distance (see \cite{Lang}). 

Let $f$ be an automorphism of a complex projective surface $X$ with $\lambda_f>1$. 
The Fatou set $\Fat(f)$ is the largest open subset on which the sequence $(f^n)_{n\in \Z}$ is
locally equicontinuous. 

Let $K(f)$ be the support of $T^+_f+T^-_f$, and let $U(f)$ be its complement.  
It is easy to show that $\Fat(f)$ is contained in $U(f)$ (see \cite{Ueda:1994, Dinh-Sibony:preprint,M}).

Theorem~\ref{thm:continuous-potentials} shows that $\pi_*(T^+_f+T^-_f)$ has continuous potentials on $X_0$.
If $\pi(U(f))\subset X_0$ is not hyperbolically 
embedded in $X_0$, Brody re-parametrization lemma and 
Lemma~\ref{lem:pull-back-cv} provide  a Brody curve $\xi\colon \C\to   X_0$ such that $\xi^*(\pi_*(T^+_f+T^-_f))=0$. 
Dinh-Sibony Theorem asserts that there is an Ahlfors current $S$ for $\xi$ such that 
$\pi_*(T^+_f+T^-_f)\wedge S=0$. This contradicts Corollary~\ref{cor:DS}. Thus,  $\pi(U(f))$ is hyperbolically embedded in $X_0$. 

\begin{thm}[Dinh-Sibony, Moncet]
Let $f$ be an automorphism of a complex projective surface $X$ with positive entropy.
The Fatou set $\Fat(f)$ is Kobayashi hyperbolic modulo periodic curves: if $x$ and $y$ are distinct
points of $\Fat(f)$ with Kobayashi distance ${\mathrm{kobdist}}(x,y)=0$, then $x$ and $y$ are contained 
in a (reducible) connected periodic curve of $f$.
\end{thm}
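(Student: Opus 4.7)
The strategy is to transfer the problem from $X$ to the (possibly singular) surface $X_0$ via the contraction morphism $\pi\colon X\to X_0$, and then exploit the fact, established just above, that $\pi(U(f))$ is hyperbolically embedded in $X_0$. Recall that $\Fat(f)\subset U(f):=X\setminus K(f)$: indeed, on any open set where $(f^n)$ is locally equicontinuous, the masses of $(f^n)^{*}\kappa$ stay locally bounded, so the Cesaro-type limits defining $T_f^+$ and $T_f^-$ put no mass there. Hence, for any pair $x,y\in\Fat(f)$, the images $\pi(x),\pi(y)$ belong to $\pi(U(f))$.

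The next step is to apply two general principles of Kobayashi hyperbolicity. First, the Kobayashi pseudo-distance is non-increasing under holomorphic maps between complex analytic spaces, so
\[
\mathrm{kobdist}_{X_0}(\pi(x),\pi(y))\;\leq\;\mathrm{kobdist}_X(x,y)\;=\;0.
\]
Second, a Kiernan-type theorem asserts that if $\V\subset M$ is hyperbolically embedded, in the sense used in the excerpt (uniform upper bound for $\|\varphi'(0)\|$ over all holomorphic $\varphi\colon\disk\to\V$), then the restriction to $\V$ of the Kobayashi pseudo-distance on $M$ separates points. Applied to $\V=\pi(U(f))\subset X_0$, this forces $\pi(x)=\pi(y)$.

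It remains to identify the fibers of $\pi$. By construction $\pi$ is an isomorphism outside the union of $f$-periodic curves and contracts each connected component of that union to a single point of $X_0$. Hence $\pi(x)=\pi(y)$ with $x\neq y$ is only possible when $\pi(x)$ is the image of a connected periodic curve $D\subset X$, and then both $x$ and $y$ lie in $D$, which is the desired conclusion.

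\textbf{Main obstacle.} The delicate point is justifying the Kiernan-type implication on the \emph{singular} space $X_0$: one has to work with the Kobayashi pseudo-distance defined via holomorphic chains of disks landing in $X_0$ (including its singular points), and show that any chain of total Poincar\'e length $<\epsilon$ joining two distinct points of $\pi(U(f))$ would, after a Brody reparametrization, yield a non-degenerate Brody-type disk whose derivative at the base point is bounded below, contradicting the uniform upper bound that defines hyperbolic embedding. The distance-decreasing property of $\pi$ itself is automatic, since the Kobayashi pseudo-distance is functorial under holomorphic maps of complex spaces, and the inclusion $\Fat(f)\subset U(f)$ is standard in this setting (see \cite{Ueda:1994,Dinh-Sibony:preprint,M}), so the real content lies entirely in Kiernan's implication applied to the singular analytic situation.
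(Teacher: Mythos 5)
Your overall route agrees with the paper's: contract via $\pi\colon X\to X_0$, use the distance-decreasing property of $\pi$, invoke the hyperbolic embedding of $\pi(U(f))$, and read off the periodic curves from the fibers of $\pi$. But the key middle step contains a genuine error. You write
\[
\mathrm{kobdist}_{X_0}(\pi(x),\pi(y))\leq \mathrm{kobdist}_X(x,y)=0
\]
and then invoke a ``Kiernan-type'' theorem asserting that if $\V\subset M$ is hyperbolically embedded then the \emph{restriction to $\V$ of the Kobayashi pseudo-distance of $M$} separates points of $\V$. That assertion is false: take $\V=\P^1\setminus\{0,1,\infty\}$ inside $M=\P^1$; here $\V$ is hyperbolically embedded, yet the Kobayashi pseudo-distance of $\P^1$ is identically zero. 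The same degeneracy holds for $M=X_0$ (and for $X$, so that $\mathrm{kobdist}_X(x,y)=0$ is not the content of the hypothesis anyway), so your chain of inequalities gives no information.

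What hyperbolic embedding does imply---and what the paper uses---is that the \emph{intrinsic} Kobayashi pseudo-distance of $\V$ is a genuine distance. The correct chain therefore has to run through the intrinsic pseudo-distances of the relevant open sets: the hypothesis is $\mathrm{kobdist}_{\Fat(f)}(x,y)=0$; the restriction $\pi|_{\Fat(f)}\colon\Fat(f)\to\pi(U(f))$ is holomorphic and hence $1$-Lipschitz from $\mathrm{kobdist}_{\Fat(f)}$ to $\mathrm{kobdist}_{\pi(U(f))}$; and $\mathrm{kobdist}_{\pi(U(f))}$ separates points because $\pi(U(f))$ is hyperbolically embedded in $X_0$. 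This forces $\pi(x)=\pi(y)$, after which your last paragraph (identifying the fibers of $\pi$) goes through. Your ``Main obstacle'' paragraph reflects the same confusion: the chains of disks computing $\mathrm{kobdist}_{\pi(U(f))}$ must have image in $\pi(U(f))$, not merely in $X_0$, precisely so that the uniform derivative bound defining hyperbolic embedding can be brought to bear.
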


\begin{proof}
The projection $\pi\colon X\to X_0$ is holomorphic and every holomorphic map is $1$-Lipschitz with respect
to the Kobayashi distance. Hence, $\pi(x)=\pi(y)$, because $\pi(\U)$ is Kobayashi hyperbolic. 
\end{proof}
}}

\bibliographystyle{plain}
\bibliography{referencessmooth}

\end{document}